\def\diag{\operatorname{diag}}
\def\sp{\operatorname{span}}
\def\supp{\operatorname{supp}}
\def\dim{\operatorname{dim}}
\def\id{\operatorname{id}}
\def\sup{\operatorname{sup}}
\def\Eta{\operatorname{H}}
\def\max{\operatorname{max}}
\def\ent{\operatorname{h}}
\def\cl{\operatorname{cl}}
\def\supp{\operatorname{supp}}
\def\Lip{\operatorname{Lip}}
\def\Kt{\operatorname{K}}
\def\KKt{\operatorname{KK}}
\def\Ext{\operatorname{Ext}}
\def\rank{\operatorname{rank}}
\def\st{\operatorname{st}}
\newcommand{\ep}{\varepsilon}
\newtheorem{thm}{Theorem}[section]
\newtheorem{cor}[thm]{Corollary}
\newtheorem{lemma}[thm]{Lemma}
\newtheorem{prop}[thm]{Proposition}
\newtheorem*{thm*}{Main Theorem}
\theoremstyle{definition}
\newtheorem{definition}[thm]{Definition}
\theoremstyle{remark}
\newtheorem{remark}[thm]{Remark}
\newtheorem{example}[thm]{Example}
\newtheorem*{Acknowledgements}{Acknowledgements}
\numberwithin{equation}{section}
\newcounter{nameOfYourChoice}
\newcommand{\vertiii}[1]{{\left\vert\kern-0.25ex\left\vert\kern-0.25ex\left\vert #1 
\right\vert\kern-0.25ex\right\vert\kern-0.25ex\right\vert}}
\tikzstyle{vertex}=[circle]
\tikzstyle{goto}=[->,shorten >=1pt,>=stealth,semithick]
\begin{document}
\setcounter{section}{0}
\title{On finitely summable Fredholm modules from Smale spaces}
\author[Dimitris Michail Gerontogiannis]{Dimitris Michail Gerontogiannis}
\address{\footnotesize Dimitris Michail Gerontogiannis, Mathematical Institute, Leiden University, Niels Bohrweg 1, 2333 CA Leiden, The Netherlands}
\email{d.gerontogiannis@hotmail.com}
\keywords{Smale space, groupoid metric, Spanier-Whitehead duality, $\Kt$-homology, summable Fredholm module.}
\subjclass[2020]{37D20, 19K33, 58B34 (Primary); 54E15 (Secondary)}

\begin{abstract}
We prove that all $\Kt$-homology classes of the stable (and unstable) Ruelle algebra of a Smale space have explicit Fredholm module representatives that are finitely summable on the same smooth subalgebra and with the same degree of summability. The smooth subalgebra is induced by a metric on the underlying Smale space groupoid and fine transversality relations between stable and unstable sets. The degree of summability is related to the fractal dimension of the Smale space. Further, the Fredholm modules are obtained by taking Kasparov products with a fundamental class of the Spanier-Whitehead $\Kt$-duality between the Ruelle algebras. Finally, we obtain general results on stability under holomorphic functional calculus and construct Lipschitz algebras on {\'e}tale groupoids. 
\end{abstract}

\maketitle

\vspace{-0.68cm}

\section{Introduction}

The purpose of this paper is to initiate a systematic study of index theory on $C^*$-algebras associated to Smale spaces. In this light, we prove a strong finiteness property for the $\Kt$-homology of the stable and unstable Ruelle algebras of irreducible Smale spaces. This is a decisive step in obtaining an index theorem for Ruelle algebras through Chern-Connes characters in cyclic cohomology.

Smale spaces are Ruelle's topological counterpart \cite{Ruelle} of the, often fractal-like, hyperbolic nonwandering parts of Smale's Axiom A systems \cite{Smale}. Ruelle algebras are Kirchberg algebras constructed by Putnam-Spielberg \cite{Putnam_algebras, PS} from {\'e}tale groupoids encoding stable and unstable relations on irreducible Smale spaces, and are higher dimensional analogues of stabilised Cuntz-Krieger algebras \cite{CK}.

Given a Ruelle algebra, we provide for the first time an exhaustive description of its $\Kt$-homology classes in terms of explicit Fredholm modules, which are derived by Kasparov slant products with the Spanier-Whitehead $\Kt$-duality fundamental class of Kaminker-Putnam-Whittaker \cite{KPW}. The Fredholm modules are finitely summable on the same holomorphically stable dense $*$-subalgebra, which is constructed by a metric on the underlying groupoid and transversality conditions related to the topological entropy of the Smale space. Notably, the degree of summability is uniform over this smooth subalgebra and is related to the fractal dimension of the Smale space. 

One important application is that we can now use the Chern-Connes characters of these Fredholm modules in cyclic cohomology \cite{ConnesDifGeom} to derive concrete formulas, given by noncommutative integration, for index maps on $\Kt$-theory and thus obtain an index theorem for Ruelle algebras. We will pursue this in a subsequent paper.

Relevant $\Kt$-homological finiteness results were shown to hold for crossed products of certain hyperbolic group boundary actions \cite{EN} and for Cuntz-Krieger algebras \cite{GM}. Although Ruelle algebras share a common ground (see \cite{CK, LacaSp}) due to hyperbolicity, their setting is different and studying their fine analytic structure requires a novel approach. We will elaborate on this shortly.

Before presenting our main result, for the convenience of the reader, let us mention that Fredholm modules are the building blocks of Kasparov's $\Kt$-homology \cite{Kasparov1,HR} and are abstractions of order zero elliptic operators, according to Atiyah \cite{Atiyah}. Roughly speaking, an even (odd) Fredholm module over a unital $C^*$-algebra $A$ consists of a separable Hilbert space $H$, a representation $\rho:A\to \mathcal{B}(H)$ and an operator $G\in \mathcal{B}(H)$ that, modulo compact operators, is a unitary (projection) and commutes with $\rho(A)$. For the formal definition that involves $\mathbb Z_2$-gradings we refer to Subsection \ref{sec:KK-theory_Ext-groups}. 

Further, finite summability is a regularity property that a Fredholm module might have, in which case, relatively to a dense $*$-subalgebra, the conditions holding modulo the ideal of compact operators actually hold modulo a Schatten $p$-ideal $\mathcal{L}^p$ ($p>0$). For more details see Subsection \ref{sec:Smooth_ext_sec}. A Fredholm module with this property is called $p$-summable and admits a noncommutative Chern character in cyclic cohomology, Connes' noncommutative generalisation of de Rham homology. The character is then used to produce a simple trace formula for the pairing of the associated $\Kt$-homology class with $\Kt$-theory. Consequently, one obtains noncommutative analogues of the Atiyah-Singer Index Theorem (see \cite{AS,Atiyah} and \cite[Chapter IV]{Connes}) and the Toeplitz Index Theorems (e.g see \cite{Goffeng2, HR}). 

The $\Kt$-homological finiteness of Ruelle algebras is a noncommutative manifestation of the classical fact that, for an $n$-dimensional closed smooth manifold $M$ and every $p>n$, each $\Kt$-homology class of $C(M)$ has a Fredholm module representative that is $p$-summable on $C^{\infty}(M)$. This follows from the $\Kt$-duality of $M$ with its cotangent bundle and the spectral theory of order zero elliptic operators, see \cite{Rave} for details. To place the notion of $\Kt$-homological finiteness into a formal context, we follow \cite{EN}. Specifically, we say that the $\Kt$-homology of a $C^*$-algebra $A$ is \textit{uniformly} $\mathcal{L}^p$\textit{-summable} if there is a dense $*$-subalgebra $\mathscr{A}\subset A$ such that, every (even and odd) $\Kt$-homology class has a Fredholm module representative that is $p$-summable on $\mathscr{A}$. For details and more examples see Subsection \ref{sec:Smooth_ext_sec}.

We now briefly present our main Theorems \ref{cor:KPW_smoothness} and \ref{cor:KPW_smoothness_SFT} in a combined statement. Let $\mathcal{R}$ denote the stable or unstable Ruelle algebra of an irreducible Smale space. Then, the following holds.

\begin{thm*}
There is a topological constant $\delta > 0$ such that for every $p>\delta$, there is a holomorphically stable dense $*$-subalgebra $\mathscr{R}\subset \mathcal{R}$ on which $\mathcal{R}$ has uniformly $\mathcal{L}^p$-summable $\Kt$-homology. The constant $\delta$ depends on the topological entropy and the supremal contraction constant associated to self-similar metrics of the Smale space. If the Smale space is zero-dimensional, the statement also holds with $0$ in place of $\delta$.
\end{thm*}

In addition to being useful for studying index theory on Ruelle algebras, this result indicates interesting connections with the dimension theory of Smale spaces. Namely, the degree of summability on each smooth subalgebra is related to the Hausdorff, box-counting and Assouad dimensions of a compatible self-similar metric on the Smale space. We computed these dimensions in \cite{Gero}. In this light, it would be interesting to pursue a noncommutative fractal dimension theory for Smale spaces. Moreover, we obtain that all classes in the Brown-Douglas-Fillmore group $\Ext(\mathcal{R})$ (see \cite{BDF}) are represented by smooth Toeplitz extensions in a uniform way, see Subsection \ref{sec:Smooth_ext_sec} and the author's PhD thesis \cite{Gero_Thesis}. This is related to Douglas' open question \cite[p.68]{Douglas} about invertibility of smooth extensions of $C^*$-algebras, see \cite{DV, Goffeng}. Finally, it is worth mentioning that this finiteness is remarkable as it cannot occur in the unbounded picture of $\Kt$-homology, see \cite{Connestraces} and Remark \ref{rem:obstruction}. Before outlining the theorem's proof and our other results, we present sufficient background on Smale spaces.

\subsection{A primer on Smale spaces}\label{sec:Primer_Smale_spaces}
For details we refer to Section \ref{sec:Smale_spaces}. Roughly, a Smale space $(X,d,\varphi)$ is a dynamical system consisting of a homeomorphism $\varphi$ acting on a compact metric space $(X,d)$ in a hyperbolic way; every $x\in X$ has a small neighbourhood homeomorphic to the product of two sets on which $\varphi$ expands and contracts distances at least by a factor $\lambda_d>1$ and $\lambda_d^{-1}<1$, respectively. The study of non-wandering Smale spaces can be reduced to the study of their irreducible or mixing parts due to Smale's Decomposition Theorem \ref{thm: Smale decomposition}.

Prototype examples are the non-wandering parts of Axiom A systems, for instance hyperbolic toral automorphisms \cite{BS} and horseshoes derived from iterated function systems \cite{PolW,Smale}. Smale spaces with totally disconnected stable (or unstable) sets are known as Wieler solenoids \cite{Wieler} and some examples are Williams solenoids \cite{Williams}, solenoids built from the limit sets of self-similar groups \cite{Nekr_ssg}, aperiodic substitution tiling spaces \cite{APutnam} and subshifts of finite type (SFT) \cite{BS}. The SFT are the zero-dimensional Smale spaces \cite{Putnam_Book} and special examples are the topological Markov chains (TMC), see Example \ref{ex:TMC}. We note that any SFT is topologically conjugate to a TMC. The latter provide a combinatorial model of arbitrary precision for Smale spaces, following the seminal work of Bowen \cite{Bowen2,Bowen3,Bowen4} on Markov partitions. 

Although a Smale space is a metric space, it is often only the underlying topology (rather than a specific metric) that is relevant when studying its $C^*$-algebras. With this in mind, for the $\Kt$-homological finiteness of Ruelle algebras it suffices to study Smale spaces up to topological conjugacy. This flexibility allows us to work with tractable metrics. Specifically, following the work of Artigue \cite{Artigue} on self-similar metrics for expansive dynamical systems, every Smale space $(X,d,\varphi)$ admits a compatible (self-similar) metric $d'$ such that $(X,d',\varphi)$ is a topologically conjugate (self-similar) Smale space with $\varphi$ and $\varphi^{-1}$ being $\lambda_{d'}$-Lipschitz. The conjugacy is given by the identity homeomorphism $(X,d)\to (X,d')$. Prototype examples of self-similar Smale spaces are the SFT. In \cite[Theorem 7.6]{Gero} we showed that for mixing self-similar Smale spaces $(X,d,\varphi)$ with topological entropy $\ent(\varphi)$, Bowen's measure of maximal entropy is Ahlfors $s$-regular; it is of order $r^s$ on every closed ball of radius $r$, where 
\begin{equation}\label{eq:fractal_dimension}
s=\frac{2\ent(\varphi)}{\log(\lambda_d)}.
\end{equation}
Consequently, the Hausdorff, box-counting and Assouad dimensions coincide and are equal to $s$. A straightforward application of Smale's Decomposition Theorem extends this result to irreducible Smale spaces, on which our focus henceforth lies.

In the 1980's, Ruelle \cite{Ruelle_algebras} constructed $C^*$-algebras from homoclinic equivalence relations on Smale spaces. Putnam \cite{Putnam_algebras} extended Ruelle's work by constructing the stable, unstable (groupoid) $C^*$-algebras $\mathcal{S}, \mathcal{U}$ and the stable, unstable Ruelle algebras $\mathcal{R}^s=\mathcal{S}\rtimes \mathbb Z, \mathcal{R}^u=\mathcal{U}\rtimes \mathbb Z$; the $\mathbb Z$-action is induced by the Smale space homeomorphism. Putnam-Spielberg \cite{PS} constructed $\mathcal{S},\mathcal{U}$ (up to strong Morita equivalence) from {\'e}tale stable, unstable groupoids $G^s,G^u$ and showed that Ruelle algebras are separable, simple, nuclear, $C^*$-stable, purely infinite and in the UCT class, hence are classified up to isomorphism by $\Kt$-theory \cite{KirP}. Here we focus only on the {\'e}tale groupoid picture of the Smale space $C^*$-algebras. 

Ruelle algebras of SFT are isomorphic to stabilised Cuntz-Krieger algebras \cite{CK}. Also, the Ruelle algebras of the dyadic solenoid \cite{BS} (an one-dimensional Smale space) are isomorphic to the stabilised crossed product of the modular group acting on its boundary \cite{LacaSp}. The latter is a remarkable connection between Ruelle algebras and the $C^*$-algebras studied by Emerson-Nica \cite{EN}.

Kaminker-Putnam-Whittaker \cite{KPW} proved that for every Smale space, the stable and unstable Ruelle algebras $\mathcal{R}^s$ and $\mathcal{R}^u$ have an odd $\Kt$-duality. This duality is given by a $\Kt$-homology class $\Delta\in \KKt_1(\mathcal{R}^s\otimes \mathcal{R}^u,\mathbb C)$ and a $\Kt$-theory class $\widehat{\Delta}\in \KKt_1(\mathbb C,\mathcal{R}^s\otimes \mathcal{R}^u)$ with Kasparov products
$$\widehat{\Delta}\otimes_{\mathcal{R}^u} \Delta=1_{\mathcal{R}^s}\enspace \text{and}\enspace \widehat{\Delta}\otimes_{\mathcal{R}^s} \Delta=-1_{\mathcal{R}^u}.$$
One of the various isomorphisms that we obtain via Kasparov slant product is
\begin{equation}\label{eq:slant_prod}
-\otimes_{\mathcal{R}^s} \Delta: \KKt_{*}(\mathbb C, \mathcal{R}^s)\to \KKt_{*+1}(\mathcal{R}^u,\mathbb C).
\end{equation}
The class $\Delta$ is given in terms of an extension $\tau_{\Delta}$ of $\mathcal{R}^s\otimes \mathcal{R}^u$ by the compact operators (that we call the KPW-extension), which is formed by the product of two essentially (modulo compact operators) commuting representations of $\mathcal{R}^s,\mathcal{R}^u$.

\subsection{Outline of proof} We describe the $\Kt$-homological finiteness of the unstable Ruelle algebra, the stable case being similar. Let $(X,d,\varphi)$ be an irreducible Smale space and $G^s,G^u$ be the stable, unstable ({\'e}tale) groupoids. By choosing a compatible self-similar metric $d'$ we focus on the topologically conjugate self-similar Smale space $(X,d',\varphi)$ that has the same groupoids, see \cite{Putnam_Func}. Then, using the Alexandroff-Urysohn-Frink Metrisation Theorem \ref{thm:Metrisation_thm} we build metrics $D_{s,d'},D_{u,d'}$ on the groupoids $G^s,G^u$. Each metric generates the topology of the groupoid and yields a Lipschitz structure on it, making the Smale space homeomorphism (at the groupoid level) bi-Lipschitz. These results constitute Theorem \ref{thm:GroupoidMetrisation}.

In Proposition \ref{prop:Lip_alg_Smale_groupoids}, we show that these metrics produce dense $*$-subalgebras of compactly supported Lipschitz functions $\Lip_{c}(G^s,D_{s,d'})\subset \mathcal{S},\, \Lip_{c}(G^u,D_{u,d'})\subset \mathcal{U}$, which are invariant under the $\mathbb Z$-action and hence give rise to the dense $*$-subalgebras \begin{align*}
\Lambda_{s,d'}&=\Lip_{c}(G^s,D_{s,d'})\rtimes_{\text{alg}} \mathbb Z \subset \mathcal{R}^s,\\
\Lambda_{u,d'}&=\Lip_{c}(G^u,D_{u,d'})\rtimes_{\text{alg}} \mathbb Z \subset \mathcal{R}^u.
\end{align*}
Also, $\mathcal{S},\mathcal{U}$ are faithfully represented on a Hilbert space $\mathscr{H}$ and $\mathcal{R}^s,\mathcal{R}^u$ admit faithful representations on $\mathscr{H}\otimes \ell^2(\mathbb Z)$ that commute modulo compacts, where their product in the Calkin algebra gives the KPW-extension $\tau_{\Delta}$, see Subsections \ref{sec:K-duality_Ruelle} and \ref{sec:SmalespaceCalg}. 

In Proposition \ref{prop:com_Lip_alg_general}, we prove that the images of $\Lambda_{s,d'}, \Lambda_{u,d'}$ on $\mathscr{H}\otimes \ell^2(\mathbb Z)$ commute modulo $\mathcal{L}^p(\mathscr{H}\otimes \ell^2(\mathbb Z))$ for every $p>p(d')$, where $p(d')>0$ is related to the Hausdorff dimension (\ref{eq:fractal_dimension}) of $(X,d',\varphi)$, see Subsection \ref{sec:Optimisation} and Remark \ref{rem:KPW_smooth_ext2}. Using holomorphic functional calculus (see Lemma \ref{lem:extend_almost_commuting_algebras}) and the aforementioned commutation relation, we enlarge $\Lambda_{s,d'},\Lambda_{u,d'}$ to dense $*$-subalgebras $\text{H}_{s,u,d'} \subset \mathcal{R}^s$, $\text{H}_{u,s,d'}\subset \mathcal{R}^u$ that are holomorphically stable and commute modulo $\mathcal{L}^p(\mathscr{H}\otimes \ell^2(\mathbb Z))$, for every $p>p(d')$. This step forces us to work in the setting of quasi-Banach spaces since we may have to consider Schatten $p$-ideals for $p\in (0,1)$. To this end, we extend a lemma of Connes about Riemann integration in Banach spaces to include the case of integrating in these quasi-normed Schatten ideals, see Proposition \ref{prop:Schatten_p_zero_one} and Lemma \ref{lem:Banach_alg_hol_cal_zero_one}.

These are the smooth subalgebras on which the $\Kt$-homological finiteness occurs. Specifically, we first use Proposition \ref{prop:slantprodKtheory} to compute the slant product (\ref{eq:slant_prod}) with the class of the KPW-extension and obtain explicit Fredholm module representatives for the $\Kt$-homology of $\mathcal{R}^u$. The main difficulty here is the lack of unit in $\mathcal{R}^s$ and $\mathcal{R}^u$. We circumvent this by using approximate identities $(u_n)_{n\in \mathbb N}$ satisfying $u_{n+1}u_n=u_n$ and the fact that the corners of simple, purely infinite $C^*$-algebras have nice $\Kt$-theory \cite{Cuntz}. Then, from Proposition \ref{prop:Ext_smoothness_prop} the representatives can be chosen to be $p$-summable on $\text{H}_{u,s,d'}$, for every $p>p(d')$.  

Then, Theorem \ref{cor:KPW_smoothness} is about the infimum degree of summability $\delta=\inf_{d'}p(d')>0$. This infimum depends on the topological entropy and the $\lambda$-number $\lambda(X,\varphi)\in (1,\infty]$; the supremum of the contraction constants for compatible self-similar metrics on the Smale space. Proposition \ref{prop:lambda_number_top.invar.} states that the $\lambda$-numbers are topological invariants and Proposition \ref{prop:lambda_number_dimension} relates them to the topological dimension of the Smale space.

If $X$ is zero-dimensional, the different metrisation approach of Proposition \ref{prop:SFT_groupoids_Lipschitz} yields refined groupoid ultrametrics that lead to sharp summability estimates. Using the same notation, we focus on compatible self-similar metrics $d'$ for which $\mathcal{R}^u$ ends up having uniformly $\mathcal{L}^p$-summable $\Kt$-homology on the smooth subalgebra $\text{H}_{u,s,d'}$ for every $p>p(d')$, where $p(d')$ is exactly half the Hausdorff dimension (\ref{eq:fractal_dimension}) of $(X,d',\varphi)$. This also happens to be the Hausdorff dimension of all local stable and unstable sets, see \cite[Remark 7.7]{Gero}. Further, the associated infimum degree of summability is zero. For details see Theorem \ref{cor:KPW_smoothness_SFT}. We should mention that such groupoid ultrametrics have not been constructed before and our method differs from the one used in \cite{GM} for Cuntz-Krieger algebras, as it also works for the even $\Kt$-homology. 

\begin{remark}\label{rem:totalinf}
This $\delta>0$ should be distinguished from the typically smaller infimum degree of summability over arbitrary smooth subalgebras. For example, for the dyadic solenoid the latter is zero, since $\mathcal{R}^u$ is isomorphic, using $\Kt$-theory \cite{LacaSp}, to the unstable Ruelle algebra of a TMC.
\end{remark}

The machinery we have described so far also leads to Theorems \ref{thm:KPW_smoothness} and \ref{thm:KPW_smoothness_SFT} which are about the smoothness of the KPW-extension in the sense of Douglas \cite{Douglas}, see Subsection \ref{sec:Smooth_ext_sec}. These results will be used in a subsequent paper for providing a geometric picture of the $\Kt$-duality between $\mathcal{R}^s$ and $\mathcal{R}^u$.

Before concluding, we note that the $\Kt$-homological finiteness results of Emerson-Nica \cite{EN} and Goffeng-Mesland \cite{GM} are analogous to ours in the use of $\Kt$-duality. Specifically, Emerson-Nica proved the $\Kt$-homological finiteness of $C(\partial \Gamma)\rtimes \Gamma$, where $\Gamma$ is a regular torsion-free hyperbolic group acting on its boundary $\partial \Gamma$. They used visual metrics and measures on $\partial \Gamma$, Coornaert's Hausdorff dimensions \cite{Coornaert} and Emerson's odd self-duality of $C(\partial \Gamma)\rtimes \Gamma$ \cite{EmersonDuality}. Moreover, Goffeng-Mesland proved that, the odd $\Kt$-homology of a Cuntz-Krieger algebra is uniformly $\mathcal{L}^p$-summable for every $p>0$. They used an explicit description of the $\Kt$-theory of Cuntz-Krieger algebras \cite{Cuntz_K_theory}, the odd $\Kt$-duality of Kaminker-Putnam \cite{KP} and extension lifting results from \cite{Goffeng}.

Nevertheless, the treatment of each case (including ours) requires specific tools and techniques. For instance, the aforementioned authors do not use groupoid metrics. Also, Ruelle algebras being non-unital  complicates the computation of the Kasparov slant products. Further, we consider quasi-normed Schatten ideals, while in \cite{GM} it is not necessary and in \cite{EN} they focus only on Schatten $p$-ideals for $p>2$. Finally, our method emerges from focusing on the KPW-extension, which is more tractable than the ones in \cite{EmersonDuality,KP}. In particular, it is the product of two commuting extensions which immediately lift to essentially commuting $*$-representations in the bounded operators on a Hilbert space, while the other two do not lift in an obvious way.
\enlargethispage{\baselineskip}
\subsection{Organisation} Section \ref{sec:K-theoretic_prelim} contains the $\Kt$-theoretic background and Section \ref{sec:Smale_spaces} is about $C^*$-algebras from Smale spaces and the $\Kt$-duality of Ruelle algebras. Then, in Section \ref{sec:Slant_prod_Unif_Smooth} we compute Kasparov slant products for simple, purely infinite $C^*$-algebras and develop holomorphic functional calculus tools to study uniform summability. In Section \ref{sec:Lip_etale_alg} we construct Lipschitz algebras on {\'e}tale groupoids and in Section \ref{sec:Metrise_Groupoids} we build dynamical metrics on Smale space groupoids. Finally, in Section \ref{sec:SmoothRuelle} we obtain the desired smooth subalgebras of Ruelle algebras and prove the main results.
\newpage
\section{$\Kt$-theoretic preliminaries}\label{sec:K-theoretic_prelim}

\subsection{KK-theory}\label{sec:KK-theory_Ext-groups} For the theory of Hilbert $C^*$-modules we refer to \cite{Lance}. The classical references for $\KKt$-theory are \cite{Blackadar, JT}. In \cite{Kasparov5}, 
Kasparov associated to a pair $(A,B)$ of separable $\mathbb Z_2$-graded $C^*$-algebras an abelian group $\KKt_0(A,B)$ whose elements are classes of generalised $*$-homomorphisms from $A$ to $B$. 
\begin{definition}[{\cite[Def. 17.1.1]{Blackadar}}]\label{def:Kasparovbimodules}
Let $A,B$ be separable $\mathbb Z_2$-graded $C^*$-algebras. A \textit{Kasparov} $(A,B)$-\textit{bimodule} is a triple $(E,\rho,F)$ such that
\begin{enumerate}[(1)]
\item $E$ is a countably generated $\mathbb Z_2$-graded Hilbert $B$-module;
\item $\rho:A\to \mathcal B_B (E)$ is a graded $*$-homomorphism to the adjointable operators;
\item the operator $F\in \mathcal B_B(E)$ anti-commutes with the grading of $E$ and satisfies $$\rho(a)(F^*-F)\in \mathcal{K}_B(E),\,\, \rho(a)(F^2-1)\in \mathcal{K}_B(E),\,\, [\rho(a),F]\in \mathcal{K}_B(E),$$ for all $a\in A$, where $\mathcal{K}_B(E)$ is the two-sided ideal of compact operators.
\end{enumerate}
\end{definition}
By considering unitary equivalence and operator homotopy of such bimodules we obtain the group $\KKt_0(A,B)$. Also, the group $\KKt_0(A\otimes \mathbb C_1,B)$, where $\mathbb C_1$ is the Clifford algebra with one generator, is denoted by $\KKt_1(A,B)$. For trivially graded $A,B,$ if $A=\mathbb C$ then $\KKt_i(\mathbb C, B)\cong \Kt_i(B)$, and if $B=\mathbb C$ then $\KKt_i(A, \mathbb C)=\Kt^i(A)$, with the $\Kt$-homology defined as in \cite{HR}. The cycles of $\Kt^0(A)$ are the \textit{even Fredholm modules over} $A$, which are exactly the Kasparov $(A,\mathbb C)$-bimodules. The cycles of $\Kt^1(A)$ are the \textit{odd Fredholm modules over} $A$ and are the Kasparov $(A,\mathbb C)$-bimodules without any mention on gradings.

Let $f:B\to D$ be a graded $*$-homomorphism and $[\mathcal{E}]=[E,\rho,F]\in \KKt_0(A,B)$. The \textit{push-forward} operation gives  
\begin{equation}\label{eq:push-forwardtriple}
f_*[\mathcal{E}]=[E\otimes_{f}D,\rho\otimes_{f}\id,F\otimes_{f}\id]\in \KKt_0(A,D),
\end{equation}
where $E\otimes_{f}D$ is the internal tensor product. If $g:D\to A$ is a graded $*$-homomorphism, the \textit{pull-back} operation yields the triple 
\begin{equation}\label{eq:pullback}
g^*[\mathcal{E}]=[E,\rho \circ g, F]\in \KKt_0(D,B).
\end{equation}
Further, the external tensor product gives 
\begin{equation}\label{eq:tensortriple}
\tau_D[\mathcal{E}]=[E\otimes D, \rho \otimes \id , F\otimes \id]\in \KKt_0(A\otimes D, B\otimes D),
\end{equation}
where the graded tensor products of $C^*$-algebras have the spatial norm. Tensoring with $D$ from the left gives the class $\tau^D[\mathcal{E}]$. Moreover, the $\KKt$-bifunctor is $C^*$-stable, homotopy-invariant in both variables and satisfies \textit{formal Bott periodicity}; that is, $\KKt_1(A\otimes \mathbb C_1,B)\cong \KKt_0(A,B)$ and $\KKt_0(A,B\otimes \mathbb C_1)\cong \KKt_1(A,B)$.

At the heart of $\KKt$-theory lies the \textit{Kasparov product}
\begin{equation}\label{eq:Kasparovprod1}
\otimes_D:\KKt_0(A,D)\times \KKt_0(D,B)\to \KKt_0(A,B)
\end{equation}
which generalises the cup-cap product from topological $\Kt$-theory and composition of graded $*$-homomorphisms. It is associative and functorial in all possible ways, thus creating the $\KKt$-category \cite{Blackadar}. This turns $\KKt_0(A,A)$ into a ring with identity $1_A$. The most general form of the product is given as
\begin{equation}\label{eq:Kasparovprod2}
\otimes_D:\KKt_0(A_1,B_1\otimes D)\times \KKt_0(D\otimes A_2,B_2)\to \KKt_0(A_1\otimes A_2,B_1\otimes B_2)
\end{equation}
with a slight abuse of notation; for $x\in \KKt_0(A_1,B_1\otimes D)$ and  $y\in \KKt_0(D\otimes A_2,B_2)$ we define $x\otimes_D y= \tau_{A_2}(x)\otimes_{B_1\otimes D\otimes A_2}\tau^{B_1}(y).$ The product formulates \textit{Bott periodicity}; one of the isomorphisms is $\beta \otimes_{C_0(\mathbb R)}:\KKt_1(C_0(\mathbb R)\otimes A,B)\to \KKt_0(A,B)$ (using formal Bott periodicity), where $\beta$ is the generator of $\KKt_1(\mathbb C, C_0(\mathbb R))\cong \mathbb Z$.

If $A,B$ are trivially graded then $\KKt_1(A,B)$ is naturally isomorphic to the group $\Ext^{-1}(A,B)\subset \Ext(A,B)$ of classes of invertible extensions of $A$ by $B\otimes \mathcal{K}(H)$. Here $\mathcal{K}(H)$ is the ideal of compact operators on a separable Hilbert space $H$. Also, $\Ext(A,\mathbb C)$ will be denoted by $\Ext(A)$. If $A$ is nuclear then $\Ext^{-1}(A,B)=\Ext(A,B)$. Further, extensions $\mathcal{E}$ of $A$ by $\mathcal{K}(H)$ can be given in terms of their Busby invariants $\tau_{\mathcal{E}}:A\to \mathcal{Q}(H)$, where $\mathcal{Q}(H)$ is the Calkin algebra. Finally, the isomorphism $\Kt^1(A)\cong \Ext^{-1}(A)$ maps the $\Kt$-homology class $[H,\rho,F]\in \Kt^1(A)$ (assuming the normalisation $F^2=1,F^*=F$) to the class of the Toeplitz extension $\tau:A\to \mathcal{Q}(H)$ given by $\tau(a)=P\rho(a)P+\mathcal{K}(H)$, where $P=(F+1)/2$ is a projection.

\subsection{Spanier-Whitehead K-duality}\label{sec:SW-duality} Here all $C^*$-algebras have trivial grading. Spanier-Whitehead duality \cite{BG} relates the homology of a finite complex with the cohomology of a dual complex. The following noncommutative analogue is based on the definitions given in \cite{EmersonDuality,KP,KPW}.

\begin{definition}\label{def:SW-Kduality}
Let $A$ and $B$ be two separable $C^*$-algebras. We say that $A$ and $B$ are \textit{Spanier-Whitehead} $\Kt$\textit{-dual} (or just $\Kt$\textit{-dual}) if there is a $\Kt$-homology class $\Delta\in \KKt_i(A\otimes B,\mathbb C)$ and a $\Kt$-theory class $\widehat{\Delta}\in \KKt_i(\mathbb C, A\otimes B)$ such that 
\begin{align*}
\widehat{\Delta}\otimes_B \Delta=1_A \,\,\,\, \text{and} \,\,\,\, \widehat{\Delta}\otimes_A \Delta=(-1)^i1_B.
\end{align*}
In particular, if $B$ is the opposite algebra $A^{\textit{op}}$, $A$ is called a \textit{Poincar{\'e} duality algebra}.
\end{definition}

\begin{remark}
By $\widehat{\Delta}\otimes_B \Delta$ we mean the product $\widehat{\Delta}\otimes_B (\sigma_{12})^*(\Delta)$ and by $\widehat{\Delta}\otimes_A \Delta$ we mean $(\sigma_{12})_*(\widehat{\Delta})\otimes_A \Delta$, where $\sigma_{12}:A\otimes B\to B\otimes A$ is the flip map.
\end{remark}

The following was first proved by Connes in the case $i=0$. For the general case see \cite[Lemma 9]{EmersonDuality}. Given a duality pair $(\widehat{\Delta},\Delta)$ between $A$ and $B$, we obtain the isomorphisms
\begin{equation}\label{eq:slantprod1}
\widehat{\Delta}\otimes_B:\KKt_j(B, \mathbb C)\to \KKt_{j+i}(\mathbb C, A),
\end{equation}
\begin{equation}\label{eq:slantprod2}
\otimes_A \Delta : \KKt_j(\mathbb C, A)\to \KKt_{j+i}(B,\mathbb C).
\end{equation}
It is worth mentioning that $\widehat{\Delta}$ is unique for $\Delta$ and vice versa. The question whether a separable $C^*$-algebra has a Spanier-Whitehead $\Kt$-dual is addressed in \cite{KS}.

\begin{remark} $\Kt$-duality is traced back to Kasparov's isomorphism \cite{Kasparov2} between the $\Kt$-theory and $\Kt$-homology of a compact Riemannian manifold and its cotangent bundle. Kasparov referred to this duality as $\KKt$-theoretic Poincar{\'e} duality. Later, Connes \cite[Chapter VI]{Connes} showed self-duality for the irrational rotation algebras. Then, Kaminker\texttt{-}Putnam \cite{KP} studied the case of Cuntz-Krieger algebras. In their work, they argued that Spanier-Whitehead $\Kt$-duality is a more appropriate terminology than Poincar{\'e} duality, since classical Poincar{\'e} duality relates the homology and cohomology of the same manifold. A basic difference between classical Spanier-Whitehead duality and classical Poincar{\'e} duality is that, for the first duality one does not have to assume orientability of the space, while for the latter, orientability is necessary. For the relation between the two dualities we refer the reader  to \cite[Section 2]{KP}.
\end{remark}
Popescu\texttt{-}Zacharias \cite{PZ} showed Poincar{\'e} duality for higher rank graph algebras. In \cite{EmersonDuality} Emerson showed Poincar{\'e} duality for crossed products of a large class of hyperbolic group boundary actions. Echterhoff\texttt{-}Emerson\texttt{-}Kim \cite{EEK2} showed $\Kt$-duality for certain orbifold $C^*$-algebras. Nishikawa-Proietti \cite{NP} studied $\Kt$-duality in the setting of discrete groups. Kaminker\texttt{-}Putnam\texttt{-}Whittaker \cite{KPW} proved the $\Kt$-duality between the stable and unstable Ruelle algebras, see Subsection \ref{sec:K-duality_Ruelle}.

\subsection{K-homological summability}\label{sec:Smooth_ext_sec}
Let $H$ be a separable Hilbert space. For a compact operator $T\in \mathcal{K}(H)$, let $(s_n(T))_{n\in \mathbb N}$ be the sequence of its singular values in decreasing order, counting their multiplicities. 
\begin{definition}[\cite{GK}]
A \textit{symmetrically normed ideal} is a proper non-zero two-sided ideal $\mathcal{I}$ of $\mathcal{B}(H)$ with norm $\|\cdot \|_{\mathcal{I}}$ such that
\begin{enumerate}[(1)]
\item $\|SRT\|_{\mathcal{I}}\leq \|S\| \|R\|_{\mathcal{I}} \|T\|$, for all $S,T \in \mathcal{B}(H)$ and $R\in \mathcal{I}$;
\item $\mathcal{I}$ is a Banach space with the norm $\|\cdot \|_{\mathcal{I}}.$
\end{enumerate}
\end{definition}
The \textit{Schatten} $p$-\textit{ideal} on $H$, where $p>0$, is defined as 
\begin{equation}\label{eq:Schattenideal}
\mathcal{L}^p(H)=\{T\in \mathcal{K}(H): (s_n(T))_{n\in \mathbb N}\in \ell^p(\mathbb N)\},
\end{equation}
and is equipped with the \textit{Schatten} $p$\textit{-norm} 
\begin{equation}\label{eq:Schattennorm}
\|T\|_p=\|(s_n(T))_{n\in \mathbb N}\|_{\ell^p(\mathbb N)}.
\end{equation}
For $p\geq 1$, the ideal $\mathcal{L}^p(H)$ is symmetrically normed. For $p\in (0,1)$ it is only quasi-normed, hence a quasi-Banach space, see Subsection \ref{sec:Hol_Fun_Cal}. However, all the other basic properties of symmetrically normed ideals are satisfied \cite{GK}. Symmetrically normed ideals are used in the work of Douglas \cite{Douglas} on smooth extensions. 

Let $(\mathcal{I}, \|\cdot \|_{\mathcal{I}})$ denote a symmetrically normed ideal or a general Schatten-ideal and $A$ be a separable $C^*$-algebra. 

\begin{definition}[\cite{Douglas}]\label{def:summableextension} Let $\mathcal{A}\subset A$ be a dense $*$-subalgebra. An extension $\tau:A\to \mathcal{Q}(H)$ will be called $\mathcal{I}$\textit{-smooth on} $\mathcal{A}$ if there is a linear map $\eta: \mathcal{A}\to \mathcal{B}(H)$ such that $$\eta(ab)-\eta(a)\eta(b)\in \mathcal{I},\, \eta(a^*)-\eta(a)^*\in \mathcal{I}$$ and $\tau(a)=\eta(a)+\mathcal{K}(H)$, for all $a,b \in \mathcal{A}$. If $\mathcal{I}=\mathcal{L}^p(H)$, the extension will be called $p$\textit{-smooth} (or just \textit{finitely smooth}).
\end{definition}

Connes' work \cite{ConnesDifGeom} on summable Fredholm modules produces a plethora of smooth extensions. Consider the two-sided ideal $\mathcal{I}^{1/2}=\text{span}\{T\in \mathcal{K}(H):T^*T,\, TT^*\in \mathcal{I}\}$ and for consistency assume it is symmetrically normed or a general Schatten-ideal. For instance, if $\mathcal{I}=\mathcal{L}^{p/2}(H)$ then $\mathcal{I}^{1/2}=\mathcal{L}^p(H).$

\begin{definition}[{\cite{ConnesDifGeom, GM}}]
Let $\mathcal{A}\subset A$ be a dense $*$-subalgebra. An even or odd Fredholm module $(H,\rho,F)$ over $A$ is $\mathcal{I}^{1/2}$\textit{-summable on} $\mathcal{A}$ if, for every $a\in \mathcal{A}$, it holds that 
$$\rho(a)(F^*-F)\in \mathcal{I},\, \rho(a)(F^2-1)\in \mathcal{I},\, [\rho(a),F]\in \mathcal{I}^{1/2}.$$ If $\mathcal{I}^{1/2}=\mathcal{L}^p(H)$, the Fredholm module will be called $p$\textit{-summable} (or just \textit{finitely summable}).
\end{definition}

The natural isomorphism $\Kt^1(A)\to \Ext^{-1}(A)$ sends the class of an odd $\mathcal{I}^{1/2}$-summable Fredholm module to the class of an extension that is $\mathcal{I}$-smooth on the same dense $*$-subalgebra. However, in general, realising (lifting) classes of smooth extensions in this way is a delicate process, as we see in \cite{Goffeng}.

Summability plays a crucial role in index theory and quantized calculus (see \cite{Connes}). Consequently, $C^*$-algebras with the following strong $\Kt$-homological condition are of particular interest. 

\begin{definition}[{\cite{EN}}]\label{def:uniformsummmability}
The $\Kt$-homology of $A$ is \textit{uniformly} $\mathcal{I}$\textit{-summable} if there is a dense $*$-subalgebra $\mathcal{A}\subset A$ such that, every $x\in \Kt^*(A)$ can be represented by a Fredholm module which is $\mathcal{I}$-summable on $\mathcal{A}$.
\end{definition}

\begin{remark}
Analogously, one can define uniform summability for $\Ext$-groups, but we do not know if, for general $C^*$-algebras, the two notions would be equivalent. 
\end{remark}

The $\Kt$-homology of $C(M)$, where $M$ is an $n$-dimensional closed smooth manifold, is uniformly $\mathcal{L}^p$-summable on $C^{\infty}(M)$ for every $p>n$ \cite{Rave}. Further, the odd $\Kt$-homology of a Cuntz-Krieger algebra is uniformly $\mathcal{L}^p$-summable on the dense $*$-subalgebra generated by the $C^*$-generators, for every $p>0$ \cite{GM}. Moreover, for certain hyperbolic groups $\Gamma$, the $\Kt$-homology of $C(\partial \Gamma)\rtimes \Gamma$ is uniformly $\mathcal{L}^p$-summable on $L\rtimes_{\text{alg}} \Gamma$, where $L$ is a dense $*$-subalgebra of Lipschitz (with respect to a visual metric on $\partial \Gamma$) functions and $p$ depends on the Hausdorff dimension of the metric. Also, the $\Kt$-homology of the reduced group $C^*$-algebra $C^*_r(\Gamma)$ of a finitely generated free group $\Gamma$ is uniformly $\mathcal{L}^p$-summable on the group ring $\mathbb C \Gamma$, whenever $p>2$. For these see \cite{EN}. To obtain uniform results in the aforementioned examples the authors use $\Kt$-duality. A different approach was used to show that every AF-algebra has uniformly $\mathcal{L}^1$-summable $\Kt$-homology \cite[Chapter 4]{Rave}.

\begin{remark}\label{rem:obstruction}
So far in the literature there is no known obstruction that prevents the $\Kt$-homology of $C^*$-algebras to be uniformly finitely summable. But, it is also not true that this finiteness condition is universal, see \cite[Lemma 6]{GM} and \cite{Push} for some special counterexamples. In the unbounded picture of $\Kt$-homology the situation is different. Pure infiniteness is a $C^*$-algebraic condition that prevents the existence of finitely summable unbounded Fredholm modules. This is because purely infinite $C^*$-algebras are traceless and any such module would yield a tracial state on the $C^*$-algebra \cite{Connestraces}. What makes the study of such an obstruction in $\Kt$-homology even more difficult, but certainly more interesting, is that classes of unbounded Fredholm modules which are at best $\theta$-summable \cite{Connes} (weaker than finite summability) can be associated to classes of finitely summable Fredholm modules. For example, this happens for the purely infinite $C^*$-algebras studied in \cite{EN,GM} and of this paper.
\end{remark}

\section{Preliminaries on Smale spaces}\label{sec:Smale_spaces}

\subsection{Smale spaces}
For more details about Smale spaces we refer to \cite{Putnam_Lec, Ruelle}. We start by introducing some basic recurrence notions for topological dynamical systems. Throughout, we will consider (infinite) compact metric spaces $(Z,\rho)$  equipped with a homeomorphism $\psi:Z\to Z$. The corresponding dynamical systems will be denoted by $(Z,\rho,\psi)$ or simply by $(Z,\psi)$, if there is no risk of confusion. 

\begin{definition}\label{def:Recurrence}
Let ($Z,\psi$) be a dynamical system. 
\begin{enumerate}[(1)]
\item A point $z\in Z$ is called \textit{non-wandering} if for every open neighbourhood $U$ of $z$ there is some $n\in \mathbb N$ such that $\psi^n(U)\cap U\neq \varnothing$. Moreover, we say that $(Z,\psi)$ is \textit{non-wandering} if every $z\in Z$ is non-wandering.
\item ($Z,\psi$) is called \textit{irreducible} if for every ordered pair of non-empty open sets $U,V\subset Z$, there is some $n\in \mathbb N$ such that $\psi^n(U)\cap V\neq \varnothing$.
\item ($Z,\psi$) is called \textit{mixing} if for every ordered pair of non-empty open sets $U,V\subset Z$, there is some $N\in \mathbb N$ such that $\psi^n(U)\cap V\neq \varnothing$, for every $n\geq N$.
\end{enumerate}
\end{definition}

\begin{definition}[{\cite{Ruelle}}]
Let $(X,d)$ be a compact metric space and $\varphi:X\to X$ be a homeomorphism. The dynamical system ($X,d,\varphi$) is a \textit{Smale space} if there are constants $\varepsilon_X>0$, $\lambda_X >1$ (which depend on $d$) and a locally defined bi-continuous map, called the \textit{bracket map},
$$[\cdot,\cdot]:\{ (x,y)\in X\times X: d(x,y)\leq \varepsilon_X\} \to X$$
that satisfies the axioms:
\begin{align*}
\tag{B1} [x,x]&=x,\\
\tag{B2} [x,[y,z]]&=[x,z],\\
\tag{B3} [[x,y],z]&=[x,z],\\
\tag{B4} \varphi([x,y])&=[\varphi(x),\varphi(y)];
\end{align*}
for any $x,y,z \in X$, whenever both sides are defined. For $x\in X$ and $0<\varepsilon \leq \varepsilon_X$ let 
\begin{align}
X^s(x,\varepsilon)&=\{y\in X: d(x,y)<\varepsilon, [x,y]=y\},\\
X^u(x,\varepsilon)&=\{y\in X: d(x,y)<\varepsilon, [y,x]=y\}
\end{align}
be the \textit{local stable} and \textit{unstable sets}. On these sets we have the \textit{contraction axioms}: 
\begin{align*}
\tag{C1} d(\varphi(y),\varphi(z))&\leq \lambda_X^{-1}d(y,z), \enspace \text{for any} \enspace y,z \in X^s(x,\varepsilon),\\
\tag{C2} d(\varphi^{-1}(y),\varphi^{-1}(z))&\leq \lambda_X^{-1}d(y,z),\enspace \text{for any} \enspace y,z \in X^u(x,\varepsilon).
\end{align*}
\end{definition}

We say that the bracket map defines a \textit{local product structure} on $X$ because, for any $x\in X$ and $0<\varepsilon \leq \varepsilon_X/2$, the bracket map 
\begin{equation}\label{eq:bracketmap}
[\cdot, \cdot]:X^u(x,\varepsilon)\times X^s(x,\varepsilon)\to X
\end{equation}
is a homeomorphism onto its image \cite[Prop. 2.1.8]{Putnam_Book}. Also, due to the uniform continuity, there is a constant $0<\varepsilon_X'\leq \varepsilon_X/2$ such that, if $d(x,y)\leq \varepsilon_X'$, then both $d(x,[x,y]),d(y,[x,y])< \varepsilon_X/2$ and hence 
\begin{equation}\label{eq:uniquebracket}
X^s(x,\ep_X/2)\cap X^u(y,\ep_X/2)=[x,y].
\end{equation}
Equation (\ref{eq:uniquebracket}) together with a bracket independent description of the local stable and unstable sets (see \cite[Subsection~4.1]{Putnam_Lec}) imply that the bracket map is unique on $X$ (but of course depends on $\ep_X$ and $\lambda_X$). 

\begin{example}[TMC]\label{ex:TMC}
Equip $\{1,\ldots , N\}$ with the discrete topology and $\{1,\ldots , N\}^{\mathbb Z}$ with the product topology making it a compact Hausdorff space. Let $M$ be an $N\times N$ matrix, with $0$ and $1$ entries, and consider the closed subspace of allowable sequences 
\begin{equation}\label{eq:SFT}
\Sigma_M=\{x=(x_i)_{i\in \mathbb Z}\in \{1,\ldots , N\}^{\mathbb Z}: M_{x_i,x_{i+1}}=1\}
\end{equation}
whose topology is generated by the ultrametric 
\begin{equation}\label{eq:SFTmetric}
d(x,y)=\text{inf}\{2^{-n}:n\geq 0, \enspace x_i=y_i \enspace \text{for} \enspace |i|<n\}.
\end{equation}
The shift homeomorphism $\sigma_M:\Sigma_M\to \Sigma_M$ given by $\sigma_M(x)_{i}=x_{i+1}$, for any $i\in \mathbb Z$, defines the \textit{topological Markov chain} ($\Sigma_M,\sigma_M$) which becomes a Smale space with bracket map given by
\begin{equation}\label{eq:SFTbracket}
([x,y])_n=
\begin{cases}
y_n, & \text{for} \enspace n\leq 0\\
x_n, & \text{for} \enspace n\geq 1,
\end{cases}
\end{equation}
for $x,y \in \Sigma_M$ with $d(x,y)\leq 2^{-1}$. We have $\varepsilon_{\Sigma_M}=2^{-1}, \lambda_{\Sigma_M}=2$ and $\sigma_M, \sigma_M^{-1}$ are $2$-Lipschitz, halving distances on local stable and unstable sets, respectively.
\end{example}

Smale spaces are expansive \cite[Prop. 2.1.9]{Putnam_Book} and thus have finite covering dimension \cite{Mane} and topological entropy $\ent(\varphi)$ \cite[Theorem~3.2]{Walters}. In fact, without any assumption on the metric, every Smale space has finite Hausdorff dimension \cite[Chapter 7]{Ruelle}. Further, the study of Smale spaces from a topological perspective is facilitated from the fact that there is flexibility in the choice of metric that we can work with. Before elaborating on this we give the following definition. 

\begin{definition}[\cite{Gero}]\label{def:selfsimilarSmalespace}
A Smale space $(X,d,\varphi)$ will be called \textit{self-similar} if both $\varphi,\,\varphi^{-1}$ are $\lambda_X$-Lipschitz with respect to the metric $d$.
\end{definition}

Prototype examples are the TMC, or more generally the SFT. Self-similar Smale spaces are particularly nice since the homeomorphism $\varphi$ acts as the $\lambda_X^{-1}$-multiple of an isometry on local stable sets and $\varphi^{-1}$ acts similarly on local unstable sets. Following the work of Artigue \cite{Artigue} on self-similar metrics for expansive dynamical systems we can deduce that self-similar Smale spaces do exist in abundance (see also \cite[Subsection 4.5]{Gero} for a criterion that allows to construct self-similar metrics on Smale spaces with a bi-Lipschitz homeomorphism). 

\begin{lemma}[\cite{Artigue}]\label{lem:Artiguelemma} Every Smale space $(X,d,\varphi)$ admits a compatible (self-similar) metric $d'$ so that $(X,d',\varphi)$ is a self-similar Smale space.
\end{lemma}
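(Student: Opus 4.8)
The plan is to follow Artigue's construction \cite{Artigue} of self-similar (adapted) metrics for expansive homeomorphisms and check that the extra axioms of a Smale space -- the bracket map and the contraction estimates (C1), (C2) -- survive the remetrisation. The key point is that a Smale space is in particular an expansive homeomorphism on a compact metric space, so Artigue's theorem already produces a compatible metric $d'$ on $X$ with a constant $\lambda_X' > 1$ such that
\begin{equation*}
d'(\varphi(x),\varphi(y)) \leq \lambda_X'\, d'(x,y) \quad\text{and}\quad d'(\varphi^{-1}(x),\varphi^{-1}(y)) \leq \lambda_X'\, d'(x,y)
\end{equation*}
for all $x,y\in X$, i.e.\ both $\varphi$ and $\varphi^{-1}$ are $\lambda_X'$-Lipschitz for $d'$. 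Since $d'$ is topologically equivalent to $d$, the homeomorphism $\varphi$ and the (topologically defined) local stable and unstable sets are unchanged as subsets of $X$; only their metric sizes change. So the first step is simply to invoke Artigue and record the Lipschitz bounds.

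Next I would verify that $(X,d',\varphi)$ is still a Smale space. The bracket map $[\cdot,\cdot]$ is already defined on a $d$-neighbourhood of the diagonal; by equivalence of $d$ and $d'$ there is $\varepsilon_X'>0$ so that $\{d'(x,y)\le \varepsilon_X'\}$ is contained in the domain, and the algebraic axioms (B1)--(B4) are metric-independent, so they hold verbatim. What needs checking are the contraction axioms (C1), (C2): one must see that on local stable sets $\varphi$ is eventually contracting for $d'$ (and dually for $\varphi^{-1}$ on unstable sets). Here one uses expansivity together with the fact that, for $y,z$ in a local stable set, $\varphi^n(y),\varphi^n(z)$ stay $d$-close for all $n\ge 0$ and converge together; so $d'(\varphi^n(y),\varphi^n(z))\to 0$. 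Combined with the one-step $\lambda_X'$-Lipschitz bound, a standard argument (pass to a power $\varphi^N$, or rescale the metric by $\sup_n \lambda_X'^{\,n} d'(\varphi^n\cdot,\varphi^n\cdot)/d'(\cdot,\cdot)$-type quantities as Artigue does) yields a genuine contraction constant $\lambda_{X}'' > 1$. In fact, the cleanest route is to note that Artigue's adapted metric is built precisely so that the stable/unstable ``hyperbolic'' behaviour is uniform, so after possibly replacing $\lambda_X'$ by a slightly smaller constant $>1$ one gets (C1) and (C2) directly; alternatively one can cite \cite[Subsection~4.5]{Gero} for the criterion that makes this work for Smale spaces specifically.

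Finally I would assemble these facts: the metric $d'$ is compatible (same topology, same dynamics, same bracket), $\varphi,\varphi^{-1}$ are $\lambda$-Lipschitz for a common $\lambda=\lambda_{X}''>1$, hence $(X,d',\varphi)$ is a self-similar Smale space in the sense of Definition~\ref{def:selfsimilarSmalespace}, and the identity map $(X,d)\to(X,d')$ is the required topological conjugacy. The main obstacle I expect is the careful verification that the contraction axioms (C1), (C2) hold with a single uniform constant $\lambda>1$ after remetrisation: Artigue's theorem as usually stated gives a Lipschitz (``adapted'') metric for the expansive homeomorphism, but does not a priori package the stable and unstable directions with the specific quantitative control (C1)--(C2) demand, so one has to either redo that part of Artigue's construction tracking the local product structure, or quote the Smale-space-adapted version from \cite{Gero}. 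Everything else -- topological equivalence, the algebraic bracket axioms, compactness -- is routine.
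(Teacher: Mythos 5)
The paper supplies no argument for this statement: Lemma~\ref{lem:Artiguelemma} is quoted directly from \cite{Artigue}, whose main theorem on self-similar hyperbolicity is formulated for hyperbolic homeomorphisms (expansive systems with canonical coordinates), i.e.\ precisely for Smale spaces, and it already produces a compatible metric in which the contraction axioms (C1)--(C2) hold with a constant $\lambda$ that is simultaneously a global Lipschitz constant for $\varphi$ and $\varphi^{-1}$. So the intended ``proof'' is a direct citation, together with the observation that Smale spaces are exactly the systems that theorem covers; there is nothing left to verify on top of it.

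Your route is different: you invoke only the expansive-case statement (a compatible metric making $\varphi$ and $\varphi^{-1}$ $\lambda$-Lipschitz) and then try to upgrade to (C1)--(C2), and it is at this upgrade that there is a genuine gap. Global $\lambda$-Lipschitzness plus the topological fact that $d'(\varphi^n(y),\varphi^n(z))\to 0$ for $y,z$ in a local stable set gives only \emph{eventual} contraction; it does not yield the one-step estimate $d'(\varphi(y),\varphi(z))\le \lambda^{-1}d'(y,z)$, and Definition~\ref{def:selfsimilarSmalespace} demands that the rate in (C1)--(C2) be the reciprocal of the very Lipschitz constant of $\varphi^{\pm 1}$. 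Neither of your suggested fixes works as stated: passing to a power proves the statement for $(X,\varphi^N)$ rather than for $\varphi$, and a Mather-type rescaling or averaging produces an adapted metric whose Lipschitz constant for $\varphi$ has no reason to be dominated by the reciprocal of the stable contraction rate --- this quantitative matching of the two constants is exactly the content of Artigue's construction, not a routine consequence of it. You do identify this as the main obstacle and point to the correct repairs (redo Artigue's construction tracking the local product structure, or use the criterion of \cite[Subsection~4.5]{Gero}, which however takes a bi-Lipschitz homeomorphism as an input rather than replacing the argument), but as written the proposal defers the essential step instead of proving it; the efficient and intended course is simply to cite Artigue's theorem in its hyperbolic form.
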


According to Smale's program \cite{Smale}, the interesting dynamics of a Smale space lie in the non-wandering set which is the closure of its periodic points and which can be studied through its irreducible and mixing components \cite[Section~7.4]{Ruelle}.

\begin{thm}[Smale's Decomposition Theorem] \label{thm: Smale decomposition}
Assume that the Smale space $(X,\varphi)$ is non-wandering. Then $X$ can be decomposed into a finite disjoint union of clopen, $\varphi$-invariant, irreducible sets $X_0,\ldots , X_{N-1}$. Each of these sets can then be decomposed into a finite disjoint union of clopen sets $X_{i0},\ldots ,X_{iN_i}$ that are cyclically permuted by $\varphi$, and where $\varphi^{N_i+1}|_{X_{ij}}$ is mixing, for every $0\leq j\leq N_i$.
\end{thm}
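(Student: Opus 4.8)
The decomposition is the classical spectral decomposition of Smale and Bowen, and the plan is to recover it from the periodic points. Since a Smale space is expansive and its local product structure supplies the pseudo-orbit tracing (shadowing) property, the hypothesis that $(X,\varphi)$ is non-wandering forces, via a standard closing argument, the periodic points to be dense in $X$ (any point is close to a point $y$ returning near itself, producing a periodic pseudo-orbit whose shadowing point is periodic by expansiveness). On $\mathrm{Per}(\varphi)$ I would declare $p\sim q$ when both $X^u(\mathrm O(p))\cap X^s(\mathrm O(q))$ and $X^s(\mathrm O(p))\cap X^u(\mathrm O(q))$ are non-empty, where $\mathrm O(\cdot)$ denotes the finite orbit and $X^s,X^u$ the global stable and unstable sets. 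The sets $X_0,\dots,X_{N-1}$ will then be the closures in $X$ of the $\sim$-classes.

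The technical heart, and the step I expect to be the main obstacle, is that $\sim$ is transitive; this is where the bracket map is used essentially. Given heteroclinic orbits $a\in X^u(\mathrm O(p))\cap X^s(\mathrm O(q))$ and $b\in X^u(\mathrm O(q))\cap X^s(\mathrm O(r))$, one iterates so that $\varphi^{n}a$ and $\varphi^{-m}b$ lie close to a common point of $\mathrm O(q)$; then $c:=[\varphi^{-m}b,\varphi^{n}a]$ is defined, and by (\ref{eq:uniquebracket}) together with the contraction axioms (C1)--(C2) it lies in $X^s(\varphi^{-m}b)\cap X^u(\varphi^{n}a)$. Following the forward asymptotics along $b$'s orbit down to $\mathrm O(r)$ and the backward asymptotics along $a$'s orbit down to $\mathrm O(p)$ shows $c\in X^u(\mathrm O(p))\cap X^s(\mathrm O(r))$; splicing the reverse heteroclinic orbits with the opposite bracket ordering gives the other non-empty intersection, so $p\sim r$. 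Reflexivity and symmetry being trivial, $\sim$ is an equivalence relation, visibly preserved by $\varphi$.

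Next, each $\sim$-class is open in $\mathrm{Per}(\varphi)$: if $q\in\mathrm{Per}(\varphi)$ with $d(p,q)$ small then $[p,q]$ and $[q,p]$ witness $p\sim q$ directly. Hence each closure $X_i$ is open in $\overline{\mathrm{Per}(\varphi)}=X$; closures of distinct classes are disjoint (their intersection would be open, hence would contain a periodic point lying in two classes), and by compactness there are only finitely many of them, partitioning $X$ into clopen, $\varphi$-invariant sets. To see $\varphi|_{X_i}$ is irreducible, take non-empty open $U,V\subset X_i$ and periodic $p\in U$, $q\in V$; a point $z\in X^u(\mathrm O(p))\cap X^s(\mathrm O(q))$ satisfies $\varphi^{-n}z\in U$ for suitably large $n$ (chosen in the residue class returning $z$ near $p$) and $\varphi^{m}z\in V$ for suitably large $m$, whence $\varphi^{m+n}(U)\cap V\neq\varnothing$.

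Finally, fix an irreducible clopen piece $X_i$, which is itself a Smale space, and let $P:=N_i+1$ be the greatest common divisor of the least periods of the periodic points of $X_i$. I would refine $\sim$ to a relation $\approx$ on $X_i$ by additionally requiring the heteroclinic connections to be realisable with ``transit time'' $\equiv 0\pmod P$; this is consistent because for a fixed pair of periodic orbits two connecting heteroclinic orbits differ in transit time by a multiple of $P$, and the splicing and closing arguments above can be run while tracking iteration counts modulo $P$. The $\approx$-classes $X_{i0},\dots,X_{iN_i}$ come out clopen, $\varphi$ permutes them cyclically (it raises transit time by one), and running the irreducibility argument for $\varphi^{P}$ inside a single class now yields every sufficiently large return time, so $\varphi^{P}|_{X_{ij}}$ is mixing. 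I expect the bookkeeping modulo $P$, and the verification that $P$ is exactly the mixing threshold, to be the second delicate point; alternatively, this last part may be deduced from the general fact that a topologically transitive system with the shadowing property admits such a cyclic-plus-mixing clopen decomposition.
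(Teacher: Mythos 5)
The paper does not actually prove Theorem \ref{thm: Smale decomposition}: it is quoted as a classical result, with \cite{Smale} and \cite[Section~7.4]{Ruelle} as the references, so there is no in-paper argument to compare yours against; what you wrote is essentially the classical spectral-decomposition proof found in those sources. Its first half is sound: non-wandering together with shadowing (available from the local product structure) and expansivity gives density of periodic points; the orbit-wise heteroclinic relation is an equivalence relation by your bracket/splicing argument (and $[\varphi^{-m}b,\varphi^{n}a]\in X^s(\varphi^{-m}b)\cap X^u(\varphi^{n}a)$ is indeed consistent with the paper's bracket conventions (B2)--(B3)); uniform openness of the classes on $\mathrm{Per}(\varphi)$ yields finitely many pairwise separated clopen, $\varphi$-invariant closures, and the irreducibility argument via a heteroclinic point is correct.

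The genuine soft spot is the cyclic-plus-mixing refinement. As written, your relation $\approx$ is not well-defined: a heteroclinic point between two periodic orbits carries no intrinsic integer ``transit time'' (at best a relative phase, and making that precise is most of the remaining work), and fixing $P=N_i+1$ in advance as the gcd of least periods is backwards --- that identification is normally a \emph{consequence} of the decomposition (one needs mixing of the power to manufacture periodic points with coprime relative periods), so your bookkeeping risks circularity. The standard repair is to run, inside an irreducible piece, the same heteroclinic equivalence but on \emph{points} rather than orbits: $p\approx q$ iff $X^u(p)\cap X^s(q)\neq\varnothing$ and $X^s(p)\cap X^u(q)\neq\varnothing$. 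The same bracket argument gives transitivity and uniform openness, the (finite) number of classes \emph{defines} $N_i+1$, $\varphi$ visibly permutes the classes and irreducibility of the piece forces the permutation to be a single cycle, and mixing of $\varphi^{N_i+1}$ on each class follows from density of homoclinic points of a fixed periodic point together with the local product structure. Alternatively, your fallback --- invoking the spectral decomposition for transitive expansive systems with shadowing --- is legitimate and closes the gap, but then that citation, not the mod-$P$ sketch, should be the argument.
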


\enlargethispage{\baselineskip}
\subsection{Stable and Unstable Groupoids}
For the theory of {\'e}tale groupoids see \cite{Renault_Book, Sims} and Section \ref{sec:Lip_etale_alg}. In the sequel our focus lies on an irreducible Smale space $(X,d,\varphi)$. The \textit{global stable} and \textit{unstable sets} at $x\in X$ are
\begin{equation}
\begin{split}
X^s(x)&=\{y\in X: \lim_{n\to +\infty}d(\varphi^n(x),\varphi^n(y))=0\},\\
X^u(x)&=\{y\in X: \lim_{n\to -\infty}d(\varphi^n(x),\varphi^n(y))=0\}.
\end{split}
\end{equation}
If $(X,\varphi)$ is mixing, $X^s(x)\cap X^u(y)$ is countable and dense in $X$, for all $x,y\in X$ \cite{Ruelle}.

From \cite{Putnam_Lec}, if $0<\varepsilon \leq \varepsilon_X$, then $X^s(x,\varepsilon)\subset X^s(x), \, X^u(x,\varepsilon)\subset X^u(x)$,
\begin{align}\label{eq:increasingnetleaves}
X^s(x)=\bigcup_{n\geq 0} \varphi^{-n}(X^s(\varphi^n(x),\varepsilon))\,\,\,\, \text{and} \,\,\,\,
X^u(x)=\bigcup_{n\geq 0} \varphi^{n}(X^u(\varphi^{-n}(x),\varepsilon)).
\end{align}
The \textit{stable} and \textit{unstable equivalence relations} are $G^s=\{(x,y):y\in X^s(x)\}$ and  $G^u=\{(x,y):y\in X^u(x)\}$. Putnam \cite{Putnam_algebras} showed that $G^s,G^u$ admit the structure of topological groupoids. Later, Putnam\texttt{-}Spielberg \cite{PS} constructed {\'e}tale versions of these groupoids giving rise to $C^*$-algebras that are strongly Morita equivalent to the ones of $G^s$ and $G^u$, following the philosophy of the abstract transversals in \cite{MRW}. Specifically, for periodic orbits $P,Q$ in the Smale space, define 
\begin{equation}
X^s(P)=\bigcup_{p\in P}X^s(p)\,\,\,\,\text{and}\,\,\,\, X^u(Q)=\bigcup_{q\in Q}X^u(q),
\end{equation}
and consider the \textit{stable} and \textit{unstable groupoids} 
\begin{equation}
\begin{split}
G^s(Q)&=\{(v,w)\in G^s: v,w \in X^u(Q)\},\\
G^u(P)&=\{(v,w)\in G^u: v,w \in X^s(P)\}.
\end{split}
\end{equation}
Due to irreducibility, Theorem \ref{thm: Smale decomposition} yields that $G^s(Q)$ and $G^u(P)$ meet every stable and unstable equivalence class, respectively, at countably many points. Using (\ref{eq:increasingnetleaves}), the unit spaces $X^u(Q)$ and $X^s(P)$ can be equipped with inductive limit topologies. The topologies on the groupoids are given by families of local homeomorphisms.

Recall the definition of the constant $0<\varepsilon_X'\leq \varepsilon_X/2$ from (\ref{eq:uniquebracket}), which is small enough so that, for all $x,y \in X$ with $d(x,y)\leq \varepsilon_X'$, we have that both $d(x,[x,y])$ and $d(y,[x,y])$ are less than $\varepsilon_X/2$. Let $(v,w)\in G^s(Q)$ and for some $N\in \mathbb N$ one has $\varphi^N(w)\in X^s(\varphi^N(v),\varepsilon_X'/2)$. Due to the continuity of $\varphi^{N}$ we can find some $\eta >0$ such that $\varphi^N(X^u(w,\eta))\subset X^u(\varphi^N(w),\varepsilon_X'/2)$. The map $h^s:X^u(w,\eta)\to X^u(v,\varepsilon_X/2)$ defined by 
\begin{equation}
h^s(z)=\varphi^{-N}[\varphi^N(z),\varphi^N(v)],
\end{equation}
is called a \textit{stable holonomy map} and is a homeomorphism onto its image. Also, it holds that $h^s(w)=v$. Similarly, we have \textit{unstable holonomy maps} $h^u$. More precisely, for every $(v,w)\in G^u(P)$ there are $N\in \mathbb N$ and $\eta>0$ so that, the map $h^u:X^s(w,\eta)\to X^s(v,\varepsilon_X/2)$ given by 
\begin{equation}
h^u(z)=\varphi^N[\varphi^{-N}(v),\varphi^{-N}(z)],
\end{equation}
is a homeomorphism onto its image and $h^u(w)=v$. 

\begin{thm}[{\cite[Theorem 8.3.5]{Putnam_Lec}}]\label{thm: stable bisections}
The collections of sets 
\begin{align*}
V^s(v,w,h^s,\eta,N)&=\{(h^s(z),z):z\in X^u(w,\eta)\},\\
V^u(v,w,h^u,\eta,N)&=\{(h^u(z),z):z\in X^s(w,\eta)\}
\end{align*}
generate second countable, locally compact and Hausdorff topologies on $G^s(Q)$ and $G^u(P)$, respectively, for which $G^s(Q)$ and $G^u(P)$ are {\'e}tale groupoids.
\end{thm}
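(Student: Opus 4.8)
The plan is to prove the statement for $G^s(Q)$, the case of $G^u(P)$ being entirely symmetric after exchanging the roles of stable and unstable sets and of $\varphi$ with $\varphi^{-1}$. I would organise the argument in three stages. First, verify that the sets $V^s(v,w,h^s,\eta,N)$ form a basis for a topology on $G^s(Q)$. They cover $G^s(Q)$, since any $(v,w)\in G^s(Q)$ equals $(h^s(w),w)$ for a stable holonomy with $h^s(w)=v$. For the intersection condition, suppose $(v,w)$ lies in $V^s(v_1,w,h_1,\eta_1,N_1)\cap V^s(v_2,w,h_2,\eta_2,N_2)$, so that $h_1(w)=h_2(w)=v$. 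The decisive point is a rigidity property of holonomies: any two stable holonomy maps sending $w$ to $v$ coincide on a neighbourhood of $w$ inside $X^u(w,\min\{\eta_1,\eta_2\})$. This I would derive by applying a high power $\varphi^N$ and using the bracket axioms (B2)--(B4) to see that the holonomies defined with parameters $N_1$ and $N_2$ both agree near $w$ with the one defined with parameter $\max\{N_1,N_2\}$, together with the uniqueness of the bracket map recorded in (\ref{eq:uniquebracket}). Hence the intersection contains $V^s(v,w,h_1,\eta',N_1)$ for small $\eta'>0$, and the $V^s$ indeed form a basis.

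Second, show that with this topology $G^s(Q)$ is an \'etale groupoid. On $V^s(v,w,h^s,\eta,N)$ the source map $(h^s(z),z)\mapsto z$ is the canonical identification with $X^u(w,\eta)$, and the range map $(h^s(z),z)\mapsto h^s(z)$ is the holonomy homeomorphism onto $h^s(X^u(w,\eta))$; using the increasing-union description (\ref{eq:increasingnetleaves}) one checks that local unstable sets, and their images under holonomies, are open in the inductive limit topology on $X^u(Q)$, so both maps are open continuous local homeomorphisms. Continuity of inversion and of multiplication reduces to two algebraic facts about holonomies, again read off from (B2)--(B4): the inverse of a stable holonomy is a stable holonomy, giving $V^s(v,w,h^s,\eta,N)^{-1}\subset V^s(w,v,(h^s)^{-1},\eta',N')$; and the composite of two composable stable holonomies $g,h$ is a stable holonomy, so multiplication carries a neighbourhood of a composable pair into a single basic set, parametrised by $z\mapsto (g(h(z)),z)$. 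The atlas $\{s|_{V^s}\}$ then exhibits $G^s(Q)$ as \'etale.

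Third, establish the three topological adjectives. Each local unstable set $X^u(x,\eta)$ is second countable, being a subspace of the compact metric space $X$, and is locally compact: by the local product structure (\ref{eq:bracketmap}) it is homeomorphic to a slice of a neighbourhood of $x$ in $X$, and for $\eta'<\eta$ the closure $\overline{X^u(x,\eta')}$ is compact (closed in $X$) and contained in $X^u(x,\eta)$ because the bracket condition defining unstable sets is closed. Since $X^u(Q)$ is a countable increasing union of homeomorphic copies of local unstable sets along open embeddings, via (\ref{eq:increasingnetleaves}), it inherits second countability and local compactness, and these pass to $G^s(Q)$ through countably many charts $s|_{V^s}$. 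For Hausdorffness, two points with distinct sources are separated by pulling back a separation in $X^u(Q)$, while two points $(v_1,w)$ and $(v_2,w)$ with $v_1\neq v_2$ lie in $V^s(v_1,w,h_1,\cdot)$ and $V^s(v_2,w,h_2,\cdot)$, and these can be shrunk to be disjoint because $h_1$ and $h_2$ agree at no point near $w$ --- otherwise the germ-rigidity argument of the first stage would force $v_1=h_1(w)=h_2(w)=v_2$.

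I expect the holonomy-rigidity fact --- that the germ of a stable holonomy at its base point is determined by the pair $(v,w)$, and in particular pins down the image point $v$ --- to be the main obstacle, as it is the one step that genuinely uses the Smale space axioms (expansiveness together with uniqueness of the bracket), whereas the remainder is point-set bookkeeping over the inductive limit topologies of (\ref{eq:increasingnetleaves}). A secondary technical care point is the verification that local unstable sets and their holonomy images are open, precompact and second countable inside $X^u(Q)$, which must be checked level by level in the increasing union.
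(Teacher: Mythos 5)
This theorem is imported by the paper from Putnam's lecture notes (Theorem 8.3.5) without proof, so there is no in-paper argument to compare against; your outline reproduces the standard proof of the cited source — holonomy-germ rigidity via the bracket axioms (as in (\ref{eq:uniquebracket})) to get the basis property, étaleness from the source/range charts together with closure of stable holonomies under inversion and composition, and point-set verification of second countability, local compactness and Hausdorffness over the inductive limit topology of (\ref{eq:increasingnetleaves}) — and it is correct in outline. One small repair: in the Hausdorff step, the fact that $h_1$ and $h_2$ agree at no point near $w$ follows simply from continuity (agreement at points accumulating at $w$ would force $h_1(w)=h_2(w)$, i.e. $v_1=v_2$), rather than from the germ-rigidity lemma, which only propagates agreement outward from a point where the two holonomies already coincide.
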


\subsection{C*\texttt{-}algebras from Smale spaces}\label{sec:SmalespaceCalg}
For more details we refer to \cite{KPW,Putnam_Lec,PS}. We now briefly review the construction of the $C^*$-algebras of the stable groupoid $G^s(Q)$. The case of the unstable groupoid $G^u(P)$ is similar. 

Let $C_c(G^s(Q))$ denote the complex vector space of compactly supported continuous functions on $G^s(Q)$. We define a convolution and an involution on $C_c(G^s(Q))$ by 
\begin{equation}
\begin{split}
(f\cdot g) (v,w) &= \sum_{(v,z)\in  G^s(Q)} f(v,z)g(z,w),\\
f^*(v,w)&= \overline{f(w,v)}.
\end{split}
\end{equation}

Recall that $P$ is a periodic orbit in $(X,\varphi)$ and denote the countable dense subset $X^s(P)\cap X^u(Q)$ of $X$ by $X^h(P,Q)$.
The \textit{fundamental representation} $\rho_s$ of $C_c(G^s(Q))$ on $\mathscr{H}=\ell^2(X^h(P,Q))$ is the faithful representation given, for $f\in C_c(G^s(Q)), \, \xi \in \mathscr{H}$, by 
\begin{equation}\label{eq:regular representation_fund}
\rho_s(f)\xi(v)= \sum_{(v,z)\in  G^s(Q)}f(v,z) \xi (z).
\end{equation}
The \textit{stable algebra} $\mathcal{S}(Q)$ is defined as the completion of $\rho_s(C_c(G^s(Q)))$ in $\mathcal{B}(\mathscr{H})$ and is isomorphic to the reduced $C^*$-algebra of the (amenable) groupoid $G^s(Q)$ \cite{PS}. 

In many instances we can consider $P=Q$. However, later on it will be important to choose $P\neq Q$, so that $X^h(P,Q)$ contains no periodic points. We will usually suppress the notation of $\rho_s$ and instead of writing $\rho_s(a)\xi$, for $a\in C_c(G^s(Q)), \, \xi \in \mathscr{H}$, we will simply write $a\xi$.

\begin{lemma}[{\cite[Lemma 3.3]{KPW}}]
Suppose that $V^s(v,w,h^s,\eta,N)$ is an open set as in Theorem \ref{thm: stable bisections} and $a\in C_c(G^s(Q))$ with $\text{supp}(a) \subset V^s(v,w,h^s,\eta,N)$. Then for every $x\in X^h(P,Q)$ we have $$a\delta_x= a(h^s(x),x)\delta_{h^s(x)},$$ if $x\in X^u(w,\eta)$, and $a\delta_x=0$, otherwise.
\end{lemma}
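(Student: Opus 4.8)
The plan is to evaluate the vector $a\delta_x=\rho_s(a)\delta_x\in\mathscr{H}$ pointwise on $X^h(P,Q)$ directly from the defining formula (\ref{eq:regular representation_fund}) of the fundamental representation, and then to impose the support condition $\supp(a)\subset V^s(v,w,h^s,\eta,N)$.

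First I would observe that, since $x\in X^h(P,Q)\subset X^u(Q)$, applying (\ref{eq:regular representation_fund}) to $\xi=\delta_x$ collapses the groupoid sum to a single term: for every $v'\in X^h(P,Q)$,
\begin{equation*}
(\rho_s(a)\delta_x)(v')=\sum_{(v',z)\in G^s(Q)}a(v',z)\,\delta_x(z)=
\begin{cases}
a(v',x), & (v',x)\in G^s(Q),\\
0, & \text{otherwise},
\end{cases}
\end{equation*}
because $\delta_x(z)$ vanishes unless $z=x$, and the term $z=x$ is present exactly when $(v',x)\in G^s(Q)$. Next I would feed in the support condition. By the description $V^s(v,w,h^s,\eta,N)=\{(h^s(z),z):z\in X^u(w,\eta)\}$ from Theorem~\ref{thm: stable bisections}, a pair $(v',x)$ lies in this bisection if and only if $x\in X^u(w,\eta)$ and $v'=h^s(x)$ (just read off the second coordinate; no injectivity of $h^s$ is needed). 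Hence, if $x\notin X^u(w,\eta)$, then $a$ vanishes at every point of $G^s(Q)$ of the form $(v',x)$, so $\rho_s(a)\delta_x=0$, which is the second assertion. If instead $x\in X^u(w,\eta)$, then $a(v',x)=0$ for all $v'\neq h^s(x)$, so the only possibly-nonzero value of $\rho_s(a)\delta_x$ occurs at $h^s(x)$ and equals $a(h^s(x),x)$; this yields $\rho_s(a)\delta_x=a(h^s(x),x)\,\delta_{h^s(x)}$, the first assertion.

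The one point that needs a short independent check is that $h^s(x)\in X^h(P,Q)$, so that $\delta_{h^s(x)}$ is a genuine basis vector of $\mathscr{H}$. Since $h^s$ maps $X^u(w,\eta)$ into $X^u(v,\varepsilon_X/2)\subset X^u(v)$ and $v\in X^u(Q)$ (because $(v,w)\in G^s(Q)$), one gets $h^s(x)\in X^u(Q)$. On the stable side, $h^s(x)=\varphi^{-N}[\varphi^N(x),\varphi^N(v)]$ lies in $X^s(x)$ — the bracket of two sufficiently close points lies in the stable set of the first by (\ref{eq:uniquebracket}), and global stable sets are $\varphi$-equivariant — so, as $x\in X^s(P)$ and stable equivalence is an equivalence relation, $h^s(x)\in X^s(P)$. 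Therefore $h^s(x)\in X^s(P)\cap X^u(Q)=X^h(P,Q)$. Beyond this, the argument is pure unwinding of definitions; the only mild nuisance, and the step I would treat most carefully, is keeping track of which local and global stable/unstable sets each point belongs to, but it poses no real obstacle.
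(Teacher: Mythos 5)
Your argument is correct: the computation of $\rho_s(a)\delta_x$ from the defining formula of the fundamental representation, the reduction via the bisection description of $V^s(v,w,h^s,\eta,N)$, and the verification that $h^s(x)\in X^s(P)\cap X^u(Q)=X^h(P,Q)$ (so that $\delta_{h^s(x)}$ really is a basis vector) are exactly what is needed. The paper itself states this lemma as a citation of Kaminker--Putnam--Whittaker without reproducing a proof, and your direct unwinding is the same standard argument, so there is nothing to add.
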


The homeomorphism $\varphi$ induces the automorphism $\Phi=\varphi \times \varphi $ on $G^s(Q)$, hence the automorphism $\alpha_s$ of $C_c(G^s(Q))$ given by 
\begin{equation}\label{eq:groupoid_autom_0}
\alpha_s(f)=f\circ \Phi^{-1},
\end{equation}
which extends on $\mathcal{S}(Q)$. For $a\in C_c(G^s(Q))$ with $\supp(a)\subset V^s(v,w,h^s,\eta,N)$ and $x\in X^h(P,Q)$ such that $h^s(\varphi^{-1}(x))$ is defined, we have 
\begin{equation}\label{eq:groupoid_autom}
\alpha_s(a)\delta_x=a(h^s\circ \varphi^{-1}(x), \varphi^{-1}(x))\delta_{\varphi \circ h^s \circ \varphi^{-1}(x)}.
\end{equation}
Moreover, the unitary $u$ on $\mathscr{H}$ given by $u\delta_x=\delta_{\varphi (x)}$ satisfies $\alpha_s(a)=uau^*$ for all $a\in \mathcal{S}(Q)$ \cite[Lemma 3.3.1]{Whittaker_PhD}, thus we can form the crossed product $\mathcal{S}(Q)\rtimes_{\alpha_s} \mathbb Z$ called the \textit{stable Ruelle algebra}.

\begin{definition}\label{def:SmaleCalg}
The \textit{stable} and \textit{unstable algebras} are denoted by $\mathcal{S}(Q)$ and $\mathcal{U}(P)$. The \textit{stable} and \textit{unstable Ruelle algebras} are the crossed products $\mathcal{R}^s(Q)=\mathcal{S}(Q)\rtimes_{\alpha_s} \mathbb Z$ and $\mathcal{R}^u(P)=\mathcal{U}(P)\rtimes_{\alpha_u} \mathbb Z$.
\end{definition}

\begin{example}
The stable, unstable algebras of a TMC $(\Sigma_M,\sigma_M)$ are approximately finite dimensional \cite[Section 8.5]{Putnam_Lec}. The stable, unstable Ruelle algebras are strongly Morita equivalent to the Cuntz-Krieger algebras $\mathcal{O}_M, \mathcal{O}_{M^t}$ \cite[Theorem 3.8]{CK}. 
\end{example}

\begin{remark}
Following \cite{PS}, the stable and unstable (Ruelle) algebras are separable, nuclear and satisfy the UCT. Irreducibility of $(X,\varphi)$ guarantees that $\mathcal{R}^s(Q),\,\mathcal{R}^u(P)$ are simple and purely infinite. If $(X,\varphi)$ is mixing, then $\mathcal{S}(Q),\,\mathcal{U}(P)$ are also simple. Also, they are $C^*$-stable (see \cite[Theorem A.2]{DY} and \cite[Corollary 4.5]{HRor}) and hence, for any periodic orbit $Q'$, we have $\mathcal{S}(Q)\cong \mathcal{S}(Q')$ and $\mathcal{R}^s(Q)\cong \mathcal{R}^s(Q')$, the unstable case being similar. Other fine structure properties are studied in \cite{DGY,DS2,DS}.
\end{remark}

\subsection{K-duality for Ruelle algebras}\label{sec:K-duality_Ruelle}
We review the construction of the $\Kt$-homology duality class of Kaminker\texttt{-}Putnam\texttt{-}Whittaker in \cite{KPW}, that we call the KPW-extension class since it is given on the level of extensions. Note that Ruelle algebras are nuclear. The description of the $\Kt$-theory duality class is not required for this paper. 

Assume now that the periodic orbits $P,Q$ satisfy $P\cap Q=\varnothing$. Note that $X^h(P,Q)$ has no periodic points. Also, recall the Hilbert space $\mathscr{H}=\ell^2(X^h(P,Q))$ and the open subsets of $G^s(Q), G^u(P)$ of Theorem \ref{thm: stable bisections}. We state the main theorem in \cite{KPW} and the lemmas leading to the construction of the KPW-extension class.

\begin{thm}[{\cite{KPW}}]\label{thm:Ruelleduality}
The Ruelle algebras $\mathcal{R}^s(Q)$ and $\mathcal{R}^u(P)$ are Spanier-Whitehead $\Kt$-dual. Moreover, if the $\Kt$-theory groups of $\mathcal{S}(Q)$ or $\mathcal{U}(P)$ have finite rank, then $\mathcal{R}^s(Q)\cong \mathcal{R}^u(P)$ and consequently both Ruelle algebras are Poincar{\'e} duality algebras.
\end{thm}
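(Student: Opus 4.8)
The plan is to follow Kaminker--Putnam--Whittaker \cite{KPW}. Part one: realise the odd duality class $\Delta$ as the class of the KPW-extension, produce its (unique) partner $\widehat{\Delta}\in\KKt_1(\mathbb{C},\mathcal{R}^s(Q)\otimes\mathcal{R}^u(P))$ from the fine structure of the Smale space, and verify the two zig-zag identities $\widehat{\Delta}\otimes_{\mathcal{R}^u(P)}\Delta=1_{\mathcal{R}^s(Q)}$ and $\widehat{\Delta}\otimes_{\mathcal{R}^s(Q)}\Delta=-1_{\mathcal{R}^u(P)}$. Part two: deduce the ``moreover'' from the Pimsner--Voiculescu sequence, the UCT, and Kirchberg--Phillips classification.

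First I would build $\Delta$. As $P\cap Q=\varnothing$, the set $X^h(P,Q)=X^s(P)\cap X^u(Q)$ is a transversal for $G^s(Q)$ inside its unit space $X^u(Q)$ and also for $G^u(P)$ inside $X^s(P)$, so $\mathscr{H}=\ell^2(X^h(P,Q))$ simultaneously carries the fundamental representation $\rho_s$ of $\mathcal{S}(Q)$ and a fundamental representation $\rho_u$ of $\mathcal{U}(P)$. The decisive local computation is that $[\rho_s(a),\rho_u(b)]$ is compact --- in fact finite rank when $a\in C_c(G^s(Q))$ and $b\in C_c(G^u(P))$ are supported on single bisections $V^s(v,w,h^s,\eta,N)$ and $V^u(v',w',h^u,\eta',N')$: both $\rho_s(a)\rho_u(b)\delta_x$ and $\rho_u(b)\rho_s(a)\delta_x$ are obtained by composing a stable with an unstable holonomy, and the bracket axioms (B2) and (B3) force the two composites to agree for every $x$ outside the mismatch of the two holonomy domains, a set meeting $X^h(P,Q)$ in finitely many points. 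Passing to the crossed products, one uses the covariance unitary together with the bilateral shift and the multiplication operators on $\ell^2(\mathbb{Z})$ to extend $\rho_s,\rho_u$ to faithful representations of $\mathcal{R}^s(Q)$ and $\mathcal{R}^u(P)$ on $\mathscr{H}\otimes\ell^2(\mathbb{Z})$ that still commute modulo $\mathcal{K}$; their product descends to a unital $\ast$-homomorphism $\mathcal{R}^s(Q)\otimes\mathcal{R}^u(P)\to\mathcal{Q}(\mathscr{H}\otimes\ell^2(\mathbb{Z}))$, i.e.\ the Busby invariant of an extension by the compacts. Since $\mathcal{R}^s(Q)\otimes\mathcal{R}^u(P)$ is separable and nuclear, one has $\Ext(\mathcal{R}^s(Q)\otimes\mathcal{R}^u(P))=\Ext^{-1}(\mathcal{R}^s(Q)\otimes\mathcal{R}^u(P))\cong\KKt_1(\mathcal{R}^s(Q)\otimes\mathcal{R}^u(P),\mathbb{C})$, and $\Delta$ is the resulting class.

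Next I would produce $\widehat{\Delta}$ and verify the duality equations. The partner class should be built geometrically from the homoclinic (asymptotic) structure of the Smale space --- the groupoid simultaneously refining $G^s$ and $G^u$ and carrying the ``diagonal'' that links the stable and unstable transversal pictures --- in such a way that it is compatible with the representations defining $\Delta$; the precise construction appears in \cite{KPW} and is not needed afterwards. The technical heart is the verification of the two zig-zag identities: one expands the products in the general form \eqref{eq:Kasparovprod2}, inserts the KPW-extension representation for $\Delta$, and then identifies the iterated Kasparov product with the identity $(\mathcal{R}^s(Q),\mathcal{R}^s(Q))$-bimodule (respectively with its negative over $\mathcal{R}^u(P)$, the sign being the odd Bott sign) --- for instance by an explicit operator homotopy contracting it to the identity, a rotation interpolating between the two copies of the transversal, or equivalently, since the duality is odd, by computing that the relevant Toeplitz-type operator has index $\pm1$, using that the homoclinic points are dense. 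I expect this verification to be the main obstacle, as in \cite{KPW}: one must control the essential commutation while tracking the gradings and Clifford generators through an iterated product over the tensor factor, with the non-unitality of the Ruelle algebras entering only through stabilisation.

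Finally, for the ``moreover'' I would argue entirely in $\Kt$-theory. Suppose $\Kt_\ast(\mathcal{S}(Q))$, equivalently $\Kt_\ast(\mathcal{U}(P))$, has finite rank. Since $\mathcal{R}^s(Q)=\mathcal{S}(Q)\rtimes_{\alpha_s}\mathbb{Z}$, the Pimsner--Voiculescu six-term sequence shows $\Kt_\ast(\mathcal{R}^s(Q))$ has finite rank, and additivity of rank on short exact sequences forces $\rank\Kt_0(\mathcal{R}^s(Q))=\rank\Kt_1(\mathcal{R}^s(Q))$; the same holds for $\mathcal{R}^u(P)$. The slant-product isomorphisms \eqref{eq:slantprod1} and \eqref{eq:slantprod2} attached to $(\widehat{\Delta},\Delta)$ give $\Kt_j(\mathcal{R}^s(Q))\cong\Kt^{j+1}(\mathcal{R}^u(P))$ and, symmetrically, $\Kt_j(\mathcal{R}^u(P))\cong\Kt^{j+1}(\mathcal{R}^s(Q))$, while the universal coefficient theorem --- valid as both algebras lie in the UCT class --- computes $\Kt^\ast$ from $\Kt_\ast$ with the rank unchanged and, for the finitely generated groups at hand, the torsion transposed across the $\mathbb{Z}_2$-grading. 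Combining these relations forces all four ranks to coincide and the degreewise torsion subgroups of $\mathcal{R}^s(Q)$ and $\mathcal{R}^u(P)$ to agree, whence $\Kt_\ast(\mathcal{R}^s(Q))\cong\Kt_\ast(\mathcal{R}^u(P))$ as $\mathbb{Z}_2$-graded groups. Both being stable Kirchberg algebras in the UCT class, Kirchberg--Phillips classification \cite{KirP} yields $\mathcal{R}^s(Q)\cong\mathcal{R}^u(P)$. Since the inversion on the groupoid $G^s(Q)$ identifies $\mathcal{R}^s(Q)$ with its opposite algebra $\mathcal{R}^s(Q)^{\textit{op}}$, the duality just established exhibits $\mathcal{R}^s(Q)$, and hence also $\mathcal{R}^u(P)$, as a Poincar\'e duality algebra.
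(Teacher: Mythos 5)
The paper itself offers no proof of this statement: Theorem \ref{thm:Ruelleduality} is imported wholesale from \cite{KPW}, and Subsection \ref{sec:K-duality_Ruelle} only recalls the ingredients needed later, namely the construction of the class $\Delta$ through Lemmas \ref{lem:stableunstablerankone}--\ref{lem:commutationRuelles} and Definition \ref{def:KPWextclass}. Your first part reproduces exactly that material, and your second part defers the construction of $\widehat{\Delta}$ and the verification of the two identities $\widehat{\Delta}\otimes_{\mathcal{R}^u(P)}\Delta=1_{\mathcal{R}^s(Q)}$, $\widehat{\Delta}\otimes_{\mathcal{R}^s(Q)}\Delta=-1_{\mathcal{R}^u(P)}$ back to \cite{KPW}, listing only speculative strategies. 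So, read as a comparison with the paper, you are doing the same thing (taking the duality from \cite{KPW}); read as a proof, the proposal has its genuine gap precisely at the theorem's core, since neither $\widehat{\Delta}$ nor the zig-zag identities are actually produced or verified. Two inaccuracies in the part you do write out: compactness of $[\rho_s(a),\rho_u(b)]$ on $\mathscr{H}$ is not what makes the inflated representations essentially commute on $\mathscr{H}\otimes \ell^2(\mathbb Z)$; since $[\overline{\rho_s}(a),\overline{\rho_u}(b)]=\bigoplus_{n\in\mathbb Z}[\alpha_s^n(a),b]$, one needs the asymptotic estimates of Lemmas \ref{lem:minusinftylimit} and \ref{lem:plusinftyKPW} (vanishing of $\alpha_s^{n}(a)b$ for $n\ll 0$ and norm decay of the commutators as $n\to+\infty$), which your sketch never invokes. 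Also, the mechanism behind Lemma \ref{lem:stableunstablerankone} is that each product $\rho_s(a)\rho_u(b)$ and $\rho_u(b)\rho_s(a)$ is itself of rank at most one (transversality pins down a single heteroclinic point), not that the two composites agree off a finite set; and the Busby invariant is not unital, the Ruelle algebras being non-unital.

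For the \emph{moreover} you supply your own argument, and it is close in spirit to the intended one, but as written it assumes more than the hypothesis gives: the assumption is that $\Kt_*(\mathcal{S}(Q))$ has finite rank, not that it is finitely generated, whereas your UCT step (\emph{rank unchanged, torsion transposed}) is only valid for finitely generated groups. For countable groups of finite rank, $\operatorname{Hom}(-,\mathbb Z)$ and $\Ext(-,\mathbb Z)$ can behave badly (e.g.\ $\Ext(\mathbb Q,\mathbb Z)$ is uncountable), so you must first argue that the K-groups in play are finitely generated --- for instance by noting that the duality forces $\Kt^{*+1}(\mathcal{R}^u(P))\cong \Kt_*(\mathcal{R}^s(Q))$ to be countable, and that countability of the $\Ext$ term together with finite rank yields finite generation --- or simply follow the argument of \cite{KPW}. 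The Pimsner--Voiculescu rank count, the appeal to Kirchberg--Phillips classification, and the identification $\mathcal{R}^s(Q)^{\textit{op}}\cong\mathcal{R}^s(Q)$ via groupoid inversion to obtain Poincar\'e duality in the sense of Definition \ref{def:SW-Kduality} are all fine.
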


Due to transversality (existence of bracket map) one has the following result. 

\begin{lemma}[{\cite[Lemma 6.1]{KPW}}]\label{lem:stableunstablerankone}
For every $a\in C_c(G^s(Q)), \, b\in C_c(G^u(P))$ with $\supp(a)\subset V^s(v,w,h^s,\eta,N),\, \supp(b)\subset V^u(v',w',h^u,\eta',N')$, it holds that $\rank(ab)$ and $\rank(ba)$ are at most one. Consequently, for every $a\in \mathcal{S}(Q),\, b\in \mathcal{U}(P)$ we have that the products $ab,\, ba\in \mathcal{K}(\mathscr{H})$.
\end{lemma}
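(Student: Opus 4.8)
The plan is to reduce the $C^*$-algebraic statement to the combinatorial rank bound, and then to prove the rank bound by exploiting the transversality of stable and unstable sets encoded in the bracket map. First I would reduce to the case of generators: since $C_c(G^s(Q))$ is spanned by functions supported in single bisections $V^s(v,w,h^s,\eta,N)$ (these sets form a basis for the topology by Theorem~\ref{thm: stable bisections}, and a compactly supported continuous function can be written as a finite sum of such pieces using a partition of unity), and likewise for $C_c(G^u(P))$, it suffices to bound $\rank(ab)$ and $\rank(ba)$ for $a,b$ supported in single bisections. By the lemma preceding Theorem~\ref{thm:Ruelleduality}, such an $a$ acts on the basis vectors $\delta_x$ of $\mathscr{H}=\ell^2(X^h(P,Q))$ by $a\delta_x=a(h^s(x),x)\delta_{h^s(x)}$ when $x\in X^u(w,\eta)$ and $a\delta_x=0$ otherwise; the analogous formula holds for $b$ in terms of the unstable holonomy $h^u:X^s(w',\eta')\to X^s(v',\varepsilon_X/2)$.

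Now I would compute $ab\,\delta_x$ explicitly: $b$ sends $\delta_x$ to a multiple of $\delta_{h^u(x)}$ (nonzero only if $x\in X^s(w',\eta')$), and then $a$ sends $\delta_{h^u(x)}$ to a multiple of $\delta_{h^s(h^u(x))}$ (nonzero only if $h^u(x)\in X^u(w,\eta)$). So $ab\,\delta_x$ is always a scalar multiple of a single fixed basis vector, \emph{provided} the output basis vector $\delta_{h^s(h^u(x))}$ does not depend on $x$ over the set of $x$ for which the composite is nonzero. This is exactly where transversality enters. The point $y=h^u(x)$ lies in the local stable set $X^s(v',\varepsilon_X/2)$, and it must also lie in $X^u(w,\eta)\subset X^u(w,\varepsilon_X)$; by the uniqueness of the bracket, equation~(\ref{eq:uniquebracket}), a local stable set and a local unstable set meet in at most one point, so there is at most one such $y$, hence at most one relevant value $h^s(y)$, hence at most one $x$ contributing at all (since $h^u$ is injective). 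Therefore $ab$ is a rank-one operator (or zero). The computation for $ba$ is symmetric, swapping the roles of $s$ and $u$, $P$ and $Q$.

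For the consequence that $ab,ba\in\mathcal{K}(\mathscr{H})$ for arbitrary $a\in\mathcal{S}(Q)$, $b\in\mathcal{U}(P)$: approximate $a$ and $b$ in operator norm by elements $a_n\in\rho_s(C_c(G^s(Q)))$, $b_n\in\rho_u(C_c(G^u(P)))$, write each $a_n,b_n$ as a finite sum of pieces supported in single bisections, apply bilinearity of the product together with the rank bound just established to conclude $a_nb_n$ has finite rank (a finite sum of rank-at-most-one operators), and then pass to the limit using that $\mathcal{K}(\mathscr{H})$ is norm-closed and that $\|a_nb_n-ab\|\to 0$. The main obstacle is the geometric step: making the appearance of the bracket map precise when composing $h^s$ and $h^u$, and verifying that the relevant intermediate point $y$ genuinely lies in both a sufficiently small local stable set and a sufficiently small local unstable set so that (\ref{eq:uniquebracket}) applies — one may need to pass to a forward iterate $\varphi^M$ to shrink the holonomy domains $X^u(w,\eta)$ and $X^s(w',\eta')$ below $\varepsilon_X'$, which is harmless since the groupoid topologies are defined via these holonomies and the estimate only needs one compatible choice of representatives.
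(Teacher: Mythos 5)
Your argument is correct and is essentially the proof of the cited result \cite[Lemma 6.1]{KPW} (which this paper does not reprove): after reducing to bisection-supported functions, the composite acts nontrivially on at most one basis vector because the intermediate point must lie in $X^u(w,\eta)\cap X^s(v',\varepsilon_X/2)$, which contains at most one point, and the holonomy maps are injective; norm-density and norm-closedness of $\mathcal{K}(\mathscr{H})$ then give the general statement. The only minor looseness is your appeal to (\ref{eq:uniquebracket}), which literally applies when the base points are $\varepsilon_X'$-close; what is actually needed (and holds with no closeness assumption on $v'$ and $w$, by the bracket identities (B2)--(B3) or by expansiveness) is that a local stable set and a local unstable set of radius at most $\varepsilon_X/2$ meet in at most one point, so the passage to a forward iterate you worry about is not required.
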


Using the expanding and contracting nature of the dynamics, together with the fact that $P\cap Q=\varnothing$, we have the following. 

\begin{lemma}[{\cite[Lemma 6.2]{KPW}}]\label{lem:minusinftylimit}
For every $a\in C_c(G^s(Q)), \, b\in C_c(G^u(P))$ with $\supp(a)\subset V^s(v,w,h^s,\eta,N), \,\supp(b)\subset V^u(v',w',h^u,\eta',N')$, there is $M\in \mathbb N$ such that $\alpha_s^{-n}(a)b=b\alpha_s^{-n}(a)=0$, for $n\geq M$. As a result, for every $a\in \mathcal{S}(Q), \, b\in \mathcal{U}(P)$ we have that $$\lim_{n\to +\infty} \alpha_s^{-n}(a)b=0 \,\,\,\, \text{and} \,\,\,\, \lim_{n\to +\infty} b\alpha_s^{-n}(a)=0.$$
\end{lemma}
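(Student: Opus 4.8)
The plan is to reduce everything to the compactly supported case, where the statement is a finitary combinatorial fact, and then pass to the norm closure by an $\varepsilon/3$-argument. First I would fix $a \in C_c(G^s(Q))$ and $b \in C_c(G^u(P))$ supported in basic bisections $V^s(v,w,h^s,\eta,N)$ and $V^u(v',w',h^u,\eta',N')$ respectively, and work with the fundamental representations on $\mathscr{H} = \ell^2(X^h(P,Q))$. By the preceding lemma's computation, $\alpha_s^{-n}(a)$ acts on a basis vector $\delta_x$ by sending it to a multiple of $\delta_{y}$ where $y$ lies on the local stable set of $x$ through a holonomy that, after applying $\varphi^{-n}$, has been pushed deep into a contracting direction; concretely, using \eqref{eq:groupoid_autom} iteratively, the support of $\alpha_s^{-n}(a)$ as a subset of $G^s(Q)$ is $\Phi^{-n}(\supp(a))$, and under $\Phi^{-n} = \varphi^{-n}\times\varphi^{-n}$ the diameter of the relevant $X^u$-neighbourhood in the $w$-coordinate shrinks like $\lambda_X^{-n}$ because $\varphi^{-n}$ contracts unstable directions.

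The key step is then purely set-theoretic: the product $\alpha_s^{-n}(a)b$ (and $b\alpha_s^{-n}(a)$) is nonzero only if the range bisection of one factor meets the source bisection of the other inside $G^s(Q)$ composed with $G^u(P)$, i.e. only if there is a point $x$ with $x$ in the $X^u$-support of $b$ and $\varphi^{n}(x)$ (or $\varphi^n$ applied to the relevant holonomy image) in the $X^u(w,\eta)$-support of $a$ after unwinding the definitions. Since $\supp(a)$ and $\supp(b)$ are compact and the dynamics is expanding on unstable sets, the condition $P\cap Q = \varnothing$ (so $X^h(P,Q)$ has no periodic points) forces these two finitely-many "relevant points" to be eventually pulled apart: the stable-direction displacement between the two sides grows, so that for all $n \geq M$ the two supports are disjoint and the product vanishes. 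I would extract $M$ from a compactness argument — cover $\supp(b)$ by finitely many of the plaques on which the holonomy of $a$ is defined, and on each use the contraction estimate (C2) together with expansiveness to get a uniform $n$ beyond which no overlap is possible. This is essentially the content of \cite[Lemma 6.2]{KPW}, and the expanding/contracting dichotomy plus $P\cap Q=\varnothing$ is exactly what makes the bound $M$ exist.

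For the general statement, given $a \in \mathcal{S}(Q)$, $b \in \mathcal{U}(P)$ and $\varepsilon > 0$, I would choose $a_0 \in C_c(G^s(Q))$, $b_0 \in C_c(G^u(P))$ (finite sums of functions supported in basic bisections, using that such functions are dense) with $\|a - a_0\| < \varepsilon$ and $\|b - b_0\| < \varepsilon$. Since $\alpha_s$ is isometric, $\|\alpha_s^{-n}(a)b - \alpha_s^{-n}(a_0)b_0\| \leq \|\alpha_s^{-n}(a-a_0)\|\,\|b\| + \|\alpha_s^{-n}(a_0)\|\,\|b - b_0\| \leq \varepsilon(\|b\| + \|a_0\|)$, uniformly in $n$, and by the compactly supported case (applied to each pair of basic-bisection pieces and taking the maximum of the finitely many resulting thresholds $M$) the term $\alpha_s^{-n}(a_0)b_0$ vanishes for all large $n$. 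Hence $\limsup_n \|\alpha_s^{-n}(a)b\| \leq \varepsilon(\|b\|+\|a_0\|)$, and letting $\varepsilon \to 0$ gives $\lim_n \alpha_s^{-n}(a)b = 0$; the identity $\lim_n b\,\alpha_s^{-n}(a) = 0$ follows by taking adjoints, since $(\alpha_s^{-n}(a)b)^* = b^*\alpha_s^{-n}(a^*)$ and $a^* \in \mathcal{S}(Q)$, $b^* \in \mathcal{U}(P)$. The main obstacle is the bookkeeping in the finitary step: making precise, via the holonomy formulas \eqref{eq:groupoid_autom}, exactly which stable/unstable neighbourhoods must overlap for the product to survive, and then invoking expansiveness and (C1)–(C2) to see that $P\cap Q=\varnothing$ prevents this overlap for $n \geq M$.
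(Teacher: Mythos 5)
The first thing to note is that the paper contains no proof of this lemma: it is quoted from \cite[Lemma 6.2]{KPW} with only the remark that the expanding/contracting dynamics together with $P\cap Q=\varnothing$ are responsible, so your proposal can only be measured against that argument. Your outer structure is the standard and correct one: reduce to $a,b$ supported on basic sets, read off from \eqref{eq:groupoid_autom} (and its unstable analogue) when $\alpha_s^{-n}(a)b\,\delta_x$ and $b\,\alpha_s^{-n}(a)\delta_x$ can be nonzero, and then pass from the compactly supported case to $\mathcal S(Q)$, $\mathcal U(P)$ by approximation, isometry of $\alpha_s$, and adjoints for the reversed order; that closing step is fine as written.

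The gap is in the only substantive step, the existence of $M$, which you ultimately defer to \cite[Lemma 6.2]{KPW} itself --- circular in this context --- and the mechanism you offer in its place is not the right one: it is neither that ``the stable-direction displacement between the two sides grows'' nor that $X^h(P,Q)$ contains no periodic points. Unwinding the formulas, $\alpha_s^{-n}(a)b\,\delta_x\neq 0$ forces $h^u(x)\in\varphi^{-n}(X^u(w,\eta))$, while $h^u(x)$ ranges over the fixed compact set $K=h^u(S_b)$, where $S_b\subset X^s(w',\eta')$ is the image of $\supp(b)$ under the source map; note $K\subset X^s(v',\varepsilon_X/2)\subset X^s(P)$. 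By (C2), $\varphi^{-n}(X^u(w,\eta))\subset X^u(\varphi^{-n}(w),\lambda_X^{-n}\eta)$, and since $w\in X^u(Q)$ these sets collapse onto the finite orbit $Q$: their diameters tend to $0$ and $d(\varphi^{-n}(w),Q)\to 0$. The entire role of $P\cap Q=\varnothing$ is to give $\operatorname{dist}(K,Q)>0$: a point of $Q$ lying in $K$ would be a periodic point stably equivalent to a point of $P$, and stably (or unstably) equivalent periodic points coincide. Hence for all large $n$ the two sets are disjoint and $\alpha_s^{-n}(a)b=0$; the product $b\,\alpha_s^{-n}(a)$ is handled identically, with $\varphi^{-n}(X^u(v,\varepsilon_X/2))$ collapsing onto $Q$ and $S_b\subset X^s(P)$ playing the role of $K$. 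Without isolating this positive distance (or an equivalent quantitative statement), the ``pulled apart'' heuristic and the covering-by-plaques suggestion do not produce the threshold $M$, so the finitary step as proposed would not go through.
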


The next lemma plays a prominent role in the construction of the KPW-extension class. In Subsection \ref{sec:Lip_Ruelle_Smooth_Ext} we provide a refined version of it.

\begin{lemma}[{\cite[Lemma 6.3]{KPW}}]\label{lem:plusinftyKPW}
For any $a\in \mathcal{S}(Q)$ and $b\in \mathcal{U}(P)$, we have 
\begin{align*}
&\lim_{n\to +\infty}\|\alpha_s^n(a)b-b\alpha_s^n(a)\|=0,\\
&\lim_{n\to +\infty}\|\alpha_s^n(a)\alpha_u^{-n}(b)-\alpha_u^{-n}(b)\alpha_s^n(a)\|=0.
\end{align*}
\end{lemma}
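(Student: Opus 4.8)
\textbf{Proof plan for Lemma \ref{lem:plusinftyKPW}.}

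The plan is to reduce both limits to the dense subalgebras $C_c(G^s(Q))$, $C_c(G^u(P))$ and exploit the contraction/expansion dichotomy of the dynamics as already packaged in Lemmas \ref{lem:stableunstablerankone} and \ref{lem:minusinftylimit}. First I would observe that since $\alpha_s$ is an isometric automorphism and multiplication is jointly continuous, the maps $a\mapsto \|\alpha_s^n(a)b-b\alpha_s^n(a)\|$ are uniformly (in $n$) Lipschitz in $a$ with constant $2\|b\|$, and symmetrically in $b$; hence it suffices to prove both limits for $a\in C_c(G^s(Q))$ and $b\in C_c(G^u(P))$, and by linearity (and the fact that any such compactly supported function is a finite sum of ones supported in basic bisections, using a partition of unity subordinate to a cover by sets $V^s(\cdot)$, $V^u(\cdot)$ from Theorem \ref{thm: stable bisections}) for $a,b$ supported in single basic bisections $V^s(v,w,h^s,\eta,N)$, $V^u(v',w',h^u,\eta',N')$.

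For the \emph{first} limit, the key point is that $\alpha_s^n(a)$ is again supported in a basic stable bisection: from \eqref{eq:groupoid_autom} (or directly from $\alpha_s(f)=f\circ\Phi^{-1}$ with $\Phi=\varphi\times\varphi$) one gets $\supp(\alpha_s^n(a))\subset V^s(\varphi^n(v),\varphi^n(w),\varphi^n\circ h^s\circ\varphi^{-n},\eta_n,N)$ for a suitable $\eta_n>0$. Applying $\alpha_s^n$ to $\alpha_s^{-n}(a)$ in Lemma \ref{lem:minusinftylimit} — i.e., replacing $a$ there by $\alpha_s^n(a)$, which is legitimate since that lemma is stated for arbitrary basic-bisection-supported functions — I would extract an $M$ (depending on $a,b$) with $\alpha_s^n(a)\,b = b\,\alpha_s^n(a) = 0$ for all $n\ge M$. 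Then the commutator $\alpha_s^n(a)b-b\alpha_s^n(a)$ is identically zero for $n\ge M$, which is much stronger than the stated norm convergence; passing back to general $a\in\mathcal S(Q)$, $b\in\mathcal U(P)$ via the density/continuity argument gives $\lim_n\|\alpha_s^n(a)b-b\alpha_s^n(a)\|=0$.

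For the \emph{second} limit, one treats $\alpha_s^n(a)$ and $\alpha_u^{-n}(b)$ together. Here $\alpha_s^n$ expands along stable directions while $\alpha_u^{-n}$ expands along unstable directions, and the natural move is to re-express $\alpha_s^n(a)\alpha_u^{-n}(b)$ using the intertwining unitaries: on $\mathscr H$ both automorphisms are implemented by the \emph{same} $u$ (with $\alpha_s(a)=uau^*$ and likewise $\alpha_u(b)=ubu^*$, since $u\delta_x=\delta_{\varphi(x)}$ acts on the common index set $X^h(P,Q)$), so $\alpha_s^n(a)\alpha_u^{-n}(b)=u^n a u^{-n}\cdot u^{-n} b u^n$, which is \emph{not} obviously a conjugate of $ab$; instead I would compute the commutator directly at the level of matrix coefficients in the basis $(\delta_x)_{x\in X^h(P,Q)}$, using the explicit action formulas: $\alpha_s^n(a)$ moves $\delta_x$ along a stable holonomy translated by $\varphi^n$, and $\alpha_u^{-n}(b)$ moves $\delta_x$ along an unstable holonomy translated by $\varphi^{-n}$. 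The composition in either order is nonzero only if $x$ lies in a tiny (radius shrinking like $\lambda_X^{-n}$ in the relevant local set) neighbourhood, and by expansivity/transversality — precisely the mechanism behind Lemma \ref{lem:stableunstablerankone} — for $n$ large enough the two orders of composition land on the same point $[\cdot,\cdot]$ and with the same coefficient, so the commutator again vanishes identically for $n\ge M'$ (a finite $M'$ depending on $a,b$). Density and uniform-continuity then finish the general case.

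\textbf{Main obstacle.} The routine parts are the density reduction and the bookkeeping of which basic bisection $\alpha_s^{\pm n}$ sends a given one to. The genuinely delicate step is the second limit: matching the two orders of multiplication $\alpha_s^n(a)\alpha_u^{-n}(b)$ versus $\alpha_u^{-n}(b)\alpha_s^n(a)$ on the nose for large $n$. This requires carefully tracking how the stable holonomy map $h^s$ (now translated by $\varphi^n$) and the unstable holonomy $h^u$ (translated by $\varphi^{-n}$) interact via the bracket axioms (B1)--(B4) and the uniqueness relation \eqref{eq:uniquebracket}: once both holonomies act on sufficiently small local sets — which is forced by the contraction constants $\lambda_X^{\mp n}$ once $n$ is large — the bracket identities make the two compositions agree. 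Ensuring the thresholds on $n$ can be taken uniform over the (finitely many) bisections needed to cover $\supp(a),\supp(b)$, and that the argument is insensitive to $P\cap Q=\varnothing$ only through Lemma \ref{lem:minusinftylimit}, is where the care lies; but no new ideas beyond those already embodied in Lemmas \ref{lem:stableunstablerankone} and \ref{lem:minusinftylimit} are needed.
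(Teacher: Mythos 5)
There is a genuine gap, and it is the central mechanism of the lemma. For the first limit you propose to feed $\alpha_s^n(a)$ into Lemma \ref{lem:minusinftylimit} and extract an $n$-independent threshold $M$ with $\alpha_s^n(a)b=b\alpha_s^n(a)=0$ for $n\ge M$. This does not work: the threshold in Lemma \ref{lem:minusinftylimit} depends on the supporting bisections, and $\supp(\alpha_s^n(a))=\Phi^n(\supp(a))$ moves with $n$, so substituting $\alpha_s^n(a)$ only yields, for each fixed $n$, a bound on how far one must push with $\alpha_s^{-m}$ — it says nothing about $\alpha_s^n(a)b$ itself. Worse, the conclusion is false: for $n\to+\infty$ the source of $\supp(\alpha_s^n(a))$ is $\varphi^n$ of a piece of local unstable set, which expands and, by irreducibility, meets the local stable set carrying $\supp(b)$ in a set of cardinality growing roughly like $e^{\ent(\varphi)n}$ (this is exactly part (ii) of Lemma \ref{lem:Rank_Ruelle_alg}), so the products are generically nonzero with growing rank. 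The asymmetry between $n\to-\infty$ (products eventually vanish, Lemma \ref{lem:minusinftylimit}) and $n\to+\infty$ (only asymptotic commutation) is the whole point; if the products vanished for large $n$, every operator $\bigoplus_n\alpha_s^n(a)b$ would be compact and the KPW-extension would be trivial, contradicting the non-compactness statement quoted from Whittaker's thesis in Subsection \ref{sec:K-duality_Ruelle}.

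The second-limit argument fails for the same reason. For large $n$ the two compositions $\alpha_s^n(a)\alpha_u^{-n}(b)$ and $\alpha_u^{-n}(b)\alpha_s^n(a)$ do send $\delta_x$ to multiples of the \emph{same} basis vector, but the scalar coefficients are values of $a$ and $b$ at \emph{different} groupoid points (in the notation of the paper's refined Lemma \ref{lem:KPW_norms}, $a$ is evaluated at $(\varphi^{-n}(x_3),\varphi^{-n}(x_4))$ in one order and at $(\varphi^{-n}(x_2),\varphi^{-n}(x_1))$ in the other); the bracket axioms only force these points to be exponentially close, of order $\lambda_X^{-n}$, not equal. Hence the commutator is generically nonzero for every $n$, and the correct mechanism is an estimate, not exact cancellation: uniform continuity of $a,b$ on their compact supports gives the qualitative limit, while Lipschitz continuity gives the exponential rate in Lemma \ref{lem:KPW_norms} that the summability results of Section \ref{sec:SmoothRuelle} rely on. (Once the second limit is established, the first follows by conjugating with $u^{-n}$, since $\|\alpha_s^{2n}(a)b-b\alpha_s^{2n}(a)\|=\|\alpha_s^{n}(a)\alpha_u^{-n}(b)-\alpha_u^{-n}(b)\alpha_s^{n}(a)\|$, exactly as in the paper; this replaces your vanishing argument.) Your density reduction and the bookkeeping of bisections are fine, but the "exact matching of coefficients" step must be replaced by the continuity estimate — otherwise the proof proves something false.
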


Let us consider the inflated representation $\overline{\rho_{s}}:\mathcal{R}^s(Q)\to \mathcal{B}(\mathscr{H}\otimes \ell^2(\mathbb Z))$ given by 
\begin{equation}\label{eq:inflatedstableRuelle}
a\mapsto \bigoplus_{n\in \mathbb Z} \alpha_s^n(a)\,\,\,\, \text{and} \,\,\,\, u\mapsto 1\otimes B,
\end{equation}
where $a\in \mathcal{S}(Q)$, $u$ is the unitary given by $u\delta_x=\delta_{\varphi(x)}$ and $B$ is the left bilateral shift given by $B\delta_n=\delta_{n-1}$. Moreover, let $\overline{\rho_{u}}:\mathcal{R}^u(P)\to \mathcal{B}(\mathscr{H}\otimes \ell^2(\mathbb Z))$ be the representation given, for $b\in \mathcal{U}(P)$, by
\begin{equation}\label{eq:inflatedusntableRuelle}
b\mapsto b\otimes 1 \,\,\,\, \text{and} \,\,\,\, u\mapsto u\otimes B^*.
\end{equation}
Both representations are faithful and the commutators $[\overline{\rho_s}(a),\overline{\rho_u}(u)]$, $[\overline{\rho_u}(b),\overline{\rho_s}(u)]$ and $[\overline{\rho_s}(u), \overline{\rho_u}(u)]$ are zero. We then have the following result.

\begin{lemma}[{\cite[Lemma 6.4]{KPW}}]\label{lem:commutationRuelles}
It holds that $\overline{\rho_s}(\mathcal{R}^s(Q))$ and $\overline{\rho_u}(\mathcal{R}^u(P))$ commute modulo compact operators on $\mathscr{H}\otimes \ell^2(\mathbb Z).$
\end{lemma}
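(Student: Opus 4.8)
The plan is to reduce the commutation-modulo-compacts statement for the Ruelle algebras to the commutation relations already established for the stable and unstable algebras in Lemmas \ref{lem:stableunstablerankone}--\ref{lem:plusinftyKPW}, using the explicit forms of the inflated representations \eqref{eq:inflatedstableRuelle} and \eqref{eq:inflatedusntableRuelle}. Since $\overline{\rho_s}(\mathcal{R}^s(Q))$ is generated as a $C^*$-algebra by $\overline{\rho_s}(\mathcal{S}(Q))$ and the unitary $\overline{\rho_s}(u)=1\otimes B$, and similarly $\overline{\rho_u}(\mathcal{R}^u(P))$ is generated by $\overline{\rho_u}(\mathcal{U}(P))$ and $\overline{\rho_u}(u)=u\otimes B^*$, and since the set of pairs of elements whose commutator is compact is a closed set stable under the relevant algebraic operations (sums, products, adjoints, limits), it suffices to check that the commutator is compact for a generating pair of elements: namely $[\overline{\rho_s}(a),\overline{\rho_u}(b)]$ for $a\in \mathcal{S}(Q)$, $b\in \mathcal{U}(P)$, together with the three mixed commutators $[\overline{\rho_s}(a),\overline{\rho_u}(u)]$, $[\overline{\rho_u}(b),\overline{\rho_s}(u)]$, $[\overline{\rho_s}(u),\overline{\rho_u}(u)]$, which the excerpt already records as being exactly zero. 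So the whole statement comes down to the single commutator $[\overline{\rho_s}(a),\overline{\rho_u}(b)]$.

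For that commutator I would compute directly from the definitions. On $\mathscr H\otimes\ell^2(\mathbb Z)=\bigoplus_{n\in\mathbb Z}\mathscr H$, the element $\overline{\rho_s}(a)$ acts on the $n$-th summand as $\alpha_s^n(a)$, while $\overline{\rho_u}(b)$ acts on every summand as $b$ (i.e. $b\otimes 1$). Hence $[\overline{\rho_s}(a),\overline{\rho_u}(b)]$ is the block-diagonal operator whose $n$-th block is the commutator $[\alpha_s^n(a),b]$ computed on $\mathscr H$. To see this is compact, split the sum into three ranges. For $n\geq 0$ large, Lemma \ref{lem:plusinftyKPW} gives $\|[\alpha_s^n(a),b]\|\to 0$, so the tail of the block-diagonal operator has blocks tending to $0$ in norm; for this tail to be compact I additionally need each individual block $[\alpha_s^n(a),b]$ to be compact, which follows from Lemma \ref{lem:stableunstablerankone} since $\alpha_s^n(a)b$ and $b\alpha_s^n(a)$ both lie in $\mathcal K(\mathscr H)$. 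For $n\to -\infty$, Lemma \ref{lem:minusinftylimit} gives $\alpha_s^{-m}(a)b\to 0$ and $b\alpha_s^{-m}(a)\to 0$ in norm, so again the blocks go to $0$ and are individually compact by Lemma \ref{lem:stableunstablerankone}. Finally the remaining (central, bounded) range of indices contributes finitely many compact blocks by Lemma \ref{lem:stableunstablerankone}. A block-diagonal operator all of whose blocks are compact and whose block-norms tend to $0$ at both ends is compact, so $[\overline{\rho_s}(a),\overline{\rho_u}(b)]\in\mathcal K(\mathscr H\otimes\ell^2(\mathbb Z))$.

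To finish, I would pass from this generating-pair statement to the general statement by a density/closedness argument: fix $a_0\in\mathcal R^s(Q)$ and note that $\{b'\in\mathcal R^u(P): [\overline{\rho_s}(a_0),\overline{\rho_u}(b')]\in\mathcal K\}$ is a norm-closed linear subspace of $\mathcal R^u(P)$ stable under multiplication and adjoints (using that $\mathcal K$ is an ideal and the Leibniz rule for commutators), hence a closed $*$-subalgebra; having shown it contains $\mathcal U(P)$ and the generating unitary $u$, it is all of $\mathcal R^u(P)$. Symmetrically, for fixed $b_0\in\mathcal R^u(P)$ the set of $a'\in\mathcal R^s(Q)$ with $[\overline{\rho_s}(a'),\overline{\rho_u}(b_0)]\in\mathcal K$ is a closed $*$-subalgebra containing $\mathcal S(Q)$ and $u$, hence all of $\mathcal R^s(Q)$; combining the two gives the claim for all $a'\in\mathcal R^s(Q)$, $b'\in\mathcal R^u(P)$.

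\textbf{Main obstacle.} The only genuinely delicate point is the compactness of the tail block-diagonal operator in the $n\to+\infty$ direction: norm-decay of the blocks alone does not give compactness (it gives membership in the norm closure of finite-rank-block operators only if each block is \emph{compact}), so one really must invoke Lemma \ref{lem:stableunstablerankone} to know each $[\alpha_s^n(a),b]$ is itself compact before assembling the blocks; the same subtlety occurs at $n\to-\infty$ via Lemma \ref{lem:minusinftylimit}. Everything else is a routine unwinding of the definitions of $\overline{\rho_s}$ and $\overline{\rho_u}$ together with the closedness/algebraic-stability of the "commutes modulo compacts" relation.
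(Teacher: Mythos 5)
Your proposal is correct and is essentially the argument the paper relies on: the paper does not reprove this lemma but cites \cite{KPW}, whose proof is precisely your reduction to generators plus the block-diagonal decomposition $\bigoplus_n[\alpha_s^n(a),b]$, with compactness of each block supplied by Lemma \ref{lem:stableunstablerankone}, norm decay as $n\to-\infty$ and $n\to+\infty$ supplied by Lemmas \ref{lem:minusinftylimit} and \ref{lem:plusinftyKPW}, and the passage to the crossed products via the exactly commuting unitaries. One cosmetic point: for a fixed non-normal $a_0$ the set $\{b':[\overline{\rho_s}(a_0),\overline{\rho_u}(b')]\in\mathcal K(\mathscr H\otimes\ell^2(\mathbb Z))\}$ is a closed subalgebra but not obviously $*$-closed, so you should either run the closure argument over the $*$-closed generating sets or demand compact commutators with both $a_0$ and $a_0^*$; this is an immediate repair and does not affect the proof.
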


Due to nuclearity, one can then form the extension (or rather the Busby invariant) $\tau_{\Delta}: \mathcal{R}^s(Q)\otimes \mathcal{R}^u(P)\to \mathcal{Q}(\mathscr{H}\otimes \ell^2(\mathbb Z))$ that on elementary tensors and generators is
\begin{equation}\label{eq: tau_Delta}
\begin{split}
\tau_{\Delta}(au^j\otimes bu^{j'})&= (\overline{\rho_s}\cdot \overline{\rho_u})(a u^j\otimes b u^{j'})+ \mathcal{K}(\mathscr{H}\otimes \ell^2(\mathbb Z))\\
&=(\sum_{n\in \mathbb Z}\alpha_s^n(a)\otimes e_{n,n})(1\otimes B^j)(b\otimes 1) (u^{j'}\otimes B^{-j'})+ \mathcal{K}(\mathscr{H}\otimes \ell^2(\mathbb Z))\\
&= \sum_{n\in \mathbb Z}\alpha_s^n(a)bu^{j'}\otimes e_{n,n+j-j'} + \mathcal{K}(\mathscr{H}\otimes \ell^2(\mathbb Z))
\end{split}
\end{equation}
where $a\in \mathcal{S}(Q),\, b\in \mathcal{U}(P),\, j,j'\in \mathbb Z$, and $e_{n,m}$ are matrix units. This extension is non-trivial owing to $\mathcal{R}^s(Q)\otimes \mathcal{R}^u(P)$ being simple and the next lemma.

\begin{lemma}[{\cite[Lemma 4.4.13]{Whittaker_PhD}}]
There are $a\in \mathcal{S}(Q)$ and $b\in \mathcal{U}(P)$ such that the operator $(\overline{\rho_s}\cdot \overline{\rho_u})(a\otimes b)$ is not compact.
\end{lemma}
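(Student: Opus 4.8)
The plan is to exhibit concrete $a\in\mathcal{S}(Q)$ and $b\in\mathcal{U}(P)$, both arising from bump functions on ``diagonal'' bisections, and to show that the resulting operator is not compact because, written as a block-diagonal operator along the $\ell^2(\mathbb Z)$-factor, its blocks fail to decay. First, identify $\mathscr{H}\otimes\ell^2(\mathbb Z)$ with $\bigoplus_{n\in\mathbb Z}\mathscr{H}$ via $\xi\otimes\delta_n\mapsto\xi$ in the $n$-th summand. Then, by the formulas (\ref{eq:inflatedstableRuelle}) and (\ref{eq:inflatedusntableRuelle}), for $a\in\mathcal{S}(Q)$ and $b\in\mathcal{U}(P)$ one has $(\overline{\rho_s}\cdot\overline{\rho_u})(a\otimes b)=\overline{\rho_s}(a)\,\overline{\rho_u}(b)=\bigoplus_{n\in\mathbb Z}\alpha_s^n(a)b$. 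By Lemma \ref{lem:stableunstablerankone} every block $\alpha_s^n(a)b$ lies in $\mathcal{K}(\mathscr{H})$, so this operator is compact if and only if $\|\alpha_s^n(a)b\|\to 0$ as $|n|\to\infty$; and by Lemma \ref{lem:minusinftylimit} one already has $\|\alpha_s^{-n}(a)b\|\to 0$ as $n\to+\infty$. Hence it suffices to construct $a,b$ with $\limsup_{n\to+\infty}\|\alpha_s^n(a)b\|>0$.

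For the construction, fix any point $v'\in X^h(P,Q)$. Since distinct points of the periodic orbit $Q$ have disjoint global unstable sets, $X^u(Q)=\bigsqcup_{q\in Q}X^u(q)$, so there is a unique $q\in Q$ with $v'\in X^u(q)$; set $k=|Q|$, so that $\varphi^{-k}(q)=q$. For $\eta,\eta'$ small enough (at most $\varepsilon_X'/2$), the diagonal sets $V^s(q,q,\id,\eta,0)=\{(z,z):z\in X^u(q,\eta)\}$ and $V^u(v',v',\id,\eta',0)=\{(y,y):y\in X^s(v',\eta')\}$ are open bisections as in Theorem \ref{thm: stable bisections} containing the units $(q,q)$ and $(v',v')$. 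Choose $a\in C_c(G^s(Q))$ with $0\le a\le1$, $\supp a\subset V^s(q,q,\id,\eta,0)$ and $a(q,q)=1$, and $b\in C_c(G^u(P))$ with $\supp b\subset V^u(v',v',\id,\eta',0)$ and $b(v',v')=1$. Then $b\,\delta_{v'}=\delta_{v'}$, and by (\ref{eq:groupoid_autom}) the operator $\alpha_s^{mk}(a)$ is supported on the diagonal set $\{(z,z):z\in\varphi^{mk}(X^u(q,\eta))\}$ — which is open in $X^u(Q)$ by (\ref{eq:increasingnetleaves}) — and acts there as multiplication by $x\mapsto a\big(\varphi^{-mk}(x),\varphi^{-mk}(x)\big)$.

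For the lower bound: since $v'\in X^u(q)$ and $q$ is $k$-periodic, $d\big(\varphi^{-mk}(v'),q\big)=d\big(\varphi^{-mk}(v'),\varphi^{-mk}(q)\big)\to0$ as $m\to\infty$, so $\varphi^{-mk}(v')$ eventually enters the interior of $\supp a$ (note $\varphi$ preserves $X^h(P,Q)$, so all $\varphi^{-mk}(v')$ remain in $X^h(P,Q)$ and $\delta_{\varphi^{-mk}(v')}\in\mathscr{H}$). Thus, for all large $m$, $\alpha_s^{mk}(a)b\,\delta_{v'}=a\big(\varphi^{-mk}(v'),\varphi^{-mk}(v')\big)\,\delta_{v'}$, and continuity of $a$ on $G^s(Q)$ gives $a\big(\varphi^{-mk}(v'),\varphi^{-mk}(v')\big)\to a(q,q)=1$. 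As $\|\delta_{v'}\|=1$, this yields $\|\alpha_s^{mk}(a)b\|\ge\big|a(\varphi^{-mk}(v'),\varphi^{-mk}(v'))\big|\to1$, hence $\limsup_{n\to+\infty}\|\alpha_s^n(a)b\|\ge1>0$, and therefore $(\overline{\rho_s}\cdot\overline{\rho_u})(a\otimes b)$ is not compact, as required.

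The main obstacle is exactly this last step: Lemmas \ref{lem:minusinftylimit}--\ref{lem:plusinftyKPW} supply only upper bounds and asymptotic commutation for the products $\alpha_s^n(a)b$, whereas one needs a genuine \emph{lower} bound on $\|\alpha_s^n(a)b\|$ along a subsequence. The device that unlocks it is to anchor $\supp a$ at a periodic point $q\in Q$: the supports $\Phi^{mk}(\supp a)$, which would otherwise drift away under the dynamics, refocus near the unit $(q,q)$ along $m\in\mathbb N$, keeping a fixed basis vector $\delta_{v'}$ with $v'\in X^u(q)\cap X^h(P,Q)$ in view with weight tending to $1$. Everything else is routine verification (legitimacy of the two diagonal bisections for small $\eta,\eta'$; membership of the relevant $\delta$-vectors in $\mathscr{H}$; openness of $\varphi^{mk}(X^u(q,\eta))$), and no mixing hypothesis is needed — irreducibility enters only through the standing assumption $P\cap Q=\varnothing$, which guarantees $v'$ is non-periodic and hence $\delta_{v'}\in\mathscr{H}$.
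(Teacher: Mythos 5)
Your argument is correct, and since the paper only cites \cite{Whittaker_PhD} for this lemma rather than proving it, there is no in-paper proof to measure you against; your reduction (a block-diagonal operator $\bigoplus_{n}\alpha_s^n(a)b$ with compact blocks is compact iff $\|\alpha_s^n(a)b\|\to 0$, and Lemma \ref{lem:minusinftylimit} kills the negative direction) together with anchoring $\supp a$ at the unit $(q,q)$ over a periodic point of $Q$ is exactly the natural route. One step you should spell out: from $d(\varphi^{-mk}(v'),q)\to 0$ alone you cannot yet evaluate $a$ through the bisection formula, because membership in $X^u(q,\eta)$ requires not only $d(\varphi^{-mk}(v'),q)<\eta$ but also the bracket condition $[\varphi^{-mk}(v'),q]=\varphi^{-mk}(v')$. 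This is where $v'\in X^u(q)$ is really used: by (\ref{eq:increasingnetleaves}) there is $n_0$ with $\varphi^{-n_0}(v')\in X^u(\varphi^{-n_0}(q),\varepsilon_X)$, and then (B4) and (C2) give $\varphi^{-n}(v')\in X^u(\varphi^{-n}(q),\varepsilon_X)$ with shrinking distance for all $n\geq n_0$; taking $n=mk$ and using $\varphi^{-mk}(q)=q$ yields $\varphi^{-mk}(v')\in X^u(q,\eta)$ for all large $m$, after which continuity of $a$ on the bisection (whose topology is that of $X^u(q,\eta)$ via the source map) gives $a(\varphi^{-mk}(v'),\varphi^{-mk}(v'))\to a(q,q)=1$ as you claim. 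With that point made explicit the proof is complete: the diagonal sets are genuine base sets of Theorem \ref{thm: stable bisections} with $h^s=\id$ (since $[z,q]=z$ on $X^u(q,\eta)$), $b\delta_{v'}=\delta_{v'}$, $\alpha_s^n(a)$ acts as a multiplication operator, and the lower bound $\|\alpha_s^{mk}(a)b\|\geq |a(\varphi^{-mk}(v'),\varphi^{-mk}(v'))|$ follows. (A small quibble: $\delta_{v'}\in\mathscr{H}$ needs only $v'\in X^h(P,Q)$, not non-periodicity of $v'$, so your closing remark about the role of $P\cap Q=\varnothing$ is not needed for this lemma.)
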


\begin{definition}[{\cite[Def. 6.6]{KPW}}]\label{def:KPWextclass}
The \textit{KPW-extension class} is represented by the \textit{KPW-extension} $\tau_{\Delta}$ (\ref{eq: tau_Delta}). The corresponding class $\Delta\in \KKt_1(\mathcal{R}^s(Q)\otimes \mathcal{R}^u(P),\mathbb C)$ will be called the $\Kt$\textit{-homology duality class}.
\end{definition}

\section{Slant products and K-homological summability}\label{sec:Slant_prod_Unif_Smooth}
\enlargethispage{\baselineskip}
Let $A,B$ be separable $C^*$-algebras and $x\in \KKt_1(A\otimes B,\mathbb C)$. First we focus on the Fredholm module picture of the slant product $\otimes_A x:\KKt_0(\mathbb C, A)\to \KKt_1(B,\mathbb C)$. In the literature we were able to find this computation only for unital $C^*$-algebras, see \cite[Prop. 2.1.3, Prop. 2.1.6]{GM}. However, if $A$ or $B$ are not unital, certain technical difficulties arise which are described below. Nevertheless, if $A$ is also simple and purely infinite, these can be circumvented. This suffices for the purpose of this paper. Further, if $x$ is represented by an extension formed by commuting (modulo compacts) representations of $A$ and $B$, we find a tractable description of the slant product $\otimes_A x:\KKt_*(\mathbb C, A)\to \KKt_{*+1}(B,\mathbb C)$ in terms of Fredholm module representatives. Finally, by developing tools from holomorphic functional calculus, if the later map is surjective, we deduce uniform summability conditions for the $\Kt$-homology of $B$.

\subsection{Slant products for simple, purely infinite C*-algebras}\label{sec:SlantprodDuality}
Suppose that $A,B$ are separable $C^*$-algebras and let $(H,\rho, F)$ be an odd Fredholm module over $A\otimes B$, and $e\in \mathcal{M}_n(A)=\mathcal{M}_n(\mathbb C)\otimes A$ be a projection. In addition, let $(u_n)_{n\in \mathbb N}$ be an approximate identity for $B$ such that 
\begin{equation}\label{eq:approxid}
u_{n+1}u_n=u_n.
\end{equation}
It holds that the sequence of positive elements $(\id_{\mathcal{M}_n(\mathbb C)}\otimes \rho)(e\otimes u_n)$ is bounded and increasing. Therefore, it converges strongly to a positive operator $P_e\in B(\mathbb C^n \otimes H)$. Using (\ref{eq:approxid}) it is straightforward to show that $P_e$ is a projection. 

Let $H_e$ be the Hilbert space $P_e(\mathbb C^n \otimes H)$ and $\rho_e:B\to \mathcal{B}(H_e)$ be the representation given by $\rho_e(b)=(\id_{\mathcal{M}_n(\mathbb C)}\otimes \rho)(e\otimes b)$. This representation is well-defined since for every $b\in B$ we have that 
\begin{equation}\label{eq:e_rep}
\rho_e(b)P_e=\rho_e(b)=P_e\rho_e(b).
\end{equation}
Finally, we consider the operator 
\begin{equation}\label{eq:e_oper}
F_e=P_e(\id_{\mathbb C^n}\otimes F)P_e.
\end{equation}

\begin{lemma}\label{lem:slantcalc1}
The triple $(H_e,\rho_e,F_e)$ is an odd Fredholm module over $B$.
\end{lemma}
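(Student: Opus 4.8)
The plan is to verify that the triple $(H_e,\rho_e,F_e)$ satisfies all three conditions in the definition of an odd Fredholm module over $B$: that $\rho_e$ is a $*$-representation, and that $\rho_e(b)(F_e^2-1)$, $\rho_e(b)(F_e^*-F_e)$, and $[\rho_e(b),F_e]$ lie in $\mathcal{K}(H_e)$ for all $b\in B$. The essential device throughout will be the strong-limit description of $P_e$ together with the intertwining identity \eqref{eq:e_rep}, which lets us replace $P_e$ by $(\id\otimes\rho)(e\otimes u_n)$ up to strongly convergent errors and then push the compactness of commutators from the module over $A\otimes B$ down to $B$.

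First I would note that $\rho_e$ is multiplicative and $*$-preserving because $(\id\otimes\rho)$ is, using $e^2=e=e^*$ and $e\otimes bb' = (e\otimes b)(e\otimes b')$; the content of \eqref{eq:e_rep} is exactly that $\rho_e(b)$ already lives in $P_e(\mathbb C^n\otimes H)$, so compressing to $H_e$ changes nothing. For the three operator conditions, I would fix $b\in B$ and write $b = b u_n + (b-bu_n)$ with $\|b - bu_n\|\to 0$; since the conditions are closed under norm limits in $b$ (compacts form a closed ideal and $\rho_e$ is bounded), it suffices to treat $b$ of the form $b u_n$, equivalently $\rho_e(b)P_e^{(n)}$ where $P_e^{(n)}=(\id\otimes\rho)(e\otimes u_n)$. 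Then $\rho_e(b)(\id_{\mathbb C^n}\otimes F) = (\id\otimes\rho)(e\otimes bu_n)(\id_{\mathbb C^n}\otimes F)$, and because $(H,\rho,F)$ is a Fredholm module over $A\otimes B$ and $e\otimes bu_n\in \mathcal{M}_n(A\otimes B)$, the operators $(\id\otimes\rho)(e\otimes bu_n)(\id\otimes F^2-1)$, $(\id\otimes\rho)(e\otimes bu_n)(\id\otimes F^*-F)$, and $[(\id\otimes\rho)(e\otimes bu_n),\id\otimes F]$ are all compact. The remaining bookkeeping is to insert the projections $P_e$ that define $F_e = P_e(\id\otimes F)P_e$: one expands, e.g., $\rho_e(b)(F_e^*-F_e) = \rho_e(b)P_e(\id\otimes F^*-F)P_e$ (using $P_e^*=P_e$ and \eqref{eq:e_rep} to absorb the outer $P_e$'s), and similarly $\rho_e(b)(F_e^2-1) = \rho_e(b)P_e(\id\otimes F)P_e(\id\otimes F)P_e - \rho_e(b)$; writing $P_e(\id\otimes F)P_e(\id\otimes F)P_e = P_e(\id\otimes F^2)P_e - P_e(\id\otimes F)(1-P_e)(\id\otimes F)P_e$ and $\rho_e(b) = \rho_e(b)P_e(\id\otimes 1)P_e$, the difference collapses, modulo $\mathcal{K}$, to $-\rho_e(b)P_e(\id\otimes F)(1-P_e)(\id\otimes F)P_e$ plus a compact term. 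For the commutator, $[\rho_e(b),F_e]$ unfolds as $\rho_e(b)P_e(\id\otimes F)P_e - P_e(\id\otimes F)P_e\rho_e(b)$, and again after replacing the outermost factors via \eqref{eq:e_rep} one reduces to the compact commutator $[\rho_e(b),\id\otimes F]$ plus cross-terms of the shape $\rho_e(b)(\id\otimes F)(1-P_e)\cdots$.

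I expect the one genuinely delicate point to be controlling these leftover terms involving $(1-P_e)$, i.e.\ showing that expressions such as $\rho_e(b)P_e(\id_{\mathbb C^n}\otimes F)(1-P_e)$ are compact. The key observation is that $P_e$ is the strong limit of the increasing sequence $P_e^{(n)}$, so $(1-P_e)(\id\otimes\rho)(e\otimes u_m)\to 0$ strongly as $m\to\infty$ for each fixed... more precisely, one uses that $\rho_e(b)P_e = (\id\otimes\rho)(e\otimes b)$ commutes with $\id\otimes F$ modulo compacts and that $(\id\otimes\rho)(e\otimes b)(1-P_e)$ is a norm limit of $(\id\otimes\rho)(e\otimes b u_m)(1-P_e^{(m')})$ which vanishes in norm, since $(e\otimes bu_m)(e\otimes u_{m'}) = e\otimes bu_m$ whenever $m'\ge m$ by \eqref{eq:approxid}, forcing $(\id\otimes\rho)(e\otimes bu_m)(1-P_e) = 0$. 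Thus $\rho_e(b)(\id\otimes F)(1-P_e) = [\rho_e(b),\id\otimes F](1-P_e) + (\id\otimes F)\rho_e(b)(1-P_e)$, the first summand compact and the second zero by the above; this kills every stray $(1-P_e)$-term, and assembling the pieces gives the three required compactness statements, completing the proof that $(H_e,\rho_e,F_e)$ is an odd Fredholm module over $B$.
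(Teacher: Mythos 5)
Your argument is correct and is essentially the paper's proof: both rest on the exact identity $\rho_e(b)P_e=\rho_e(b)=P_e\rho_e(b)$ of (\ref{eq:e_rep}) together with the Fredholm-module conditions of the ambient module applied to $e\otimes b$ (after the standard observation that $(\mathbb C^n\otimes H,\id\otimes\rho,\id\otimes F)$ is a Fredholm module over $\mathcal{M}_n(A\otimes B)$), pushed through the compression by $P_e$. The paper's computation is just tidier: commuting $\rho_e(b)$ past $\id\otimes F$ modulo compacts \emph{before} absorbing the projections makes your $(1-P_e)$ cross-terms and the approximate-identity density reduction unnecessary — for instance $[F_e,\rho_e(b)]=P_e\,[\id\otimes F,\rho_e(b)]\,P_e$ holds exactly, not merely modulo compacts.
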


\begin{proof}
Since the triple $(\mathbb C^n\otimes H,\id_{\mathcal{M}_n(\mathbb C)}\otimes \rho, \id_{\mathbb C^n}\otimes F)$ is an odd Fredholm module over $\mathcal{M}_n(\mathbb C)\otimes A\otimes B$, it suffices to prove the statement only for $n=1$. Also, for brevity, if $S,T\in \mathcal{B}(H)$ and $S-T\in \mathcal{K}(H)$ we will write $S\sim_{\mathcal{K}} T$. Further, all the operators in $\mathcal{B}(H_e)$ can be thought to be in $\mathcal{B}(H)$ by letting them to be zero on $H_e^{\perp}$. Let $b\in B$, then we have that

\begin{align*}
\rho_e(b)(F_e^*-F_e)&=\rho_e(b)(P_eF^*P_e-P_eFP_e)\\
&=\rho_e(b)F^*P_e-\rho_e(b)FP_e\\
&\sim_{\mathcal{K}} (F^*-F)\rho_e(b)\\
&\sim_{\mathcal{K}} 0.\\
\intertext{Since $\rho_e(b)(F_e^*-F_e)$ is zero on $H_e^{\perp}$ and maps in $H_e$, we have $\rho_e(b)(F_e^*-F_e)\in \mathcal{K}(H_e)$. Similarly, it holds that $\rho_e(b)(F_e^2-P_e)\in \mathcal{K}(H_e)$. Finally, }
[F_e,\rho_e(b)]&=P_eFP_e\rho_e(b)-\rho_e(b)P_eFP_e\\
&=P_eF\rho_e(b)P_e-P_e\rho_e(b)FP_e\\
&=P_e[F,\rho_e(b)]P_e\\
&\sim_{\mathcal{K}} 0.
\end{align*}
As a result, $[F_e,\rho_e(b)]\in \mathcal{K}(H_e)$.
\end{proof}

To compute the slant products we need a good description of the $\Kt$-theory classes over $A$. If $A$ is unital, it is well-known that $\Kt_0(A)$ can be alternatively given by formal differences of isomorphism classes of finitely generated projective modules over $A$. More precisely, if $e\in \mathcal{M}_n(A)$ is a projection, its Murray-von Neumann equivalence class $[e]$ corresponds to the isomorphism class $[eA^n]$. Following \cite[Section 6]{Kasparov5}, the class $[eA^n]$ corresponds to the equivalence class of the Kasparov $(\mathbb C, A)$-bimodule $(eA^n,j,0)$, where $j:\mathbb C\to \mathcal{B}_A(pA^n)$ is given by $j(\lambda)=\lambda \cdot \id$. If $A$ is not unital, the above description does not hold and one has to consider relative $\Kt$-groups for computing these slant products. This seems quite elaborate and is not needed here. 

Suppose now that $A$ is simple and purely infinite. Then, there is a non-zero full projection $p\in A$ \cite[Lemma 1.1]{Brown}. Also, the $C^*$-subalgebra $pAp$ is simple, purely infinite, unital, $C^*$-stably isomorphic to $A$ and the inclusion map $\psi: pAp\to A$ induces an isomorphism $\psi_*:\KKt_*(\mathbb C, pAp)\to \KKt_*(\mathbb C,A)$ \cite[Prop. 1.2]{Paschke}. Moreover, from \cite{Cuntz}, the $\Kt$-theory of $pAp$ is given by
\begin{equation}\label{eq:K0pureinf}
\begin{split}
\Kt_0(pAp)&=\{[e]:e\enspace \text{is a non-zero projection in}\enspace pAp\},\\
\Kt_1(pAp)&=U(pAp)/U_0(pAp),
\end{split}
\end{equation}
where $U(pAp)$ is the unitary group and $U_0(pAp)$ is the connected component of $p$. Further, the class $[e]$ of a projection $e\in pAp$ corresponds to the isomorphism class of finitely generated projective modules $[e(pAp)]$, since $pAp$ is unital. In turn, this corresponds to the equivalence class of Kasparov $(\mathbb C,pAp)$-bimodules $[e(pAp),j,0]$. Composing with $\psi_*$, we obtain the isomorphism $\Kt_0(pAp)\to \KKt_0(\mathbb C,A)$,
\begin{equation}\label{eq:descrofKK0}
[e]\mapsto [e(pAp)\otimes_{\psi} A, j\otimes_{\psi} \id, 0].
\end{equation}

\begin{lemma}\label{lem:slantbimodule}
Let $A$ be a $C^*$-algebra with a full projection $p\in A$ and $e\in pAp$ be any projection. The map $U:e(pAp)\otimes_{\psi} A\to eA$, $ea'\otimes_{\psi}a\mapsto ea'a$, for $a\in A$ and  $a'\in pAp$, extends to an isomorphism of Hilbert $A$-modules.
\end{lemma}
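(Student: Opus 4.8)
The plan is to define the map $U$ on the algebraic tensor product $e(pAp)\odot A$ by the formula $U(ea'\otimes_\psi a)=ea'a$ and to verify that it is well-defined, $A$-linear, inner-product preserving, and has dense range; then to extend by continuity. The key structural fact I would use throughout is that the internal tensor product $e(pAp)\otimes_\psi A$ is by definition the completion of $e(pAp)\odot A$ in the $A$-valued inner product determined by $\langle \xi_1\otimes a_1,\xi_2\otimes a_2\rangle = a_1^*\,\psi(\langle \xi_1,\xi_2\rangle_{pAp})\,a_2 = a_1^*\,\langle\xi_1,\xi_2\rangle_{pAp}\,a_2$, where the last equality just records that $\psi$ is the inclusion $pAp\hookrightarrow A$. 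So the content is a bookkeeping check that this pushes forward correctly to $eA$, whose $A$-valued inner product is $\langle ea_1, ea_2\rangle_A = (ea_1)^*(ea_2) = a_1^* e a_2$ (using $e^*=e=e^2$).

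First I would check the inner-product computation, which is the crux. For $a_i\in A$ and $a_i'\in pAp$, the two elements $ea_1'\otimes_\psi a_1$ and $ea_2'\otimes_\psi a_2$ satisfy
\begin{align*}
\langle ea_1'\otimes_\psi a_1,\ ea_2'\otimes_\psi a_2\rangle
&= a_1^*\,\langle ea_1',ea_2'\rangle_{pAp}\,a_2\\
&= a_1^*\,(a_1')^*e a_2'\,a_2\\
&= (ea_1'a_1)^*(ea_2'a_2)\\
&= \langle ea_1'a_1,\ ea_2'a_2\rangle_A,
\end{align*}
where in the penultimate line I again use $e^*=e=e^2$. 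This simultaneously shows that $U$ is isometric for the Hilbert-module norms and that it is well-defined: any element of the algebraic kernel of the tensor product (i.e. a finite sum of simple tensors with zero $A$-valued norm) is sent to something of zero $A$-valued norm in $eA$, hence to $0$. The right $A$-linearity $U(\xi\otimes a)\cdot c = ea'ac = U(\xi\otimes ac)=U((\xi\otimes a)\cdot c)$ is immediate from the definitions.

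Next I would establish density of the range. Since $p$ is a full projection and $pAp$ contains an approximate unit for $\overline{pAp}=pAp$ (indeed $p$ itself is a unit there), for any $a\in A$ we can take $a'=p$ and an approximate identity, or more directly note that $eA = \overline{\{ep\,a : a\in A\}}$ and each $epa = U(ep\otimes_\psi a)$; more carefully, the range of $U$ contains $e(pAp)\cdot A = \{ea'a\}$, whose closure is $eA$ because $pAp\cdot A$ is dense in $pA$ (as $p$ acts as a unit on $pAp$ and $pAp\cdot A \ni p\cdot a$ can approximate $pa$ using an approximate unit of $pAp$), hence $e\cdot(pAp\cdot A)$ is dense in $e\cdot pA$; and since $p$ is full one also has... — actually the cleanest route is: $e = ep \in e(pAp)\cdot p \subset$ range closure... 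Let me simply argue range $\supseteq e\cdot(\overline{pAp\cdot A}) = e\cdot pA \ni e\cdot pa$, and then, since $e\in pAp$ so $ep=e$, we get $ea = epa \in e\cdot pA$ for all $a\in A$, i.e. range is dense in $eA$. Finally, a bounded adjointable isometric map with dense range between Hilbert $C^*$-modules is unitary (its adjoint, defined on the dense range by $U^*(U\xi)=\xi$, extends to a bounded inverse), so $U$ extends to an isomorphism $e(pAp)\otimes_\psi A \xrightarrow{\ \cong\ } eA$ of Hilbert $A$-modules.

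I expect the main obstacle to be purely notational rather than mathematical: keeping straight which inner product lives where (the $pAp$-valued one on $e(pAp)$, its pushforward along $\psi$, and the $A$-valued one on $eA$) and making the density argument airtight without circularity, in particular justifying that $\overline{pAp\cdot A} = pA$ and hence that $ea$ lies in the range closure for \emph{every} $a\in A$ using $ep=e$. There is no deep input needed beyond the definition of the internal tensor product and the idempotency/self-adjointness of $e$ and $p$; fullness of $p$ is used only insofar as it guarantees $pAp$ is genuinely nonzero and unital so the source module makes sense, and is not really needed for this particular isomorphism.
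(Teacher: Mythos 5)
Your proposal is correct, and its skeleton matches the paper's: define $U$ on elementary tensors, verify exactly the inner-product identity $a_1^*\langle ea_1',ea_2'\rangle_{pAp}\,a_2=(ea_1'a_1)^*(ea_2'a_2)$ (which settles both well-definedness and isometry), extend to the completed tensor product, and finish by a density-of-range argument. The one genuine divergence is the density step. The paper's proof invokes fullness of $p$: it writes $epApA=eApA$, approximates an arbitrary $c\in A$ by elements $c_n\in ApA$, and gets $ec=\lim_n epc_n$ in the closure of the range. You instead argue through $\overline{pAp\cdot A}=pA$ and $ep=e$, and you remark at the end that fullness is not actually needed for this lemma; that remark is correct, and in fact the cleanest version of your own argument is even shorter than what you wrote: since $p=ppp\in pAp$ and $e=ee\in e(pAp)$, the elementary tensor $e\otimes_\psi a$ is already in the domain and $U(e\otimes_\psi a)=ea$, so the algebraic range contains $eA$ outright and no approximation (and no fullness, which the paper uses only for $\psi_*$ being a K-theory isomorphism elsewhere) is required. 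Your intermediate discussion of approximate units of $pAp$ is redundant for the same reason ($p$ is an honest unit of $pAp$), and your closing step -- an inner-product-preserving map has closed range, so dense range gives surjectivity and hence a unitary -- is the standard justification and is at the same level of detail as the paper's.
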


\begin{proof}
Clearly, the map $U$ extends by linearity to an $A$-module map on the algebraic tensor product $e(pAp)\otimes_{pAp} A$. Similarly, it holds that $U$ preserves the inner products and thus it extends to an isometry on the complete tensor product $e(pAp)\otimes_{\psi} A$. Moreover, we have that $epApA=eApA$. Since $p$ is full, for every $c\in A$ there is $(c_n)_{n\in \mathbb N} \subset ApA$ such that $\lim_nc_n=c$. Consequently, $\lim_n epc_n=\lim_n ec_n= ec\in eA$, meaning that $U$ has dense range.
\end{proof}

The slant products will be computed in the context of $\Kt$-theory and $\Kt$-homology. Also, the isomorphism $\psi_*$ is considered in the context of $\Kt$-theory, meaning that $\psi_*:\Kt_*(pAp)\to \Kt_*(A)$. 

\begin{prop}\label{prop:slantprodK0theory}
Let $A$ be a separable, simple, purely infinite $C^*$-algebra and $B$ be a separable $C^*$-algebra. Let $p\in A$ be a non-zero projection and $(H,\rho,F)$ be an odd Fredholm module over $A\otimes B$, where $\rho$ is non-degenerate. The slant product $\otimes_A [H,\rho,F]: \Kt_0(A)\to \Kt^1(B)$ is given by $$\psi_*([e])\mapsto [H_e,\rho_e,F_e], \enspace \text{where} \enspace 0\neq e\in pAp \enspace \text{is a projection}.$$ 
\end{prop}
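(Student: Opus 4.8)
The plan is to unwind the definition of the Kasparov product $\otimes_A$ in its most general form \eqref{eq:Kasparovprod2} against the explicit description \eqref{eq:descrofKK0} of $\Kt_0(A)$, and to show that the resulting Kasparov $(B,\mathbb C)$-bimodule is operator-homotopic (indeed unitarily equivalent) to the triple $(H_e,\rho_e,F_e)$ constructed before Lemma~\ref{lem:slantcalc1}. First I would fix a non-zero projection $e\in pAp$ and, using Lemma~\ref{lem:slantbimodule}, replace the Hilbert $A$-module $e(pAp)\otimes_{\psi}A$ by $eA$, so that the class $\psi_*([e])\in \KKt_0(\mathbb C,A)$ is represented by the Kasparov $(\mathbb C,A)$-bimodule $(eA,j_e,0)$ with $j_e(\lambda)=\lambda e$. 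Then $\psi_*([e])\otimes_A [H,\rho,F]$ is, by the general product \eqref{eq:Kasparovprod2} with $A_1=\mathbb C$, $B_1=\mathbb C$, $A_2=B$, $B_2=\mathbb C$, the product $\tau_B(eA,j_e,0)\otimes_{A\otimes B}(H,\rho,F)$. The first factor is $(eA\otimes B,\, j_e\otimes\id,\, 0)$, a Kasparov $(B,A\otimes B)$-bimodule with zero operator.

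The next step is the standard computation of a Kasparov product where the left factor has zero operator: the product $(eA\otimes B,j_e\otimes\id,0)\otimes_{A\otimes B}(H,\rho,F)$ is represented by the internal tensor product $E:=(eA\otimes B)\otimes_{\rho}H$, with the obvious left $B$-action $\pi(b)=(j_e(1)\otimes b)\otimes_\rho \id$, and with operator a suitable $F$-connection $T$ for $F$ on $E$ (which, since the left operator is $0$, is determined up to the usual compact-perturbation and positivity ambiguity by being any $F$-connection; the product class is independent of the choice). So the heart of the argument is to identify $E$ with $H_e$ and to check that $\pi$ becomes $\rho_e$ and that $T$ can be taken to be $F_e$. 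For the module identification: the internal tensor product $(eA\otimes B)\otimes_{\rho}H$ sits naturally inside $(A\otimes B)\otimes_\rho H\cong H$ (using non-degeneracy of $\rho$), and under this identification the submodule generated by $e\otimes B$ is exactly the closed range of the projection $P_e=\text{s-}\!\lim_n \rho(e\otimes u_n)$, i.e.\ $H_e$; here I would use the approximate identity property \eqref{eq:approxid} and \eqref{eq:e_rep} to see that $\rho(e\otimes u_n)$ converges strongly to the projection onto this subspace and that $(e\otimes b)\otimes_\rho\xi \mapsto \rho(e\otimes b)\xi$ is the required unitary $E\to H_e$. Transporting $\pi$ through this unitary gives precisely $\rho_e(b)=\rho(e\otimes b)$, by definition.

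It then remains to check that $F_e=P_e(\id\otimes F)P_e=P_e F P_e$ is an $F$-connection on $E\cong H_e$, equivalently that it is a legitimate operator representing the product; but $F_e$ is manifestly self-adjoint, Lemma~\ref{lem:slantcalc1} already shows $(H_e,\rho_e,F_e)$ is an odd Fredholm module over $B$, and the connection condition reduces to checking that for generators $T_{e\otimes b}\colon H\to E$, $\xi\mapsto (e\otimes b)\otimes_\rho\xi$, the operators $F_e T_{e\otimes b}-(-1)^{\partial}T_{e\otimes b}F$ are compact — which, after transport to $H$, is exactly the statement that $P_e F\rho(e\otimes b)-\rho(e\otimes b)F\in\mathcal K(H)$, and this follows from $[\rho(e\otimes b),F]\in\mathcal K$ together with $[P_e,F]\sim_{\mathcal K}0$ on the relevant subspace, essentially the same estimates as in the proof of Lemma~\ref{lem:slantcalc1}. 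By uniqueness of the Kasparov product up to the equivalence relation defining $\KKt$, we conclude $\psi_*([e])\otimes_A[H,\rho,F]=[H_e,\rho_e,F_e]$ in $\Kt^1(B)$.

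The main obstacle I expect is the bookkeeping in the connection step: making precise the identification of the internal tensor product $(eA\otimes B)\otimes_\rho H$ with $H_e$ requires care about the strong-limit definition of $P_e$ (rather than the algebraic, unital picture available when $A$ has a unit), and verifying that $F_e$ — built with $P_e$ rather than from an honest multiplier — satisfies the $F$-connection axioms without appealing to unitality. Everything else (Lemma~\ref{lem:slantbimodule}, non-degeneracy of $\rho$, the property \eqref{eq:approxid}, the compactness estimates of Lemma~\ref{lem:slantcalc1}) is already in place, so once the connection identification is pinned down the proof closes.
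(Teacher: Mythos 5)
Your proposal follows essentially the same route as the paper's proof: represent $\psi_*([e])$ by $(eA,j',0)$ via Lemma~\ref{lem:slantbimodule}, use the zero-operator form of the Kasparov product to reduce to finding an $F$-connection on $(eA\otimes B)\otimes_{\rho}H$, identify this module with $H_e$ by the unitary $x\otimes_{\rho}\xi\mapsto\rho(x)\xi$, and check that the transported $F_e$ is a connection and that the induced representation is $\rho_e$. Two points need repair, though. First, $F_e=P_e(\id\otimes F)P_e$ is \emph{not} ``manifestly self-adjoint'': in Definition~\ref{def:Kasparovbimodules} the operator $F$ is only essentially self-adjoint relative to $\rho(A\otimes B)$, so you cannot dispose of the second connection condition $FT_x^*-T_x^*G\in\mathcal{K}$ by symmetry; the paper verifies it separately, and this is exactly where the strong-limit description of $P_e$ through the approximate identity with $u_{n+1}u_n=u_n$ is used, yielding $(FT_x^*W^{-1}-T_x^*W^{-1}F_e)\xi=[F,\rho(x^*)]\xi$. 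Second, your appeal to ``$[P_e,F]\sim_{\mathcal{K}}0$ on the relevant subspace'' is unjustified as stated ($P_e$ is only a strong limit of $\rho(e\otimes u_n)$, and strong limits do not preserve essential commutation); it is also unnecessary, since for $x\in eA\otimes B$ one has $P_e\rho(x)=\rho(x)$ and hence $F_e\rho(x)-\rho(x)F=P_e[F,\rho(x)]\in\mathcal{K}(H)$, which is the computation the paper actually performs. With these two steps carried out as in the paper, your argument closes.
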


\begin{proof}
Using the map in (\ref{eq:descrofKK0}) we see that the class $\psi_*([e])$ is represented by the Kasparov $(\mathbb C, A)$-bimodule $$(e(pAp)\otimes_{\psi} A, j\otimes_{\psi} \id, 0).$$ The unitary $U:e(pAp)\otimes_{\psi} A\to eA$ of Lemma \ref{lem:slantbimodule} makes this bimodule unitarily equivalent to $(eA,j',0)$ with $j':\mathbb C \to \mathcal{B}_A(eA)$ given by $j'(\lambda)(ea)=\lambda ea$, for $a\in A$. Hence, $\psi_*([e])\otimes_A[H,\rho,F]$ is equal to the product $\tau_B([eA,j',0])\otimes_{A\otimes B}[H,\rho,F]$ which, since the operator on the left $\KKt$-class is zero, is given by $$[(eA\otimes B)\otimes_{\rho}H,(j'\otimes \id)\otimes_{\rho} \id ,G],$$ where $G$ is any $F$-connection for $eA\otimes B$, see \cite[Def. 2.2.4]{JT} and  \cite[Def. 2.2.7]{JT}. Before constructing the operator $G$, we briefly mention the definition of a connection in our context. For every $x\in eA\otimes B$, let $T_x\in \mathcal{B}(H, (eA\otimes B)\otimes_{\rho} H)$ be given by $T_x(\xi)=x\otimes_{\rho} \xi$ and then $T_x^*(y\otimes_{\rho} \xi)=\rho (\langle x,y \rangle_{A\otimes B})\xi.$ For $G$ to be an $F$-connection for $eA\otimes B$ we should have that, for every $x\in eA\otimes B$, 
\begin{align*}
T_xF - GT_x &\in \mathcal{K}(H, (eA\otimes B)\otimes_{\rho} H),\\
FT_x^* - T_x^*G &\in \mathcal{K}((eA\otimes B)\otimes_{\rho} H,H).
\end{align*}
\enlargethispage{\baselineskip}
\enlargethispage{\baselineskip}
Let us first find a nicer description of the Hilbert space $(eA\otimes B)\otimes_{\rho} H)$. The map $W: (eA\otimes B)\otimes_{\rho} H\to H_e$ given on simple tensors by  $x\otimes_{\rho} \xi \mapsto \rho(x)\xi$ is unitary. Indeed, it is straightforward to see that $W$ preserves the inner products. Also, it has a dense range since $\rho(eA\otimes B)=P_e \rho(A\otimes B)$ and hence $\overline{\rho(eA\otimes B)H}= \overline{P_e\rho(A\otimes B)H}=P_e(\overline{\rho(A\otimes B)H})=H_e$. The claim now is that $W^{-1}F_eW$ is an $F$-connection for $eA\otimes B$. Before proving the claim we should note that for $x\in eA\otimes B$ it holds that $P_e\rho(x)=\rho(x)$ and  $\rho(x^*)P_e=\rho(x^*).$ We have that $T_xF-W^{-1}F_eW T_x \in \mathcal{K}(H, (eA\otimes B)\otimes_{\rho} H)$ if and only if $WT_xF-F_eWT_x\in \mathcal{K}(H,H_e).$ To prove the latter, let $\xi \in H$ and then,
\begin{align*}
(WT_xF-F_eWT_x)\xi&= W(x\otimes_{\rho} F\xi)-F_eW(x\otimes_{\rho}\xi)\\
&=\rho(x)F\xi-F_e\rho(x)\xi\\
&= P_e\rho(x)F\xi-P_eF\rho(x)\xi\\
&= P_e[\rho(x),F]\xi.
\end{align*}
Similarly, it holds that $FT_x^*-T_x^*W^{-1}F_eW \in \mathcal{K}((eA\otimes B)\otimes_{\rho} H,H)$ if and only if the operator $FT_x^*W^{-1}-T_x^*W^{-1}F_e\in \mathcal{K}(H_e,H).$ Recall that $P_e$ is the strong operator limit of $(\rho(e\otimes u_n))_{n\in \mathbb N}$, where $(u_n)_{n\in \mathbb N}$ is an approximate identity for $B$ satisfying $u_{n+1}u_n=u_n$. For $\xi \in H_e$ we have,
\begin{align*}
(FT_x^*W^{-1}-T_x^*W^{-1}F_e)\xi &= FT_x^*W^{-1}P_e\xi-T_x^*W^{-1}F_e\xi\\
&= \lim_{n\to \infty}(FT_x^*((e\otimes u_n)\otimes_{\rho} \xi)-T_x^*((e\otimes u_n)\otimes_{\rho} F\xi))\\
&= \lim_{n\to \infty}(F\rho(\langle x, e\otimes u_n\rangle_{A\otimes B})\xi -\rho(\langle x, e\otimes u_n\rangle_{A\otimes B})F\xi)\\
&= [F,\rho(x^*)]\xi.
\end{align*}
This proves the claim, that $W^{-1}F_eW$ is an $F$-connection for $eA\otimes B$. The proof of the proposition finishes by observing that the representation $j'':\mathbb C\otimes B\to \mathcal{B}(H_e)$ given by conjugation with $W$, that is, $$j''(\lambda \otimes b)= W(j'\otimes \id)\otimes_{\rho} \id)(\lambda \otimes b) W^{-1},$$ is the same with the representation $\rho_e:B\to \mathcal{B}(H_e)$ given by $\rho_e(b)\to \rho(e\otimes b)$. This can be proved in a similar fashion as above, by evaluating $j''(\lambda \otimes b)$ on some $\xi \in H_e$ which is written as $P_e\xi$. Then, the result follows from the strong limit description of $P_e$. Consequently, the desired $\KKt$-class $[(eA\otimes B)\otimes_{\rho}H,(j'\otimes \id)\otimes_{\rho} \id ,W^{-1}F_eW]$ is equal to $[H_e,\rho_e,F_e]$.
\end{proof}

Using the notation of Proposition \ref{prop:slantprodK0theory}, with a simple calculation, we obtain the following result.

\begin{cor}\label{cor:slantprodext}
Let $A$ be a separable, simple, purely infinite $C^*$-algebra and $B$ be a separable $C^*$-algebra. Let $p\in A$ be a non-zero projection and $\tau:A\otimes B\to \mathcal{Q}(H)$ be an invertible extension. The slant product $\otimes_A [\tau]: \Kt_0(A)\to \Ext^{-1}(B)$ is given by $$\psi_*([e])\mapsto [\tau_e],$$ where $0\neq e\in pAp$ is a projection and $\tau_e(b)=\tau(e\otimes b)$, for $b\in B$.
\end{cor}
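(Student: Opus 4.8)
The plan is to deduce this from Proposition~\ref{prop:slantprodK0theory} via the natural isomorphism $\Kt^{1}(\,\cdot\,)\cong\Ext^{-1}(\,\cdot\,)$ recalled in Subsection~\ref{sec:KK-theory_Ext-groups}, which is realised through Toeplitz extensions. First I would observe that the assignment $\tau\mapsto\tau_{e}:=\tau(e\otimes\,\cdot\,)$ descends to a well-defined map $\Ext^{-1}(A\otimes B)\to\Ext^{-1}(B)$: adding a trivial extension $q\circ\theta$ to $\tau$ adds $b\mapsto\theta(e\otimes b)+\mathcal{K}$ to $\tau_{e}$, and this is again trivial because $e^{2}=e$ makes $b\mapsto\theta(e\otimes b)$ a $*$-homomorphism, while unitary equivalences transfer directly. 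Hence I may replace $\tau$ by any stably unitarily equivalent extension, and since every class in $\Ext^{-1}(A\otimes B)$ is the class of the Toeplitz extension of some odd Fredholm module (this being how the isomorphism $\Kt^{1}(A\otimes B)\cong\Ext^{-1}(A\otimes B)$ is realised), after also passing to the essential subspace $\overline{\rho(A\otimes B)H}$ I may assume $\tau(x)=P\rho(x)P+\mathcal{K}(H)$, where $(H,\rho,F)$ is an odd Fredholm module over $A\otimes B$ with $\rho$ non-degenerate and $P=(F+1)/2$; the compression to the essential subspace costs only that $P$ is self-adjoint with $\rho(x)(P^{2}-P)\in\mathcal{K}(H)$ rather than an exact projection, which is harmless for Proposition~\ref{prop:slantprodK0theory} and for the computation below.

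Next I would apply Proposition~\ref{prop:slantprodK0theory}: for a non-zero projection $e\in pAp$ the slant product $\otimes_{A}[\tau]=\otimes_{A}[H,\rho,F]$ carries $\psi_{*}([e])$ to $[H_{e},\rho_{e},F_{e}]\in\Kt^{1}(B)$, where $H_{e}=P_{e}H$, $\rho_{e}(b)=\rho(e\otimes b)$, $P_{e}$ is the strong limit of $\rho(e\otimes u_{n})$ for an approximate identity of $B$ satisfying \eqref{eq:approxid}, and by \eqref{eq:e_oper} (with $n=1$) $F_{e}=P_{e}FP_{e}=2Q_{e}-P_{e}$ with $Q_{e}:=P_{e}PP_{e}$. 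I then need to transport $[H_{e},\rho_{e},F_{e}]$ back through $\Kt^{1}(B)\cong\Ext^{-1}(B)$. Since $Q_{e}=(F_{e}+P_{e})/2$ and $\rho_{e}(b)(F_{e}^{2}-P_{e})\in\mathcal{K}(H_{e})$ by the proof of Lemma~\ref{lem:slantcalc1}, the relations \eqref{eq:e_rep} together with $[\rho_{e}(b),F]\in\mathcal{K}(H)$ make $b\mapsto Q_{e}\rho_{e}(b)Q_{e}$ multiplicative and $*$-preserving modulo $\mathcal{K}(H_{e})$, and one checks (normalising $F_{e}$ within its operator-homotopy class if one wishes) that this $*$-homomorphism represents the image of $[H_{e},\rho_{e},F_{e}]$ under $\Kt^{1}(B)\to\Ext^{-1}(B)$.

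It remains to identify this extension with $\tau_{e}$, which is the advertised short computation. Writing $P_{e}^{\perp}=1-P_{e}$, from $\tau(x)=P\rho(x)P+\mathcal{K}(H)$ and \eqref{eq:e_rep} one has $\tau_{e}(b)=P\rho_{e}(b)P+\mathcal{K}(H)=(PP_{e})\,\rho_{e}(b)\,(P_{e}P)+\mathcal{K}(H)$. Expanding $PP_{e}=P_{e}PP_{e}+P_{e}^{\perp}PP_{e}$ and $P_{e}P=P_{e}PP_{e}+P_{e}PP_{e}^{\perp}$, and using $[\rho_{e}(b),P]\in\mathcal{K}(H)$ together with $P_{e}^{\perp}\rho_{e}(b)=\rho_{e}(b)P_{e}^{\perp}=0$ from \eqref{eq:e_rep}, every term other than $P_{e}PP_{e}\,\rho_{e}(b)\,P_{e}PP_{e}=Q_{e}\rho_{e}(b)Q_{e}$ falls into $\mathcal{K}(H)$. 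Hence $\tau_{e}(b)=Q_{e}\rho_{e}(b)Q_{e}+\mathcal{K}(H)$, and as $Q_{e}$ and $\rho_{e}(b)$ are supported on $H_{e}$ this is precisely the extension $b\mapsto Q_{e}\rho_{e}(b)Q_{e}+\mathcal{K}(H_{e})$ of $B$ together with the zero extension on $H_{e}^{\perp}$. Thus $[\tau_{e}]=[H_{e},\rho_{e},F_{e}]$ under $\Kt^{1}(B)\cong\Ext^{-1}(B)$; in particular $[\tau_{e}]\in\Ext^{-1}(B)$. I expect the only delicate point to be purely organisational --- checking that the reduction to Toeplitz form, the passage to the essential subspace, and the identification of extensions by $\mathcal{K}(H)$ with extensions by $\mathcal{K}(H_{e})$ are all compatible with $\Ext$-equivalence, and that the slant products on $\Ext$ and on $\KKt$ correspond under the isomorphism --- which is exactly why this is merely a corollary, the substance already being contained in Proposition~\ref{prop:slantprodK0theory}.
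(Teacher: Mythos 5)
Your argument is correct and follows exactly the route the paper intends: the paper gives no written proof beyond ``using the notation of Proposition~\ref{prop:slantprodK0theory}, with a simple calculation,'' and your final commutator computation identifying $P\rho_e(b)P$ with $Q_e\rho_e(b)Q_e$ modulo compacts is precisely that calculation, combined with the standard Toeplitz realisation of $\Kt^1\cong\Ext^{-1}$ and the well-definedness of $\tau\mapsto\tau_e$ on $\Ext$-classes. Nothing essential is missing.
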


Slant products in terms of extensions can be particularly nice. Specifically, we have the following.

\begin{prop}\label{prop:slantprodKtheory}
Let $A$ be a separable, simple, purely infinite $C^*$-algebra and $B$ be a separable $C^*$-algebra such that $A\otimes B= A\otimes_{\max} B$. Let $p\in A$ be a non-zero projection and $\tau:A\otimes B\to \mathcal{Q}(H)$ be an invertible extension that on elementary tensors is given by $\tau(a\otimes b)= \rho_A(a)\rho_B(b)+\mathcal{K}(H),$ where $\rho_A:A\to \mathcal{B}(H),\, \rho_B:B\to \mathcal{B}(H)$ are representations that commute modulo $\mathcal{K}(H)$. Then, the slant product
\begin{enumerate}[(i)]
\item $\otimes_A [\tau]: \Kt_0(A)\to \Kt^1(B)$ is given by $$\psi_*([e])\mapsto [H,\rho_B,2\rho_A(e)-1],$$   where $0\neq e\in pAp$ is a projection;
\item $\otimes_A [\tau]: \Kt_1(A)\to \Kt^0(B)$ is given by $$\psi_*([v])\mapsto [H\oplus H, \rho_B\oplus \rho_B, \begin{pmatrix}
0&F^*\\
F&0
\end{pmatrix}],$$ where  $v\in pAp$ is a unitary and the operator $F=\rho_A(v)+1-\rho_A(p)$, if $[v]\neq 0$. If $[v]=0$ we can simply choose $F=1$.
\end{enumerate}
\end{prop}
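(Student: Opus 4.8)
The plan is to reduce part (i) to Proposition \ref{prop:slantprodK0theory} and to derive part (ii) from part (i) via formal Bott periodicity, i.e.\ the suspension trick that trades an odd $\Kt_1$-class for an even $\Kt_0$-class over a suspended algebra. For part (i), I would start from Proposition \ref{prop:slantprodK0theory}, which tells us that $\psi_*([e])\otimes_A[\tau]$ is represented by the Fredholm module $(H_e,\rho_e,F_e)$, where $H_e=P_e H$, $P_e$ is the strong limit of $\rho(e\otimes u_n)$, $\rho_e(b)=\rho(e\otimes b)$, and $F_e=P_eFP_e$. The key observation is that under the factorisation hypothesis $\tau(a\otimes b)=\rho_A(a)\rho_B(b)+\mathcal K(H)$, we have $\rho(e\otimes b)=\rho_A(e)\rho_B(b)$ modulo compacts, and since $\rho_A,\rho_B$ commute modulo $\mathcal K(H)$ and $e$ is a projection, $\rho_A(e)$ is a projection modulo compacts commuting with $\rho_B(B)$; perturbing $\rho_A(e)$ to an honest projection $P$ (standard, since its spectrum is near $\{0,1\}$) one checks $P_e\sim_{\mathcal K}P$. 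Then the triple $(H_e,\rho_e,F_e)$ is, modulo compact perturbations and a unitary identification of $H_e$ with $P H$, equivalent to $(PH,P\rho_B(\cdot)P,0)$; but a standard move in $\Kt$-homology (compressing by a projection $P$ that commutes with the representation modulo compacts) shows this equals the class of $(H,\rho_B,2P-1)$, and replacing $P$ by $\rho_A(e)$ again up to compacts gives $[H,\rho_B,2\rho_A(e)-1]$. I should check that $2\rho_A(e)-1$ is self-adjoint and a unitary modulo $\mathcal K(H)$ and commutes with $\rho_B(B)$ modulo compacts — all immediate from $\rho_A(e)^2\sim_{\mathcal K}\rho_A(e)=\rho_A(e)^*$ and the commutation hypothesis — so the triple is a genuine odd Fredholm module over $B$.

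For part (ii), I would use the identification $\Kt_1(pAp)=U(pAp)/U_0(pAp)$ from \eqref{eq:K0pureinf} together with the isomorphism $\Kt_1(A)\cong\Kt_0(C_0(\mathbb R)\otimes A)$ (formal Bott periodicity, via the generator $\beta\in\KKt_1(\mathbb C,C_0(\mathbb R))$), which sends the class of a unitary $v\in pAp$ to the Bott projection associated to $v$. Applying part (i) to the algebra $C_0(\mathbb R)\otimes A$ (or, more directly, using the Toeplitz/mapping-cone model for the $\Kt_1$-class of $v$) and then pairing with the extension $\tau$ produces an even Fredholm module whose grading operator is the $2\times 2$ off-diagonal matrix with entries built from the unitary $\rho_A(v)$ corrected on the orthogonal complement of $\rho_A(p)$: since $\rho_A$ need not be unital, $\rho_A(v)$ is only a partial isometry with $\rho_A(v)\rho_A(v)^*=\rho_A(v)^*\rho_A(v)=\rho_A(p)$, so $F=\rho_A(v)+1-\rho_A(p)$ is the natural unitary extension, self-adjoint modulo nothing but unitary, with $F^*F=FF^*=1$ exactly. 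One then verifies $F$ commutes with $\rho_B(B)$ modulo compacts (again from the commutation hypothesis, since $\rho_A(v)$ and $\rho_A(p)$ are in the $C^*$-algebra generated by $\rho_A(A)$), so the $2\times2$ triple on $H\oplus H$ with representation $\rho_B\oplus\rho_B$ is an even Fredholm module over $B$. The degenerate case $[v]=0$ in $\Kt_1$ corresponds to the zero element of $\Kt^0(B)$, which is represented by any degenerate even module, e.g.\ with $F=1$.

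The main obstacle I expect is the careful bookkeeping caused by $A$ and hence $\rho_A$ being non-unital: $P_e$ is only defined as a strong limit, $\rho_A(e)$ and $\rho_A(p)$ are only projections, not complements of each other, and one must consistently pass between the compressed Hilbert space $H_e$, the projection $\rho_A(e)$ up to compacts, and the ambient $H$, keeping track of where each operator is honestly self-adjoint/unitary versus merely so modulo $\mathcal K(H)$. In particular, justifying $P_e\sim_{\mathcal K}$ (a perturbation of) $\rho_A(e)$ uses the hypothesis $u_{n+1}u_n=u_n$ and the strong convergence $\rho_B(u_n)\to 1$ on $\overline{\rho_B(B)H}$, together with $\rho$ non-degenerate; and in (ii) one must be sure the mapping-cone/Bott identification of the $\Kt_1$-class is compatible with the slant product, which is where invoking part (i) over the suspension, rather than redoing the connection computation of Proposition \ref{prop:slantprodK0theory}, keeps the argument short. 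Everything else — self-adjointness, the unitary/projection conditions modulo $\mathcal K(H)$, and the commutation with $\rho_B(B)$ — is a direct consequence of the factorisation hypothesis and a routine spectral perturbation of $\rho_A(e)$.
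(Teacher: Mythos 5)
There is a genuine gap in part (i). Your reduction to Proposition~\ref{prop:slantprodK0theory} presupposes a representation $\rho$ of $A\otimes B$ on $H$ with $\rho(e\otimes b)$ defined and $P_e$ a strong limit of $\rho(e\otimes u_n)$ on $H$. No such $\rho$ exists in the setting of the statement: the hypothesis only gives $\rho_A,\rho_B$ commuting modulo $\mathcal{K}(H)$, so $a\otimes b\mapsto \rho_A(a)\rho_B(b)$ is multiplicative only in the Calkin algebra (a multiplicative lift on $H$ would split the extension and force $[\tau]=0$). Any honest Fredholm module representing $[\tau]$ lives on a dilated Hilbert space, and identifying its compression $(H_e,\rho_e,F_e)$ with $(H,\rho_B,2\rho_A(e)-1)$ is exactly the content you skip with ``modulo compact perturbations and a unitary identification''; also note $\rho_A(e)$ is already an honest projection (no spectral perturbation is needed), and the intermediate triple $(PH,P\rho_B(\cdot)P,0)$ is not an odd Fredholm module, since $\rho_e(b)(F^2-1)\in\mathcal{K}$ fails for $F=0$. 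The paper's route avoids all of this by staying in the extension picture: by Corollary~\ref{cor:slantprodext} the slant product is the Busby invariant $\tau_e(b)=\tau(e\otimes b)=\rho_A(e)\rho_B(b)\rho_A(e)+\mathcal{K}(H)$, which is visibly a Toeplitz compression of the honest representation $\rho_B$ by the honest projection $\rho_A(e)$, and the standard dictionary $\Kt^1\cong\Ext^{-1}$ then gives $[H,\rho_B,2\rho_A(e)-1]$ immediately. That is the step your argument should pass through.

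Part (ii) also has a gap at its key step. ``Applying part (i) to $C_0(\mathbb R)\otimes A$'' cannot work: the suspension is neither simple purely infinite nor does it contain any nonzero projection, so the Cuntz description of $\Kt_0$ via projections in a corner, on which part (i) rests, is unavailable; one would be pushed into exactly the relative-$\Kt$-theory/unitization bookkeeping for non-unital algebras that the argument is designed to avoid. Your verification that $F=\rho_A(v)+1-\rho_A(p)$ is a unitary commuting with $\rho_B(B)$ modulo compacts is correct, but it only shows the displayed triple is an even Fredholm module; it does not show that it represents $\psi_*([v])\otimes_A[\tau]$, and that identification is never carried out. The paper does it by writing $[v]=\overline{v}_*(\beta)$ via the Cayley-transform homomorphism $\overline{v}:C_0(\mathbb R)\to pAp$, using functoriality to reduce to $\tau_B(\beta)\otimes_{C_0(\mathbb R)\otimes B}\bigl((\psi\circ\overline{v})\otimes \id_B\bigr)^*[\tau]$, and then computing this product with the Bott generator explicitly via \cite[Lemma 7]{EmersonDuality} and the function $g(t)=-2i/(t+i)$ with $\overline{v}(g)=v-p$. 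Some such computation of the product with $\beta$ (or a citation replacing it) is needed; as written, your proposal asserts the outcome rather than proving it.
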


\begin{proof}
Let $e\in pAp$ be a non-zero projection. By Corollary \ref{cor:slantprodext}, the class $\psi_*([e]) \otimes_A [\tau]$ in $\Kt^1(B)$ is represented by the extension $\tau_e:B\to \mathcal{Q}(H)$ defined as $\tau_e(b)=\tau(e\otimes b)$. Here $\tau_e$ is, in particular, a Toeplitz extension since $$\tau_e(b)=\rho_A(e)\rho_B(b)+\mathcal{K}(H)= \rho_A(e)\rho_B(b)\rho_A(e)+\mathcal{K}(H).$$ Hence, the odd Fredholm module $(H,\rho_B,2\rho_A(e)-1)$ over $B$ represents $\psi_*([e]) \otimes_A [\tau]$. This completes the proof of part (i).

For part (ii) consider a unitary $v\in pAp$. The case where $[v]=0$ is straightforward. If $[v]\neq 0$, the spectrum $\sigma(v)=\mathbb T$. Consider the composition of maps $$\overline{v}:C_0(\mathbb R)\to C_0(\mathbb T\setminus \{1\})\to C(\mathbb T)\to pAp,$$ where the first is induced by the inverse of the Cayley transform $\mathbb R\to  \mathbb T\setminus \{1\}$ given by $t\mapsto (t-i)/(t+i)$, the second by inclusion as an ideal and the third by continuous functional calculus for $v$. Also, the induced map $\overline{v}_*:\Kt_1(C_0(\mathbb R))\to \Kt_1(pAp)$ maps the Bott element $\beta$ to $[v]$, see \cite[Section 6.2]{Emersonbook}. Therefore, we have that 
\begin{align*}
\psi_*([v])\otimes_A[\tau]&=(\psi\circ \overline{v})_*(\beta) \otimes_A [\tau]\\
&=\tau_B((\psi\circ \overline{v})_*(\beta))\otimes_{A\otimes B}[\tau]\\
&= \tau_B(\beta \otimes_{C_0(\mathbb R)} [\psi\circ \overline{v}])\otimes_{A\otimes B}[\tau]\\
&= \tau_B(\beta)\otimes_{C_0(\mathbb R)\otimes B}\tau_B([\psi\circ \overline{v}])\otimes_{A\otimes B}[\tau]\\
&= \tau_B(\beta)\otimes_{C_0(\mathbb R)\otimes B} ((\psi\circ \overline{v})\otimes \text{id}_B)^*([\tau])
\end{align*}
where $((\psi\circ \overline{v})\otimes \text{id}_B)^*([\tau])$ is represented by the extension $\tau_v:C_0(\mathbb R)\otimes B\to \mathcal{Q}(H)$ that on elementary tensors is given by $\tau_v(f\otimes b)= \tau((\psi\circ \overline{v})(f)\otimes b)=\tau(\overline{v}(f)\otimes b)$, since $\psi$ can now be dropped. In particular, we have that $$\tau_v(f\otimes b)= (\rho_A\circ  \overline{v})(f)\rho_B(b)+\mathcal{K}(H).$$ We also have that the function $g\in C_0(\mathbb R)$ defined as $g(t)=-2i/(t+i)$ satisfies $\overline{v}(g)=v-p$ and hence from \cite[Lemma 7]{EmersonDuality} we obtain that the class $\psi_*([v])\otimes_A[\tau]$ is represented by the (balanced) even Fredholm module $$(H\oplus H, \rho_B\oplus \rho_B, \begin{pmatrix}
0&F^*\\
F&0
\end{pmatrix}),$$ where $F=\rho_A(v)+1-\rho_A(p)$.
\end{proof}

\begin{remark}
The condition $A\otimes B= A\otimes_{\max} B$ in Proposition \ref{prop:slantprodKtheory} guarantees that the extension $\tau$ is well-defined.
\end{remark}

\subsection{Holomorphic functional calculus and summability}\label{sec:Hol_Fun_Cal} 
We extend a lemma of Connes, regarding the ideals $\mathcal{L}^p(H)$ for $p\geq1$, to general symmetrically normed ideals. Also, we extend it to include the case $p\in (0,1)$. However, the latter endeavour is more elaborate since the corresponding ideals are quasi-normed. After some lemmas about holomorphic functional calculus we obtain Proposition \ref{prop:Ext_smoothness_prop} which is about uniform $\Kt$-homological summability. We give the following definition which is a combination of Definitions 1 of \cite[p.92]{ConnesDifGeom} and \cite[p.285]{Connes}.

\begin{definition}\label{def:Hol_fun_cal}
Let $D$ be a Banach algebra and $\mathcal{D}$ a subalgebra with unitisation $\widetilde{\mathcal{D}}\subset \widetilde{D}$. We say that $\mathcal{D}$ is \textit{holomorphically closed} in $D$ if, for every $d\in \widetilde{\mathcal{D}}$ we have $$f(d)\in \widetilde{\mathcal{D}},$$ for any function $f$ that is holomorphic in an open neighbourhood of the spectrum of $d$ in $\widetilde{D}$. Moreover, we say that $\mathcal{D}$ is \textit{holomorphically stable} in $D$ if, for every $n\in \mathbb N$, the subalgebra $\mathcal{M}_n(\widetilde{\mathcal{D}})$ is holomorphically closed in $\mathcal{M}_n(\widetilde{D})$.
\end{definition}

The following theorem summarises a result of Schweitzer and the Density Theorem of Karoubi, see \cite{Schweitzer}, \cite[Prop. 2, p.285]{Connes} and \cite[p.92]{ConnesDifGeom}.

\begin{thm}[Density Theorem]\label{thm:Density_theorem}
Suppose that $\mathcal{D}$ is a holomorphically closed dense subalgebra of a Banach algebra $D$. Then, $\mathcal{D}$ is holomorphically stable in $D$. Moreover, the inclusion $i:\mathcal{D}\to D$ induces the isomorphism $$i_*:\Kt_*(\mathcal{D})\to \Kt_*(D).$$
\end{thm}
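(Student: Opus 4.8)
The plan is to prove the Density Theorem by combining two classical ingredients, Schweitzer's result on spectral invariance and Karoubi's Density Theorem for $K$-theory, and then observe that our notion of holomorphic stability is exactly the hypothesis needed to upgrade spectral invariance to a $K$-theoretic isomorphism. I would organise the argument in two parts corresponding to the two conclusions.

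\emph{First part: holomorphic stability.} I would start from the given hypothesis that $\mathcal{D}$ is a holomorphically closed dense subalgebra of $D$. The key point of Schweitzer's work is that holomorphic closedness is automatically inherited by matrix algebras: if $\mathcal{D} \subset D$ is dense and holomorphically closed, then for each $n$ the algebra $\mathcal{M}_n(\widetilde{\mathcal{D}})$ is dense in $\mathcal{M}_n(\widetilde{D})$ and is again holomorphically closed. I would cite \cite{Schweitzer} for this stability under passing to matrix algebras (this is essentially the statement that spectral invariance passes to matrices, proved via a Fr\'echet algebra / smooth structure argument, or directly by noting that the resolvent $(\lambda - a)^{-1}$ of a matrix $a \in \mathcal{M}_n(\widetilde{\mathcal{D}})$ can be computed from resolvents of scalar entries via Cramer's rule and holomorphic functional calculus). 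Once $\mathcal{M}_n(\widetilde{\mathcal{D}})$ is holomorphically closed in $\mathcal{M}_n(\widetilde{D})$ for every $n$, holomorphic stability of $\mathcal{D}$ in $D$ is immediate from \defref{def:Hol_fun_cal}.

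\emph{Second part: the $K$-theory isomorphism.} Here I would invoke Karoubi's Density Theorem (see \cite[Prop. 2, p.285]{Connes}, \cite[p.92]{ConnesDifGeom}): if $\mathcal{D} \subset D$ is a dense subalgebra such that $\mathcal{M}_n(\widetilde{\mathcal{D}})$ is closed under holomorphic functional calculus in $\mathcal{M}_n(\widetilde{D})$ for every $n$ — that is, precisely holomorphic stability, which we have just established — then the inclusion $i : \mathcal{D} \to D$ induces an isomorphism $i_* : \Kt_*(\mathcal{D}) \to \Kt_*(D)$. The mechanism is that idempotents in $\mathcal{M}_n(\widetilde{D})$ can be perturbed to idempotents in $\mathcal{M}_n(\widetilde{\mathcal{D}})$ (using density to get a nearby element, then holomorphic functional calculus with a function equal to $1$ near $1$ and $0$ near $0$ to make it idempotent, staying inside $\mathcal{M}_n(\widetilde{\mathcal{D}})$), giving surjectivity of $i_*$ on $\Kt_0$; a relative version of the same perturbation argument applied to homotopies gives injectivity; and the $\Kt_1$ statement follows either by the analogous argument with invertibles or by a suspension/Bott periodicity reduction. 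I would present this as a citation rather than reproving it.

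\textbf{The main obstacle} is essentially bookkeeping rather than a genuine mathematical difficulty: one must be careful that all the statements are made for \emph{non-unital} algebras via unitisations, since $\mathcal{D}$ and $D$ here need not be unital (indeed in the applications $D$ will be a dense subalgebra of a Ruelle algebra, which is non-unital), and that "holomorphic functional calculus" for a non-unital algebra means functional calculus in the unitisation with functions holomorphic near the spectrum there — which is why \defref{def:Hol_fun_cal} is phrased in terms of $\widetilde{\mathcal{D}} \subset \widetilde{D}$. The subtle point to check is that Schweitzer's matrix-stability and Karoubi's density argument are both compatible with this unitisation convention, i.e. that adjoining a unit does not destroy holomorphic closedness; this is standard but worth a sentence. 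Beyond that, the proof is a direct assembly of the two cited results, so I would keep it short.
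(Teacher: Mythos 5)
Your proposal matches the paper exactly: the paper offers no proof of its own, stating the theorem as a summary of Schweitzer's result on holomorphic closedness passing to matrix algebras \cite{Schweitzer} together with Karoubi's Density Theorem as presented in \cite[Prop. 2, p.285]{Connes} and \cite[p.92]{ConnesDifGeom}, which are precisely the two ingredients you assemble. Your additional remarks on the unitisation conventions and the idempotent-perturbation mechanism are standard and consistent with how \defref{def:Hol_fun_cal} is set up, so nothing further is needed.
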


Let $H$ be a separable Hilbert space and $\mathcal{I}\subset \mathcal{B}(H)$ be a symmetrically normed ideal. The next lemma is basically Proposition $5a$ of \cite{ConnesDifGeom}, but stated more generally. Its proof follows from the symmetric property of $\mathcal{I}$, the fact that for $R\in \mathcal{I}$ one has $\|R\|_{\mathcal{B}(H)}\leq \|R\|_{\mathcal{I}}$ \cite[Chapter III]{GK}, and that every continuous function $g:\Gamma\to \mathcal{I}$, where $\Gamma$ is a simple closed $C^1$-curve, is Riemann integrable since $\mathcal{I}$ is a Banach space.

\begin{lemma}\label{lem:Banach_alg_hol_cal}
For every $S,T\in \mathcal{B}(H)$ such that $[S,T]\in \mathcal{I}$ and every function $f$ that is holomorphic in a neighbourhood of the spectrum $\sigma(S)$, we have $$[f(S),T]\in \mathcal{I}.$$
\end{lemma}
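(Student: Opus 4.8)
The plan is to use the holomorphic functional calculus representation of $f(S)$ as a contour integral and then commute $T$ past it, exploiting that $[S,T]\in\mathcal{I}$ together with the resolvent identity. Concretely, choose a simple closed $C^1$-curve $\Gamma$ (a finite union of such curves) enclosing $\sigma(S)$ and contained in the open set where $f$ is holomorphic, so that
\[
f(S)=\frac{1}{2\pi i}\int_{\Gamma} f(z)\,(z-S)^{-1}\,dz,
\]
the integral converging in the operator norm of $\mathcal{B}(H)$. The key algebraic identity is
\[
[(z-S)^{-1},T]=(z-S)^{-1}[z-S,T](z-S)^{-1}=-(z-S)^{-1}[S,T](z-S)^{-1},
\]
valid for each $z\in\Gamma$. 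Since $[S,T]\in\mathcal{I}$ and $(z-S)^{-1}\in\mathcal{B}(H)$, the symmetric norm property (condition (1) in the definition of a symmetrically normed ideal) gives $[(z-S)^{-1},T]\in\mathcal{I}$ with
\[
\bigl\|[(z-S)^{-1},T]\bigr\|_{\mathcal{I}}\le \bigl\|(z-S)^{-1}\bigr\|_{\mathcal{B}(H)}^{2}\,\bigl\|[S,T]\bigr\|_{\mathcal{I}}.
\]

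Next I would integrate this identity over $\Gamma$. Formally,
\[
[f(S),T]=\frac{1}{2\pi i}\int_{\Gamma} f(z)\,[(z-S)^{-1},T]\,dz
= -\frac{1}{2\pi i}\int_{\Gamma} f(z)\,(z-S)^{-1}[S,T](z-S)^{-1}\,dz,
\]
so it suffices to show the right-hand integral defines an element of $\mathcal{I}$. The integrand $z\mapsto f(z)(z-S)^{-1}[S,T](z-S)^{-1}$ is a norm-continuous map $\Gamma\to\mathcal{I}$: continuity in $\mathcal{B}(H)$ of $z\mapsto(z-S)^{-1}$ is standard, continuity is preserved under multiplication by the fixed element $[S,T]\in\mathcal{I}$ by the symmetric norm inequality, and $f$ is continuous on $\Gamma$. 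Since $\mathcal{I}$ is a Banach space, any norm-continuous function on the compact $C^1$-curve $\Gamma$ is Riemann integrable with values in $\mathcal{I}$, and the $\mathcal{I}$-valued integral maps to the $\mathcal{B}(H)$-valued integral under the continuous inclusion $\mathcal{I}\hookrightarrow\mathcal{B}(H)$ (recall $\|R\|_{\mathcal{B}(H)}\le\|R\|_{\mathcal{I}}$). Hence the $\mathcal{B}(H)$-valued contour integral computing $[f(S),T]$ actually lands in $\mathcal{I}$, which is the claim.

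One bookkeeping point is the passage from $[f(S),T]$ to the contour integral of $[(z-S)^{-1},T]$: a priori we only know $[f(S),T]=\tfrac{1}{2\pi i}\int_\Gamma f(z)[(z-S)^{-1},T]\,dz$ as an identity of $\mathcal{B}(H)$-valued integrals, obtained by moving $T$ across the $\mathcal{B}(H)$-convergent integral (justified because left and right multiplication by the fixed bounded operator $T$ are bounded, hence commute with the $\mathcal{B}(H)$-Riemann integral). Combined with the previous paragraph's observation that the same integral, read in $\mathcal{I}$, converges and has image under $\mathcal{I}\hookrightarrow\mathcal{B}(H)$ equal to this $\mathcal{B}(H)$-value, we conclude $[f(S),T]\in\mathcal{I}$.

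The main obstacle is essentially nonexistent in the symmetrically normed case — everything reduces to the resolvent identity plus the defining ideal inequality and completeness of $\mathcal{I}$ — but care is needed because $f(d)$ in Definition~\ref{def:Hol_fun_cal} is interpreted in the unitisation, so one should phrase the argument allowing $\sigma(S)$ to be computed in $\widetilde{\mathcal{B}(H)}$ (adjoining a unit when $H$ is infinite-dimensional is unnecessary since $\mathcal{B}(H)$ is already unital, so $\sigma(S)$ is just the usual spectrum). The genuinely delicate version of this lemma is the $p\in(0,1)$ analogue, where $\mathcal{L}^p(H)$ is only quasi-normed and Riemann integration must be re-established (this is exactly the content deferred to Proposition~\ref{prop:Schatten_p_zero_one} and Lemma~\ref{lem:Banach_alg_hol_cal_zero_one}); for the present symmetrically normed statement the Banach-space structure makes the integration routine.
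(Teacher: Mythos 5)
Your proposal is correct and follows essentially the same route the paper intends: the paper only sketches this lemma, pointing to Connes' Proposition 5a and citing exactly the three ingredients you use (the resolvent/commutator identity, the symmetric-norm inequality $\|SRT\|_{\mathcal{I}}\leq\|S\|\,\|R\|_{\mathcal{I}}\|T\|$ together with $\|R\|_{\mathcal{B}(H)}\leq\|R\|_{\mathcal{I}}$, and Riemann integrability of continuous $\mathcal{I}$-valued functions on $\Gamma$ since $\mathcal{I}$ is Banach). One trivial slip: the identity is $[(z-S)^{-1},T]=(z-S)^{-1}[S,T](z-S)^{-1}$ (no minus sign), but this sign is immaterial for membership in the ideal $\mathcal{I}$, so the argument stands.
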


The situation for the symmetrically quasi-normed ideals $\mathcal{L}^{p}(H)$, for $p\in (0,1)$, is a bit different. Specifically, it is no longer true that every continuous function $g:\Gamma\to \mathcal{L}^p(H)$, for $\Gamma$ being a simple closed $C^1$-curve, is Riemann integrable. An example with values in $\ell^p(\mathbb N)$ can be found in \cite[Section 3]{Gramsch}. Even assuming that $g$ is holomorphic (differentiable) in a neighbourhood of $\Gamma$ does not suffice. One has to assume that $g$ is analytic, meaning that it has local power series expansions. We note that in the context of quasi-Banach spaces, analyticity is stronger than holomorphicity because tools like the Hahn-Banach Theorem do not necessarily hold. 

Quasi-Banach spaces can be studied in the context of $r$-Banach spaces, $r\in (0,1]$. Let us pause for a moment and give precise definitions. 
\enlargethispage{\baselineskip}

A \textit{quasi-normed space} is a complex vector space $V$ equipped with a \textit{quasi-norm} $\vertiii{\cdot}:V\to [0,\infty)$ which, for all $v,w\in V$, satisfies
\begin{enumerate}[(i)]
\item $\vertiii{v} =0$ only if $v=0$;
\item $\vertiii{zv}=|z|\vertiii{v}$, for $z\in \mathbb C$;
\item $\vertiii{v+w}\leq K(\vertiii{v}+\vertiii{w})$, for some $K\geq 1$.
\setcounter{nameOfYourChoice}{\value{enumi}}
\end{enumerate}

For $r\in (0,1],$ an $r$\textit{-normed space} is a complex vector space $V$ equipped with an $r$\textit{-norm} $\vertiii{\cdot}_r:V\to [0,\infty)$ that satisfies (i), (ii) and for every $v,w\in V$ one has

\begin{enumerate}[(i)]
\setcounter{enumi}{\value{nameOfYourChoice}}
\item $\vertiii{v+w}_r^r\leq \vertiii{v}_r^r+\vertiii{w}_r^r.$
\end{enumerate}

A \textit{quasi-Banach space} is a complete quasi-normed space, and an $r$\textit{-Banach space} is a complete $r$-normed space. If $V$ is $r$-normed with $\vertiii{\cdot}_r$, then $\vertiii{\cdot}_r$ is a quasi-norm with constant $K=2^{1/r-1}.$ Conversely, if $\vertiii{\cdot}$ is a quasi-norm on $V$ with constant $K$, the Aoki-Rolewicz Renorming Theorem \cite{Rolewicz} yields an equivalent $r$-norm $\vertiii{\cdot}_r$ on $V$, where also $K=2^{1/r-1}$, that for every $v\in V$ it holds 
\begin{equation}\label{eq:Renorming}
\vertiii{v}_r\leq \vertiii{v}\leq 2^{1/r} \vertiii{v}_r.
\end{equation}

Returning to our problem of integrating on simple closed $C^1$-curves, we mention that Riemann integration for quasi-Banach (or $r$-Banach) valued functions is defined in exactly the same way as for Banach-valued functions. Let $V$ be an $r$-normed space with $r$-norm $\vertiii{\cdot}_r$ and $\Gamma$ be a simple closed $C^1$-curve in $\mathbb C$. The linear space of Riemann integrable $V$-valued functions on $\Gamma$ is denoted by $\mathscr{R}(\Gamma,V)$. Moreover, a function $g:\Gamma\to V$ has $r$\textit{-expansion on} $\Gamma$ if, for every $k\geq 0$, there are $a_k\in V$ and Riemann integrable functions $f_k:\Gamma\to \mathbb C$ such that, 
$$g(z)=\sum_{k=0}^{\infty}a_kf_k(z)\enspace \text{and} \enspace \sum_{k=0}^{\infty}\vertiii{a_k}_r^r \|f_k\|_{\infty}^r<\infty.
$$ Let us denote by $\mathscr{E}(\Gamma, V)$ the linear space of functions $\Gamma\to V$ that have $r$-expansions.

\begin{prop}[{\cite[Prop. 3.7]{Gramsch}}]\label{prop:quasi_Banach_Integration}
The linear space $\mathscr{E}(\Gamma, V)$ is contained in $\mathscr{R}(\Gamma,V)$.
\end{prop}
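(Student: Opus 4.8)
The plan is to show that any function $g$ with an $r$-expansion on $\Gamma$ is a uniform limit (in the appropriate sense) of Riemann-integrable functions, and then invoke the fact that $\mathscr{R}(\Gamma,V)$ is closed under such limits because $V$ is complete. First I would recall that $\mathscr{R}(\Gamma,V)$ is a linear space containing all finite linear combinations $\sum_{k=0}^{n} a_k f_k(z)$, since each $f_k:\Gamma\to\mathbb C$ is Riemann integrable and multiplying a scalar Riemann-integrable function by a fixed vector $a_k\in V$ clearly yields a Riemann-integrable $V$-valued function (the Riemann sums converge because the scalar ones do, and $\vertiii{a_k(S_n-S_m)}_r = |a_k|_{?}$ --- more precisely $\vertiii{a_k}_r$ times the scalar quantity $|S_n-S_m|$ --- goes to zero). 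So the partial sums $g_n(z)=\sum_{k=0}^{n} a_k f_k(z)$ all lie in $\mathscr{R}(\Gamma,V)$.

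Next I would control the tails using the $r$-norm inequality. For $m<n$ and any $z\in\Gamma$,
\begin{equation*}
\vertiii{g_n(z)-g_m(z)}_r^r \le \sum_{k=m+1}^{n} \vertiii{a_k}_r^r\,|f_k(z)|^r \le \sum_{k=m+1}^{n} \vertiii{a_k}_r^r\,\|f_k\|_\infty^r,
\end{equation*}
which tends to $0$ as $m,n\to\infty$ by the summability hypothesis $\sum_k \vertiii{a_k}_r^r\|f_k\|_\infty^r<\infty$. Hence $(g_n)$ is uniformly Cauchy on $\Gamma$ in the $r$-norm, and since $V$ is $r$-Banach (complete), $g_n\to g$ uniformly on $\Gamma$.

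The final step is to argue that $\mathscr{R}(\Gamma,V)$ is closed under uniform limits. Given $h_n\in\mathscr{R}(\Gamma,V)$ with $h_n\to h$ uniformly, one checks that the Riemann sums of $h$ form a Cauchy net: for a tagged partition $\mathcal{P}$ write $S(h,\mathcal{P})$ for the corresponding Riemann sum; then
\begin{equation*}
\vertiii{S(h,\mathcal{P})-S(h_n,\mathcal{P})}_r^r \le \sum_{j}\vertiii{(h-h_n)(\xi_j)}_r^r\,|\Delta z_j|^r \le \|h-h_n\|_{\infty,r}^r \sum_j |\Delta z_j|^r,
\end{equation*}
and one notes that $\sum_j|\Delta z_j|^r$ is bounded uniformly over partitions of bounded mesh (indeed it $\to 0$ as the mesh $\to 0$ when $r<1$, and equals the length bound when $r=1$), so that a standard $3\varepsilon$-argument --- choosing $n$ large, then exploiting that $S(h_n,\cdot)$ is Cauchy, then transferring back --- shows $S(h,\mathcal{P})$ converges as the mesh shrinks. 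Therefore $h\in\mathscr{R}(\Gamma,V)$, and applying this with $h=g$, $h_n=g_n$ finishes the proof that $\mathscr{E}(\Gamma,V)\subset\mathscr{R}(\Gamma,V)$.

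The main obstacle I anticipate is the bookkeeping around the quantity $\sum_j|\Delta z_j|^r$ for $r<1$: unlike the $r=1$ case, this is \emph{not} bounded by the length of $\Gamma$ but does vanish with the mesh, so the closedness-under-uniform-limits step must be phrased carefully in terms of the mesh rather than treating Riemann integrability as a fixed quantitative estimate; the $C^1$ hypothesis on $\Gamma$ is what guarantees the mesh-to-arclength comparison needed here. Everything else is routine manipulation of the $r$-norm axiom (i)--(iv) together with completeness of $V$.
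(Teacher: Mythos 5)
Your first two steps are fine (the partial sums lie in $\mathscr{R}(\Gamma,V)$, and the tail estimate gives uniform convergence $g_n\to g$; completeness is not even needed there since pointwise convergence is part of the definition of an $r$-expansion). The genuine gap is the last step: the claim that $\mathscr{R}(\Gamma,V)$ is closed under uniform limits is false for $r<1$, and the quantity you flagged behaves in exactly the opposite way from what you assert. For a partition of a curve of length $L$ into $N$ comparable pieces one has $\sum_j|\Delta z_j|^r\approx L^rN^{1-r}$, so it diverges like $\mathrm{mesh}^{r-1}$ as the mesh shrinks (by subadditivity of $t\mapsto t^r$ it is always at least $L^r$, and it is unbounded over partitions of small mesh); hence the bound $\vertiii{S(h,\mathcal P)-S(h_n,\mathcal P)}_r^r\le\|h-h_n\|_{\infty,r}^r\sum_j|\Delta z_j|^r$ gives no uniform control and no $3\varepsilon$-argument can be run. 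Indeed, if uniform limits of Riemann integrable functions were Riemann integrable, every continuous $V$-valued function would be Riemann integrable (uniformly approximate by $V$-valued step functions, which are integrable because only finitely many jump terms occur in comparing Riemann sums); this contradicts Gramsch's $\ell^p$-valued counterexample quoted in the paper immediately before the proposition, and it is precisely why the stronger notion of $r$-expansion is introduced at all.

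The correct argument (the paper only cites Gramsch; this is essentially his proof) never pushes the $r$-triangle inequality through the partition. Set $I=\sum_k a_k\int_\Gamma f_k(z)\,dz$, which converges in $V$ because $\vertiii{a_k\int_\Gamma f_k}_r^r\le \vertiii{a_k}_r^r\,L^r\|f_k\|_\infty^r$ is summable (here completeness is used). For a tagged partition $\mathcal P$ one has $S(g,\mathcal P)-I=\sum_k a_k\bigl(S(f_k,\mathcal P)-\int_\Gamma f_k\bigr)$, hence $\vertiii{S(g,\mathcal P)-I}_r^r\le\sum_k\vertiii{a_k}_r^r\,\bigl|S(f_k,\mathcal P)-\int_\Gamma f_k\bigr|^r$. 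The inner quantity is scalar, so the ordinary triangle inequality applies: $|S(f_k,\mathcal P)-\int_\Gamma f_k|\le 2L\|f_k\|_\infty$ uniformly in $\mathcal P$ and $k$ (chords are dominated by arcs, so $\sum_j|\Delta z_j|\le L$), while for each fixed $k$ it tends to $0$ with the mesh by Riemann integrability of $f_k$. Splitting the series into a finite head and a tail of weight less than $\varepsilon$ then gives $S(g,\mathcal P)\to I$, so $g\in\mathscr{R}(\Gamma,V)$. The divergent factor $\sum_j|\Delta z_j|^r$ never appears because the geometry of the partition is absorbed inside scalar absolute values before the $r$-th powers are taken.
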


For the next two results, let $p\in (0,1)$ and consider the ideal $\mathcal{L}^p(H)$ equipped with the Schatten quasi-norm $\|\cdot \|_p$. Its constant is $K=2^{1/p}$, and the renorming process gives an equivalent complete $r$-norm $|\cdot |_r $, where $r= p/(p+1)$. The first result is the key tool for extending Lemma \ref{lem:Banach_alg_hol_cal} to $\mathcal{L}^p(H)$.

\begin{prop}\label{prop:Schatten_p_zero_one}
Let $S,T\in \mathcal{B}(H)$ such that $[S,T]\in \mathcal{L}^p(H)$, and let $\Gamma$ be a simple closed $C^1$-curve in the resolvent $\mathbb C\setminus \sigma(S)$. Then, the map $F:\Gamma\to \mathcal{L}^p(H)$ given by $$F(z)=[(z-S)^{-1},T],$$ is Riemann integrable.
\end{prop}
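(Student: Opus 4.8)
The idea is to reduce the Riemann integrability of $F$ to Proposition~\ref{prop:quasi_Banach_Integration} by exhibiting an $r$-expansion of $F$ on $\Gamma$ in the $r$-Banach space $(\mathcal{L}^p(H), |\cdot|_r)$ with $r = p/(p+1)$. First I would rewrite the integrand using the identity
\begin{equation*}
[(z-S)^{-1},T] = (z-S)^{-1}[S,T](z-S)^{-1},
\end{equation*}
valid for $z \in \Gamma \subset \mathbb C \setminus \sigma(S)$, which isolates the single ``bad'' factor $R := [S,T] \in \mathcal{L}^p(H)$ sandwiched between two bounded resolvents depending holomorphically (indeed analytically) on $z$. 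The point is that the two outer factors are $\mathcal{B}(H)$-valued and analytic, so their power series can be used to produce the required scalar coefficient functions, while the fixed ideal element $R$ absorbs into the coefficients $a_k$.

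Concretely, I would fix a point $z_0$ lying in the unbounded component of $\mathbb C\setminus\sigma(S)$ near $\Gamma$ (or, to cover all of $\Gamma$ at once, work locally and invoke compactness of $\Gamma$: a finite cover suffices since a finite sum of $r$-expansions is an $r$-expansion). Around $z_0$ the resolvent has the norm-convergent Neumann-type expansion $(z-S)^{-1} = \sum_{m\geq 0} (z_0-z)^m (z_0-S)^{-(m+1)}$, convergent for $|z-z_0|$ less than the distance from $z_0$ to $\sigma(S)$. Substituting this for both resolvents in the sandwiched expression and collecting terms gives
\begin{equation*}
F(z) = \sum_{m,n\geq 0} (z_0-z)^{m+n}\,(z_0-S)^{-(m+1)} R\, (z_0-S)^{-(n+1)}.
\end{equation*}
Reindexing by $k = m+n$, set $a_k = \sum_{m+n=k}(z_0-S)^{-(m+1)} R\,(z_0-S)^{-(n+1)} \in \mathcal{L}^p(H)$ (a finite sum, hence genuinely in the ideal) and $f_k(z) = (z_0-z)^k$, which is a polynomial, hence Riemann integrable on $\Gamma$. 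It remains to verify the $r$-summability condition $\sum_k |a_k|_r^r \|f_k\|_\infty^r < \infty$. Using the $r$-norm triangle inequality $|{\cdot}|_r^r$-subadditivity together with the ideal bound $|BRC|_r \leq \|B\|\,|R|_r\,\|C\|$ (a property of the quasi-normed Schatten ideals, valid also for $p<1$, cf.\ \cite{GK}), one gets $|a_k|_r^r \leq (k+1)\, |R|_r^r\, \|(z_0-S)^{-1}\|^{r(k+2)}$, and $\|f_k\|_\infty^r \leq \sup_{z\in\Gamma}|z_0-z|^{rk}$; choosing $z_0$ close enough to the relevant arc of $\Gamma$ so that $\sup_{z}|z_0-z| \cdot \|(z_0-S)^{-1}\| < 1$ makes the series geometric-times-polynomial, hence convergent. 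Thus $F \in \mathscr{E}(\Gamma, \mathcal{L}^p(H)) \subset \mathscr{R}(\Gamma, \mathcal{L}^p(H))$ by Proposition~\ref{prop:quasi_Banach_Integration}.

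\textbf{Main obstacle.} The routine parts are the resolvent identity and the Neumann expansion; the delicate point is the radius-of-convergence bookkeeping. A single $z_0$ need not satisfy $\sup_{z\in\Gamma}|z_0-z|\cdot\|(z_0-S)^{-1}\| < 1$ over the whole curve, so the honest statement is local: cover $\Gamma$ by finitely many sub-arcs $\Gamma_j$, on each choose $z_0^{(j)}$ close to $\Gamma_j$ and in the resolvent set so the geometric bound holds there, obtaining $F|_{\Gamma_j} \in \mathscr{E}(\Gamma_j,\mathcal{L}^p(H))$, and then patch — noting that Riemann integrability is a local-plus-additive property over a partition of $\Gamma$ into $C^1$ pieces, so $F \in \mathscr{R}(\Gamma,\mathcal{L}^p(H))$ follows. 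One should also double-check that the ideal estimate $|BRC|_r \le \|B\|\,|R|_r\,\|C\|$ holds for the renormed $r$-norm $|\cdot|_r$ and not merely for $\|\cdot\|_p$; this is immediate from \eqref{eq:Renorming} (which only costs a harmless constant factor that can be absorbed, since a bounded rescaling of the coefficients $a_k$ does not affect $r$-summability). This is where the emphasis on \emph{analyticity} rather than mere holomorphicity, flagged in the discussion preceding Proposition~\ref{prop:quasi_Banach_Integration}, becomes essential: it is the genuine power-series expansion of the resolvent, not just its complex differentiability, that yields the $r$-expansion.
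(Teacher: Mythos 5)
Your proposal is correct and follows essentially the same route as the paper: the identity $[(z-S)^{-1},T]=(z-S)^{-1}[S,T](z-S)^{-1}$, local power-series (Neumann) expansions of the resolvent to produce genuine $r$-expansions with coefficients $\sum_{m+n=k}(z_0-S)^{-(m+1)}[S,T](z_0-S)^{-(n+1)}$, the $r$-subadditivity and ideal estimates for the convergence check, and compactness of $\Gamma$ to localise. The only cosmetic difference is the patching step: you glue via additivity of the Riemann integral over sub-arcs, whereas the paper multiplies the local expansions by a partition of unity $\{h_{w_j}\}$ so that each $Fh_{w_j}$ has an $r$-expansion on all of $\Gamma$ and Proposition~\ref{prop:quasi_Banach_Integration} applies verbatim to the closed curve.
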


\begin{proof} First note that $F$ is well-defined since for every $z\in \mathbb C\setminus \sigma(S)$ it holds
\begin{equation}\label{eq:hol_fun_commutator}
[(z-S)^{-1},T]= (z-S)^{-1}[S,T](z-S)^{-1}.
\end{equation}
Instead of showing directly that $F$ is Riemann integrable with respect to the quasi-norm $\|\cdot \|_p$, we will prove that it is Riemann integrable with respect to $|\cdot |_r $ and then use inequalities (\ref{eq:Renorming}). Specifically, we will show that $F$ is a finite sum of functions with $r$-expansions on $\Gamma$, and hence from Proposition \ref{prop:quasi_Banach_Integration} we obtain that $F$ is Riemann integrable.

To begin, let us also denote by $F$ the extension of the map on $\mathbb C\setminus \sigma(S)$. Observe that the map $z\mapsto (z-S)^{-1}$ defined on $\mathbb C\setminus \sigma(S)$ is analytic in $\mathcal{B}(H)$. Indeed, let $w\in \mathbb C\setminus \sigma(S)$ and for every $z\in \mathbb C$ with $|z-w|<1/ \|(w-S)^{-1}\|$ we have that $$(z-S)^{-1}=\sum_{k=0}^{\infty}s_k(w)(w-z)^k,$$ where $s_k(w)=((w-S)^{-1})^{k+1}.$ 

Moreover, using (\ref{eq:hol_fun_commutator}), for $w\in \mathbb C\setminus \sigma(S)$ and $z\in \mathbb C$ with $|z-w|<1/ \|(w-S)^{-1}\|$ we have, 
\begin{equation}\label{eq:Schatten_p_zero_one1}
F(z)=\sum_{n=0}^{\infty}c_n(w)(w-z)^n,
\end{equation}
where $$c_n(w)=\sum_{j=0}^n s_j(w)[S,T]s_{n-j}(w).$$ The expansion (\ref{eq:Schatten_p_zero_one1}) converges in $\mathcal{B}(H)$ and hence $F$ is analytic in $\mathcal{B}(H)$. Clearly, the expansion (\ref{eq:Schatten_p_zero_one1}) around $w\in \mathbb C\setminus \sigma(S)$ holds for all $z\in \mathbb C$ such that 
\begin{equation}\label{eq:Schatten_p_zero_one2}
|z-w|<\frac{1}{2K\|(w-S)^{-1}\|}.
\end{equation}
This open ball will be denoted by $B(w).$ 

Let now $w\in \Gamma$ and consider the open segment $\Gamma(w)=\Gamma \cap B(w)$. We claim that the expansion (\ref{eq:Schatten_p_zero_one1}), restricted on $\Gamma(w)$, converges (absolutely) also in $(\mathcal{L}^p(H),|\cdot |_r)$. In other words, it is a local $r$-expansion, in the sense that, if $f_{n,w}:\Gamma(w)\to \mathbb C$ are given by $f_{n,w}(z)=(w-z)^n$, then 
\begin{equation}\label{eq:Schatten_p_zero_one3}
\sum_{n=0}^{\infty}|c_n(w)|_r^r \|f_{n,w}\|_{\infty}^r<\infty.
\end{equation}
Then, since $|\cdot|_r$ is equivalent to $\|\cdot \|_p$ which majorises the operator norm $\|\cdot \|$ on $\mathcal{B}(H)$, we get that this local $r$-expansion agrees with $F$ on $\Gamma(w)$. Of course, each $c_n(w)\in \mathcal{L}^p(H)$ and the functions $f_{n,w}$ are Riemann integrable. We note that for (\ref{eq:Schatten_p_zero_one3}) to be true, it suffices to show that 
\begin{equation}\label{eq:Schatten_p_zero_one4}
\sum_{n=0}^{\infty}\|c_n(w)\|_p^r \|f_{n,w}\|_{\infty}^r<\infty.
\end{equation}
We have that
\begin{equation}\label{eq:Schatten_p_zero_one5}
\|f_{n,w}\|_{\infty}=\sup_{z\in \Gamma(w)}|w-z|^n\leq \frac{1}{(2K)^n\|(w-S)^{-1}\|^n}.
\end{equation}
Moreover, 
\begin{align*}
\|c_n(w)\|_p&\leq K^n\sum_{j=0}^n \|s_j(w)[S,T]s_{n-j}(w)\|_p\\
&\leq \|[S,T]\|_pK^n\sum_{j=0}^n \|s_j(w)\|\|s_{n-j}(w)\|,
\end{align*}
and since $r\in (0,1]$, we also have that 
\begin{equation}\label{eq:Schatten_p_zero_one6}
\|c_n(w)\|_p^r\leq (\|[S,T]\|_pK^{n})^r\sum_{j=0}^n \|s_j(w)\|^r\|s_{n-j}(w)\|^r.
\end{equation}

In addition, the series $$\sum_{n=0}^{\infty}K^{rn}\|s_n(w)\|^r\|f_{n,w}\|^r_{\infty}<\infty$$ because from (\ref{eq:Schatten_p_zero_one5}) we obtain that
\newpage
\begin{align*}
K^{rn}\|s_n(w)\|^r\|f_{n,w}\|^r_{\infty}&=K^{rn}\|((w-S)^{-1})^{n+1}\|^r\|f_{n,w}\|^r_{\infty}\\
&\leq \|(w-S)^{-1}\|^r \frac{K^{rn} \|(w-S)^{-1}\|^{rn}}{2^{rn}K^{rn}\|(w-S)^{-1}\|^{rn}}\\
&= \|(w-S)^{-1}\|^r (\frac{1}{2^r})^n.
\end{align*}
Therefore, by noticing that $\|f_{n,w}\|_{\infty}^r=\|f_{1,w}\|_{\infty}^{rn}$, we have
$$
\sum_{n=0}^{\infty} (\sum_{j=0}^n \|s_j(w)\|^r\|s_{n-j}(w)\|^r) K^{rn}\|f_{n,w}\|_{\infty}^r = (\sum_{n=0}^{\infty}K^{rn}\|s_n(w)\|^r\|f_{n,w}\|^r_{\infty})^2 <\infty,
$$
and then from (\ref{eq:Schatten_p_zero_one6}) the series (\ref{eq:Schatten_p_zero_one4}) converges. 

From the compactness of $\Gamma$, there are $w_1,\ldots ,w_m\in \Gamma$ that determine open segments $\Gamma(w_1),\ldots , \Gamma(w_m)$ that cover $\Gamma$, on which $F$ has local $r$-expansions. Let $\{h_{w_1},\ldots , h_{w_m}\}$ be a partition of unity subordinated to the cover $\{\Gamma(w_1),\ldots ,\Gamma(w_m)\}$. Then, the function $F:\Gamma\to \mathcal{L}^p(H)$ can be written as $$F=\sum_{j=1}^{m} Fh_{w_j}.$$ Now, each $Fh_{w_j}$ has an $r$-expansion on $\Gamma$ since, if we let $\widetilde{f}_{n,w_j}$ denote the extension of $f_{n,w_j}:\Gamma(w_j)\to \mathbb C$ to $\Gamma$, then for every $z\in \Gamma$ one has $$(Fh_{w_j})(z)=\sum_{n=0}^{\infty} c_n(w_j)(\widetilde{f}_{n,w_j}h_{w_j})(z).$$ We still have that $\widetilde{f}_{n,w_j}h_{w_j}$ is Riemann integrable and 
$$
\sum_{n=0}^{\infty}|c_n(w_j)|_r^r \|\widetilde{f}_{n,w_j}h_{w_j}\|_{\infty}^r \leq \sum_{n=0}^{\infty}|c_n(w_j)|_r^r \|f_{n,w_j}\|_{\infty}^r \|h_{w_j}\|_{\infty}^r<\infty. 
$$
Consequently, the function $F:\Gamma\to (\mathcal{L}^p(H),|\cdot |_r)$ lies in $\mathscr{E}(\Gamma, \mathcal{L}^p(H))$ and hence it is Riemann integrable.
\end{proof}

Working as in \cite[Prop. 5a]{ConnesDifGeom} but applying Proposition \ref{prop:Schatten_p_zero_one} we obtain the following result.

\begin{lemma}\label{lem:Banach_alg_hol_cal_zero_one} Let $p\in (0,1)$. 
For every $S,T\in \mathcal{B}(H)$ such that $[S,T]\in \mathcal{L}^p(H)$ and every function $f$ that is holomorphic in a neighbourhood of the spectrum $\sigma(S)$, we have $$[f(S),T]\in \mathcal{L}^p(H).$$
\end{lemma}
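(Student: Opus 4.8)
The plan is to mimic the classical holomorphic functional calculus argument (Connes, \cite[Prop. 5a]{ConnesDifGeom}) but to replace the integrability input by Proposition \ref{prop:Schatten_p_zero_one}, which is precisely tailored to the quasi-Banach setting. Concretely, since $f$ is holomorphic in an open neighbourhood $U$ of $\sigma(S)$, choose a simple closed $C^1$-curve $\Gamma$ contained in $U\setminus \sigma(S)$ that winds once around $\sigma(S)$, so that the Riesz functional calculus formula
\begin{equation*}
f(S)=\frac{1}{2\pi i}\oint_{\Gamma} f(z)\,(z-S)^{-1}\,dz
\end{equation*}
holds, the integral being a Riemann integral of a continuous (indeed analytic) $\mathcal{B}(H)$-valued function. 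First I would commute $T$ formally past this expression: since $[\,\cdot\,,T]$ is a bounded linear map on $\mathcal{B}(H)$, and Riemann sums converge in norm, one gets
\begin{equation*}
[f(S),T]=\frac{1}{2\pi i}\oint_{\Gamma} f(z)\,[(z-S)^{-1},T]\,dz,
\end{equation*}
at least as an identity in $\mathcal{B}(H)$.

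The next step is to upgrade this to an identity in $\mathcal{L}^p(H)$. By Proposition \ref{prop:Schatten_p_zero_one}, the map $z\mapsto [(z-S)^{-1},T]$ is Riemann integrable as a function $\Gamma\to \mathcal{L}^p(H)$; multiplying by the bounded scalar function $f$ on $\Gamma$ preserves Riemann integrability (this follows, e.g., from the $r$-expansion description: if $F$ has an $r$-expansion on $\Gamma$, so does $fF$, using that products of Riemann integrable scalar functions are Riemann integrable and $\|ff_k\|_\infty\leq \|f\|_\infty\|f_k\|_\infty$). Hence the contour integral $\oint_\Gamma f(z)[(z-S)^{-1},T]\,dz$ exists as an element of $\mathcal{L}^p(H)$. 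The point now is that the inclusion $\mathcal{L}^p(H)\hookrightarrow \mathcal{B}(H)$ is continuous — indeed $\|\cdot\|\leq \|\cdot\|_p$ on $\mathcal{L}^p(H)$ — so the $\mathcal{L}^p$-valued Riemann sums converge in operator norm to the same limit as the $\mathcal{B}(H)$-valued ones. Therefore the $\mathcal{B}(H)$-valued integral computed above actually lands in $\mathcal{L}^p(H)$, giving $[f(S),T]\in \mathcal{L}^p(H)$.

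The only genuinely delicate point — and the step I would expect to be the main obstacle — is the interchange of $[\,\cdot\,,T]$ with the integral at the level of $\mathcal{L}^p(H)$ rather than $\mathcal{B}(H)$: one must be sure that the commutator identity $[f(S),T]=\oint_\Gamma f(z)[(z-S)^{-1},T]\,dz$ holds with both sides interpreted in $\mathcal{L}^p(H)$, and not just that the right-hand side happens to be a $\mathcal{L}^p$ operator. This is handled by the uniqueness of limits: the $\mathcal{L}^p$-valued integral, viewed in $\mathcal{B}(H)$ via the continuous inclusion, is the $\mathcal{B}(H)$-valued integral, which equals $[f(S),T]$ by the norm-level computation; so $[f(S),T]$, a priori only in $\mathcal{B}(H)$, coincides with an element of $\mathcal{L}^p(H)$, i.e. lies in $\mathcal{L}^p(H)$. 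A minor care is needed because $\mathcal{L}^p(H)$ for $p\in(0,1)$ is not a Banach space, so one cannot invoke Hahn–Banach or boundedness of linear functionals; but none of that is required here — everything reduces to continuity of the inclusion and to Proposition \ref{prop:Schatten_p_zero_one}, together with the closedness of $\mathcal{L}^p(H)$ in its own quasi-norm. This completes the proof.
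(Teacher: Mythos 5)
Your proposal is correct and follows essentially the same route as the paper: the paper's proof is precisely to run Connes' contour-integral argument (\cite[Prop.~5a]{ConnesDifGeom}) with Proposition~\ref{prop:Schatten_p_zero_one} supplying the Riemann integrability of $z\mapsto[(z-S)^{-1},T]$ in $(\mathcal{L}^p(H),\|\cdot\|_p)$, and then using $\|\cdot\|\leq\|\cdot\|_p$ to identify the $\mathcal{L}^p$-valued integral with $[f(S),T]$. Your extra observation that multiplying by the bounded scalar $f$ preserves the $r$-expansion property is exactly the small point left implicit in the paper, so nothing is missing.
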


\begin{lemma}\label{lem:extend_almost_commuting_algebras} Let $A,B$ be $C^*$-algebras in $\mathcal{B}(H)$ and $\mathcal{I}\subset \mathcal{B}(H)$ be a symmetrically normed ideal, or a Schatten $p$-ideal for $p\in (0,1)$. Also, let $\mathcal{A}\subset A,\, \mathcal{B}\subset B$ be dense $*$-subalgebras that commute modulo $\mathcal{I}$; $[a,b]\in \mathcal{I}$, for every $a\in \mathcal{A}$ and $b\in \mathcal{B}$. Then, there are dense $*$-subalgebras $\mathcal{A}_{\mathcal{B}}\subset A$ and $\mathcal{B}_{\mathcal{A}}\subset B$ that
\begin{enumerate}[(1)]
\item satisfy $\mathcal{A}\subset \mathcal{A}_{\mathcal{B}}$ and $\mathcal{B}\subset \mathcal{B}_{\mathcal{A}}$;
\item are holomorphically stable;
\item commute modulo $\mathcal{I}$.
\end{enumerate}
\end{lemma}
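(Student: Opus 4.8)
The plan is to construct $\mathcal{A}_{\mathcal{B}}$ and $\mathcal{B}_{\mathcal{A}}$ by a mutual saturation procedure under holomorphic functional calculus, iterated countably many times, and to keep track at each stage of the commutation-modulo-$\mathcal{I}$ property using the holomorphic functional calculus lemmas already established (Lemma \ref{lem:Banach_alg_hol_cal} in the symmetrically normed case and Lemma \ref{lem:Banach_alg_hol_cal_zero_one} in the case $p\in(0,1)$). First I would replace $A,B$ by their unitisations $\widetilde{A},\widetilde{B}$ inside $\mathcal{B}(H)^{\sim}$ (adjoining a common unit), and note that it suffices to produce holomorphically \emph{closed} dense subalgebras with the stated properties, since the matrix amplifications $\mathcal{M}_n(\widetilde{\mathcal{A}_{\mathcal{B}}})\subset \mathcal{M}_n(\widetilde{A})$ inherit the analogous commutation relation with $\mathcal{M}_n(\widetilde{\mathcal{B}_{\mathcal{A}}})$ modulo $\mathcal{M}_n(\mathcal{I})$ — a symmetrically normed ideal, resp.\ Schatten ideal, of $\mathcal{B}(\mathbb{C}^n\otimes H)$ — so holomorphic stability follows by running the same argument at each matrix level (or more cheaply, by the Density Theorem \ref{thm:Density_theorem} once closedness is known, since holomorphic closedness of a dense subalgebra of a Banach algebra already implies holomorphic stability). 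I will henceforth work with unital dense subalgebras of the unital Banach algebra $\overline{A}+\mathbb{C}1$ etc., suppressing the unitisation tildes.

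The core construction is the following inductive doubling. Set $\mathcal{A}_0=\widetilde{\mathcal{A}}$, $\mathcal{B}_0=\widetilde{\mathcal{B}}$. Given unital subalgebras $\mathcal{A}_n\subset \widetilde{A}$ and $\mathcal{B}_n\subset \widetilde{B}$ that commute modulo $\mathcal{I}$, let $\mathcal{A}_{n+1}$ be the (unital) subalgebra of $\widetilde{A}$ generated by all elements $f(a)$, where $a\in \mathcal{A}_n$ and $f$ is holomorphic on an open neighbourhood of $\sigma_{\widetilde{A}}(a)$, and likewise define $\mathcal{B}_{n+1}$ from $\mathcal{B}_n$. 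By Lemma \ref{lem:Banach_alg_hol_cal} (or Lemma \ref{lem:Banach_alg_hol_cal_zero_one}), for $a\in\mathcal{A}_n$, $b\in\mathcal{B}_n$ and $f$ holomorphic near $\sigma(a)$ we have $[f(a),b]\in\mathcal{I}$, hence $f(a)$ commutes modulo $\mathcal{I}$ with all of $\mathcal{B}_n$; applying the symmetric statement in the second variable and using that $\mathcal{I}$ is an ideal (so that the commutation relation passes to products: if $[x,y]\in\mathcal{I}$ and $[x',y]\in\mathcal{I}$ then $[xx',y]=x[x',y]+[x,y]x'\in\mathcal{I}$, and similarly in $y$) one gets that the generated algebras $\mathcal{A}_{n+1}$ and $\mathcal{B}_{n+1}$ still commute modulo $\mathcal{I}$. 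Each $\mathcal{A}_{n+1}$ is a dense $*$-subalgebra: density is inherited from $\mathcal{A}_n\subset\mathcal{A}_{n+1}$, and closure under $*$ follows because $f(a)^*=\bar f^{\,*}(a^*)$ with $\bar f^{\,*}(z):=\overline{f(\bar z)}$ holomorphic near $\sigma(a^*)=\overline{\sigma(a)}$ and $a^*\in\mathcal{A}_n$ since $\mathcal{A}_n$ is a $*$-algebra (one checks $\sigma_{\widetilde A}$ is $*$-invariant). Finally set $\mathcal{A}_{\mathcal{B}}=\overline{\bigcup_n\mathcal{A}_n}^{\mathrm{alg}}=\bigcup_n\mathcal{A}_n$ (the union is already an algebra since the chain is increasing) and $\mathcal{B}_{\mathcal{A}}=\bigcup_n\mathcal{B}_n$.

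It remains to verify the three listed properties for $\mathcal{A}_{\mathcal{B}},\mathcal{B}_{\mathcal{A}}$. Property (1) and density and the $*$-property are immediate from the construction. Property (3): if $a\in\mathcal{A}_{\mathcal{B}}$ and $b\in\mathcal{B}_{\mathcal{A}}$ then $a\in\mathcal{A}_n$, $b\in\mathcal{B}_m$ for some $n,m$, so both lie in $\mathcal{A}_{\max(n,m)}$-resp.-$\mathcal{B}_{\max(n,m)}$, whose commutator is in $\mathcal{I}$. Property (2), holomorphic closedness: given $d\in\mathcal{A}_{\mathcal{B}}$, say $d\in\mathcal{A}_n$, and $f$ holomorphic near $\sigma_{\widetilde A}(d)$, by definition $f(d)\in\mathcal{A}_{n+1}\subset\mathcal{A}_{\mathcal{B}}$ — but one must be slightly careful that the spectrum is computed in the same ambient algebra $\widetilde A$ at every stage, which is true since $\sigma$ is being taken in the fixed unital $C^*$-algebra $\widetilde A$ throughout (spectral permanence for $C^*$-subalgebras is not even needed because we never shrink the ambient algebra). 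Holomorphic stability then follows from the Density Theorem \ref{thm:Density_theorem}, or alternatively by repeating the whole argument with $\mathcal{M}_n(\mathcal{A}_{\mathcal{B}})\subset\mathcal{M}_n(\widetilde A)$ and $\mathcal{M}_n(\mathcal{B}_{\mathcal{A}})$, $\mathcal{M}_n(\mathcal{I})$ in place of $\mathcal{A},\mathcal{B},\mathcal{I}$.

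The main obstacle is bookkeeping in the non-locally-convex regime $p\in(0,1)$: there I cannot invoke Connes' original Banach-space argument for $[f(S),T]\in\mathcal{I}$, but Lemma \ref{lem:Banach_alg_hol_cal_zero_one} has been set up precisely to cover this, so the only real point to check is that the \emph{products} of elements commuting modulo a quasi-normed Schatten ideal still commute modulo it — which holds because $\mathcal{L}^p(H)$ for $p\in(0,1)$ still satisfies the ideal property $\|SRT\|\le\|S\|\,\|R\|_p\,\|T\|$ and is closed under addition (it is a linear space), so the Leibniz identity $[xx',y]=x[x',y]+[x,y]x'$ keeps us inside the ideal exactly as in the normed case. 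A secondary subtlety is ensuring closure under the $*$-operation is compatible with holomorphic functional calculus for \emph{non-normal} elements; this is handled by the identity $f(a)^*=\bar f^{\,*}(a^*)$ noted above, valid for arbitrary $a$ in a unital Banach $*$-algebra, so no normality is required.
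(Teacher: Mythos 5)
Your proposal is correct, but it takes a genuinely different route from the paper. You build $\mathcal{A}_{\mathcal{B}}$ and $\mathcal{B}_{\mathcal{A}}$ from below, as the increasing unions of algebras obtained by iteratively adjoining all holomorphic functional calculus elements and closing under products and the involution, tracking commutation modulo $\mathcal{I}$ at each stage via Lemmas \ref{lem:Banach_alg_hol_cal} and \ref{lem:Banach_alg_hol_cal_zero_one} together with a Leibniz argument. The paper instead works from above, in one step: it sets $\mathcal{A}_{\mathcal{B}}=\{a\in A: [a,b]\in \mathcal{I} \text{ for all } b\in \mathcal{B}\}$ and then $\mathcal{B}_{\mathcal{A}}=\{b\in B: [a,b]\in \mathcal{I} \text{ for all } a\in \mathcal{A}_{\mathcal{B}}\}$; these commutants modulo $\mathcal{I}$ are trivially dense $*$-subalgebras (they contain $\mathcal{A}$, resp.\ $\mathcal{B}$), holomorphic closedness is immediate from the same two lemmas, and holomorphic stability then follows from the Density Theorem \ref{thm:Density_theorem} exactly as in your argument. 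What the paper's approach buys is brevity and maximality: no induction, no need to verify $*$-closure of $f(a)$ for non-normal $a$ or to propagate commutation through products, and the resulting smooth subalgebras are the largest possible with the stated properties; your construction buys minimality (the smallest holomorphically saturated enlargements of $\mathcal{A},\mathcal{B}$) at the cost of the bookkeeping you describe, plus the routine step of intersecting your unital algebras back with $A$ and $B$, which you leave implicit. One small inaccuracy: your parenthetical that spectral permanence is ``not even needed'' is misleading, since Lemmas \ref{lem:Banach_alg_hol_cal} and \ref{lem:Banach_alg_hol_cal_zero_one} are stated for the spectrum in $\mathcal{B}(H)$, so invoking them for $f$ holomorphic only near $\sigma_{\widetilde{A}}(a)$ does use that the two spectra coincide for the $C^*$-subalgebra $\widetilde{A}\subset \mathcal{B}(H)$ --- the same point the paper notes parenthetically; since this is standard for $C^*$-algebras it is harmless, not a gap.
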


\begin{proof}
Let $\mathcal{A}_{\mathcal{B}}=\{a\in A: [a,b]\in \mathcal{I},\,\, \text{for all}\,\, b\in \mathcal{B}\}$. It is straightforward to see that $\mathcal{A}_{\mathcal{B}}$ is a $*$-subalgebra of $A$, and since it contains $\mathcal{A}$, it is also dense in $A$. Moreover, one can see that $$\widetilde{\mathcal{A}_{\mathcal{B}}}=\{a\in \widetilde{A}: [a,b]\in \mathcal{I},\,\, \text{for all}\,\, b\in \mathcal{B}\}.$$ Let $a\in \widetilde{\mathcal{A}_{\mathcal{B}}}$ and consider an arbitrary $b\in \mathcal{B}$. From Lemmas \ref{lem:Banach_alg_hol_cal} and \ref{lem:Banach_alg_hol_cal_zero_one} we see that, if $f$ is holomorphic in an open neighbourhood of the spectrum of $a$ in $\widetilde{A}$ (note that it agrees with the spectrum in $\mathcal{B}(H)$), then it still holds that $[f(a),b]\in \mathcal{I}$. Therefore, the algebra $\mathcal{A}_{\mathcal{B}}$ is holomorphically closed in $A$, and from Theorem \ref{thm:Density_theorem} we obtain that it is actually holomorphically stable in $A$.

Consider now $\mathcal{B}_{\mathcal{A}}=\{b\in B: [a,b]\in \mathcal{I},\,\, \text{for all}\,\, a\in \mathcal{A}_{\mathcal{B}}\}.$ By definition, $\mathcal{B}_{\mathcal{A}}$ contains $\mathcal{B}$ and therefore is a dense $*$-subalgebra of $B$. Working as before we obtain that $\mathcal{B}_{\mathcal{A}}$ is holomorphically stable in $B$.
\end{proof}

Since we work with corner $C^*$-subalgebras we need the following fact.

\begin{lemma}\label{lem:hereditary_hol_cal}
Let $A$ be a $C^*$-algebra and $\mathcal{A}\subset A$ be a holomorphically stable $*$-subalgebra. Then, for every projection $p\in \mathcal{A}$, the $*$-algebra $p\mathcal{A}p$ is holomorphically stable in $pAp$.
\end{lemma}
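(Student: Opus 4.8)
The plan is to verify the defining property of holomorphic stability directly, reducing the statement about $p\mathcal{A}p$ to the known stability of $\mathcal{A}$. First I would record the basic identity $\widetilde{p\mathcal{A}p}=\mathbb C p \oplus p\mathcal{A}p$ as a unital subalgebra of $pAp$ with unit $p$, and note that for $x\in \widetilde{p\mathcal{A}p}$ the spectrum $\sigma_{pAp}(x)$ agrees with $\sigma_{\widetilde{pAp}}(x)$, and differs from $\sigma_{\widetilde{A}}(x)$ only possibly by the point $0$ (indeed, $pAp$ is a hereditary $C^*$-subalgebra, so for $x$ in its unitisation the two spectra coincide away from $0$; $x$ is invertible in $\widetilde{A}$ iff $x$ is invertible in $\widetilde{pAp}$ \emph{plus} $x$ restricted to $(1-p)$ is invertible, which it automatically is since that part is scalar). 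The key point is that a function $f$ holomorphic on a neighbourhood of $\sigma_{pAp}(x)$ can be extended (after possibly shrinking and adding a small disc around $0$, or just by noting $0$ may already be in the domain) to a function holomorphic on a neighbourhood of $\sigma_{\widetilde A}(x)$, by setting its value near $0$ to be any fixed holomorphic germ, e.g. the constant $f(0)$ if $0\in\sigma_{pAp}(x)$, or an arbitrary holomorphic extension otherwise. Then $f(x)$ computed in $\widetilde{A}$ lies in $\widetilde{\mathcal{A}}$ by holomorphic stability of $\mathcal{A}$ (applied at matrix level for the stable version).

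The main step is then to check that this $f(x)$ actually lies in $\widetilde{p\mathcal{A}p}$. For this I would use the holomorphic functional calculus formula $f(x)=\frac{1}{2\pi i}\oint_{\Gamma} f(z)(z-x)^{-1}\,dz$, where $\Gamma$ is a contour around $\sigma_{\widetilde{pAp}}(x)$ avoiding $0$ (possible since we only need $\Gamma$ to enclose the spectrum in $\widetilde{pAp}$, and if $0\notin\sigma_{pAp}(x)$ we need not enclose $0$; if $0\in\sigma_{pAp}(x)$ we instead arrange $f$ near $0$ to be handled separately, writing $f=f(0)+g$ with $g(0)=0$ so that $g(x)\in p\mathcal{A}p$ and $f(0)p\in\mathbb C p$). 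Since $x=pxp$, for $z\in\Gamma$ (with $z\neq 0$) the resolvent satisfies $(z-x)^{-1}=z^{-1}(1-p)\oplus(z-x)^{-1}|_{pAp}$, and the $pAp$-part equals $p(z-x)^{-1}p$. Hence $p(z-x)^{-1}p$ is the resolvent of $x$ inside $\widetilde{pAp}$, and integrating against $f(z)$ over $\Gamma$ gives an element of $\widetilde{pAp}$; combining with the fact that the same integral computed in $\widetilde A$ lands in $\widetilde{\mathcal A}$, and that $\widetilde{\mathcal A}\cap\widetilde{pAp}=\widetilde{p\mathcal A p}$ (which follows because if $p a p=a'$ for $a'\in\widetilde{\mathcal A}$ then $a'=p a' p\in p\widetilde{\mathcal A}p=\widetilde{p\mathcal A p}$), we conclude $f(x)\in\widetilde{p\mathcal A p}$.

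For the \emph{stable} version one repeats the argument with $\mathcal{M}_n$: a projection $p\in\mathcal A$ gives projections $p\otimes 1_n\in\mathcal M_n(\mathcal A)$, and $\mathcal M_n(p\mathcal A p)=(p\otimes 1_n)\mathcal M_n(\mathcal A)(p\otimes 1_n)$, so the $n=1$ argument applied to $\mathcal M_n(A)$ with its holomorphically stable subalgebra $\mathcal M_n(\mathcal A)$ (stable by hypothesis on $\mathcal A$) yields holomorphic closedness of $\mathcal M_n(p\mathcal A p)$ in $\mathcal M_n(pAp)$ for every $n$. I expect the only genuinely delicate point to be the bookkeeping around the spectral point $0$: one must be careful that "holomorphic on a neighbourhood of $\sigma_{pAp}(x)$" need not say anything about $0$ when $0\notin\sigma_{pAp}(x)$, yet $0$ may lie in $\sigma_{\widetilde A}(x)$, so the contour and the extension of $f$ must be chosen to enclose exactly $\sigma_{\widetilde{pAp}}(x)$ and not $0$ — this is what makes the resolvent decomposition $(z-x)^{-1}=z^{-1}(1-p)+p(z-x)^{-1}p$ legitimate on $\Gamma$. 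Everything else is routine manipulation of the functional calculus integral and of the identities $x=pxp$, $xp=px=x$.
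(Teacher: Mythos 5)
Your proposal is correct and follows essentially the same route as the paper's proof: compare the spectrum of $x$ in $pAp$ with its spectrum in $\widetilde{A}$ (they differ at most by the point $0$), use the compression identity $(zp-x)^{-1}=p(z1-x)^{-1}p$ to match the contour integral defining the functional calculus in $pAp$ with one computed in $\widetilde{A}$ (extending $f$ across $0$, the paper doing so by $0$ so the extra contour contributes nothing), invoke holomorphic closedness of $\mathcal{M}_n(\mathcal{A})$ in $\mathcal{M}_n(A)$, and pass to matrices via the projection $\diag(p,\ldots,p)$. The only loose point is the parenthetical claim that $g(x)\in p\mathcal{A}p$ when $0\in\sigma_{pAp}(x)$, which as stated begs the question but is also unnecessary: in that case $f$ is already holomorphic on a neighbourhood of the full spectrum in $\widetilde{A}$, so one simply compresses, $f_p(x)=pf(x)p$, exactly as in the paper's first case.
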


\begin{proof}
Let $p\in \mathcal{A}$ be a projection. We claim that $p\mathcal{A}p$ is holomorphically closed in $pAp$. Note that $pAp$ is unital with unit $p$, and hence we do not need to consider its unitisation. In order to avoid trivialities assume that $p\neq 0$. If $A$ is not unital, we consider its unitisation $\widetilde{A}$ as well as the unitisation $\widetilde{\mathcal{A}}\subset \widetilde{A}$. This makes sense because $p\widetilde{A}p=pAp$ (similarly for $\mathcal{A}$), and therefore, we can simply assume that $A$ and $\mathcal{A}$ are unital, with unit $1$. Again, to avoid trivialities assume that $p\neq 1$.

Let $b\in p\mathcal{A}p$ and denote by $\sigma_p(b)$ the spectrum relative to $pAp$, and by $\sigma(b)$ the spectrum relative to $A$. It holds that $$\sigma(b)=\sigma_p(b)\cup \{0\}.$$ Indeed, the element $b$ cannot be invertible in $A$ and hence $0\in \sigma(b)$. Also, for $z \in \mathbb C$, if $z 1-b$ is invertible in $A$ with inverse $(z1-b)^{-1}$, then $z p-b$ is invertible in $pAp$ with inverse 
\begin{equation}\label{eq:hereditary_hol_cal}
(z p-b)^{-1}=p(z 1-b)^{-1}p.
\end{equation}
Consequently, $\sigma_p(b)\subset \sigma(b)$. Finally, by representing $A$ and $pAp$ faithfully on a Hilbert space (note that $*$-isomorphisms preserve the spectrum of elements), it is not hard to check that $\sigma(b)\setminus \{0\}\subset \sigma_p(b)$.

Consider now a function $f$ which is holomorphic in an open set $U\supset \sigma_p(b)$. By a slight abuse of notation, let us denote by $f_p(b)$ the holomorphic functional calculus relative to $pAp$, and in case it can be defined, lets us denote by $f(b)$ the holomorphic functional calculus relative to $A$. First, assume that $0\in U$ and hence $\sigma(b)\subset U$. This means $f(b)$ is defined. Let $\gamma$ be a simple closed $C^1$-curve in $U$ that encloses counter-clockwise the spectrum $\sigma(b)$ and then, using (\ref{eq:hereditary_hol_cal}), we have
\begin{align*}
f_p(b)&=\frac{1}{2\pi i}\int_{\gamma}f(z)(zp-b)^{-1}dz\\
&=p(\frac{1}{2\pi i}\int_{\gamma}f(z)(z1-b)^{-1}dz)p\\
&=pf(b)p.
\end{align*}
As a result, $f_p(b)\in p\mathcal{A}p$ because $f(b)\in \mathcal{A}$.

Assume now that $0\not \in U$. Then, since $\{0\}$ and $\sigma_p(b)$ are disjoint, we can find an open ball $W\subset \mathbb C$ around $0$ and an open set $V$ around $\sigma_p(b)$ such that $W\cap V=\varnothing$. Consider the holomorphic function 
$$g(z)=\begin{cases}
f(z),& \text{if}\enspace z\in U\cap V\\
0, & \text{if} \enspace z\in W,
\end{cases}$$
and note that its domain contains $\sigma(b)$. Let $\gamma_1,\gamma_2$ be simple closed $C^1$-curves such that, $\gamma_1$ is in $U\cap V$ encircling $\sigma_p(b)$, and $\gamma_2$ is in $W$ around $0$, both with counter-clockwise orientation. Then, we have that 
\begin{align*}
g(b)&=\frac{1}{2\pi i}\int_{\gamma_1}g(z)(z1-b)^{-1}dz+\frac{1}{2\pi i}\int_{\gamma_2}g(z)(z1-b)^{-1}dz\\
&=\frac{1}{2\pi i}\int_{\gamma_1}g(z)(z1-b)^{-1}dz.
\end{align*}
Therefore, using (\ref{eq:hereditary_hol_cal}) we obtain that 
$$
pg(b)p=\frac{1}{2\pi i}\int_{\gamma_1}f(z)(zp-b)^{-1}dz=f_p(b).
$$
Since $g(b)\in \mathcal{A}$ it follows that $f_p(b)\in p\mathcal{A}p$. 

Finally, let $n\in \mathbb N$ and we claim that $\mathcal{M}_n(p\mathcal{A}p)$ is holomorphically closed in $\mathcal{M}_n(pAp)$. First, observe that $\mathcal{M}_n(pAp)=p'\mathcal{M}_n(A)p'$ and $\mathcal{M}_n(p\mathcal{A}p)=p'\mathcal{M}_n(\mathcal{A})p'$, where $p'$ is $\diag(p,\ldots,p)\in \mathcal{M}_n(\mathcal{A})$. Also, recall that $A$ and $\mathcal{A}$ can be assumed to be unital, and from the hypothesis we have that $\mathcal{M}_n(\mathcal{A})$ is holomorphically closed in $\mathcal{M}_n(A)$. Then, from the proof so far, the same holds for $\mathcal{M}_n(p\mathcal{A}p)\subset \mathcal{M}_n(pAp).$ 
\end{proof}

Let $\mathcal{I}\subset \mathcal{B}(H)$ be a symmetrically normed ideal or a Schatten $p$-ideal for $p\in (0,1)$. In what follows we consider a separable, simple, purely infinite $C^*$-algebra $A$ and a separable $C^*$-algebra $B$ such that $A\otimes B=A\otimes_{\max}B$. 

\begin{prop}\label{prop:Ext_smoothness_prop}
Let $\tau:A\otimes B\to \mathcal{Q}(H)$ be an invertible extension defined by $\tau(a\otimes b)= \rho_A(a)\rho_B(b)+\mathcal{K}(H),$ where $\rho_A:A\to \mathcal{B}(H),\, \rho_B:B\to \mathcal{B}(H)$ are faithful representations commuting modulo $\mathcal{K}(H)$. Assume that there are dense $*$-subalgebras $\mathcal{A}\subset A, \, \mathcal{B}\subset B$ such that $\rho_A(\mathcal{A}),\, \rho_B(\mathcal{B})$ commute modulo $\mathcal{I}$. Then,
\begin{enumerate}[(i)]
\item $\mathcal{A},\mathcal{B}$ can be respectively enlarged to holomorphically stable dense $*$-subalgebras $\mathcal{A}_{\mathcal{B}}\subset A,\, \mathcal{B}_{\mathcal{A}}\subset B$ such that $\rho_A(\mathcal{A}_{\mathcal{B}}),\, \rho_B(\mathcal{B}_{\mathcal{A}})$ commute modulo $\mathcal{I}$. As a result, $\tau$ is $\mathcal{I}$-smooth on $\mathcal{A}_{\mathcal{B}}\otimes_{\text{alg}}\mathcal{B}_{\mathcal{A}}$;
\item the $\Kt$-homology classes in the range of the map $\otimes_A[\tau]:\Kt_*(A)\to \Kt^{*+1}(B)$ are represented by Fredholm modules that are $\mathcal{I}$-summable on $\mathcal{B}_{\mathcal{A}}$. Therefore, if the map is surjective, then $B$ has uniformly $\mathcal{I}$-summable $\Kt$-homology on $\mathcal{B}_{\mathcal{A}}$.
\end{enumerate}
\end{prop}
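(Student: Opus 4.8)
The plan has two parts, mirroring the statement.

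\emph{Part (i).} Since $\rho_A$ and $\rho_B$ are faithful, they are $*$-isomorphisms onto the $C^*$-subalgebras $\rho_A(A),\rho_B(B)\subset\mathcal B(H)$, and by hypothesis $\rho_A(\mathcal A),\rho_B(\mathcal B)$ are dense $*$-subalgebras commuting modulo $\mathcal I$. I would apply \lemref{lem:extend_almost_commuting_algebras} to this data to obtain holomorphically stable dense $*$-subalgebras of $\rho_A(A)$ and $\rho_B(B)$ that contain $\rho_A(\mathcal A),\rho_B(\mathcal B)$ and still commute modulo $\mathcal I$, and then pull them back along $\rho_A^{-1},\rho_B^{-1}$ (which preserve matrix algebras, spectra and the holomorphic functional calculus, hence holomorphic stability) to get the desired $\mathcal A_{\mathcal B}\subset A$ and $\mathcal B_{\mathcal A}\subset B$. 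For the $\mathcal I$-smoothness of $\tau$, take $\eta\colon\mathcal A_{\mathcal B}\otimes_{\text{alg}}\mathcal B_{\mathcal A}\to\mathcal B(H)$ to be the linear extension of $a\otimes b\mapsto\rho_A(a)\rho_B(b)$ (well defined by bilinearity). On elementary tensors $x=a\otimes b$, $y=a'\otimes b'$ one gets $\eta(xy)-\eta(x)\eta(y)=\rho_A(a)[\rho_A(a'),\rho_B(b)]\rho_B(b')$ and $\eta(x^*)-\eta(x)^*=[\rho_A(a^*),\rho_B(b^*)]$, both in $\mathcal I$ because $\mathcal I$ is an ideal and the inner commutators are in $\mathcal I$; extending multilinearly, noting $\tau(x)=\eta(x)+\mathcal K(H)$ on elementary tensors and that $\mathcal A_{\mathcal B}\otimes_{\text{alg}}\mathcal B_{\mathcal A}$ is a dense $*$-subalgebra of $A\otimes B$, \defref{def:summableextension} gives the claim.

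\emph{Part (ii).} First I would fix a non-zero projection $p\in\mathcal A_{\mathcal B}$: $A$ has a non-zero full projection since it is simple and purely infinite, and approximating it by a self-adjoint element of the dense holomorphically closed subalgebra $\mathcal A_{\mathcal B}$ and then applying a function of its spectrum that is locally constant (hence holomorphic) near the spectrum produces such a $p$, automatically full by simplicity. Then $pAp$ is unital, separable, simple and purely infinite, $\psi_*\colon\Kt_*(pAp)\to\Kt_*(A)$ is an isomorphism, and by \lemref{lem:hereditary_hol_cal} the $*$-algebra $p\mathcal A_{\mathcal B}p$ is holomorphically stable in $pAp$. Using (\ref{eq:K0pureinf}) together with the standard approximation-and-functional-calculus argument (a projection or an invertible of $pAp$ is approximated by one of the dense holomorphically closed $p\mathcal A_{\mathcal B}p$ and then cleaned up inside $p\mathcal A_{\mathcal B}p$; cf.\ \thmref{thm:Density_theorem}), every class of $\Kt_0(pAp)$ is $[e]$ for a non-zero projection $e\in p\mathcal A_{\mathcal B}p$ and every class of $\Kt_1(pAp)$ is $[v]$ for a unitary $v\in p\mathcal A_{\mathcal B}p$. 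Hence any $x$ in the range of $\otimes_A[\tau]$ can be written as $\psi_*([e])\otimes_A[\tau]$ or $\psi_*([v])\otimes_A[\tau]$ with $e,v\in p\mathcal A_{\mathcal B}p$ (the zero class being represented by a degenerate module), and \proref{prop:slantprodKtheory} represents these by $(H,\rho_B,2\rho_A(e)-1)$ and by $(H\oplus H,\rho_B\oplus\rho_B,\begin{pmatrix}0&F^*\\ F&0\end{pmatrix})$ with $F=\rho_A(v)+1-\rho_A(p)$, respectively.

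\emph{Conclusion and the main difficulty.} In both representatives the operator is self-adjoint and squares to $1$ ($(2\rho_A(e)-1)^2=1$ since $e$ is a projection, and $F$ is unitary since $\rho_A(v)$ is a partial isometry with initial and final projection $\rho_A(p)$), so the only summability requirement that is not automatic is that the commutator of the representation with the operator lie in $\mathcal I$. That commutator equals $2[\rho_B(b),\rho_A(e)]$, respectively is built from $[\rho_B(b),\rho_A(v)]$ and $[\rho_B(b),\rho_A(p)]$, and it lies in $\mathcal I$ for all $b\in\mathcal B_{\mathcal A}$ precisely because $e$, $v$, $p$ were chosen in $\mathcal A_{\mathcal B}$. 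Thus every class in the range of $\otimes_A[\tau]$ has a representative $\mathcal I$-summable on $\mathcal B_{\mathcal A}$; if $\otimes_A[\tau]$ is surjective then $\Kt^*(B)$ equals this range and $B$ has uniformly $\mathcal I$-summable $\Kt$-homology on $\mathcal B_{\mathcal A}$ by \defref{def:uniformsummmability}. The step demanding the most care — and the reason pure infiniteness, \lemref{lem:hereditary_hol_cal} and the Density Theorem are all invoked — is realising every $\Kt$-class of $A$ by projections and unitaries lying inside the holomorphically stable corner $p\mathcal A_{\mathcal B}p$ rather than merely somewhere in $A$; once that is done, the summability drops out of the defining commutation relations.
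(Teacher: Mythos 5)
Your proposal is correct and follows essentially the same route as the paper: part (i) applies Lemma \ref{lem:extend_almost_commuting_algebras} to $\rho_A(\mathcal{A}),\rho_B(\mathcal{B})$ and pulls the enlarged algebras back through the faithful representations, and part (ii) uses a projection $p\in\mathcal{A}_{\mathcal{B}}$, Lemma \ref{lem:hereditary_hol_cal} together with the Density Theorem \ref{thm:Density_theorem} and Cuntz's description (\ref{eq:K0pureinf}) to realise every class of $\Kt_*(A)$ by a projection or unitary in $p\mathcal{A}_{\mathcal{B}}p$, and then reads off summability of the representatives from Proposition \ref{prop:slantprodKtheory}. The extra details you supply (the explicit map $\eta$ witnessing $\mathcal{I}$-smoothness, the functional-calculus construction of $p$, and the unitarity of $F$) are exactly the points the paper leaves implicit, and they are argued correctly.
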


\begin{proof}
By Lemma (\ref{lem:extend_almost_commuting_algebras}), $\rho_A(\mathcal{A})$ and $\rho_B(\mathcal{B})$ can be enlarged to holomorphically stable dense $*$-subalgebras $\rho_A(\mathcal{A})_{\mathcal{B}}\subset \rho_A(A)$ and $\rho_B(\mathcal{B})_{\mathcal{A}}\subset \rho_B(B)$ commuting modulo $\mathcal{I}$. Let $\mathcal{A}_{\mathcal{B}}\subset A$ and $\mathcal{B}_{\mathcal{A}}\subset B$ be the inverse images of $\rho_A(\mathcal{A})_{\mathcal{B}}$ and $\rho_B(\mathcal{B})_{\mathcal{A}}$. Then, the extension $\tau$ is $\mathcal{I}$-smooth on $\mathcal{A}_{\mathcal{B}}\otimes_{\text{alg}}\mathcal{B}_{\mathcal{A}}$. This proves part (i).

For part (ii) consider a non-zero projection $p\in \mathcal{A}_{\mathcal{B}}$ and an arbitrary $x\in \Kt_0(A)$. Let $\psi:pAp\to A$ be the inclusion yielding the isomorphism $\psi_*:\Kt_*(pAp)\to \Kt_*(A)$. Since $p\mathcal{A}_{\mathcal{B}}p$ is dense, holomorphically stable in $pAp$ (see Lemma \ref{lem:hereditary_hol_cal}), by the Density Theorem \ref{thm:Density_theorem} there is a non-zero projection $e\in p\mathcal{A}_{\mathcal{B}}p$ such that $x\otimes_A[\tau]=\psi_*([e])\otimes_A[\tau]$. By Proposition \ref{prop:slantprodKtheory}, the class $\psi_*([e])\otimes_A[\tau]$ is represented by the odd Fredholm module $(H,\rho_B,2\rho_A(e)-1)$ which is $\mathcal{I}$-summable on $\mathcal{B}_{\mathcal{A}}$. Similarly if $x\in \Kt_1(A)$.
\end{proof}

\section{Lipschitz algebras on general {\'e}tale groupoids}\label{sec:Lip_etale_alg} We show that on {\'e}tale groupoids equipped with a nice metric, one can construct dense Lipschitz $*$-subalgebras of the corresponding groupoid $C^*$-algebras. The main result of this section indicates the properties that the Smale space groupoid metrics should have, see Section \ref{sec:Metrise_Groupoids}. For details about {\'e}tale groupoids we refer to \cite{Renault_Book,Sims}.

Let $G$ be a second countable, locally compact, {\'e}tale groupoid whose topology is induced by a metric $d_G$, and let $G^{(0)}$ be the \textit{space of units}. For simplicity assume that $G^{(0)}\subset G$. By $r,s:G\to G^{(0)}$ we denote the \textit{range} and \textit{source maps} defined as
\begin{equation}
r(\gamma)=\gamma \gamma^{-1}, \enspace s(\gamma)=\gamma^{-1}\gamma,
\end{equation}
and by $G^{(2)}$ the \textit{set of composable pairs}
\begin{equation}
G^{(2)}=\{(\alpha,\beta)\in G\times G: s(\alpha)=r(\beta)\},
\end{equation}
which is closed in $G\times G$ since $G^{(0)}$ is Hausdorff. On $G^{(2)}$ we define the \textit{multiplication map} $m:G^{(2)}\to G$, denoted by $(\alpha,\beta)\mapsto\alpha \beta$, and for $A,B\subset G$ we define $AB\subset G$ as
\begin{equation}
AB=m((A\times B)\cap G^{(2)}).
\end{equation}
One can observe that, if $A,B\subset G$ are (pre-)compact, then $AB$ is also (pre-)compact. Indeed, since $A\times B$ is (pre-)compact in $G\times G$ and $G^{(2)}$ is closed, then $(A\times B)\cap G^{(2)}$ is (pre-)compact in $G^{(2)}$ and hence $AB$ is also (pre-)compact in $G$.

The groupoid being {\'e}tale means that $r,s$ are local homeomorphisms.
In fact, there is an abundance of sets, called bisections, on which $r,s$ are simultaneously local homeomorphisms. More precisely, a subset $B\subset G$ is a \textit{bisection} if there is an open $V\subset G$ containing $B$ such that, the restrictions $r_V:V\to r(V)$ and $s_V:V\to s(V)$ are homeomorphisms, with respect to the topology of $G$. This means that $G^{(0)}$ is open in $G$ and hence the $r,s$-fibres are discrete. 

Since $G$ is second countable and Hausdorff, its topology has a countable base $\mathcal{B}$ of open bisections. In addition, since $G$ is locally compact, the base can be chosen to consist of open, pre-compact bisections. Moreover, since $G$ is {\'e}tale we get that the multiplication map $m:G^{(2)}\to G$ is open. In particular, if $A,B\subset G$ are open, then $AB$ is also open. An important fact to notice is that, if $A,B$ are open, pre-compact bisections, then $AB$ is an open, pre-compact bisection.

Let us now consider the complex vector space $C_c(G)$ which becomes a $*$-algebra with \textit{convolution product} given by 
\begin{equation}\label{eq:convolution_etale}
(f\cdot g)(\gamma)=\sum_{\alpha \beta=\gamma}f(\alpha)g(\beta),
\end{equation}
and \textit{involution} given by
\begin{equation}
f^*(\gamma)=\overline{f(\gamma^{-1})}.
\end{equation}
Moreover, let $\pi$ be a faithful representation of $C_c(G)$ on a Hilbert space $H$ and $C^*_{\pi}(G)$ to denote the completion of $\pi(C_c(G))$ in $\mathcal{B}(H)$. Since $\pi$ is faithful we will often consider $C_c(G)\subset C^*_{\pi}(G)$ by suppressing the notation of $\pi$. The main result in this subsection is the following.

\begin{prop}\label{prop:Lip_dense_alg_general}
Suppose that $r,s:G\to G^{(0)}$ are locally bi-Lipschitz with respect to the metric $d_G$. Then, the complex vector space of compactly supported Lipschitz functions $\Lip_c(G,d_G)$ forms a dense $*$-subalgebra of $C_{\pi}^*(G)$.
\end{prop}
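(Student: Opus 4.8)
The plan is to verify the three defining properties of a dense $*$-subalgebra directly, using the fact that a compactly supported Lipschitz function can be written as a finite sum of Lipschitz functions each supported in a single open, pre-compact bisection. First I would record the bisection reduction: since $\supp(f)$ is compact and the base $\mathcal{B}$ of open, pre-compact bisections covers $G$, a partition of unity (which can be taken Lipschitz, as bump functions on a metric space are Lipschitz) lets us write any $f\in \Lip_c(G,d_G)$ as $f=\sum_{i=1}^k f_i$ with $f_i\in \Lip_c(G,d_G)$ and $\supp(f_i)$ contained in a single bisection $B_i\in\mathcal{B}$. Density of $\Lip_c(G,d_G)$ in $C^*_\pi(G)$ is then immediate: $\Lip_c(G,d_G)$ is already dense in $C_c(G)$ in the inductive limit topology (since Lipschitz functions are uniformly dense in $C_c$ of a metric space and one controls supports), hence dense in the $C^*$-completion. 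Closure under involution is a one-line check: if $f$ is Lipschitz and compactly supported then so is $f^*(\gamma)=\overline{f(\gamma^{-1})}$, because $\gamma\mapsto\gamma^{-1}$ is a homeomorphism which is moreover locally bi-Lipschitz (this follows from the hypothesis on $r,s$ together with $r(\gamma^{-1})=s(\gamma)$, or can be assumed as part of the metric being "nice"; in any case inversion restricted to the compact support of $f$ is Lipschitz).

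The substantive step is closure under the convolution product. Given $f,g\in\Lip_c(G,d_G)$, by the reduction above it suffices to treat $f,g$ each supported in a single open, pre-compact bisection, say $\supp(f)\subset A$ and $\supp(g)\subset B$ with $A,B\in\mathcal{B}$. Then the key structural observation, already noted in the text, is that $AB$ is again an open, pre-compact bisection, and $\supp(f\cdot g)\subset \overline{AB}$, which is compact. On a bisection the convolution sum collapses to a single term: for $\gamma\in AB$ there is a unique pair $(\alpha,\beta)\in (A\times B)\cap G^{(2)}$ with $\alpha\beta=\gamma$, namely $\alpha = (r_A)^{-1}(r(\gamma))$ and $\beta = (s_B)^{-1}(s(\gamma))$. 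Hence on $AB$ we have
\[
(f\cdot g)(\gamma) = f\bigl((r_A)^{-1}(r(\gamma))\bigr)\, g\bigl((s_B)^{-1}(s(\gamma))\bigr),
\]
and $(f\cdot g)$ vanishes off $AB$. This exhibits $f\cdot g$ as a product of compositions of Lipschitz maps: $f$ and $g$ are Lipschitz, $r$ and $s$ are Lipschitz (being locally bi-Lipschitz, and we are on a pre-compact set), and $(r_A)^{-1}$, $(s_B)^{-1}$ are Lipschitz precisely because $r,s$ are locally \emph{bi}-Lipschitz. A product of two bounded Lipschitz functions is Lipschitz, so $f\cdot g$ is Lipschitz on $AB$; since it has compact support and vanishes outside $AB$, one checks it is globally Lipschitz on $G$ (the only delicacy is matching the Lipschitz estimate across the boundary $\partial(AB)$, which is handled because $f\cdot g\to 0$ there and $\supp(f\cdot g)$ is a positive distance from points far from it — a standard pasting argument for functions supported in an open set).

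The main obstacle I expect is precisely this last pasting/boundary issue: verifying that a function which is Lipschitz on an open set $AB$ with compact support inside $\overline{AB}$ extends to a globally Lipschitz function on $G$, i.e. that there is no Lipschitz-constant blow-up as one approaches $\partial(AB)$ from outside. The resolution is that for $\gamma\in AB$ near the boundary, $|(f\cdot g)(\gamma)|$ is small (it is continuous and vanishes on $\partial(AB)\cap \supp$), and one compares pairs of points using the triangle inequality splitting into the cases both-inside, both-outside (trivial, value $0$), and one-inside-one-outside, where one interpolates through a near-boundary point; uniform continuity on the compact closure plus the local Lipschitz bound on the compact set $\overline{AB}$ gives a uniform global constant. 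Once the single-bisection case is done, the general case follows by bilinearity of convolution and the fact that a finite sum of Lipschitz functions is Lipschitz. I would also remark that compact support of all products is automatic from $AB$ pre-compact, so $\Lip_c(G,d_G)$ is genuinely closed under the operations and sits inside $C_c(G)\subset C^*_\pi(G)$.
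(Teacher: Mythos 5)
Your proposal is correct and follows essentially the same route as the paper: reduce to functions supported on open pre-compact bisections via a Lipschitz partition of unity, use the single-term convolution formula $(f\cdot g)(\gamma)=f\bigl(r_A^{-1}(r(\gamma))\bigr)\,g\bigl(s_B^{-1}(s(\gamma))\bigr)$ on the open pre-compact bisection $AB$ together with local bi-Lipschitzness (upgraded to bi-Lipschitz on compact sets) and the positive distance from $\supp(f\cdot g)$ to the complement of $AB$ to obtain global Lipschitzness, and deduce density from uniform approximation with controlled supports. The only point worth making explicit is the density step: the paper passes from sup-norm approximation to $C^*$-norm approximation via the bound $\|\pi(h)\|\le\sum_i\|h_i\|_\infty$ for a decomposition of $h$ into pieces supported on bisections, which is exactly the fact your appeal to the inductive limit topology is relying on.
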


The proof of Proposition \ref{prop:Lip_dense_alg_general} is achieved by establishing all the following lemmas. The proof of the first lemma is omitted as it is elementary. Specifically, it follows from the existence of locally Lipschitz partitions of unity \cite[Theorem 5.3]{LV} and the Stone-Weierstrass Theorem for locally compact Hausdorff spaces, for instance see \cite[Theorem A.10.1]{DE}.

\begin{lemma}\label{lem:Stone_Weir}
Let $(Z,d_Z)$ be a locally compact metric space. Then, the $*$-subalgebra of compactly supported complex valued Lipschitz functions $\Lip_c(Z,d_Z)$ is dense in $C_0(Z)$, with respect to the sup-norm.
\end{lemma}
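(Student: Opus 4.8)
Lemma~\ref{lem:Stone_Weir} is the density statement for locally compact metric spaces. Before discussing its proof in the form stated (which the author defers), let me outline how one would prove it and indicate the main obstacle.

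\textbf{Plan of proof.} The goal is to show that $\Lip_c(Z,d_Z)$ is dense in $C_0(Z)$ for a locally compact metric space $(Z,d_Z)$. The strategy is to combine a Stone--Weierstrass argument with a cutoff-by-Lipschitz-partitions-of-unity argument. First I would fix $f\in C_0(Z)$ and $\ep>0$. By definition of $C_0$, there is a compact set $L\subset Z$ with $|f|<\ep$ outside $L$. Since $Z$ is locally compact and metric, I would enlarge $L$ to an open, relatively compact neighbourhood $U$ with $\overline{U}$ compact, so that $L\subset U\subset \overline{U}$. The key separation tool is that the function $x\mapsto d_Z(x,Z\setminus U)$ is globally $1$-Lipschitz and vanishes outside $U$; normalising, one builds a Lipschitz function $\chi:Z\to[0,1]$ that is $1$ on $L$ and supported in $\overline{U}$ (for instance $\chi(x)=\min\{1,\ d_Z(x,Z\setminus U)/\eta\}$ for a suitable $\eta>0$, using that $L$ and $Z\setminus U$ are disjoint closed sets with $L$ compact so $d_Z(L,Z\setminus U)>0$). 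Then $\chi$ has compact support and is Lipschitz.

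\textbf{Second step: approximate on the compact piece.} On the compact metric space $\overline{U}$, the unital $*$-algebra $\Lip(\overline{U},d_Z)$ separates points (the coordinate-type functions $x\mapsto d_Z(x,z_0)$ are Lipschitz and separate points) and contains the constants, so by the Stone--Weierstrass Theorem it is uniformly dense in $C(\overline{U})$. Hence there is $g\in\Lip(\overline{U},d_Z)$ with $\|g-f\|_{\infty,\overline{U}}<\ep$. Extending $g$ arbitrarily but keeping it Lipschitz on a neighbourhood (or simply working with $g$ restricted to $\overline{U}$ and then multiplying by $\chi$, which kills it outside $\overline{U}$), the product $h:=\chi\cdot g$ is a compactly supported Lipschitz function on $Z$: it is a product of two bounded Lipschitz functions, one of compact support, hence Lipschitz with compact support. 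Finally I would estimate $\|f-h\|_\infty$ by splitting into the region $\{\chi=1\}\supset L$, where $|f-h|=|f-g|<\ep$; the region outside $\overline{U}$, where $h=0$ and $|f|<\ep$; and the annular transition region $\overline{U}\setminus L$, where both $|f|<\ep$ and $|h|=|\chi g|\le|g|\le|f|+\ep<2\ep$, giving $|f-h|<3\ep$ there. So $\|f-h\|_\infty<3\ep$, proving density.

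\textbf{Main obstacle.} The one genuinely delicate point is making sure the cutoff function $\chi$ and the Stone--Weierstrass approximant $g$ can be multiplied to yield something that is globally Lipschitz on all of $Z$ (not merely locally): this works precisely because $\chi$ is \emph{globally} Lipschitz and has compact support, so $\chi g$ is bounded and its Lipschitz constant is controlled by $\|\chi\|_\infty\Lip(g|_{\overline{U}})+\|g\|_{\infty,\overline{U}}\Lip(\chi)$ on $\overline{U}$ and is $0$ elsewhere, with the two pieces matching continuously on $\partial\overline U$ where $\chi=0$. In the version the author actually invokes, the clean way to package all of this is via the existence of locally Lipschitz partitions of unity subordinate to relatively compact covers (the cited \cite[Theorem 5.3]{LV}), which supplies the cutoffs, together with the Stone--Weierstrass theorem for locally compact Hausdorff spaces in the vanishing-at-infinity formulation (the cited \cite[Theorem A.10.1]{DE}); these two inputs make the argument essentially immediate, which is why it is stated as elementary and the proof omitted.
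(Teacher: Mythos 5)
Your proof is correct, but be aware that the paper does not spell out an argument at all: it simply cites the existence of locally Lipschitz partitions of unity \cite{LV} and the Stone--Weierstrass theorem for locally compact Hausdorff spaces \cite{DE}, the intended route being to apply the $C_0$-version of Stone--Weierstrass directly to the self-adjoint subalgebra $\Lip_c(Z,d_Z)\subset C_0(Z)$, whose point-separation and nowhere-vanishing properties are furnished by Lipschitz bump functions. Your argument is a more elementary, self-contained variant: you invoke only the classical Stone--Weierstrass theorem on the compact set $\overline U$, where $\Lip(\overline U,d_Z)$ is a unital, self-adjoint, point-separating algebra, and then transfer the approximation to $Z$ by an explicit distance-function cutoff $\chi$ and a $3\varepsilon$-estimate; this buys independence from the locally compact form of Stone--Weierstrass and from the partition-of-unity theorem, at the cost of the gluing step. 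In that step your justification that $h=\chi g$ is globally Lipschitz should be phrased a bit more carefully: the fact that the two pieces ``match continuously'' on $\partial U$ is not by itself the reason, since continuity across a boundary does not in general propagate Lipschitz bounds. The correct observation is that for $x\in\overline U$ and $y\notin U$ one has
$$|h(x)-h(y)|=\chi(x)\,|g(x)|\le \Lip(\chi)\,d_Z(x,Z\setminus U)\,\|g\|_{\infty,\overline U}\le \Lip(\chi)\,\|g\|_{\infty,\overline U}\,d_Z(x,y),$$
which, combined with the product estimate on $\overline U$ and $h=0$ off $U$, gives the global Lipschitz constant. With that small repair your proof is complete and correct, and the minor overlap of the three regions in your final estimate is harmless since both bounds hold wherever they are invoked.
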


The proof of next result is the partition of unity argument of \cite[Lemma 3.1.3]{Sims}, but in the setting of Lipschitz functions.

\begin{lemma}\label{lem:Lip_span}
It holds that $$\Lip_c(G,d_G)=\sp \{f\in \Lip_c(G,d_G):\text{there is}\,\, B\in \mathcal{B}\,\,\text{such that}\,\, \supp(f)\subset B\}.$$
\end{lemma}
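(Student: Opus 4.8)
The plan is to prove Lemma \ref{lem:Lip_span} by the standard partition-of-unity argument, adapted so that the resulting pieces remain Lipschitz rather than merely continuous. Write $f \in \Lip_c(G,d_G)$ and set $K = \supp(f)$, a compact set. Since $\mathcal{B}$ is a base of open, pre-compact bisections, for each $\gamma \in K$ pick $B_\gamma \in \mathcal{B}$ with $\gamma \in B_\gamma$; by compactness finitely many of them, say $B_1,\dots,B_N$, cover $K$. The goal is to write $f = \sum_{i=1}^N f_i$ with each $f_i \in \Lip_c(G,d_G)$ and $\supp(f_i) \subset B_i$, which immediately gives the inclusion $\subseteq$ in the asserted equality (the reverse inclusion being trivial).

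First I would produce a \emph{Lipschitz} partition of unity subordinate to the open cover $\{B_1,\dots,B_N\}$ of the compact set $K$. For this, note that $G$ is a locally compact metric space, so by \cite[Theorem 5.3]{LV} (the same reference used just above for Lemma \ref{lem:Stone_Weir}) there exist locally Lipschitz partitions of unity subordinate to open covers; applying this to the open cover $\{B_1,\dots,B_N, G\setminus K\}$ of $G$ and discarding the last function yields locally Lipschitz functions $h_1,\dots,h_N : G \to [0,1]$ with $\supp(h_i) \subset B_i$, with $\sum_i h_i \le 1$ everywhere, and with $\sum_i h_i \equiv 1$ on a neighbourhood of $K$. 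Since each $B_i$ is pre-compact, $\supp(h_i)$ is compact, and a locally Lipschitz function with compact support on a metric space is (globally) Lipschitz; hence $h_i \in \Lip_c(G,d_G)$.

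Then I would set $f_i = f \cdot h_i$ (pointwise product, not convolution). Each $f_i$ is compactly supported with $\supp(f_i) \subset \supp(h_i) \subset B_i$. A pointwise product of two bounded Lipschitz functions on a metric space is Lipschitz — the estimate $|f_i(\gamma) - f_i(\gamma')| \le \|f\|_\infty \,\Lip(h_i)\, d_G(\gamma,\gamma') + \|h_i\|_\infty \,\Lip(f)\, d_G(\gamma,\gamma')$ does it — so $f_i \in \Lip_c(G,d_G)$. Finally, on the neighbourhood of $K$ where $\sum_i h_i \equiv 1$ we have $\sum_i f_i = f \cdot \sum_i h_i = f$, while outside $K$ both sides vanish, so $f = \sum_{i=1}^N f_i$ globally. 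This exhibits $f$ as an element of the span on the right-hand side, completing the proof.

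The only mild subtlety — and the step I would be most careful with — is the passage from ``locally Lipschitz'' to ``Lipschitz'': one must invoke compactness of the supports to upgrade the local Lipschitz bounds to a uniform one, and one should make sure the cited partition-of-unity result genuinely delivers \emph{locally Lipschitz} (not merely continuous) bump functions, which is exactly why \cite[Theorem 5.3]{LV} is the right tool. Everything else is the routine Stone--Weierstrass-style bookkeeping already used in \cite[Lemma 3.1.3]{Sims}, now carried out in the Lipschitz category.
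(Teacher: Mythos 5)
Your proof is correct and follows essentially the same route as the paper, which simply cites the partition-of-unity argument of \cite[Lemma 3.1.3]{Sims} carried out in the Lipschitz category via \cite[Theorem 5.3]{LV}. The points you flag as subtleties (upgrading locally Lipschitz bump functions with compact support to globally Lipschitz ones, and stability of Lipschitz functions under pointwise products) are exactly the adjustments the paper has in mind, so nothing further is needed.
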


We also have the following.

\begin{lemma}\label{lem:Lip_dense_subspace}
It holds that $\Lip_c(G,d_G)$ is a dense subspace of $C_{\pi}^*(G)$.
\end{lemma}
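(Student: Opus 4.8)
The plan is to combine the two preceding lemmas with a standard inductive-limit density argument. Recall that $C_c(G)$ is dense in $C^*_\pi(G)$ by construction of the reduced groupoid $C^*$-algebra (the completion of $\pi(C_c(G))$), and that $C_c(G)$ carries the \emph{inductive limit topology}: a sequence $f_n\to f$ in this topology if the supports of the $f_n$ are eventually contained in a fixed compact set and $f_n\to f$ uniformly. A basic fact about groupoid $C^*$-algebras is that the inductive limit topology on $C_c(G)$ is finer than the norm topology inherited from $C^*_\pi(G)$; equivalently, if $f_n\to f$ in the inductive limit topology then $\pi(f_n)\to\pi(f)$ in operator norm. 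So it suffices to show that every $f\in C_c(G)$ can be approximated in the inductive limit topology by elements of $\Lip_c(G,d_G)$.

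First I would reduce to the case where $f$ is supported in a single open, pre-compact bisection $B\in\mathcal{B}$. Indeed, by Lemma \ref{lem:Lip_span} applied verbatim to $C_c(G)$ (this is exactly the partition-of-unity argument of \cite[Lemma 3.1.3]{Sims}), an arbitrary $f\in C_c(G)$ is a finite sum $f=\sum_{j=1}^k f_j$ with each $f_j\in C_c(G)$ supported in some $B_j\in\mathcal{B}$; since a finite sum of inductive-limit-convergent sequences converges in the inductive limit topology, it is enough to approximate each $f_j$.

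Next, fix $f\in C_c(G)$ with $K:=\supp(f)\subset B$ for an open, pre-compact bisection $B$. Choose a compact neighbourhood $L$ of $K$ with $K\subset\Int(L)\subset L\subset B$, and fix a Lipschitz function $\chi\in\Lip_c(G,d_G)$ with $0\le\chi\le 1$, $\chi\equiv 1$ on $K$ and $\supp(\chi)\subset \Int(L)$; such $\chi$ exists by the Lipschitz Urysohn lemma available in any metric space (or by Lemma \ref{lem:Stone_Weir} combined with a cutoff). By Lemma \ref{lem:Stone_Weir} applied to the locally compact metric space $(G,d_G)$ (or directly to the open set $\Int(L)$), there is a sequence $g_n\in\Lip_c(G,d_G)$ with $g_n\to f$ uniformly on $G$. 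Then $\chi g_n\in\Lip_c(G,d_G)$ (a product of two compactly supported Lipschitz functions on a metric space is again compactly supported Lipschitz), $\supp(\chi g_n)\subset L$ for all $n$, and since $\chi f=f$ we get $\|\chi g_n-f\|_\infty=\|\chi(g_n-f)\|_\infty\le\|g_n-f\|_\infty\to 0$. Thus $\chi g_n\to f$ in the inductive limit topology, hence $\pi(\chi g_n)\to\pi(f)$ in norm, which proves $f\in\overline{\Lip_c(G,d_G)}$ and therefore that $\Lip_c(G,d_G)$ is a dense subspace of $C^*_\pi(G)$.

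The only genuinely delicate point is the passage from uniform (inductive-limit) approximation to norm approximation in $C^*_\pi(G)$; I would invoke the standard continuity of the inclusion $(C_c(G),\text{inductive limit})\hookrightarrow C^*_\pi(G)$, which holds because $\pi$ factors through the (full or reduced) groupoid $C^*$-algebra on which this continuity is part of the defining universal/estimate properties — here it is already built into the hypothesis that $C^*_\pi(G)$ is the closure of $\pi(C_c(G))$ together with the bisection-supported estimate $\|\pi(h)\|\le \|h\|_\infty$ for $h$ supported in a bisection. Everything else (products and cutoffs of Lipschitz functions staying Lipschitz and compactly supported, existence of Lipschitz Urysohn functions) is routine metric-space bookkeeping, and the reduction to bisection-supported functions is a direct reuse of Lemma \ref{lem:Lip_span}.
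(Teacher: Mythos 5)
Your proposal is correct and takes essentially the same route as the paper: both arguments rest on the uniform density of Lipschitz functions in $C_0(G)$ (Lemma~\ref{lem:Stone_Weir}), Lipschitz partition/cutoff functions, and the estimate $\|\pi(h)\|\leq \sum_i\|h_i\|_{\infty}$ for decompositions of $h$ into bisection-supported pieces --- the paper cites \cite[Prop.~3.2.1]{Sims} for this, which is precisely the ``inductive-limit continuity'' you invoke. The only cosmetic difference is that the paper multiplies a single global Lipschitz approximant $\widetilde{g}$ by a Lipschitz partition of unity, $g=\sum_i\widetilde{g}f_i$, instead of reducing to one bisection and using a Lipschitz Urysohn cutoff $\chi$.
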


\begin{proof}
It suffices to prove that, for $f\in C_c(G)$ and $\varepsilon >0$, there is $g\in \Lip_c(G,d_G)$ such that $\|\pi(f)-\pi(g)\|< \varepsilon.$ From \cite[Prop. 3.2.1]{Sims}, for every $h\in C_c(G)$, there is a constant $K_h\geq 0$ such that, for every representation $\rho$ of $C_c(G)$ on a Hilbert space, it holds that $\|\rho(h)\|\leq K_h.$ 
Using a partition of unity argument like the one needed in Lemma \ref{lem:Lip_span} (where the partition functions are only required to be continuous), we can write $h$ as a finite sum $\sum_{i}h_i,$ with each $h_i$ being compactly supported on a bisection in $\mathcal{B}$. Then, according to \cite{Sims} we can choose $$K_h=\sum_{i} \|h_i\|_{\infty}.$$ In particular, for the representation $\pi$, we get that 
\begin{equation}\label{eq:Lips_dense_subspace}
\|\pi(h)\|\leq \sum_{i} \|h_i\|_{\infty}.
\end{equation}

In our case, we can write $$f=\sum_{i=1}^{n}ff_i,$$ where $f_i$ are Lipschitz partition functions, compactly supported on bisections in $\mathcal{B}$. Let us now consider $M=\max_{1\leq i\leq n}\|f_i\|_{\infty}$ and then from Lemma \ref{lem:Stone_Weir} there exists $\widetilde{g}\in \Lip_c(G,d_G)$ so that $\|f-\widetilde{g}\|_{\infty}<\varepsilon (nM)^{-1}.$ Let $$g=\sum_{i=1}^{n}\widetilde{g}f_i,$$ and note that $g\in \Lip_c(G,d_G)$ and that each $(f-\widetilde{g})f_i$ is compactly supported on a bisection in $\mathcal{B}$. Then, from (\ref{eq:Lips_dense_subspace}) we get that $\|\pi(f)-\pi(g)\|<\varepsilon$ and the proof is complete.
\end{proof}

\begin{lemma}\label{lem:convolution_Lip}
Supppose that the maps $r,s$ are locally bi-Lipschitz with respect to $d_G$. Then, for every $f,g\in \Lip_c(G,d_G)$, the convolution $f\cdot g\in \Lip_c(G,d_G).$ Consequently, $\Lip_c(G,d_G)$ is a $*$-subalgebra of $C_{\pi}^*(G)$.
\end{lemma}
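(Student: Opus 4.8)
The plan is to first reduce, via Lemma~\ref{lem:Lip_span} and bilinearity of the convolution, to the case where $\supp f$ and $\supp g$ are contained in single open, pre-compact bisections $B_1,B_2\in\mathcal{B}$: writing $f=\sum_i f_i$ and $g=\sum_j g_j$ with each $f_i,g_j$ of this form, we have $f\cdot g=\sum_{i,j}f_i\cdot g_j$, so it suffices to treat a single product $f_i\cdot g_j$, and to handle the involution separately. Along the way I will use the elementary fact that a locally Lipschitz function $h$ with compact support on a locally compact metric space is globally Lipschitz. This follows from two observations: on a compact set a locally Lipschitz function is Lipschitz (cover by finitely many balls with Lipschitz estimates, or argue by a subsequence/contradiction); and, choosing nested open neighbourhoods $\supp h\subseteq U_1\subseteq U_2\subseteq U_3$ of $\supp h$ with $\overline{U_3}$ compact and metric gaps between the levels, a pair $x,y$ is controlled linearly in $d(x,y)$ in each of the cases ``$x$ or $y$ in $U_2$ and $d(x,y)$ small'' (both points lie in $\overline{U_3}$, apply Lipschitzness there), ``$d(x,y)$ bounded below'' (use boundedness of $h$), and ``both $x,y\notin U_2$'' (then $h(x)=h(y)=0$).

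For the convolution with $\supp f\subseteq B_1$, $\supp g\subseteq B_2$: the set $B_1B_2$ is again an open, pre-compact bisection, and $\supp(f\cdot g)\subseteq(\supp f)(\supp g)$ is a compact subset of the open set $B_1B_2$ (recall $AB$ is pre-compact when $A,B$ are). Since $B_1$ is a bisection, for each $\gamma\in B_1B_2$ there is a \emph{unique} pair $(\alpha(\gamma),\beta(\gamma))\in(B_1\times B_2)\cap G^{(2)}$ with $\alpha(\gamma)\beta(\gamma)=\gamma$ (if $\alpha\beta=\alpha'\beta'=\gamma$ with $\alpha,\alpha'\in B_1$, then $r(\alpha)=r(\gamma)=r(\alpha')$, so $\alpha=\alpha'$ and then $\beta=\alpha^{-1}\gamma=\beta'$). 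As $\{f\neq0\}\subseteq B_1$ and $\{g\neq0\}\subseteq B_2$, the convolution sum defining $(f\cdot g)(\gamma)$ has at most one nonzero term, giving $(f\cdot g)(\gamma)=f(\alpha(\gamma))\,g(\beta(\gamma))$ on $B_1B_2$, and $(f\cdot g)(\gamma)=0$ off $B_1B_2$. The crucial identities are $\alpha(\gamma)=(r|_{B_1})^{-1}(r(\gamma))$ and $\beta(\gamma)=(s|_{B_2})^{-1}(s(\gamma))$, which follow from $r(\alpha(\gamma))=r(\gamma)$ and $s(\beta(\gamma))=s(\gamma)$.

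Next I would show $\alpha,\beta$ are locally Lipschitz on $B_1B_2$. Fixing $\gamma_0\in B_1B_2$ with $\alpha_0=\alpha(\gamma_0)$, choose a neighbourhood $N_0\ni\gamma_0$ in $G$ on which $r$ is bi-Lipschitz onto its image, and a neighbourhood $N_1\ni\alpha_0$ inside $B_1$ on which $r$ is bi-Lipschitz onto its image; since $G$ is {\'e}tale, $r$ is an open map, so $r(N_1)$ is open and $N:=N_0\cap B_1B_2\cap r^{-1}(r(N_1))$ is a neighbourhood of $\gamma_0$. On $N$ one has $\alpha=(r|_{N_1})^{-1}\circ r|_N$, a composition of Lipschitz maps, hence $\alpha$ is Lipschitz on $N$; the argument for $\beta$ uses $s$ instead. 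Therefore $f\cdot g=(f\circ\alpha)\,(g\circ\beta)$ is locally Lipschitz on $B_1B_2$ (a product of bounded, locally Lipschitz scalar functions, since $f,g$ are globally Lipschitz and bounded), and it vanishes identically on the open set $G\setminus(\supp f)(\supp g)$; as these two open sets cover $G$, the function $f\cdot g$ is locally Lipschitz on $G$ with compact support, hence globally Lipschitz by the preliminary fact. For the involution, the same scheme shows the inversion map $\iota\colon\gamma\mapsto\gamma^{-1}$ is locally bi-Lipschitz — on a bisection $B$ one has $\iota|_B=(s|_{B^{-1}})^{-1}\circ r|_B$, and $B^{-1}$ is again a bisection — so $f^*=\overline{f}\circ\iota$ is locally Lipschitz with compact support, hence lies in $\Lip_c(G,d_G)$. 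Combined with the density from Lemma~\ref{lem:Lip_dense_subspace}, this yields that $\Lip_c(G,d_G)$ is a dense $*$-subalgebra of $C_\pi^*(G)$.

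The main obstacle is that the hypothesis only provides ``locally bi-Lipschitz'' estimates, not a uniform one on a fixed pre-compact bisection, so one cannot directly exhibit a global Lipschitz constant for $\alpha$ or $\beta$ on $B_1B_2$. The proof must therefore produce only local Lipschitz estimates and then upgrade to a global one using compactness of $\supp(f\cdot g)$; this is exactly why the implication ``locally Lipschitz with compact support $\Rightarrow$ globally Lipschitz'', and the careful choice of nested neighbourhoods and metric gaps in its proof, is the heart of the argument.
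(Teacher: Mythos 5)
Your proof is correct and follows essentially the same route as the paper: reduce via Lemma~\ref{lem:Lip_span} to functions supported on bisections, write $f\cdot g=\bigl(f\circ (r|_{B_1})^{-1}\circ r\bigr)\bigl(g\circ (s|_{B_2})^{-1}\circ s\bigr)$ on $B_1B_2$ (zero elsewhere), deduce local Lipschitzness from the locally bi-Lipschitz hypothesis, and upgrade to a global Lipschitz bound by compactness of the support — the paper does this last step by applying Lemma~\ref{lem:locally_bi_Lip} on a compact neighbourhood $\cl(N_{\varepsilon})\subset AB$ of $\supp(f\cdot g)$, while you use the equivalent fact that a compactly supported locally Lipschitz function is globally Lipschitz. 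Your explicit verification of closure under the involution, via the local identity $\iota|_B=(s|_{B^{-1}})^{-1}\circ r|_B$ on bisections, covers a point the paper's proof leaves implicit.
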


\begin{proof}
Let $f,g\in \Lip_c(G,d_G)$ and by Lemma \ref{lem:Lip_span} we can assume that $\supp(f)\subset A$, $\supp(g)\subset B$, where $A,B\in \mathcal{B}$. First we note that, for all $\gamma=\alpha \beta \in AB$ it holds that, $r_{AB}(\gamma)=r_{A}(\alpha)$ and $s_{AB}(\gamma)=s_{B}(\beta)$ and as a result, 
\begin{equation}\label{eq:convolution_Lip}
\alpha=r_A^{-1}(r_{AB}(\gamma)),\enspace \beta= s_B^{-1}(s_{AB}(\gamma)).
\end{equation}
From the definition of the convolution (\ref{eq:convolution_etale}) we see that, if $(f\cdot g)(\gamma)\neq 0$, then there is a pair $(\alpha ,\beta) \in G^{(2)}$ such that $\gamma=\alpha \beta$ and $f(\alpha)g(\beta)\neq 0$. Therefore, $\alpha \in A,\, \beta \in B$ and hence $\alpha, \beta$ are uniquely determined by $\gamma$ as in (\ref{eq:convolution_Lip}). 

So, if $\gamma\in AB$, then 
\begin{equation}\label{eq:convolution_Lip2}
(f\cdot g)(\gamma)=(f\circ r_A^{-1}\circ r_{AB})(\gamma)(g\circ s_B^{-1}\circ s_{AB})(\gamma),
\end{equation}
and if $\gamma \not \in AB$, then $(f\cdot g)(\gamma)=0$. Moreover, $\supp(f)\supp(g)$ is a compact subset of $AB$ and since $$\supp(f\cdot g)\subset \supp(f)\supp(g),$$ we obtain that $\supp(f\cdot g)$ is also compact. 

Assume now that $f\cdot g\neq 0$ and hence $AB\neq \varnothing$. Then, since $r,s$ are locally bi-Lipschitz and $A,B,AB$ are open, the restricted maps $r_A,s_B,r_{AB},s_{AB}$ are also locally bi-Lipschitz with respect to $d_G$. In particular, the inverses $r_A^{-1},s_B^{-1}$ are locally bi-Lipschitz. Also, since $\supp(f\cdot g)$ compact in the open $AB$, we can find an $\varepsilon$-neighbourhood $N_{\varepsilon}$ of $\supp(f\cdot g)$ such that $\cl(N_{\varepsilon})\subset AB$. From the discussion so far we note that $r_A^{-1}\circ r_{AB}$ and $s_B^{-1}\circ s_{AB}$ are locally bi-Lipschitz homeomorphisms. Then, from Lemma \ref{lem:locally_bi_Lip} we obtain that $r_A^{-1}\circ r_{AB}$ and $s_B^{-1}\circ s_{AB}$ are bi-Lipschitz on the compact set $\cl(N_{\varepsilon})$, and since $f,g$ are Lipschitz with compact support we get that $f\cdot g$ is Lipschitz on $\cl(N_{\varepsilon})$. Since $f\cdot g$ vanishes outside of $\cl(N_{\varepsilon})$ it then follows that $f\cdot g$ is globally Lipschitz.
\end{proof}

\section{Metrisation of Smale space groupoids}\label{sec:Metrise_Groupoids}
We construct dynamical metrics on the stable and unstable groupoids of Smale spaces. These are compatible with the {\'e}tale topologies of the groupoids, making the range, source maps locally bi-Lipschitz and the automorphisms (induced from the Smale space homeomorphisms) bi-Lipschitz. In the case of TMC we obtain groupoid ultrametrics. We note that similar metrics can be constructed for the homoclinic groupoid and the semi-direct product groupoids found in \cite{PS}. However, what follows is notationally demanding, hence we avoid performing similar computations for the latter groupoids, whose metric structure investigation is not needed here. This section is an important step in studying summability for Ruelle algebras, since the groupoid metrics are designed to yield Lipschitz algebras on which the $\Kt$-homology of Ruelle algebras is uniformly finitely summable, see Section \ref{sec:SmoothRuelle}.

Our metrisation tool is the Alexandroff-Urysohn-Frink Theorem \ref{thm:Metrisation_thm}. Let $G$ denote the stable or unstable groupoid. The main utility of Theorem \ref{thm:Metrisation_thm} lies in building a tractable uniform structure (see \cite[Chapter 6]{Kelley}) on $G$ by considering countably many neighbourhoods of the diagonal of $G$ in $G \times G$. The corresponding uniform structure should also generate the {\'e}tale (Hausdorff) topology on $G$, thus giving a compatible metric on $G$. 

Let $(X,d,\varphi)$ be an irreducible Smale space and fix two periodic orbits $P,Q$. For simplicity we state and prove our basic results only for the stable groupoid $G^s(Q)$, as the statements, proofs and notation for the unstable groupoid $G^u(P)$ are similar. We begin by noting that $G^s(Q)$ is an invariant of topological conjugacy for $(X,d,\varphi)$, see \cite{Putnam_Func}. 

To make this specific to our case, let $\text{M}_d(X)$ be the collection of all metrics $d'$ on $X$ inducing the same topology with $d$. Also, consider the collection of \textit{hyperbolic metrics}
\begin{equation}
\text{hM}_d(X,\varphi)=\{d'\in \text{M}_d(X): (X,d',\varphi)\, \, \text{is a Smale space}\},
\end{equation}
and its non-empty sub-collection of \textit{self-similar (hyperbolic) metrics}
\begin{equation}
\text{sM}_d(X,\varphi)=\{d'\in \text{M}_d(X): (X,d',\varphi)\, \, \text{is a self-similar Smale space}\}.
\end{equation}
The algebraic structure of $G^s(Q)$ can equivalently be given by any metric in $\text{M}_d(X)$. Moreover, for every metric $d'\in \text{hM}_d(X,\varphi)$, Theorem \ref{thm: stable bisections} can be applied to the Smale space $(X,d',\varphi)$ and yield a topological base $\mathcal{B}^s(Q,d')$ for $G^s(Q)$. The gist is that all these bases produce the same topology on $G^s(Q)$. Thus, one can freely choose suitable hyperbolic metrics to work with and the self-similar are the best behaved. In the sequel, the construction of $G^s(Q)$ from the Smale space $(X,d',\varphi)$, where $d'\in \text{hM}_d(X,\varphi)$, will be called the $d'$\textit{-model} of $G^s(Q)$.

\subsection{Metrics and Lipschitz dynamics on Smale space groupoids}\label{sec:Lipschitz_metrics_on_groupoids}
We use the powerful metrisation tool of Alexandroff, Urysohn and Frink \cite{Frink} to metrise $G^s(Q)$. More precisely, for every $d'\in \text{hM}_d(X,\varphi)$, using the {\'e}tale topological base $\mathcal{B}^s(Q,d')$ one can build a compatible metric $D_{s,d'}$ for $G^s(Q)$. In particular, if $d'\in \text{sM}_d(X,\varphi)$, we obtain the following result (Theorem \ref{thm:GroupoidMetrisation}), whose proof is achieved as a combination of Lemmas \ref{lem:generalgroupoidmetric}, \ref{lem:generalgroupoidmetricLip}, \ref{lem:bi_Lip_inversion} and \ref{lem:bi-Lipschitz_groupoid_maps}. Before stating the theorem, we should highlight a few basic facts and introduce some notation. 

By $i:G^s(Q)\to G^s(Q)$ we denote the inversion map
\begin{equation}
i(x,y)=(y,x),
\end{equation}
and recall that by $r,s:G^s(Q)\to X^u(Q)$ we denote the range and source maps, defined as
\begin{equation}
r(x,y)=x, \enspace s(x,y)=y.
\end{equation}
In the sequel we will consider compatible metrics $D,\, \widetilde{D}$ on $G^s(Q),\, X^u(Q)$, respectively (see (\ref{eq:groupoidmetric}), (\ref{eq:restricted_metric}) and Lemma \ref{lem:bi-Lipschitz_groupoid_maps}), for which $r$ becomes locally bi-Lipschitz, that is, for every $a\in G^s(Q)$ there is $\ell_a >0$ and $\Lambda_a \geq 1$ such that, 
\begin{equation}\label{eq:locally_bi_Lip}
\Lambda_a^{-1}D(a,b)\leq \widetilde{D}(r(a),r(b))\leq \Lambda_a D(a,b),
\end{equation}
for every $b\in B_{D}(a,\ell_a)$. Similarly, for the map $s$. This should be no surprise, because for every $V\in \mathcal{B}(G^s(Q))$, where 
\begin{equation}
\mathcal{B}(G^s(Q))=\{W\subset G^s(Q): W\in \mathcal{B}^s(Q,d'),\, d'\in \text{hM}_{d'}(X,\varphi)\},
\end{equation}
it is not hard to show that the restrictions $r_V:V\to r(V)$ and $s_V:V\to s(V)$ are homeomorphisms, and therefore the maps $r,s$ are local homeomorphisms (note that $j(X^u(Q))$ is open is $G^s(Q)$ since $G^s(Q)$ is {\'e}tale). 

\begin{thm}\label{thm:GroupoidMetrisation}
For every metric $d'\in \text{sM}_d(X,\varphi)$ there are compatible metrics $D_{s,d'}$ on $G^s(Q)$ and $\widetilde{D_{s,d'}}$ on $X^u(Q)$ so that, with respect to these metrics,
\begin{enumerate}[(1)]
\item the groupoid automorphism $\Phi=\varphi \times \varphi :G^s(Q)\to G^s(Q)$ is bi-Lipschitz with $$4^{-1}D_{s,d'}(a,b)\leq D_{s,d'}(\Phi(a),\Phi(b))\leq 8D_{s,d'}(a,b),$$ for every $a,b\in G^s(Q)$;
\item the inversion map $i$ is bi-Lipschitz, and for every base set $V\in \mathcal{B}(G^s(Q))$, the restrictions $r_V$ and $s_V$ of the range and source maps are bi-Lipschitz. Specifically, the maps $r,s$ are locally bi-Lipschitz.
\end{enumerate}
Similarly, for every $d'\in \text{sM}_d(X,\varphi)$, there are compatible metrics $D_{u,d'}$ on $G^u(P)$ and $\widetilde{D_{u,d'}}$ on $X^s(P)$ for which, the automorphism $\Phi=\varphi \times \varphi :G^u(P)\to G^u(P)$ satisfies $$4^{-1}D_{u,d'}(a,b)\leq D_{u,d'}(\Phi^{-1}(a),\Phi^{-1}(b))\leq 8D_{u,d'}(a,b),$$ for every $a,b\in G^u(P)$, the inversion map is bi-Lipschitz, and the restrictions of the range and source maps on base sets are also bi-Lipschitz.
\end{thm}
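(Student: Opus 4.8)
\emph{Proof plan.} The idea is to produce $D_{s,d'}$ and $\widetilde{D_{s,d'}}$ from the Alexandroff--Urysohn--Frink Metrisation Theorem~\ref{thm:Metrisation_thm} applied to a sequence of entourages graded by the dynamics, and then to read the Lipschitz estimates directly off the grading. Fix $d'\in \mathrm{sM}_d(X,\varphi)$ and pass to the $d'$-model of $G^s(Q)$; by Theorem~\ref{thm: stable bisections} its {\'e}tale topology is second countable, so it admits a countable base $\mathcal{B}^s(Q,d')$ of open, pre-compact bisections $V=V^s(v,w,h^s,\eta,N)$, each carrying chart homeomorphisms $s_V\colon V\to s(V)\subset X^u(Q)$ and $r_V\colon V\to r(V)$ through which the metric $d'$ on $X$ transports to a local metric on $V$. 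First I would amalgamate these local metrics into a decreasing sequence $W_0\supseteq W_1\supseteq\cdots$ of symmetric neighbourhoods of the diagonal in $G^s(Q)\times G^s(Q)$ satisfying Frink's chain condition $W_{n+1}\circ W_{n+1}\circ W_{n+1}\subseteq W_n$, forming a neighbourhood base of the diagonal and generating the {\'e}tale topology; the decisive design point is that the index $n$ is calibrated to \emph{scale}, using the inductive-limit presentation~\eqref{eq:increasingnetleaves} of $X^u(Q)$, so that $\varphi$ shifts levels by exactly one and hence $\Phi=\varphi\times\varphi$ carries $W_n$ into $W_{n+1}$ and $W_{n-1}$ into $W_n$. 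Since the inversion $i$ commutes with $\Phi$, the $W_n$ can simultaneously be taken invariant under $i\times i$. Theorem~\ref{thm:Metrisation_thm} then returns a metric $D_{s,d'}$ compatible with the {\'e}tale topology and obeying the sandwich $W_{n+1}\subseteq\{D_{s,d'}<2^{-n}\}\subseteq W_n$; applying the same tool to the inductive-limit topology of $X^u(Q)$, compatibly with the charts $s_V,r_V$, produces $\widetilde{D_{s,d'}}$.

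For part~(1), self-similarity of $d'$ (Definition~\ref{def:selfsimilarSmalespace}) is exactly what makes this clean: since $\varphi$ acts as $\lambda_{d'}^{-1}$ times an isometry on local stable sets and $\varphi^{-1}$ likewise on local unstable sets, the scale shift of $\Phi$ on the graded entourages is by one index with no residual dependence on $\lambda_{d'}$. Feeding $\Phi(W_n)\subseteq W_{n-1}$ and $\Phi^{-1}(W_n)\subseteq W_{n-1}$ into the sandwich together with the chain condition yields $4^{-1}D_{s,d'}(a,b)\le D_{s,d'}(\Phi(a),\Phi(b))\le 8\,D_{s,d'}(a,b)$; the numbers $4$ and $8$ are merely the cumulative slack of the Frink sandwich and chain condition, and this is the only place explicit constants enter.

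For part~(2), the $i\times i$-invariance of the $W_n$ makes the inversion bi-Lipschitz straight from the sandwich inequalities. For the range and source maps, fix a base bisection $V\in\mathcal{B}^s(Q,d')$; because the entourages were built by transporting the $d'$-balls through precisely the charts $s_V$, $r_V$ and their overlaps, the sandwich for $D_{s,d'}$ together with the corresponding one for $\widetilde{D_{s,d'}}$ forces $s_V\colon (V,D_{s,d'})\to (s(V),\widetilde{D_{s,d'}})$ and $r_V\colon (V,D_{s,d'})\to (r(V),\widetilde{D_{s,d'}})$ to be bi-Lipschitz, and since $\bigcup\mathcal{B}^s(Q,d')=G^s(Q)$ this gives the local bi-Lipschitz property~\eqref{eq:locally_bi_Lip}. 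The unstable groupoid $G^u(P)$ is treated verbatim with the r{o}les of $\varphi$ and $\varphi^{-1}$ exchanged: on the unit space $X^s(P)$ it is $\varphi^{-1}$ that shifts the levels of~\eqref{eq:increasingnetleaves} upward, so the graded entourages are calibrated to $\varphi^{-1}$, which is why the resulting estimate is the one for $\Phi^{-1}$ recorded in the theorem, with the same constants $4^{-1}$ and $8$.

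The hard part is the construction of $\{W_n\}$: it must simultaneously generate the inductive-limit {\'e}tale topology (rather than the strictly coarser subspace topology inherited from $X\times X$), remain consistent across overlapping bisections so that $D_{s,d'}$ is well defined and the charts $s_V,r_V$ are honestly bi-Lipschitz, and be graded tightly enough by the dynamics that $\Phi$ becomes a near-shift on the index. Reconciling these three requirements while pinning down the explicit constants is the technical core, which I would organise into four lemmas --- existence and compatibility of the groupoid metric (Lemma~\ref{lem:generalgroupoidmetric}), the automorphism estimate under self-similarity (Lemma~\ref{lem:generalgroupoidmetricLip}), bi-Lipschitzness of inversion (Lemma~\ref{lem:bi_Lip_inversion}), and bi-Lipschitzness of the restricted range and source maps (Lemma~\ref{lem:bi-Lipschitz_groupoid_maps}) --- whose combination is the proof of the theorem.
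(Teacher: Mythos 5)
Your outline follows the same route as the paper: metrise via the Alexandroff--Urysohn--Frink Theorem~\ref{thm:Metrisation_thm} using a dynamically graded system of neighbourhoods, and extract the constants $4^{-1}$ and $8$ from the Frink quasimetric/chain-metric comparison combined with a one-level shift under $\Phi$. But as a proof there is a genuine gap: all the substance is deferred to the unproved construction of the graded entourages $W_n$. The actual work consists in building, for each $a\in G^s(Q)$, base sets $V_n(a)=V^s(a,h^s_{a,n},\eta_{a,n},N_{a,n})$ from holonomy maps, with the level advancing in time-blocks of length $\lceil\log_{\lambda_{X,d'}}3\rceil$ (the factor $3$ is what lets two intersecting level-$(n+1)$ sets around different base points fit inside a single level-$n$ set), and in verifying condition (3) of Theorem~\ref{thm:Metrisation_thm} by explicit bracket-map computations; the Lipschitz estimate for $\Phi$ then rests on the bookkeeping that one application of $\varphi^{\pm1}$ moves the first-time parameter $N_a$ by at most one and hence the cover index by at most one. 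Asserting that the index ``can be calibrated so that $\varphi$ shifts levels by exactly one'' is the statement to be proved, not an input, and self-similarity enters through these computations rather than automatically.

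Two of your bridging claims are also too strong as stated. Entourages obtained by ``transporting $d'$-balls through the charts $s_V,r_V$'' control both legs by $d'$ and therefore generate the subspace topology from $X\times X$, which is strictly coarser than the {\'e}tale topology (you flag this, but then still invoke chart-transport to claim the source and range estimates are ``forced''); with the correct holonomy-depth grading, the lower Lipschitz bound for $s_V$ is genuinely nontrivial and in the paper needs a contradiction argument producing a constant $M_V$ depending on the depth of the bisection $V$ and on $\varepsilon_X,\varepsilon_X'$ --- which is also why $r,s$ are only locally bi-Lipschitz. Likewise, taking the $W_n$ to be $(i\times i)$-invariant is not free: the natural graded sets are graphs over the source leg whose range leg is controlled only up to the ratio $\varepsilon_X/\varepsilon_X'$, so inversion drops roughly $M$ levels with $\lambda_{X,d'}^{-M}\varepsilon_X/2\le\varepsilon_X'/4$ and acquires a $d'$-dependent Lipschitz constant (this is the content of Lemma~\ref{lem:bi_Lip_inversion}); if you symmetrise instead, you must re-verify the neighbourhood-base property and the source/range estimates for the symmetrised family. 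In short, the plan points in the right direction, but the four lemmas you defer to \emph{are} the proof, and none of them is carried out.
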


\begin{remark}
Recall that for every $d_1,d_2\in \text{hM}_d(X,\varphi)$ the $d_1$-model of $G^s(Q)$ is exactly the same with the $d_2$-model of $G^s(Q)$. They are just different ways to construct $G^s(Q)$. However, for $d'\in \text{hM}_d(X,\varphi)$, the metric $D_{s,d'}$ is better adapted to the $d'$-model.
\end{remark}

One can find several formulations of the Alexandroff-Urysohn-Frink Metrisation Theorem \cite[Chapter 6]{Kelley}. Here it is best to use the one in \cite[Theorem 2.4.1]{Sakai_book}. Before stating it, if $Z$ is a set, $\mathcal{U}$ is a cover of $Z$ and $z\in Z$, let us denote the $\mathcal{U}$-\textit{star} of $z$ by
\begin{equation}
\st(z,\mathcal{U})=\bigcup \{U\in \mathcal{U}: z\in U\}.
\end{equation}

\begin{thm}[Alexandroff-Urysohn-Frink Metrisation Theorem]\label{thm:Metrisation_thm}
For a Hausdorff space $Z$ the following are equivalent.
\begin{enumerate}[(1)]
\item $Z$ is metrisable.
\item $Z$ has a sequence of open covers $(\mathcal{U}_n)_{n\in \mathbb N}$ such that, 
\begin{enumerate}[(a)]
\item for each $z\in Z$, the sequence $(\st(z,\mathcal{U}_n))_{n\in \mathbb N}$ is a neighbourhood base of $z$;
\item for every $U,U'\in \mathcal{U}_{n+1}$, if $U\cap U'\neq \varnothing$ then there is some $U''\in \mathcal{U}_n$ such that $U\cup U'\subset U''.$
\end{enumerate}
\item Each $z\in Z$ has a neighbourhood base $(V_n(z))_{n\in \mathbb N}$ of open sets satisfying the condition that, for every $z\in Z$ and $n \in \mathbb N$, there is some $j(z,n)\geq n$ so that, if $y\in Z$ with $V_{j(z,n)}(z)\cap V_{j(z,n)}(y)\neq \varnothing$, then $V_{j(z,n)}(y)\subset V_n(z)$.
\end{enumerate}
\end{thm}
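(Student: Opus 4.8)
The plan is to establish the cycle $(1)\Rightarrow(2)\Rightarrow(3)\Rightarrow(1)$; the first two implications are soft, and the third — the direction actually used here to metrise the groupoids — carries the substance, via the classical ``chaining'' lemma that manufactures a pseudometric out of a suitable decreasing sequence of neighbourhoods of the diagonal.

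For $(1)\Rightarrow(2)$ I would fix a compatible metric $\rho$ on $Z$ and take $\mathcal{U}_n$ to be the cover by the open balls $B_\rho(z,3^{-n})$, $z\in Z$: since $B_\rho(x,3^{-n})\subseteq\st(x,\mathcal{U}_n)\subseteq B_\rho(x,2\cdot 3^{-n})$ we get (a), and if $B_\rho(w,3^{-n-1})$ meets $B_\rho(w',3^{-n-1})$ the triangle inequality forces their union into $B_\rho(w,3^{-n})\in\mathcal{U}_n$, which is (b). For $(2)\Rightarrow(3)$ I put $V_n(z):=\st(z,\mathcal{U}_n)$, which are open and, by (a), form a neighbourhood base at $z$; one use of (b) gives the ``star of a star'' estimate that any $W\in\mathcal{U}_{n+1}$ meeting $V_{n+1}(z)$ satisfies $W\subseteq V_n(z)$. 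Running this twice shows $j(z,n):=n+2$ works: from $V_{n+2}(z)\cap V_{n+2}(y)\neq\varnothing$ pick $U\ni z$, $U'\ni y$ in $\mathcal{U}_{n+2}$ with a common point, use (b) to get $U_1\in\mathcal{U}_{n+1}$ with $U\cup U'\subseteq U_1$, so $U'\subseteq V_{n+1}(z)$; then each $W\in\mathcal{U}_{n+2}$ containing $y$ meets $U'$ and hence, by (b) together with the star-of-a-star estimate, lies in $V_n(z)$, whence $V_{n+2}(y)\subseteq V_n(z)$.

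The core is $(3)\Rightarrow(1)$. Replacing $V_n(z)$ by $V_1(z)\cap\cdots\cap V_n(z)$ we may assume the $V_n(z)$ decrease in $n$, and taking maxima we may assume $j(z,\cdot)$ is nondecreasing. I would then aim to build a decreasing sequence $Z\times Z=E_0\supseteq E_1\supseteq\cdots$ of symmetric sets containing the diagonal with $E_{n+1}\circ E_{n+1}\circ E_{n+1}\subseteq E_n$ and with the sections $E_n[x]:=\{y:(x,y)\in E_n\}$ forming a neighbourhood base at every $x$. Granting this, define $g(x,y):=2^{-n}$ when $(x,y)\in E_n\setminus E_{n+1}$ and $g(x,y):=0$ on $\bigcap_n E_n$, and set $\rho(x,y):=\inf\sum_i g(x_i,x_{i+1})$ over all finite chains $x=x_0,\ldots,x_k=y$. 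Then $\rho$ is a pseudometric with $\rho\le g$, and the standard induction on chain length using the triple composition gives $g\le 2\rho$; hence $E_{n+2}[x]\subseteq\{y:\rho(x,y)<2^{-n-1}\}\subseteq E_n[x]$, so the $\rho$-balls and the $E_n[x]$ generate the same neighbourhood filters and $\rho$ induces the topology of $Z$. Since $Z$ is Hausdorff, for $x\neq y$ some $E_n[x]$ omits $y$, so $g(x,y)\ge 2^{-n}$ and $\rho(x,y)>0$; thus $\rho$ is a genuine metric.

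I expect the only real obstacle to be the construction of the composable entourages $E_n$ from the pointwise data of (3): the level $j(z,n)$ depends on the point $z$ and cannot be made uniform in $z$, so the naive choice $E_n=\bigcup_z V_{j(z,n)}(z)\times V_{j(z,n)}(z)$ — which does generate the correct neighbourhood filters (this is exactly where (3) is used, pointwise) — satisfies no composition law. Resolving this is Frink's contribution \cite{Frink}; roughly, one interleaves the levels carefully, using that in controlling any one chain only finitely many ``bad scales'' at its fixed endpoints intervene. (Alternatively one routes the chaining lemma through $(2)\Rightarrow(1)$, where, after intersecting consecutive covers to make the stars decreasing, the entourages $E_n=\bigcup_{U\in\mathcal{U}_n}U\times U$ do compose thanks to (b); the implication $(3)\Rightarrow(2)$ then carries the same point-dependence difficulty.) Once $(E_n)$ is in hand the rest is the routine chaining argument above.
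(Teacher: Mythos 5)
Your implications $(1)\Rightarrow(2)$ and $(2)\Rightarrow(3)$ are correct (the $3^{-n}$-ball covers and the choice $j(z,n)=n+2$ via a double application of (b) both work), and your description of the chain-metric mechanism is the right one. But the proposal has a genuine gap exactly where the theorem has its content: the implication $(3)\Rightarrow(1)$. You correctly observe that the naive entourages $E_n=\bigcup_z V_{j(z,n)}(z)\times V_{j(z,n)}(z)$ satisfy no composition law because $j(z,n)$ is point-dependent, and then you write that ``resolving this is Frink's contribution'' with only a vague remark about interleaving scales. That is not a proof of the step; it is a deferral of the only hard step, and the alternative route you sketch (through $(3)\Rightarrow(2)$ and then the Alexandroff--Urysohn part) is deferred for the same reason. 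Note also that this is precisely the direction the paper needs and uses: condition (3) is what is verified for the groupoid neighbourhood bases, and the metric is then manufactured from it.

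For comparison, the paper does not reprove the theorem (it cites Frink, Kelley and Sakai, Theorem 2.4.1), but it reproduces the Sakai proof of $(3)\Rightarrow(2)\Rightarrow(1)$ in its groupoid setting, and that is how the point-dependence is actually resolved: after arranging the $V_n(z)$ to be decreasing, one diagonalises over the point-dependent indices by the recursion $k(z,1)=1$, $k(z,n)=\max\{n,\ j(z,i): i=1,\dots,k(z,n-1)\}$, and sets $U_n(z)=\bigcap_{i=1}^{k(z,n)}V_i(z)=V_{k(z,n)}(z)$; the covers $\mathcal{U}_n=\{U_n(z):z\in Z\}$ then satisfy condition (2) (this is the analogue of the paper's Lemmas \ref{lem:groupoidNB}--\ref{lem:k(a,n)}). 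From (2) one does not need entourages at all: $\rho(x,y)=\inf\{2^{-n}:\ x,y\in U\ \text{for some}\ U\in\mathcal{U}_n\}$ is a $2$-quasimetric by (b), Frink's chain construction gives a metric $D$ with $4^{-1}\rho\le D\le\rho$, and the inclusions $\st(z,\mathcal{U}_{n+2})\subset \overline{B}_D(z,2^{-n-2})\subset \st(z,\mathcal{U}_n)$ show $D$ induces the topology (these are exactly the inequalities (\ref{eq:metricandquasimetric}) and (\ref{eq:groupoidmetricstar}) the paper reuses). To complete your proposal you would need to supply this recursion (or an equivalent device) rather than cite Frink for it; everything else in your outline is routine and correct.
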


We aim to construct metrics on $G^s(Q)$ by building neighbourhood bases that satisfy condition (3) of Theorem \ref{thm:Metrisation_thm}. To simplify the notation we will denote the elements of $G^s(Q)$ by $a=(a_1,a_2),\,b=(b_1,b_2),\, c=(c_1,c_2)$ or $e=(e_1,e_2)$. 

Let $d'\in \text{hM}_d(X,\varphi)$ be an arbitrary metric and consider the $d'$-model of $G^s(Q)$. This means all base sets are in $\mathcal{B}^s(Q,d')$ and all local stable, unstable sets, and the homeomorphism $\varphi$, are defined and behave with respect to $d'$. However, in order to keep the notation reasonably simple we do not highlight the dependence on the metric $d'$. In particular, the Smale space constants will be again denoted by $\varepsilon_X>0$ and $\lambda_X>1$, instead of $\varepsilon_{X,d'}$ and $\lambda_{X,d'}$.

First of all, since $\varphi$ is uniformly continuous on $X$, for every $n\geq 0$, we can consider $\eta_n\in (0,\varepsilon_X/2]$ to be the supremum amongst all $\eta \in (0,\varepsilon_X/2]$ such that $\varphi^n(X^u(z,2\eta))\subset \cl(X^u(\varphi^n(z),\varepsilon_X'/2))$, for every $z\in X$, where the closure is taken in $X^u(\varphi^n(z),\varepsilon_X)$. In particular, 

\begin{equation}\label{eq:widthofholonomies}
\varphi^n(X^u(z,2\eta_n))\subset \cl(X^u(\varphi^n(z),\varepsilon_X'/2))
\end{equation}
for every $z\in X$. The sequence $(\eta_n)_{n\geq 0}$ is decreasing since, for every $z\in X$, we have $\varphi^{n+1}(X^u(z,2\eta_{n+1}))\subset \cl(X^u(\varphi^{n+1}(z),\varepsilon_X'/2))$ and hence $$\varphi^{n}(X^u(z,2\eta_{n+1}))\subset \cl(X^u(\varphi^{n}(z),\lambda_X^{-1}\varepsilon_X'/2)),$$ meaning that $\eta_{n+1}\leq \eta_n$. Also, one can observe that $(\eta_n)_{n\geq 0}$ converges to zero.
\newpage
Recall that the base sets of the topology on $G^s(Q)$ are denoted by $V^s(a,h^s,\eta, N)$, where 
\begin{enumerate}[(i)]
\item $N\in \mathbb N$ is big enough so that $$\varphi^N(a_2)\in X^s(\varphi^N(a_1),\varepsilon_X'/2);$$
\item $\eta\in (0,\varepsilon_X/2]$ is small enough so that $$\varphi^{N}(X^u(a_2,\eta))\subset X^u(\varphi^{N}(a_2),\varepsilon_X'/2);$$
\item $h^s:X^u(a_2,\eta)\to X^u(a_1, \varepsilon_X/2)$ is given by $$h^s(z)=\varphi^{-N}[\varphi^N(z),\varphi^N(a_1)].$$ 
\end{enumerate}

For every $a\in G^s(Q)$ we define $N_a\geq 0$ to be the \textit{first time} that 
\begin{equation}\label{eq:first_time}
\varphi^{N_a}(a_2)\in X^s(\varphi^{N_a}(a_1),\varepsilon_X'/2),
\end{equation}
and consider the sequences $(N_{a,n})_{n\geq 0}$ and $(\eta_{a,n})_{n\geq 0}$ given by 
\begin{equation}
N_{a,n}= \max \{ N_a, n\} \enspace \text{and} \enspace \eta_{a,n}=\eta_{N_{a,n}}
\end{equation}
Then, define the holonomy map $h_{a,n}^s:X^u(a_2, \eta_{a,n})\to X^u(a_1,\varepsilon_X/2)$ given by 
\begin{equation}
h_{a,n}^s(z)=\varphi^{-N_{a,n}}[\varphi^{N_{a,n}}(z),\varphi^{N_{a,n}}(a_1)],
\end{equation}
and consider the base set 
\begin{equation}\label{eq:neigh_base_sets}
V_n(a)= V^s(a, h_{a,n}^s, \eta_{a,n}, N_{a,n}).
\end{equation}

The above condition (ii) can be relaxed by considering the closure on the right-hand side of the inclusion. Then, due to (\ref{eq:widthofholonomies}), each holonomy map $h_{a,n}^s$ can be extended on $X^u(a_2, 2\eta_{a,n})$.

\begin{lemma}\label{lem:groupoidNB}
For $a\in G^s(Q)$, the sequence of open sets $(V_n(a))_{n\geq 0}$ forms a decreasing neighbourhood base of $a$.
\end{lemma}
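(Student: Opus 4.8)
The plan is to verify the two assertions separately: first that each $V_n(a)$ is an open neighbourhood of $a$, and then that the sequence is decreasing. For the first point, $V_n(a)=V^s(a,h^s_{a,n},\eta_{a,n},N_{a,n})$ is one of the basic open sets from Theorem~\ref{thm: stable bisections}, so openness is immediate once I check that the parameters are admissible; this amounts to confirming that $N_{a,n}=\max\{N_a,n\}\geq N_a$ so that condition (i) holds by the definition of $N_a$ in (\ref{eq:first_time}) and $\varphi$-invariance of local stable sets, and that $\eta_{a,n}=\eta_{N_{a,n}}$ satisfies the inclusion required in condition (ii) — this is exactly (\ref{eq:widthofholonomies}) (with the closure built in, as remarked after (\ref{eq:neigh_base_sets})). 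That $a\in V_n(a)$ follows since $h^s_{a,n}(a_2)=\varphi^{-N_{a,n}}[\varphi^{N_{a,n}}(a_2),\varphi^{N_{a,n}}(a_1)]=\varphi^{-N_{a,n}}\varphi^{N_{a,n}}(a_1)=a_1$, using axiom (B1) (together with the bracket independence, so the value does not depend on which admissible $N$ we use) and $a_2\in X^u(a_2,\eta_{a,n})$.

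Next I would show the sequence is decreasing, i.e.\ $V_{n+1}(a)\subset V_n(a)$. The key observations are that $(N_{a,n})_{n\geq 0}$ is non-decreasing and $(\eta_{a,n})_{n\geq 0}=(\eta_{N_{a,n}})_{n\geq 0}$ is non-increasing (because $(\eta_m)_{m\geq 0}$ is decreasing, as established just before this lemma). Since $\eta_{a,n+1}\leq \eta_{a,n}$, the domain $X^u(a_2,\eta_{a,n+1})$ is contained in $X^u(a_2,\eta_{a,n})$. The remaining point is that the two holonomy maps $h^s_{a,n}$ and $h^s_{a,n+1}$ agree on the smaller domain: both send $z$ to the unique point of $X^s(h^s_{a,n}(z),\varepsilon_X/2)$ lying on $X^u(z)$, and by (\ref{eq:uniquebracket}) (uniqueness of the bracket) and axiom (B4) the expression $\varphi^{-N}[\varphi^N(z),\varphi^N(a_1)]$ is independent of the admissible power $N$, so $h^s_{a,n+1}=h^s_{a,n}$ on $X^u(a_2,\eta_{a,n+1})$. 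Hence $V_{n+1}(a)=\{(h^s_{a,n+1}(z),z):z\in X^u(a_2,\eta_{a,n+1})\}\subset \{(h^s_{a,n}(z),z):z\in X^u(a_2,\eta_{a,n})\}=V_n(a)$.

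Finally, for the neighbourhood-base property I must show that every open $U\ni a$ contains some $V_n(a)$. Since the sets $V^s(v,w,h^s,\eta,N)$ form a base, it suffices to handle $U=V^s(a,\tilde h^s,\tilde\eta,\tilde N)$; choosing $n$ large enough that $n\geq \tilde N$ (so $N_{a,n}\geq \tilde N$) and $\eta_{a,n}\leq \tilde\eta$ (possible since $\eta_{a,n}\to 0$), the bracket-independence argument above again gives $h^s_{a,n}=\tilde h^s$ on $X^u(a_2,\eta_{a,n})$, whence $V_n(a)\subset U$. The main obstacle — really the only subtle point — is the repeated use of the fact that the holonomy maps defined with different powers $N$ coincide on common domains; once that independence is carefully justified from the bracket axioms (B2)–(B4) and the uniqueness relation (\ref{eq:uniquebracket}), everything else is routine bookkeeping with the monotone sequences $(N_{a,n})$ and $(\eta_{a,n})$.
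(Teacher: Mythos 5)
Your verification that each $V_n(a)$ is an admissible base set containing $a$, and your argument that the sequence is decreasing (monotonicity of $(N_{a,n})$ and $(\eta_{a,n})$ plus independence of the expression $\varphi^{-N}[\varphi^{N}(z),\varphi^{N}(a_1)]$ from the admissible power $N$), are sound and essentially the same as the paper's treatment of that half.

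The neighbourhood-base step, however, has a genuine gap. You reduce to open sets of the form $U=V^s(a,\tilde h^s,\tilde\eta,\tilde N)$, i.e.\ base sets \emph{centred at} $a$. But the base furnished by Theorem~\ref{thm: stable bisections} consists of sets $V^s(c,h^s,\eta,N)$ in which $a$ is merely some element, with the holonomy $h^s$ defined through the reference point $c_1$, not through $a_1$. The claim that base sets centred at $a$ are cofinal among neighbourhoods of $a$ is (up to the countable parametrisation) exactly what the lemma asserts, so invoking it is circular; and the bracket-independence you appeal to only covers changing the power $N$ for a \emph{fixed} reference point, not changing the reference point from $c_1$ to $a_1$. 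This is precisely where the paper does real work: given $a\in V^s(c,h^s,\eta,N)$ and $b\in V_n(a)$ with $N_{a,n}\geq N$ and $\eta_{a,n}$ small, it first establishes the quantitative facts that $\varphi^N(a_1)\in X^u(\varphi^N(c_1),\varepsilon_X/2)$ and that $d'(\varphi^{N+k}(a_1),\varphi^{N+k}(b_2))<\varepsilon_X$ for all $0\leq k\leq N_{a,n}-N$, so that the relevant brackets are defined along the whole time interval, and only then uses the axioms (B2)--(B4) to compute
$h^s(b_2)=\varphi^{-N}[\varphi^{N}(b_2),\varphi^{N}(c_1)]=\varphi^{-N}[\varphi^{N}(b_2),\varphi^{N}(a_1)]=\varphi^{-N_{a,n}}[\varphi^{N_{a,n}}(b_2),\varphi^{N_{a,n}}(a_1)]=b_1$,
which shows $V_n(a)\subset V^s(c,h^s,\eta,N)$. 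Your proposal needs this comparison of holonomies with different base points (together with the distance estimates making it legitimate); without it the final paragraph does not prove the neighbourhood-base property.
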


\begin{proof}
Let $a\in G^s(Q)$, we will show that if $b\in V_{n+1}(a)$, then $b\in V_n(a)$. Since $N_{a,n+1}\geq N_{a,n}$ it holds that $\eta_{a,n+1}\leq \eta_{a,n}$, hence $b_2\in X^u(a_2, \eta_{a,n+1})\subset X^u(a_2, \eta_{a,n})$, meaning that $b_2$ lies in the domain of the holonomy map $h^s_{a,n}$. The claim follows because $h_{a,n}^s(b_2)=b_1$, where $b_1=h_{a,n+1}^s(b_2)=\varphi^{-N_{a,n+1}}[\varphi^{N_{a,n+1}}(b_2),\varphi^{N_{a,n+1}}(a_1)].$ 

Let $V^s(c,h^s,\eta,N)$ be a base set containing $a\in G^s(Q)$. Choose $n$ big enough such that $N_{a,n}\geq N$ and $\eta_{a,n}$ is even smaller than $\eta$, so that $X^u(a_2, \eta_{a,n})\subset X^u(c_2,\eta)$. Then, it holds that $V_n(a)\subset V^s(c,h^s,\eta,N)$. Indeed, let $b\in V_n(a)$ and we have that $b_2\in X^u(a_2,\eta_{a,n})$, meaning that $b_2$ lies in the domain of $h^s$. It remains to show that $h^s(b_2)=b_1$, where $b_1=h_{a,n}^s(b_2)=\varphi^{-N_{a,n}}[\varphi^{N_{a,n}}(b_2),\varphi^{N_{a,n}}(a_1)].$ For this we use the fact that
\begin{enumerate}[(i)]
\item $\varphi^N(a_1)\in X^u(\varphi^N(c_1),\varepsilon_X/2)$;
\item $d'(\varphi^{N+k}(a_1),\varphi^{N+k}(b_2))<\varepsilon_X$, for all $k\in \{0,\ldots , N_{a,n}-N\}$.
\end{enumerate}

For part (i), observe that $\varphi^N(a_1)=[\varphi^N(a_2),\varphi^N(c_1)]$. Since $d'(\varphi^N(a_2),\varphi^N(c_2))<\varepsilon_X'/2$ and $d'(\varphi^N(c_2),\varphi^N(c_1))<\varepsilon_X'/2$, we have that $$[\varphi^N(a_2),\varphi^N(c_1)]\in X^u(\varphi^N(c_1),\varepsilon_X/2).$$ For part (ii), note that $b_2\in X^u(a_2, \eta_{a,n})$ and hence $\varphi^{N_{a,n}}(b_2)\in X^u(\varphi^{N_{a,n}}(a_2),\varepsilon_X'/2).$ Therefore, $\varphi^{N+k}(b_2)\in X^u(\varphi^{N+k}(a_2),\varepsilon_X'/2),$ for all $k\in \{0,\ldots , N_{a,n}-N\}$. In addition, $\varphi^N(a_1)\in X^s(\varphi^N(a_2),\varepsilon_X/2),$ and as a result $\varphi^{N+k}(a_1)\in X^s(\varphi^{N+k}(a_2),\varepsilon_X/2),$ for every $k\in \{0,\ldots , N_{a,n}-N\}$. 

With conditions (i) and (ii) the following computations with the bracket map are well-defined, that is,
\begin{align*}
h^s(b_2)&=\varphi^{-N}[\varphi^N(b_2),\varphi^N(c_1)]\\
&= \varphi^{-N}[\varphi^N(b_2),[\varphi^N(c_1),\varphi^N(a_1)]]\\
&= \varphi^{-N}[\varphi^N(b_2),\varphi^N(a_1)]\\
&= \varphi^{-N_{a,n}}[\varphi^{N_{a,n}}(b_2),\varphi^{N_{a,n}}(a_1)]\\
&= b_1. \qedhere
\end{align*}
\end{proof}

We now mimic the notation of part (3) of Theorem \ref{thm:Metrisation_thm}. For every $a\in G^s(Q)$ and $n\geq 0$ define 
\begin{equation}\label{eq:J(a,n)}
j(a,n)=N_{a,n}+\lceil \log_{\lambda_X}3 \rceil,
\end{equation}
and observe that $j(a,n)\geq \max \{ N_a, n\} +1$. 

\begin{lemma}\label{lem:groupoidNB2}
For every $a\in G^s(Q), n\geq 0$, if $b\in G^s(Q)$ with $V_{j(a,n)}(a)\cap V_{j(a,n)}(b)\neq \varnothing$, then $V_{j(a,n)}(b)\subset V_n(a)$.
\end{lemma}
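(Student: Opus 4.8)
The plan is to unwind both sides of the asserted inclusion in terms of holonomies and to reduce everything to two ingredients: the rigidity of stable holonomy maps, and one distance estimate performed at a convenient scale. Write $m=j(a,n)=N_{a,n}+\lceil \log_{\lambda_X}3\rceil$. Since $m>N_a$ we get $N_{a,m}=m$, so $h^s_{a,m}$ is precisely the restriction of $h^s_{a,n}$ to $X^u(a_2,2\eta_{a,m})$ (as one checks using axiom (B4), exactly as in the bracket manipulation at the end of the proof of \lemref{lem:groupoidNB}), and $\eta_{a,m}=\eta_m\le\eta_{a,n}$; likewise $N_{b,m}=\max\{N_b,m\}\ge m$, whence $\eta_{b,m}=\eta_{N_{b,m}}\le\eta_m$. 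Now fix $c=(c_1,c_2)\in V_m(a)\cap V_m(b)$ and an arbitrary $e=(e_1,e_2)\in V_m(b)$. Reading off (\ref{eq:neigh_base_sets}) and the definition of the base sets, this says
$$c_2\in X^u(a_2,\eta_m),\quad c_2,e_2\in X^u(b_2,\eta_{b,m}),\quad c_1=h^s_{a,m}(c_2)=h^s_{b,m}(c_2),\quad e_1=h^s_{b,m}(e_2),$$
and in particular $e_2,b_2,c_2,a_2$ all lie in a single unstable equivalence class. We must show $e_2\in X^u(a_2,\eta_{a,n})$ and $h^s_{a,n}(e_2)=e_1$.

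For the first assertion I would push everything forward by $\varphi^m$. Since $c_2\in X^u(a_2,2\eta_m)$ and, using $\eta_{b,m}\le\eta_m$, also $c_2,e_2\in X^u(b_2,2\eta_m)$, the defining property (\ref{eq:widthofholonomies}) of $\eta_m$ gives $d'(\varphi^m(a_2),\varphi^m(c_2))$, $d'(\varphi^m(b_2),\varphi^m(c_2))$ and $d'(\varphi^m(b_2),\varphi^m(e_2))$ all at most $\varepsilon_X'/2$, so by the triangle inequality $d'(\varphi^m(a_2),\varphi^m(e_2))\le 3\varepsilon_X'/2\le 3\varepsilon_X/4<\varepsilon_X$, with $\varphi^m(e_2)\in X^u(\varphi^m(a_2))$. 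Applying $\varphi^{-1}$ repeatedly (axiom (C2)), the pair stays inside the $\varepsilon_X$-local unstable set since distances only shrink, so after $\lceil\log_{\lambda_X}3\rceil$ iterates and then the remaining $N_{a,n}$ iterates one gets
$$d'(\varphi^{N_{a,n}}(a_2),\varphi^{N_{a,n}}(e_2))\le \lambda_X^{-\lceil\log_{\lambda_X}3\rceil}\cdot\tfrac{3\varepsilon_X'}{2}\le\tfrac{\varepsilon_X'}{2},\qquad d'(a_2,e_2)\le \lambda_X^{-N_{a,n}}\tfrac{\varepsilon_X'}{2}.$$
By the construction of $\eta_{a,n}=\eta_{N_{a,n}}$ and of the (extended) domains of the holonomies this places $e_2$ in the set $X^u(a_2,\eta_{a,n})$ over which $V_n(a)$ is defined; this is exactly what the $\lceil\log_{\lambda_X}3\rceil$ safety margin buys — it collapses the three $\varepsilon_X'/2$-legs down to a single one.

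It then remains to check $h^s_{a,n}(e_2)=e_1$. By the previous step $e_2$ is in the domain of $h^s_{a,n}$, so $h^s_{a,n}(e_2)=\varphi^{-N_{a,n}}[\varphi^{N_{a,n}}(e_2),\varphi^{N_{a,n}}(a_1)]$ while $e_1=\varphi^{-N_{b,m}}[\varphi^{N_{b,m}}(e_2),\varphi^{N_{b,m}}(b_1)]$. The point is holonomy rigidity: since $h^s_{a,m}(c_2)=h^s_{b,m}(c_2)=c_1$ and $a_1,b_1\in X^u(c_1)$, the germs of the holonomies along $a$ and along $b$ agree near $c_2$, so on the relevant common domain $h^s_{a,\cdot}(z)$ and $h^s_{b,\cdot}(z)$ are both the unique point of $X^s(z)\cap X^u(c_1,\varepsilon_X/2)$ (by the uniqueness in (\ref{eq:uniquebracket})), and one concludes $h^s_{a,n}(e_2)=e_1$ by the same iterated bracket computation ($h^s(b_2)=\varphi^{-N}[\varphi^N(b_2),[\varphi^N(c_1),\varphi^N(a_1)]]=\dots=b_1$) that closes the proof of \lemref{lem:groupoidNB}, now carried out with the extra $\lceil\log_{\lambda_X}3\rceil$ iterates ensuring that every bracket that appears is defined (these play the role of conditions (i)–(ii) there). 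I expect this last bookkeeping — propagating the various $\varepsilon_X'/2$- and $\varepsilon_X$-bounds through the nested bracket identities so that (B2), (B3), (B4) may legitimately be invoked at each step — to be the main technical obstacle; the rest is just combinatorics of the parameters $N_{a,n}$, $N_{b,m}$, $\eta_n$.
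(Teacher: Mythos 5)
There is a genuine gap in the first half of your argument, at the step ``this places $e_2$ in the set $X^u(a_2,\eta_{a,n})$''. Your estimate, after pulling back from time $m=j(a,n)$, gives $d'(a_2,e_2)\leq \lambda_X^{-N_{a,n}}\varepsilon_X'/2$, and you then need this to be smaller than the radius $\eta_{a,n}=\eta_{N_{a,n}}$ that actually defines $V_n(a)$. But the lemma is proved for an arbitrary $d'\in \text{hM}_d(X,\varphi)$ (self-similarity is only invoked later, from Lemma \ref{lem:generalgroupoidmetricLip} on), and for such a metric there is no comparison $\eta_{N}\geq \lambda_X^{-N}\varepsilon_X'/2$: the contraction axiom (C2) bounds the forward expansion of $\varphi$ on local unstable sets only from \emph{below} by $\lambda_X$, not from above, so the supremum $\eta_N$ in (\ref{eq:widthofholonomies}) can be far smaller than $\lambda_X^{-N}\varepsilon_X'/2$. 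Hence your distance bound does not put $e_2$ in the domain $X^u(a_2,\eta_{a,n})$ of $h^s_{a,n}$, and $e\in V_n(a)$ is not established. Even if you restrict to self-similar $d'$, where $\eta_n=\lambda_X^{-n}\varepsilon_X'/4$, your three legs of size $\varepsilon_X'/2$ at time $m$ pull back only to $2\eta_{a,n}$, i.e.\ the extended domain, not the radius appearing in the definition of $V_n(a)$. The paper's proof avoids this entirely by never converting a point estimate back down through $\varphi^{-N_{a,n}}$: it shows the \emph{radius} inequality $3\eta_{a,j(a,n)}\leq \eta_{a,n}$ directly from the supremum definition of $\eta_{N_{a,n}}$, by checking $\varphi^{N_{a,n}}(X^u(z,6\eta_{a,j(a,n)}))\subset \cl(X^u(\varphi^{N_{a,n}}(z),\varepsilon_X'/2))$ for every $z$, writing $\varphi^{N_{a,n}}=\varphi^{j(a,n)}\circ\varphi^{-\lceil\log_{\lambda_X}3\rceil}$ and using only (C2) together with (\ref{eq:widthofholonomies}) at level $j(a,n)$; combined with $e_2\in X^u(a_2,3\eta_{a,j(a,n)})$ this gives exactly $e_2\in X^u(a_2,\eta_{a,n})$. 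You should replace your pull-back estimate by this radius comparison.

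Two smaller points. When you iterate (C2) you need $\varphi^m(e_2)$ to lie in the \emph{local} unstable set of $\varphi^m(a_2)$, not merely the global one with small distance; this does follow from the bracket identities (B2)--(B4) applied to the relations $[e_2,b_2]=e_2$, $[c_2,b_2]=c_2$, $[c_2,a_2]=c_2$, but it has to be said. And the second half, $h^s_{a,n}(e_2)=e_1$, is in your write-up only a sketch: the ``bookkeeping'' you defer is precisely the content of the paper's conditions (i)--(iii) (all phrased through the common point $c$, and split into the cases $j(a,n)\leq N_b$ and $j(a,n)>N_b$), which make the chain of bracket manipulations legitimate; as it stands this part is not yet a proof.
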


\begin{proof}
Let $n\geq 0$ and $a,b\in G^s(Q)$ such that $V_{j(a,n)}(a)\cap V_{j(a,n)}(b)\neq \varnothing$. We have 
\begin{align*}
V_{j(a,n)}(a)&= V^s(a, h_{a,j(a,n)}^s, \eta_{a,j(a,n)}, N_{a,j(a,n)})\\
V_{j(a,n)}(b)&= V^s(b, h_{b,j(a,n)}^s, \eta_{b,j(a,n)}, N_{b,j(a,n)}).
\end{align*}
Let $c\in V_{j(a,n)}(a)\cap V_{j(a,n)}(b)$ and $e\in V_{j(a,n)}(b)$. We aim to show that $$e\in V_n(a)=V^s(a, h_{a,n}^s, \eta_{a,n}, N_{a,n}).$$

We have that 
\begin{equation}\label{eq:groupoidNB2_eq}
e_2\in X^u(b_2, \eta_{b,j(a,n)}), \, c_2\in X^u(b_2, \eta_{b,j(a,n)}), \, c_2\in X^u(a_2, \eta_{a,j(a,n)}). 
\end{equation}
In any case, it holds that 
\begin{equation}\label{eq:groupoidNB2_eq2}
\eta_{b,j(a,n)}\leq \eta_{a,j(a,n)}.
\end{equation}
To check that (\ref{eq:groupoidNB2_eq2}) is true, first recall the decreasing sequence $(\eta_n)_{n\geq 0}$ from (\ref{eq:widthofholonomies}). Then, note that $N_{a,j(a,n)}=j(a,n)$ because $j(a,n)>N_a$, hence $\eta_{a,j(a,n)}=\eta_{N_{a,j(a,n)}}
= \eta_{j(a,n)}$, where $\eta_{j(a,n)}\geq \eta_{N_{b,j(a,n)}}= \eta_{b,j(a,n)}$.

From (\ref{eq:groupoidNB2_eq}) and (\ref{eq:groupoidNB2_eq2}) we obtain that $e_2\in X^u(a_2,3 \eta_{a,j(a,n)})$. We aim to show that $3 \eta_{a,j(a,n)}\leq \eta_{a,n}$, hence $e_2\in X^u(a_2,\eta_{a,n})$, meaning that $e_2$ lies in the domain of $h_{a,n}^s$. Note that $\lambda_X^{-\lceil \log_{\lambda_X}3 \rceil} \leq 3^{-1}$ and by using (\ref{eq:widthofholonomies}) for $\eta_{a,j(a,n)}=\eta_{j(a,n)}$, for every $z\in X$,
\begin{align*}
\varphi^{N_{a,n}}(X^u(z, 6 \eta_{a,j(a,n)})) &= \varphi^{j(a,n)-\lceil \log_{\lambda_X}3 \rceil}(X^u(z, 6 \eta_{a,j(a,n)}))\\
&\subset \varphi^{j(a,n)}(X^u(\varphi^{-\lceil \log_{\lambda_X}3 \rceil}(z), \lambda_X^{-\lceil \log_{\lambda_X}3 \rceil}6 \eta_{a,j(a,n)}))\\
&\subset \varphi^{j(a,n)}(X^u(\varphi^{-\lceil \log_{\lambda_X}3 \rceil}(z), 2 \eta_{a,j(a,n)}))\\
&= \varphi^{j(a,n)}(X^u(\varphi^{-\lceil \log_{\lambda_X}3 \rceil}(z), 2 \eta_{j(a,n)}))\\
&\subset \cl (X^u (\varphi^{N_{a,n}}(z), \varepsilon_X'/2)).
\end{align*}
From the choice of $\eta_{a,n}$ we obtain that $3 \eta_{a,j(a,n)}\leq \eta_{a,n}$. It remains to show that $h_{a,n}^s(e_2)=e_1$, where 
$$e_1=h_{b,j(a,n)}^s(e_2)=
\begin{cases}
\varphi^{-N_b}[\varphi^{N_b}(e_2),\varphi^{N_b}(b_1)], &\text{if } j(a,n)\leq N_b\\
\varphi^{-j(a,n)}[\varphi^{j(a,n)}(e_2),\varphi^{j(a,n)}(b_1)], &\text{if } j(a,n)> N_b.
\end{cases}
$$
Assume that $j(a,n)\leq N_b$, and thus $N_{a,n}< N_b$. We will prove and use the following three facts:
\begin{enumerate}[(i)]
\item $\varphi^{N_{a,n}}(c_1)\in X^u(\varphi^{N_{a,n}}(a_1),\varepsilon_X/2)$;
\item $d'(\varphi^{N_{a,n}+k}(e_2), \varphi^{N_{a,n}+k}(c_1))< \varepsilon_X$, for all $k\in \{0,\ldots, N_b-N_{a,n}\}$;
\item $\varphi^{N_b}(c_1)\in X^u(\varphi^{N_b}(b_1),\varepsilon_X/2)$.
\end{enumerate}

Part (i) follows since $c\in V_{j(a,n)}(a)$, meaning that $\varphi^{j(a,n)}(c_1)\in X^u(\varphi^{j(a,n)}(a_1),\varepsilon_X/2)$, and hence $\varphi^{N_{a,n}}(c_1)\in X^u(\varphi^{N_{a,n}}(a_1),\varepsilon_X/2)$. For part (ii), observe that $c\in V_n(a)$. Then, we have $\varphi^{N_{a,n}}(c_1)\in X^s(\varphi^{N_{a,n}}(c_2), \varepsilon_X/2)$ and thus for all $k\in \{0,\ldots, N_b-N_{a,n}\}$, $$\varphi^{N_{a,n}+k}(c_1)\in X^s(\varphi^{N_{a,n}+k}(c_2), \varepsilon_X/2).$$ Also, since $e_2\in X^u(c_2, 2\eta_{b,j(a,n)})$ we have $\varphi^{N_b}(e_2)\in \cl(X^u(\varphi^{N_b}(c_2), \varepsilon_X'/2))$, hence $$\varphi^{N_{a,n}+k}(e_2)\in \cl(X^u(\varphi^{N_{a,n}+k}(c_2), \varepsilon_X'/2)),$$ for all $k\in \{0,\ldots, N_b-N_{a,n}\}$. The claim for (ii) follows, and part (iii) is clear. 

With conditions (i), (ii) and (iii) the following computations are well-defined, 
\begin{align*}
h_{a,n}^s(e_2)&=\varphi^{-N_{a,n}}[\varphi^{N_{a,n}}(e_2), \varphi^{N_{a,n}}(a_1)]\\
&= \varphi^{-N_{a,n}}[\varphi^{N_{a,n}}(e_2),[\varphi^{N_{a,n}}(a_1), \varphi^{N_{a,n}}(c_1)]]\\
&= \varphi^{-N_{a,n}}[\varphi^{N_{a,n}}(e_2), \varphi^{N_{a,n}}(c_1)]\\
&= \varphi^{-N_b}[\varphi^{N_b}(e_2), \varphi^{N_b}(c_1)]\\
&= \varphi^{-N_b}[\varphi^{N_b}(e_2), [\varphi^{N_b}(c_1), \varphi^{N_b}(b_1)]]\\
&= \varphi^{-N_b}[\varphi^{N_b}(e_2), \varphi^{N_b}(b_1)].
\end{align*}

The exactly same computations work for the case $j(a,n)>N_b$, where in the place of $N_b$ one has to put $j(a,n)$. In any case, we obtain that $e\in V_n(a)$ and this completes the proof of the lemma.
\end{proof}

Lemmas \ref{lem:groupoidNB} and \ref{lem:groupoidNB2} imply that $G^s(Q)$ satisfies condition (3) of Theorem \ref{thm:Metrisation_thm}. Now, following the proof of Theorem \ref{thm:Metrisation_thm} in \cite{Sakai_book}, we aim to construct a sequence of open covers $(\mathcal{U}^s_n)_{n\in \mathbb N}$ satisfying condition (2). First of all, in order to be consistent with Theorem \ref{thm:Metrisation_thm}, for every $a\in G^s(Q)$ we consider the neighbourhood base $(V_n(a))_{n\in \mathbb N}$ instead of $(V_n(a))_{n\geq 0}$. 
\enlargethispage{\baselineskip}

For every $a\in G^s(Q)$, set $k(a,1)=1,$ and inductively for $n\geq 2$ define 
\begin{equation}
k(a,n)=\max \{n, j(a,i): i=1,\ldots, k(a,n-1)\}.
\end{equation}
Then, for every $n\in \mathbb N$, let 
\begin{equation}
U_n(a)=\bigcap_{i=1}^{k(a,n)}V_i(a),
\end{equation}
and since $(V_n(a))_{n\in \mathbb N}$ is decreasing we have that $U_n(a)=V_{k(a,n)}(a).$
Then, the covers 
\begin{equation}
\mathcal{U}^s_n=\{U_n(a):a\in G^s(Q)\}
\end{equation}
form the desired sequence. The next lemma gives a precise definition of $\mathcal{U}^s_n$.

\begin{lemma}\label{lem:k(a,n)}
For every $a\in G^s(Q)$ and $n\in \mathbb N$ we have that $$k(a,n+1)=N_{a,1}+n \lceil \log_{\lambda_X}3 \rceil.$$  
\end{lemma}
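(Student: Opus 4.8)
The plan is a short induction on $n$, preceded by two elementary observations. Set $L=\lceil\log_{\lambda_X}3\rceil$; since $\lambda_X>1$ we have $\log_{\lambda_X}3>0$, hence $L\geq 1$. First, $i\mapsto j(a,i)$ is nondecreasing: by (\ref{eq:J(a,n)}) and the definition of $N_{a,i}$ we have $j(a,i)=N_{a,i}+L=\max\{N_a,i\}+L$, which is nondecreasing in $i$, so that $\max\{j(a,i):1\leq i\leq m\}=j(a,m)$ for every $m\geq 1$. Second, $N_{a,1}=\max\{N_a,1\}\geq N_a$ and $N_{a,1}\geq 1$. These are the only facts about the ingredients that the argument needs.

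For the base case $n=1$, recall that $k(a,1)=1$, so $k(a,2)=\max\{2,\ j(a,1)\}$. Here $j(a,1)=N_{a,1}+L\geq 1+1=2$, so the maximum is attained at $j(a,1)$ and $k(a,2)=N_{a,1}+L=N_{a,1}+1\cdot L$, which is the asserted formula for $n=1$.

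For the inductive step, suppose $k(a,n+1)=N_{a,1}+nL$ for some $n\geq 1$. By definition, $k(a,n+2)=\max\{n+2,\ j(a,i):1\leq i\leq k(a,n+1)\}$, and by the monotonicity of $j(a,\cdot)$ the inner maximum equals $j(a,k(a,n+1))=\max\{N_a,k(a,n+1)\}+L$. Since $k(a,n+1)=N_{a,1}+nL\geq N_{a,1}\geq N_a$, this inner maximum is $k(a,n+1)+L=N_{a,1}+(n+1)L$. Finally $N_{a,1}+(n+1)L\geq 1+(n+1)=n+2$, so the outer maximum is attained there as well, giving $k(a,n+2)=N_{a,1}+(n+1)L$. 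This closes the induction.

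There is no substantial obstacle here: the statement is purely combinatorial, and the only points requiring a moment's care are that $L\geq 1$ — which is exactly what makes every comparison with $n+2$ come out the right way — and the monotonicity of $i\mapsto j(a,i)$, which lets one evaluate the finite maximum at the top index $k(a,n+1)$; both are immediate from $j(a,i)=\max\{N_a,i\}+L$.
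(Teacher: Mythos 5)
Your proof is correct and follows essentially the same route as the paper: an induction on $n$ using the monotonicity of $i\mapsto j(a,i)=\max\{N_a,i\}+\lceil\log_{\lambda_X}3\rceil$ to evaluate the inner maximum at the top index, the bound $\lceil\log_{\lambda_X}3\rceil\geq 1$ to absorb the terms $n+1$, $n+2$, and the inequality $k(a,n+1)\geq N_{a,1}\geq N_a$ to get $N_{a,k(a,n+1)}=k(a,n+1)$. The only cosmetic difference is that the paper first isolates the recursion $k(a,n+1)=j(a,k(a,n))$ and then inducts, whereas you verify the same bounds inside the induction step.
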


\begin{proof}
By definition we have that $k(a,n+1)=\max \{n+1, j(a,i): i=1,\ldots, k(a,n)\}$. The sequence $(j(a,m))_{m\in \mathbb N}$ is increasing, hence $
k(a,n+1)=\max \{n+1, j(a, k(a,n))\}$. In fact, it holds that 
\begin{equation}\label{eq:lem_k(a,n)}
k(a,n+1)=j(a,k(a,n))
\end{equation}
because $j(a,k(a,n))\geq j(a,n)=N_{a,n}+\lceil \log_{\lambda_X}3 \rceil \geq n+1.$ To prove the lemma we will use (\ref{eq:lem_k(a,n)}) and induction on $n$. For $n=1$ we have $$k(a,2)=j(a,k(a,1))=j(a,1)=N_{a,1}+ \lceil \log_{\lambda_X}3 \rceil.$$ Assuming that $k(a,n+1)=N_{a,1}+n \lceil \log_{\lambda_X}3 \rceil$, one has that 
\begin{align*}
k(a,n+2)&= j(a,k(a,n+1))\\
&= N_{a,k(a,n+1)} + \lceil \log_{\lambda_X}3 \rceil \\
&= k(a,n+1)+ \lceil \log_{\lambda_X}3 \rceil \\ 
&= N_{a,1}+ (n+1) \lceil \log_{\lambda_X}3 \rceil,
\end{align*}
thus completing the induction.
\end{proof}

As we mentioned before, in order not to overload, we simplified our notation and did not highlight the dependence on the metric $d'\in \text{hM}_d(X,\varphi)$. For this reason, the sequence $(\mathcal{U}^s_n)_{n\in \mathbb N}$ should more appropriately be denoted as $(\mathcal{U}^{s,d'}_n)_{n\in \mathbb N}$. Now, let $\mathcal{U}^{s,d'}_0=\{G^s(Q)\}$ and according to the proof of Theorem \ref{thm:Metrisation_thm} in \cite{Sakai_book}, the sequence $(\mathcal{U}^{s,d'}_n)_{n\geq 0}$ yields a $2$-quasimetric on $G^s(Q)$ given by
\begin{equation}\label{eq:groupoidquasimetric}
\rho_{s,d'}(a,b)=\inf \{ 2^{-n}:\, \text{there is} \, \, U\in \mathcal{U}^{s,d'}_n \, \, \text{such that} \, \, a,b \in U\}.
\end{equation}
This in turn gives the chain-metric $D_{s,d'}:G^s(Q)\times G^s(Q)\to [0,1]$ defined as
\begin{equation}\label{eq:groupoidmetric}
D_{s,d'}(a,b)=\inf \{\sum_{i=1}^{n-1} \rho_{s,d'}(c_i,c_{i+1}): n\in \mathbb N, \, c_i \in G^s(Q), \, c_1=a, \, c_n=b\},
\end{equation}
which satisfies 
\begin{equation}\label{eq:metricandquasimetric}
4^{-1}\rho_{s,d'}(a,b)\leq D_{s,d'}(a,b) \leq \rho_{s,d'}(a,b)
\end{equation}
for every $a,b\in G^s(Q)$. In fact, for every $a\in G^s(Q)$ it holds that
\begin{equation}\label{eq:groupoidmetricstar}
\st(a,\mathcal{U}^{s,d'}_{n+2})\subset \overline{B}_{D_{s,d'}}(a,2^{-n-2})\subset \st (a,\mathcal{U}^{s,d'}_n),
\end{equation}
and since the sequence $(\st(a,\mathcal{U}^{s,d'}_n))_{n\geq 0}$ is a neighbourhood base, the metric $D_{s,d'}$ generates the {\'e}tale topology on $G^s(Q)$. 

Let now $j:X^u(Q)\to G^s(Q)$ be the \textit{units embedding} given by
\begin{equation}
j(x)=(x,x).
\end{equation}
The restriction of $D_{s,d'}$ on $j(X^u(Q))$ generates the relative topology on $j(X^u(Q))$, and hence the pull-back metric $\widetilde{D_{s,d'}}: X^u(Q)\times X^u(Q)\to [0,1]$ given by 
\begin{equation}\label{eq:restricted_metric}
\widetilde{D_{s,d'}}(x,y)=D_{s,d'}(j(x),j(y)),
\end{equation}
generates the topology on $X^u(Q)$. As a result, we have proved the following.

\begin{lemma}\label{lem:generalgroupoidmetric}
For $d'\in \text{hM}_d(X,\varphi)$, the metric $D_{s,d'}$ induces the topology of $G^s(Q)$. Also, the pull-back metric $\widetilde{D_{s,d'}}$ induces the topology of the units space $X^u(Q)$.
\end{lemma}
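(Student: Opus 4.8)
The statement is really a bookkeeping consequence of everything set up in the subsection, so the plan is to trace through the construction and verify that each metrisation step produces a metric inducing the correct topology. First I would observe that Lemmas \ref{lem:groupoidNB} and \ref{lem:groupoidNB2} together verify hypothesis (3) of the Alexandroff-Urysohn-Frink Metrisation Theorem \ref{thm:Metrisation_thm} for the Hausdorff space $G^s(Q)$ with the neighbourhood bases $(V_n(a))_{n\in\mathbb N}$: Lemma \ref{lem:groupoidNB} gives that each $(V_n(a))_{n}$ is an open neighbourhood base of $a$, and Lemma \ref{lem:groupoidNB2} provides, for each $a$ and $n$, the required index $j(a,n)\ge n$ controlling overlaps. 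Hence $G^s(Q)$ is metrisable; but more than abstract metrisability, I need the \emph{specific} metric $D_{s,d'}$ of (\ref{eq:groupoidmetric}) and its topology.

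Next I would follow the explicit construction in \cite[Theorem 2.4.1]{Sakai_book}: pass from the neighbourhood bases to the open covers $\mathcal{U}^{s,d'}_n=\{U_n(a):a\in G^s(Q)\}$ with $U_n(a)=V_{k(a,n)}(a)$ (using Lemma \ref{lem:k(a,n)} for the explicit form of $k(a,n)$), check that these satisfy condition (2) of Theorem \ref{thm:Metrisation_thm} — the star-refinement condition (2)(b) and the neighbourhood-base condition (2)(a) — and then define the $2$-quasimetric $\rho_{s,d'}$ by (\ref{eq:groupoidquasimetric}) and the chain-metric $D_{s,d'}$ by (\ref{eq:groupoidmetric}). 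The standard chaining argument gives the comparison (\ref{eq:metricandquasimetric}) and, crucially, the two-sided inclusion (\ref{eq:groupoidmetricstar}) sandwiching the closed $D_{s,d'}$-balls between consecutive stars $\st(a,\mathcal{U}^{s,d'}_{n})$. Since condition (2)(a) guarantees that $(\st(a,\mathcal{U}^{s,d'}_n))_{n\ge 0}$ is a neighbourhood base at $a$ for the {\'e}tale topology, the inclusion (\ref{eq:groupoidmetricstar}) forces the $D_{s,d'}$-balls to form a neighbourhood base as well, so $D_{s,d'}$ generates exactly the {\'e}tale topology on $G^s(Q)$.

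For the second assertion, I would note that the units embedding $j:X^u(Q)\to G^s(Q)$, $j(x)=(x,x)$, is a homeomorphism onto its image $j(X^u(Q))$ with the relative topology (this is standard for {\'e}tale groupoids, as $G^{(0)}$ is open and the relative topology on the diagonal agrees with the inductive-limit topology on $X^u(Q)$). The restriction of a metric to a subspace always induces the relative topology, so $D_{s,d'}|_{j(X^u(Q))}$ induces the relative topology on $j(X^u(Q))$; pulling back along the homeomorphism $j$ then shows the pull-back metric $\widetilde{D_{s,d'}}$ of (\ref{eq:restricted_metric}) induces the topology of $X^u(Q)$. The unstable case $G^u(P)$, $X^s(P)$ is identical after swapping the roles of stable and unstable sets and replacing $\varphi$ by $\varphi^{-1}$ throughout.

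The only genuinely nontrivial point — and the one I would spend the most care on — is confirming that the abstract output of the Alexandroff-Urysohn-Frink construction really is the concrete metric written in (\ref{eq:groupoidmetric}) with the sandwich property (\ref{eq:groupoidmetricstar}); everything else is either a direct appeal to the already-established Lemmas \ref{lem:groupoidNB}, \ref{lem:groupoidNB2}, \ref{lem:k(a,n)} or a routine fact about subspace topologies. In fact, since (\ref{eq:groupoidquasimetric})–(\ref{eq:groupoidmetricstar}) are presented in the text as the explicit content of that theorem's proof in \cite{Sakai_book}, the proof of the lemma is essentially just the remark that these facts have already been assembled — which is exactly why the excerpt says ``we have proved the following.''
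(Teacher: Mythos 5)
Your proposal is correct and matches the paper's own argument: the paper likewise treats the lemma as the culmination of Lemmas \ref{lem:groupoidNB}, \ref{lem:groupoidNB2} (verifying condition (3) of Theorem \ref{thm:Metrisation_thm}), the cover construction via $k(a,n)$ from Lemma \ref{lem:k(a,n)} following \cite{Sakai_book}, the quasimetric/chain-metric definitions (\ref{eq:groupoidquasimetric})--(\ref{eq:groupoidmetric}), and the star-sandwich (\ref{eq:groupoidmetricstar}) forcing $D_{s,d'}$ to generate the {\'e}tale topology, with the units statement obtained by restricting along the embedding $j$. No gaps.
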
 

We continue by showing that for self-similar hyperbolic metrics the groupoids admit bi-Lipschitz dynamics. Again, we keep the notation to the minimum and when we work with the $d'$-model of $G^s(Q)$, where $d'\in \text{hM}_d(X,\varphi)$, we do not indicate the dependence of the Smale space constants and the local stable and unstable sets on $d'$. Also, the homeomorphism $\varphi$ behaves with respect to $d'$ and the base sets of $G^s(Q)$ are in $\mathcal{B}^s(Q,d')$. Nevertheless, we do highlight the dependence of the covers $\mathcal{U}^{s,d'}_n$, the quasimetric $\rho_{s,d'}$ and the metric $D_{s,d'}$ on $d'$. Before proceeding to the next lemma we should make the following observation.

Let $d'\in \text{sM}_d(X,\varphi)$ and consider, for a moment, the $d'$-model of $G^s(Q)$. Then, for every $0<\varepsilon \leq \lambda_X^{-1}\varepsilon_X$ and $z\in X$, we have
\begin{equation}\label{eq:mult_of_isom}
\varphi (X^u(z,\varepsilon))=X^u(\varphi(z),\varepsilon \lambda_X).
\end{equation}
Consequently, for the sequence $(\eta_n)_{n\geq 0}$ in (\ref{eq:widthofholonomies}) it holds that $\eta_n=\lambda_X^{-n}\varepsilon_X'/4$. As a result, for every $a\in G^s(Q)$, the neighbourhood base sets (\ref{eq:neigh_base_sets}) corresponding to $d'$ are
\begin{equation}\label{eq:selfsimilarbaseset}
V_n(a)=V^s(a,h_{a,n}^s,\lambda_X^{-N_{a,n}}\varepsilon_X'/4,N_{a,n}).
\end{equation}

\begin{lemma}\label{lem:generalgroupoidmetricLip}
Let $d'\in \text{sM}_d(X,\varphi)$ and consider the metric $D_{s,d'}$ on $G^s(Q)$. Then, the automorphism $\Phi=\varphi \times \varphi :(G^s(Q),D_{s,d'})\to (G^s(Q),D_{s,d'})$ is bi-Lipschitz with $$4^{-1}D_{s,d'}(a,b)\leq D_{s,d'}(\Phi(a),\Phi(b))\leq 8D_{s,d'}(a,b),$$ for every $a,b\in G^s(Q)$. 
\end{lemma}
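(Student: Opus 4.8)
Proof proposal for Lemma 4.9 (the bi-Lipschitz property of $\Phi=\varphi\times\varphi$ on $(G^s(Q),D_{s,d'})$).

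The plan is to reduce the bi-Lipschitz estimate for $D_{s,d'}$ to a comparison between the covers $\mathcal U^{s,d'}_n$ and their images under $\Phi$, then propagate this through the quasimetric $\rho_{s,d'}$ and the chain-metric $D_{s,d'}$. First I would record how $\Phi$ acts on a neighbourhood base set. Applying $\Phi=\varphi\times\varphi$ to $V_n(a)=V^s(a,h^s_{a,n},\lambda_X^{-N_{a,n}}\varepsilon_X'/4,N_{a,n})$ from \eqref{eq:selfsimilarbaseset}, one checks using axiom (B4) (so that $\varphi$ intertwines the bracket) together with \eqref{eq:mult_of_isom} that $\Phi(V_n(a))$ is again a base set centred at $\Phi(a)$, with first time $N_{\Phi(a)}$ satisfying $|N_{\Phi(a)}-N_a|\le 1$ (conjugating by $\varphi$ shifts the ``first time'' that $\varphi^N(a_2)\in X^s(\varphi^N(a_1),\varepsilon_X'/2)$ by exactly $1$, up to the boundary case $N_a=0$). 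Since $\lceil\log_{\lambda_X}3\rceil\ge 1$, this shift of the index by at most $1$ in the parameters $N_{a,n}$ translates, via Lemma \ref{lem:k(a,n)} (which gives $k(a,n+1)=N_{a,1}+n\lceil\log_{\lambda_X}3\rceil$), into the two-sided containment
\begin{equation*}
U_{n+1}(\Phi(a))\subset \Phi(U_n(a))\subset U_{n-1}(\Phi(a))
\end{equation*}
for all $n$ large enough, with the obvious modification at small $n$. Equivalently $\Phi(\mathcal U^{s,d'}_n)$ refines $\mathcal U^{s,d'}_{n-1}$ and is refined by $\mathcal U^{s,d'}_{n+1}$.

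Granting the cover comparison, the quasimetric estimate is immediate from the definition \eqref{eq:groupoidquasimetric}: if $a,b$ lie in a common $U\in\mathcal U^{s,d'}_n$ then $\Phi(a),\Phi(b)$ lie in $\Phi(U)$, which is contained in some member of $\mathcal U^{s,d'}_{n-1}$, giving $\rho_{s,d'}(\Phi(a),\Phi(b))\le 2\,\rho_{s,d'}(a,b)$; symmetrically $\rho_{s,d'}(\Phi(a),\Phi(b))\ge \tfrac12\rho_{s,d'}(a,b)$, using that $\Phi$ is a bijection and applying the same argument to $\Phi^{-1}$. Then I pass to the chain-metric: for any chain $a=c_1,\dots,c_n=b$ the chain $\Phi(a)=\Phi(c_1),\dots,\Phi(c_n)=\Phi(b)$ has $\sum_i\rho_{s,d'}(\Phi(c_i),\Phi(c_{i+1}))\le 2\sum_i\rho_{s,d'}(c_i,c_{i+1})$, and taking the infimum over chains gives $D_{s,d'}(\Phi(a),\Phi(b))\le 2\,D_{s,d'}(a,b)$, and likewise $\ge\tfrac12 D_{s,d'}(a,b)$ by running the argument for $\Phi^{-1}$. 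Finally I combine these with the sandwich \eqref{eq:metricandquasimetric}, namely $4^{-1}\rho_{s,d'}\le D_{s,d'}\le\rho_{s,d'}$: writing $D_{s,d'}(\Phi(a),\Phi(b))\le\rho_{s,d'}(\Phi(a),\Phi(b))\le 2\rho_{s,d'}(a,b)\le 8 D_{s,d'}(a,b)$ for the upper bound, and $D_{s,d'}(\Phi(a),\Phi(b))\ge 4^{-1}\rho_{s,d'}(\Phi(a),\Phi(b))\ge 4^{-1}\cdot 2^{-1}\rho_{s,d'}(a,b)\ge 4^{-1}D_{s,d'}(a,b)$ for the lower bound, which are exactly the claimed constants.

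The main obstacle is the first step: pinning down precisely how $\Phi$ transforms the explicit base set $V_n(a)$, in particular controlling the change in the ``first time'' $N_a$ and in the holonomy map under conjugation by $\varphi$. One must verify that $\Phi$ carries a stable holonomy $h^s_{a,n}$ to a stable holonomy based at $\Phi(a)$ — this is where axiom (B4) and the self-similarity identity \eqref{eq:mult_of_isom} (so that $\varphi$ rescales the widths $\eta_n=\lambda_X^{-n}\varepsilon_X'/4$ in a controlled way) are essential — and then bookkeep the resulting index shift through $N_{a,n}=\max\{N_a,n\}$ and through Lemma \ref{lem:k(a,n)}, taking care of the finitely many small-$n$ boundary cases where $N_{\Phi(a)}$ and $N_a$ may differ by the full amount or where the $\max$ with $n$ dominates. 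Once the containment $U_{n+1}(\Phi(a))\subset\Phi(U_n(a))\subset U_{n-1}(\Phi(a))$ is established uniformly in $a$, the rest is the routine propagation through $\rho_{s,d'}$ and $D_{s,d'}$ described above. The unstable case is identical after replacing $\varphi$ by $\varphi^{-1}$, which accounts for the $\Phi^{-1}$ appearing in the analogous estimate for $G^u(P)$.
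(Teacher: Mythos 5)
Your skeleton is the same as the paper's: use the self-similar form \eqref{eq:selfsimilarbaseset} of the neighbourhood base, track how $\Phi^{\pm 1}$ moves the sets $U_n(c)$ via the first-time bookkeeping ($N_{\Phi(a)}\le N_a\le N_{\Phi(a)}+1$, which is correct) and Lemma \ref{lem:k(a,n)}, convert this into estimates for the quasimetric \eqref{eq:groupoidquasimetric}, and finish with the sandwich \eqref{eq:metricandquasimetric}. Your containment $\Phi(U_n(a))\subset U_{n-1}(\Phi(a))$ is exactly what the paper verifies (there in the form $\Phi(U_{n+2}(c))\subset U_{n+1}(\Phi(c))$), and your chain-metric propagation step is sound.

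There is, however, a genuine flaw in your final constants computation, and it traces back to treating $\Phi$ and $\Phi^{-1}$ symmetrically. Your two-sided cover comparison only yields $\tfrac12\rho_{s,d'}(a,b)\le\rho_{s,d'}(\Phi(a),\Phi(b))\le 2\rho_{s,d'}(a,b)$, and in the last display you pass from $4^{-1}\cdot 2^{-1}\rho_{s,d'}(a,b)$ to $4^{-1}D_{s,d'}(a,b)$, which would require $\rho_{s,d'}\ge 2D_{s,d'}$; the sandwich \eqref{eq:metricandquasimetric} only gives $\rho_{s,d'}\ge D_{s,d'}$, so this route produces the constant $8^{-1}$, not the stated $4^{-1}$. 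The paper avoids the loss by exploiting the asymmetry of the dynamics: since $\varphi^{-1}$ contracts local unstable sets by exactly $\lambda_X^{-1}$ (self-similarity, \eqref{eq:mult_of_isom}), applying $\Phi^{-1}$ shrinks the widths $\lambda_X^{-k(c,n)}\varepsilon_X'/4$, and together with $N_{\Phi^{-1}(c),1}\le N_{c,1}+1$ this gives the sharper containment $\Phi^{-1}(U_n(c))\subset U_n(\Phi^{-1}(c))$ with no index loss, i.e. $\rho_{s,d'}(\Phi^{-1}(a),\Phi^{-1}(b))\le\rho_{s,d'}(a,b)$ (equivalently $U_n(\Phi(a))\subset\Phi(U_n(a))$, one index better than your left-hand inclusion); combined with $\rho_{s,d'}(\Phi(a),\Phi(b))\le 2\rho_{s,d'}(a,b)$ and the sandwich, this yields precisely the constants $4^{-1}$ and $8$. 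Alternatively, your own chain-metric step already gives $\tfrac12 D_{s,d'}(a,b)\le D_{s,d'}(\Phi(a),\Phi(b))\le 2D_{s,d'}(a,b)$, which is stronger than the lemma and makes the faulty final paragraph unnecessary — so either delete that paragraph and quote the chain estimate, or upgrade the left containment as above and argue as the paper does.
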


\begin{proof}
Let $d'\in \text{sM}_d(X,\varphi)$. Consider the $d'$-model of $G^s(Q)$ so that $\varphi$ satisfies (\ref{eq:mult_of_isom}). Recall the quasimetric $\rho_{s,d'}$ in (\ref{eq:groupoidquasimetric}) and we aim to show that, for every $a,b\in G^s(Q)$, it holds 
\begin{enumerate}[(i)]
\item $\rho_{s,d'}(\Phi^{-1}(a),\Phi^{-1}(b))\leq \rho_{s,d'}(a,b)$;
\item $\rho_{s,d'}(\Phi^{-1}(a),\Phi^{-1}(b))\geq 2^{-1}\rho_{s,d'}(a,b)$.
\end{enumerate}
Then, using the inequalities (\ref{eq:metricandquasimetric}) we obtain the result. First we prove part (i). If $a=b$, the statement is trivial. Also, since $\rho_{s,d'}$ is bounded by $1$, if $\rho_{s,d'}(a,b)=1$ then $\rho_{s,d'}(\Phi^{-1}(a),\Phi^{-1}(b))\leq \rho_{s,d'}(a,b)$. 

Assume now that $a\neq b$ and $\rho_{s,d'}(a,b)=2^{-n}$, for some $n\geq 2$ (the case $n=1$ is treated in the exactly same way, however we avoid to show it in an attempt to reduce notation). There is $c\in G^s(Q)$ such that $a,b\in U_n(c)\in \mathcal{U}^{s,d'}_n$. From (\ref{eq:selfsimilarbaseset}) and since $k(c,n)\geq N_c$ (note that $n\geq 2$), we have that $$U_n(c)=V_{k(c,n)}(c)=V^s(c,h_{c,k(c,n)}^s,\lambda_X^{-k(c,n)}\varepsilon_X'/4,k(c,n)).$$
\enlargethispage{\baselineskip}\\
\enlargethispage{\baselineskip}
For simplicity, let us drop the notation of $h_{c,k(c,n)}^s$, since it is completely determined by $c,\, k(c,n)$ and $\lambda_X^{-k(c,n)}\varepsilon_X'/4$. It is straightforward to see that $$\Phi^{-1}(a),\Phi^{-1}(b)\in V^s(\Phi^{-1}(c),\lambda_X^{-k(c,n)-1}\varepsilon_X'/4,k(c,n)+1).$$ One can observe that $N_{\Phi^{-1}(c),1}\leq N_{c,1}+1$ and hence, $$k(\Phi^{-1}(c),n)\leq k(c,n)+1.$$ Note that $k(\Phi^{-1}(c),n)\geq N_{\Phi^{-1}(c)}$, and we have 
\begin{align*}
V^s(\Phi^{-1}(c),\lambda_X^{-k(c,n)-1}\varepsilon_X'/4,k(c,n)+1)&= V^s(\Phi^{-1}(c),\lambda_X^{-k(c,n)-1}\varepsilon_X'/4,k(\Phi^{-1}(c),n))\\
&\subset V^s(\Phi^{-1}(c),\lambda_X^{-k(\Phi^{-1}(c),n)}\varepsilon_X'/4,k(\Phi^{-1}(c),n))\\
&= V_{k(\Phi^{-1}(c),n)}(\Phi^{-1}(c))\\
&= U_n(\Phi^{-1}(c)).
\end{align*}
Consequently, it holds that $\rho_{s,d'}(\Phi^{-1}(a),\Phi^{-1}(b))\leq 2^{-n}=\rho_{s,d'}(a,b)$, thus completing the proof of part (i).

To prove part (ii) suppose that $\rho_{s,d'}(a,b)=2^{-n}$, for some $n\geq 0$. The claim is that $\rho_{s,d'}(\Phi^{-1}(a),\Phi^{-1}(b))\geq 2^{-n-1}.$ Assume to the contrary that $$\rho_{s,d'}(\Phi^{-1}(a),\Phi^{-1}(b))\leq 2^{-n-2}.$$ Then, there is some $c\in G^s(Q)$ such that $\Phi^{-1}(a),\Phi^{-1}(b)\in U_{n+2}(c)=V_{k(c,n+2)}(c)$, where $$V_{k(c,n+2)}(c)=V^s(c, \lambda_X^{-k(c,n+2)}\varepsilon_X'/4,k(c,n+2)).$$ One can immediately see that $$a,b\in V^s(\Phi(c), \lambda_X^{-k(c,n+2)+1}\varepsilon_X'/4,k(c,n+2)-1).$$ Note that the latter base set is well-defined since $k(c,n+2)-1\geq 0$. Also, observe that $N_{\Phi(c),1}\leq N_{c,1}$, and hence
\begin{align*}
k(\Phi(c),n+1)&=N_{\Phi(c),1}+n\lceil \log_{\lambda_X} 3 \rceil \\
&\leq N_{c,1}+n\lceil \log_{\lambda_X} 3 \rceil \\
&\leq N_{c,1}+n\lceil \log_{\lambda_X} 3 \rceil+\lceil \log_{\lambda_X} 3 \rceil-1\\
&= k(c,n+2)-1.
\end{align*}

As a result, 
\begin{align*}
V^s(\Phi(c), \lambda_X^{-k(c,n+2)+1}\varepsilon_X'/4,k(c,n+2)-1)&= V^s(\Phi(c), \lambda_X^{-k(c,n+2)+1}\varepsilon_X'/4,k(\Phi(c),n+1))\\
&\subset V^s(\Phi(c), \lambda_X^{-k(\Phi(c),n+1)}\varepsilon_X'/4,k(\Phi(c),n+1))\\
&= V_{k(\Phi(c),n+1)}(\Phi(c))\\
&= U_{n+1}(\Phi(c)).
\end{align*}
This means that $\rho_{s,d'}(a,b)\leq 2^{-n-1}$, leading to a contradiction. This completes the proof of part (ii).
\end{proof}

We continue by taking a closer look at the Lipschitz structure of $G^s(Q)$. Again, given the groupoid metric that corresponds to $d'\in \text{sM}_d(X,\varphi)$, we prefer to work with the $d'$-model of the groupoid, and the only part where we highlight the dependence on $d'$ is in the cover $\mathcal{U}^{s,d'}_n$, the quasimetric $\rho_{s,d'}$ and the metric $D_{s,d'}$.

\begin{lemma}\label{lem:bi_Lip_inversion}
For every $d'\in \text{sM}_d(X,\varphi)$, the inversion map on $G^s(Q)$ is bi-Lipschitz with respect to the metric $D_{s,d'}$. 
\end{lemma}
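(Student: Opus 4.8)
The plan is to argue entirely with the combinatorial data underlying $D_{s,d'}$: the covers $(\mathcal{U}^{s,d'}_n)_{n\geq 0}$, their members $U_n(a)=V_{k(a,n)}(a)$, and the quasimetric $\rho_{s,d'}$ of (\ref{eq:groupoidquasimetric}). Since $i$ is an involution, chains through $G^s(Q)$ are carried bijectively to chains by $i$, so by the defining formula (\ref{eq:groupoidmetric}) of $D_{s,d'}$ any estimate $\rho_{s,d'}(i(a),i(b))\leq 2^{k}\rho_{s,d'}(a,b)$ transfers to $D_{s,d'}(i(a),i(b))\leq 2^{k}D_{s,d'}(a,b)$; applying it also to $i(a),i(b)$ gives the reverse inequality, whence $i$ is bi-Lipschitz. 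Thus it suffices to find a fixed integer $k$ with $\rho_{s,d'}(i(a),i(b))\leq 2^{k}\rho_{s,d'}(a,b)$, and, unravelling (\ref{eq:groupoidquasimetric}), this reduces to the single inclusion $i(U_n(c))\subseteq U_{n-k}(i(c))$ valid for every $c$ once $n$ exceeds a fixed threshold (for smaller $n$ there is nothing to prove since $\rho_{s,d'}\leq 1$).

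To establish that inclusion I would use three facts. (a) \emph{Holonomy symmetry}: for $(c_1,c_2)\in G^s(Q)$ and $N\geq N_c$, the stable holonomy $h^s(z)=\varphi^{-N}[\varphi^N(z),\varphi^N(c_1)]$ and the stable holonomy $g^s(w)=\varphi^{-N}[\varphi^N(w),\varphi^N(c_2)]$ attached to $i(c)=(c_2,c_1)$ are mutually inverse on their domains; this is a direct computation from axiom (B3) and the uniqueness relation (\ref{eq:uniquebracket}) (which gives $[x,y]=x$ for $y\in X^u(x)$), and, as usual (axiom (B4)), these maps are independent of the time $N$. (b) $N_c=N_{i(c)}$, because the local stable equivalence relation is symmetric for $d'$-close points; hence by \lemref{lem:k(a,n)} one has $k(c,n)=N_{c,1}+(n-1)\lceil\log_{\lambda_X}3\rceil=k(i(c),n)$ for $n\geq 2$. (c) A \emph{uniform bound on holonomy expansion}: writing $N=k(c,n)$ (so $N_{c,k(c,n)}=N$), so that by (\ref{eq:selfsimilarbaseset}) $U_n(c)=V^s(c,h^s_{c,k(c,n)},\lambda_X^{-N}\varepsilon_X'/4,N)$, the self-similar scaling (\ref{eq:mult_of_isom}) gives $\varphi^N\bigl(X^u(c_2,\lambda_X^{-N}\varepsilon_X'/4)\bigr)=X^u(\varphi^N(c_2),\varepsilon_X'/4)$, the bracket with $\varphi^N(c_1)$ maps this into $X^u(\varphi^N(c_1),\varepsilon_X/2)$, and the $\varphi^{-1}$-analogue of (\ref{eq:mult_of_isom}) contracts that set by $\lambda_X^{-N}$, so
\[
h^s_{c,k(c,n)}\bigl(X^u(c_2,\lambda_X^{-N}\varepsilon_X'/4)\bigr)\subseteq X^u\bigl(c_1,\lambda_X^{-N}\varepsilon_X/2\bigr)\subseteq X^u\bigl(c_1,\lambda_X^{-(N-k_0)}\varepsilon_X'/4\bigr),
\]
where $k_0=\lceil\log_{\lambda_X}(2\varepsilon_X/\varepsilon_X')\rceil$ is a fixed integer.

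Putting these together: take $b\in U_n(c)$, say $b=(b_1,z)$ with $z\in X^u(c_2,\lambda_X^{-N}\varepsilon_X'/4)$ and $b_1=h^s_{c,k(c,n)}(z)$. By (c), $b_1\in X^u(c_1,\lambda_X^{-(N-k_0)}\varepsilon_X'/4)$, and by (a) together with time-independence $z=g^s(b_1)$, the stable holonomy at $i(c)$, which on that domain coincides with the one of time $N-k_0$. Reading off the definition (\ref{eq:neigh_base_sets})--(\ref{eq:selfsimilarbaseset}) of the base sets at $i(c)$, this says $i(b)=(z,b_1)\in V_{N-k_0}(i(c))$; since the $V_j$ are nested decreasingly (\lemref{lem:groupoidNB}) and, by (b), $k(i(c),n-k)=N-k\lceil\log_{\lambda_X}3\rceil\leq N-k_0$ once $k=\lceil k_0/\lceil\log_{\lambda_X}3\rceil\rceil$, we conclude $i(b)\in V_{k(i(c),n-k)}(i(c))=U_{n-k}(i(c))$. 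One checks along the way that $N-k_0\geq N_c$ and $n-k\geq 2$ for $n$ past a fixed threshold, so all base sets involved are legitimate; these verifications, together with the trivial small-$n$ case, are the only bookkeeping.

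The genuinely substantive point is (c): the control must be uniform in $c$ and $n$. The reason it works is precisely self-similarity — for a metric in $\text{sM}_d(X,\varphi)$ the iterates $\varphi^{\pm N}$ act on small local unstable sets as exactly $\lambda_X^{\pm N}$ times isometries, by (\ref{eq:mult_of_isom}) and its inverse, so the bounded distortion of a stable holonomy is absorbed by a fixed level-shift $k_0$, independent of how deep in the groupoid $c$ and its neighbourhood sit. For a merely hyperbolic (non-self-similar) metric this uniform control would fail, and one would expect $i$ to be at best bi-Hölder.
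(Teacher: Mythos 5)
Your proof is correct and follows essentially the same route as the paper: both work at the level of the quasimetric $\rho_{s,d'}$ and show that $i$ maps a cover element $U_n(c)$ into $U_{n-k}(i(c))$ for a fixed level shift determined by the condition $\lambda_X^{-k_0}\varepsilon_X/2\leq \varepsilon_X'/4$, using $N_c=N_{i(c)}$, the identity $k(c,m)=k(i(c),m)$, self-similarity and time-independence of the holonomy maps. Your only departures are cosmetic: you make the mutual-inverse property of the stable holonomies at $c$ and $i(c)$ explicit (the paper leaves it implicit in its chain of inclusions $i(V^s(c,\cdot,\cdot))\subset V^s(i(c),\cdot,\cdot)$), and you transfer the $\rho_{s,d'}$-estimate to $D_{s,d'}$ by pushing chains through $i$ rather than via the comparison $4^{-1}\rho_{s,d'}\leq D_{s,d'}\leq \rho_{s,d'}$.
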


\begin{proof}
Consider the $d'$-model of $G^s(Q)$. Also, note that since $i^2=\id$, we only have to show that the inversion map $i$ is $\Lambda$-Lipschitz, for some $\Lambda \geq 1$ which depends on $d'$, and then $$\Lambda^{-1}D_{s,d'}(a,b)\leq D_{s,d'}(i(a),i(b))\leq \Lambda D_{s,d'}(a,b),$$ for every $a,b\in G^s(Q)$. As usual, instead of working directly with $D_{s,d'}$, we work with the quasimetric $\rho_{s,d'}$, and it suffices to find $\Lambda' \geq 1$ such that, 
\begin{equation}\label{eq:bi_Lip_inversion}
\rho_{s,d'}(i(a),i(b))\leq \Lambda' \rho_{s,d'}(a,b),
\end{equation}
for every $a,b\in G^s(Q)$. Then, following the metric inequalities (\ref{eq:metricandquasimetric}) we can choose $\Lambda =4\Lambda '$. Let $M\in \mathbb N$ such that $\lambda_X^{-M}\varepsilon_X/2 \leq \varepsilon_X'/4$, and the claim is that we can choose $$\Lambda'=2^{M+1}.$$ 

The case where $a=b$ or $\rho_{s,d'}(a,b)=1$ is trivial. Also, since $\rho_{s,d'}$ is bounded by $1$, if $\rho_{s,d'}(a,b)=2^{-n}$ for some $n\in \{1,\ldots, M+1\}$, then $\Lambda'=2^{M+1}$ satisfies (\ref{eq:bi_Lip_inversion}). Suppose now that $\rho_{s,d'}(a,b)=2^{-n}$ for some $n> M+1$. Then, there is $c=(c_1,c_2)\in G^s(Q)$ such that $a,b\in U_n(c)\in \mathcal{U}^{s,d'}_n$. From (\ref{eq:selfsimilarbaseset}) and the fact that $n\geq 2$, and hence $k(c,n)\geq N_c$, it holds that $$U_n(c)=V_{k(c,n)}(c)=V^s(c,h_{c,k(c,n)}^s,\lambda_X^{-k(c,n)}\varepsilon_X'/4,k(c,n)).$$ It is not hard to see that $$h_{c,k(c,n)}^s(X^u(c_2,\lambda_X^{-k(c,n)}\varepsilon_X'/4))\subset X^u(c_1,\lambda_X^{-k(c,n)}\varepsilon_X/2),$$ and for simplicity, we henceforth drop the notation of $h_{c,k(c,n)}^s$. Since $n-M\geq 2$, we have $$k(c,n)\geq k(c,n)-M\geq k(c,n-M)\geq N_c=N_{i(c)},$$ and also that $$k(c,m)=k(i(c),m),$$ for all $m\in \mathbb N$. As a result, the following calculations are well-defined, 
\begin{align*}
i(V^s(c,\lambda_X^{-k(c,n)}\varepsilon_X'/4,k(c,n)))&=i(V^s(c,\lambda_X^{-k(c,n)}\varepsilon_X'/4,k(c,n)-M))\\
&\subset V^s(i(c),\lambda_X^{-k(c,n)}\varepsilon_X/2,k(c,n)-M)\\
&\subset V^s(i(c),\lambda_X^{-k(c,n)+M}\varepsilon_X'/4,k(c,n)-M)\\
&= V^s(i(c),\lambda_X^{-k(c,n)+M}\varepsilon_X'/4,k(c,n-M))\\
&\subset V^s(i(c),\lambda_X^{-k(c,n-M)}\varepsilon_X'/4,k(c,n-M))\\
&= V_{k(i(c),n-M)}(i(c))\\
&= U_{n-M}(i(c)).
\end{align*}
Consequently, $\rho_{s,d'}(i(a),i(b))\leq 2^{-n+M}=2^M \rho_{s,d'}(a,b)< 2^{M+1} \rho_{s,d'}(a,b),$ and this completes the proof of the claim.
\end{proof}

We now state an elementary fact about locally bi-Lipschitz maps whose proof is omitted.

\begin{lemma}\label{lem:locally_bi_Lip}
Suppose that $f:(Y,d_Y)\to (Z,d_Z)$ is locally bi-Lipschitz and $K\subset Y$ is a compact set on which the restriction $f_K:K\to f(K)$ is a homeomorphism. Then, $f_K$ is bi-Lipschitz.
\end{lemma}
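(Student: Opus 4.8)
The plan is to exploit compactness of $K$ together with the fact that $f_K$ is a homeomorphism, upgrading the purely local bi-Lipschitz estimates to a single global pair of constants. First I would record the hypothesis in usable form: for each $y \in Y$ there are $\ell_y > 0$ and $\Lambda_y \geq 1$ such that $\Lambda_y^{-1} d_Y(y,y') \leq d_Z(f(y),f(y')) \leq \Lambda_y d_Y(y,y')$ whenever $y' \in B_{d_Y}(y,\ell_y)$. Covering $K$ by the open balls $B_{d_Y}(y,\ell_y/2)$ for $y \in K$ and extracting a finite subcover $B_{d_Y}(y_1,\ell_{y_1}/2), \dots, B_{d_Y}(y_m,\ell_{y_m}/2)$, I would set $\Lambda = \max_i \Lambda_{y_i}$ and $\ell = \min_i \ell_{y_i}/2 > 0$. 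Then a Lebesgue-number argument gives: whenever $a,b \in K$ with $d_Y(a,b) < \ell$, the point $a$ lies in some $B_{d_Y}(y_i,\ell_{y_i}/2)$, hence $b \in B_{d_Y}(y_i,\ell_{y_i})$, so the local estimate at $y_i$ applies and yields $\Lambda^{-1} d_Y(a,b) \leq d_Z(f(a),f(b)) \leq \Lambda\, d_Y(a,b)$. This handles all pairs that are close in $K$.

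The remaining task is to handle pairs $a,b \in K$ with $d_Y(a,b) \geq \ell$, and here is where compactness and the homeomorphism hypothesis enter decisively. Consider the closed (hence compact, since $K\times K$ is compact) set $\Delta_\ell = \{(a,b) \in K \times K : d_Y(a,b) \geq \ell\}$. On $\Delta_\ell$ the continuous function $(a,b) \mapsto d_Z(f(a),f(b))$ is strictly positive --- it never vanishes because $f_K$ is injective and $a \neq b$ there --- so by compactness it attains a positive minimum $\mu > 0$. Likewise $(a,b)\mapsto d_Y(a,b)$ attains a positive minimum, which is just $\ell$, and both $d_Y$ and $d_Z\circ(f\times f)$ are bounded above on $\Delta_\ell$ by their maxima $R_Y, R_Z < \infty$. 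Thus for $(a,b) \in \Delta_\ell$ one has
\begin{equation*}
\frac{\mu}{R_Y}\, d_Y(a,b) \leq \mu \leq d_Z(f(a),f(b)) \leq R_Z \leq \frac{R_Z}{\ell}\, d_Y(a,b).
\end{equation*}
Combining this with the near-diagonal estimate and taking $L = \max\{\Lambda,\ R_Z/\ell,\ R_Y/\mu\} \geq 1$ gives $L^{-1} d_Y(a,b) \leq d_Z(f(a),f(b)) \leq L\, d_Y(a,b)$ for all $a,b \in K$, which is exactly the assertion that $f_K$ is bi-Lipschitz.

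The main obstacle --- really the only subtle point --- is establishing the lower bound $\mu > 0$ on the far-diagonal part $\Delta_\ell$: this is precisely where the hypothesis that $f_K$ is a \emph{homeomorphism} (not merely continuous and locally bi-Lipschitz) is indispensable, since it guarantees injectivity of $f$ on $K$ and hence that $d_Z(f(a),f(b))$ is strictly positive on $\Delta_\ell$ before invoking compactness to make the infimum positive. Everything else is a routine finite-subcover and compactness-of-a-continuous-function argument, so I would present it briefly. Since the lemma is stated with its proof omitted in the paper, I would in fact only sketch these two moves (local-to-finite via Lebesgue number, and positivity of $\mu$ via compactness plus injectivity) rather than write out all the $\varepsilon$'s.
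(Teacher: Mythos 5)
There is a genuine gap in your near-diagonal step, and it comes from the form in which you recorded the hypothesis. You stated "locally bi-Lipschitz" in the base-point form of (\ref{eq:locally_bi_Lip}): the estimate attached to $y_i$ compares $d_Y(y_i,y')$ with $d_Z(f(y_i),f(y'))$ only. Knowing that both $a$ and $b$ lie in $B_{d_Y}(y_i,\ell_{y_i})$ therefore controls $d_Z(f(y_i),f(a))$ and $d_Z(f(y_i),f(b))$, not $d_Z(f(a),f(b))$; the triangle inequality only gives $d_Z(f(a),f(b))\leq \Lambda_{y_i}\bigl(d_Y(a,y_i)+d_Y(y_i,b)\bigr)$, which is not $O(d_Y(a,b))$, and no lower bound at all. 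So the sentence "the local estimate at $y_i$ applies and yields $\Lambda^{-1}d_Y(a,b)\leq d_Z(f(a),f(b))\leq \Lambda d_Y(a,b)$" is a non sequitur as written. This is not cosmetic: under the pointwise hypothesis alone the lemma is false. Take $Y=K=[0,1]$, $Z=\mathbb{R}^2$, $f(x)=(x,g(x))$ with $g(x)=x^2\sin(x^{-3})$ and $g(0)=0$. Then $f$ is continuous and injective, hence a homeomorphism of $K$ onto $f(K)$; at every point the two-sided pointwise estimate holds (lower bound with constant $1$ from the first coordinate, upper bound at $0$ because $|g(x)|\leq x^2\leq |x|$, and at $x\neq 0$ because $g$ is $C^1$ there), yet $f_K$ is not Lipschitz since $g'$ is unbounded near $0$. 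The repair is to take "locally bi-Lipschitz" in the usual pairwise sense: every point has a neighbourhood on which the \emph{restriction} of $f$ is bi-Lipschitz. That is exactly what is available wherever the paper invokes this lemma (the range and source maps are bi-Lipschitz on each base bisection, Lemma \ref{lem:bi-Lipschitz_groupoid_maps}, and similarly inside the proof of Lemma \ref{lem:convolution_Lip}), and with that reading your covering/Lebesgue-number step is correct verbatim: if $d_Y(a,b)<\ell$ then $a,b$ lie in a common ball $B_{d_Y}(y_i,\ell_{y_i})$ on which $f$ is $\Lambda_{y_i}$-bi-Lipschitz as a map of pairs.

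The rest of your argument is correct. The off-diagonal part --- compactness of $\{(a,b)\in K\times K:\ d_Y(a,b)\geq \ell\}$, strict positivity of $(a,b)\mapsto d_Z(f(a),f(b))$ there by injectivity of $f_K$, hence a positive minimum $\mu$, combined with the upper bounds $\diam K$ and $\diam f(K)$ --- gives the two missing inequalities exactly as you wrote them, and you correctly identify this as the only place where the homeomorphism hypothesis (really just injectivity together with continuity) is needed. Since the paper states the lemma with its proof omitted, there is no argument in the text to compare against; once the hypothesis is phrased in the pairwise local form, your route is the natural one and is complete.
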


For the next lemma recall that $\mathcal{B}(G^s(Q))$ is the collection of all base set in $\mathcal{B}^s(Q,d')$, for all $d'\in \text{hM}_{d'}(X,\varphi)$. Moreover, given a metric $D_{s,d'}$, recall the pull-back metric $\widetilde{D_{s,d'}}$ on the units space $X^u(Q)$ along the embedding $j:X^u(Q)\to G^s(Q)$, see (\ref{eq:restricted_metric}). 

\begin{lemma}\label{lem:bi-Lipschitz_groupoid_maps}
For every $d'\in \text{sM}_d(X,\varphi)$ and every base set $V\in \mathcal{B}(G^s(Q))$, the restricted range and source maps $r_V$ and $s_V$ are bi-Lipschitz, with respect to the metrics $D_{s,d'}$ on $G^s(Q)$ and $\widetilde{D_{s,d'}}$ on $X^u(Q)$. 
\end{lemma}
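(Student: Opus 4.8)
The plan is to reduce the statement to the \emph{local} bi-Lipschitz property of the source and range maps, and then to establish that property in the self-similar model by comparing, through the quasimetric $\rho_{s,d'}$, a neighbourhood base set $V_n(a)$ with its image under the diagonal embedding $j$. For the reduction: every base set $V=V^s(v,w,h^s,\eta,N)\in\mathcal B(G^s(Q))$ may be taken with $\overline{X^u(w,\eta)}$ compact (such sets still form a base for the \'etale topology, which is all that is needed), so $\overline V$ is compact, $h^s$ extends to it by the $2\eta$-extension remark, and $r_{\overline V},\,s_{\overline V}$ are homeomorphisms onto compact subsets of $X^u(Q)$. By Lemma~\ref{lem:locally_bi_Lip} it then suffices to prove that $r$ and $s$ are locally bi-Lipschitz for $D_{s,d'}$ and $\widetilde{D_{s,d'}}$; and since $s\circ i=r$ with $i$ globally bi-Lipschitz by Lemma~\ref{lem:bi_Lip_inversion}, once $s$ has this property so does $r$. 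Hence it is enough to treat $s$, and to do so in the $d'$-model.

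Fix $a=(a_1,a_2)\in G^s(Q)$. By (\ref{eq:selfsimilarbaseset}), $s$ maps $V_n(a)$ bijectively onto $X^u(a_2,\lambda_X^{-N_{a,n}}\varepsilon_X'/4)$. A one-line computation with the local product structure shows that the stable holonomy attached to any diagonal point is trivial, i.e.\ $h^s_{(x,x),\ell}=\id$ on $X^u(x,\lambda_X^{-\ell}\varepsilon_X'/4)$, so that
\[
V_\ell\big((x,x)\big)=j\big(X^u(x,\lambda_X^{-\ell}\varepsilon_X'/4)\big)\qquad (x\in X^u(Q),\ \ell\ge 0).
\]
Thus $j$ carries $X^u(a_2,\lambda_X^{-N_{a,n}}\varepsilon_X'/4)$ onto the base set $V_{N_{a,n}}((a_2,a_2))$, and since by (\ref{eq:restricted_metric}) the map $j$ is an isometry of $(X^u(Q),\widetilde{D_{s,d'}})$ onto its image in $(G^s(Q),D_{s,d'})$, proving $s|_{V_n(a)}$ bi-Lipschitz amounts to proving that
\[
\Psi\colon V_n(a)\longrightarrow V_{N_{a,n}}\big((a_2,a_2)\big),\qquad (h^s_{a,n}(z),z)\longmapsto (z,z),
\]
is bi-Lipschitz for $D_{s,d'}$.

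For the estimate on $\Psi$ I would argue with $\rho_{s,d'}$ and pass back via (\ref{eq:metricandquasimetric}). Two observations suffice: (i) if two elements of $G^s(Q)$ lie in a common cover set $U_m(e)=V_{k(e,m)}(e)$ then their source coordinates lie in the single ball $X^u(e_2,\lambda_X^{-k(e,m)}\varepsilon_X'/4)$, directly from (\ref{eq:selfsimilarbaseset}); and (ii) conversely, if the source coordinates of two elements of $V_n(a)$ lie in a common such ball, then, using that the holonomies $h^s_{b,\cdot}$ for $b\in V_n(a)$ all restrict to $h^s_{a,n}$ (the consistency already used in Lemma~\ref{lem:groupoidNB}), the two elements lie in a common cover set centred at one of them, with cover index controlled from $m$ linearly by Lemma~\ref{lem:k(a,n)}; the same holds, more transparently, for the diagonal images, which sit in the clean sets $V_\ell((x,x))$ above. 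Running (i) and (ii) in both directions gives $\rho_{s,d'}(\Psi b,\Psi c)\le\rho_{s,d'}(b,c)\le 2^{C_a}\,\rho_{s,d'}(\Psi b,\Psi c)$ for $b,c$ in a small enough neighbourhood of $a$, where $C_a$ bounds the index discrepancy. Since $N_b\le N_a$ for $b$ near $a$, the constant $C_a$ depends only on $a$ (of order $N_a/\lceil\log_{\lambda_X}3\rceil$), so by (\ref{eq:metricandquasimetric}) the map $\Psi$, hence $s$, is locally bi-Lipschitz with constant of order $2^{O(N_a)}$; the reduction step then finishes the proof. The unstable groupoid $G^u(P)$ is handled identically, with $\varphi$ replaced by $\varphi^{-1}$ throughout.

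The main obstacle is observation (ii): one has to check that closeness of the source coordinates, together with the rigidity (uniqueness of germs) of stable holonomies, forces the original groupoid elements into a common base set \emph{centred near those elements themselves}, with only a bounded loss in the cover index, so that the abstract chain-metric $D_{s,d'}$ does not distort the source coordinate more than the self-similar radii $\lambda_X^{-N_{a,n}}\varepsilon_X'/4$ allow. Controlling $N_{a,n}$ uniformly on a neighbourhood of $a$ is precisely what makes this go through — and also what forces the restriction to a small neighbourhood, with compactness (Lemma~\ref{lem:locally_bi_Lip}) doing the final globalisation, rather than an estimate on the given base set directly.
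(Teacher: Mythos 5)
Your plan follows the paper's proof essentially verbatim: the estimate is run on the quasimetric $\rho_{s,d'}$ and transferred via (\ref{eq:metricandquasimetric}), the easy direction uses that diagonal points have trivial holonomy and minimal first time so the cover index can only improve under $j\circ s$, the hard direction uses the bracket/holonomy-consistency computation to force two groupoid elements whose diagonalized sources share a high-index cover set into a common cover set centred at one of them, with index loss controlled by the time parameter of the base set (the paper's $M_V$, which also involves the universal $N'$ needed to upgrade $\varepsilon_X/2$- to $\varepsilon_X'/2$-closeness on stable sets), while $r$ is handled through $r=s\circ i$ and Lemma \ref{lem:bi_Lip_inversion}, and arbitrary $V\in\mathcal{B}(G^s(Q))$ via pre-compactness and Lemma \ref{lem:locally_bi_Lip}, exactly as in the paper. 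The parts you leave as sketches (the explicit bracket computation in your observation (ii), and the claim $N_b\le N_a$ near $a$, which should rather be a bound of the form $N_b\le N_{a,n}+N'$ but is only used to make the constant locally uniform) do not alter the route.
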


\begin{proof}
Let $d'\in \text{sM}_d(X,\varphi)$ and we aim to prove that, with respect to $D_{s,d'}$ and $\widetilde{D_{s,d'}}$,
\begin{enumerate}[(i)]
\item $s_V$ is bi-Lipschitz, for every $V\in \mathcal{B}^s(Q,d')$;
\item $s_V$ is bi-Lipschitz, for every $V\in \mathcal{B}(G^s(Q))$;
\item $r_V$ is bi-Lipschitz, for every $V\in \mathcal{B}(G^s(Q))$.
\end{enumerate}

We begin with part (i). Similarly as before, it is enough to consider the quasimetric $\rho_{s,d'}$ (see the metric inequalities (\ref{eq:metricandquasimetric})) and we claim that, for $V\in \mathcal{B}^s(Q,d')$, there is some $M_V\in \mathbb N$ such that, 
\begin{equation}\label{eq:bi-Lipschitz_groupoid_maps}
2^{-M_V}\rho_{s,d'}(a,b)\leq \rho_{s,d'}(j(s_V(a)),j(s_V(b)))\leq \rho_{s,d'}(a,b),
\end{equation}
for every $a,b \in V$.

First, we show that $\rho_{s,d'}(j(s_V(a)),j(s_V(b)))\leq \rho_{s,d'}(a,b)$. The case where $a=b$ or $\rho_{s,d'}(a,b)=1$ is trivial. Suppose that $\rho_{s,d'}(a,b)=2^{-n}$, for some $n\in \mathbb N$. Then there is $c=(c_1,c_2)\in G^s(Q)$ so that $a,b\in U_n(c)\in \mathcal{U}^{s,d'}_n$. From (\ref{eq:selfsimilarbaseset}) we have that $$U_n(c)=V_{k(c,n)}(c)=V^s(c,h^s_{c,k(c,n)},\lambda_X^{-N_{c,k(c,n)}}\varepsilon_X'/4,N_{c,k(c,n)}),$$ and, in particular, $$s_V(a),s_V(b)\in X^u(c_2, \lambda_X^{-N_{c,k(c,n)}}\varepsilon_X'/4).$$ As a result, 
\begin{align*}
j(s_V(a)),\, j(s_V(b))&\in j(X^u(c_2, \lambda_X^{-N_{c,k(c,n)}}\varepsilon_X'/4))\\
&= V^s(j(c_2), \id, \lambda_X^{-N_{c,k(c,n)}}\varepsilon_X'/4,0).
\end{align*}
Observe that $N_{c,k(c,n)}\geq k(j(c_2),n)=N_{j(c_2),k(j(c_2),n)} $ and hence
\begin{align*}
V^s(j(c_2), \id, \lambda_X^{-N_{c,k(c,n)}}\varepsilon_X'/4,0)&\subset V^s(j(c_2), \id, \lambda_X^{-k(j(c_2),n)}\varepsilon_X'/4,0)\\
&= V^s(j(c_2), h^s_{j(c_2),k(j(c_2),n)}, \lambda_X^{-k(j(c_2),n)}\varepsilon_X'/4,k(j(c_2),n))\\
&= V_{k(j(c_2),n)}(j(c_2))\\
&= U_n(j(c_2)).
\end{align*}
Therefore, it holds that $$\rho_{s,d'}(j(s_V(a)),j(s_V(b)))\leq 2^{-n}=\rho_{s,d'}(a,b).$$ 

We now aim to show that there is $M_V\in \mathbb N$ such that, $$2^{-M_V}\rho_{s,d'}(a,b)\leq \rho_{s,d'}(j(s_V(a)),j(s_V(b))),$$ for every $a,b\in V$. Let us now make the definition of $V$ more precise and assume that $V=V^s(e, h^s,\eta ,N)$, where $e\in G^s(Q)$. Also, let $N'\in \mathbb N$ such that $\lambda_X^{-N'}\varepsilon_X/2 \leq \varepsilon_X'/2$. Then, for $a,b\in V$ we have that 
\begin{equation}\label{eq:bi-Lipschitz_groupoid_maps2}
\varphi^N(a_2)\in X^s(\varphi^N(a_1),\varepsilon_X/2),
\end{equation}
and in particular, 
\begin{equation}\label{eq:bi-Lipschitz_groupoid_maps3}
\varphi^{N+N'}(a_2)\in X^s(\varphi^{N+N'}(a_1),\varepsilon_X'/2).
\end{equation}
Similarly for $b\in V$. The claim is that we can choose $$M_V= \Bigl\lceil \frac{ \log_{\lambda_X}2+(N+N')(1+\lceil \log_{\lambda_X}3 \rceil)}{\lceil \log_{\lambda_X}3 \rceil} \Bigl\rceil.$$ 

To this purpose, suppose that $\rho_{s,d'}(a,b)=2^{-n}$, for some $n\geq 0$, and we claim that $$2^{-M_V-n}\leq \rho_{s,d'}(j(s_V(a)),j(s_V(b))).$$ Our approach is proof by contradiction. So, assume that $$\rho_{s,d'}(j(s_V(a)),j(s_V(b)))\leq 2^{-M_V-n-1}.$$ Then, there is $c\in G^s(Q)$ such that $$j(s_V(a)),\, j(s_V(b))\in U_{M_V+n+1}(c)\in \mathcal{U}^{s,d'}_{M_V+n+1},$$ where $$U_{M_V+n+1}(c)=V^s(c,\lambda_X^{-k(c,  M_V+n+1)}\varepsilon_X'/4, k(c,  M_V+n+1)).$$ In particular, it holds that 
\begin{equation}\label{eq:bi-Lipschitz_groupoid_maps4}
s_V(b)\in X^u(s_V(a), 2\lambda_X^{-k(c,  M_V+n+1)}\varepsilon_X'/4).
\end{equation}
Moreover,
\begin{equation}\label{eq:bi-Lipschitz_groupoid_maps5}
2\lambda_X^{-k(c,  M_V+n+1)}\varepsilon_X'/4\leq \lambda_X^{-k(a,n+N+N'+1)}\varepsilon_X'/4.
\end{equation}
Indeed, by removing $\varepsilon_X'/4$ from both sides, taking the logarithm $\log_{\lambda_X}$ and considering (\ref{eq:bi-Lipschitz_groupoid_maps3}), the inequality (\ref{eq:bi-Lipschitz_groupoid_maps5}) is true because
\begin{align*}
\log_{\lambda_X}2+k(a,n+N+N'+1)&=\log_{\lambda_X}2+N_{a,1}+(n+N+N')\lceil \log_{\lambda_X}3 \rceil\\
&\leq \log_{\lambda_X}2+(N+N')+(n+N+N')\lceil \log_{\lambda_X}3 \rceil\\
&\leq (M_V+n)\lceil \log_{\lambda_X}3 \rceil\\
&\leq N_{c,1}+(M_V+n)\lceil \log_{\lambda_X}3 \rceil\\
&= k(c,M_V+n+1).
\end{align*} 

In addition, since $k(a,n+N+N'+1)\geq N+N'\geq N_a$, the base set $$V^s(a,\lambda_X^{-k(a,n+N+N'+1)}\varepsilon_X'/4, N+N')$$ is well-defined and equal to 
$$
V^s(a,\lambda_X^{-k(a,n+N+N'+1)}\varepsilon_X'/4, k(a,n+N+N'+1))
= U_{n+N+N'+1}(a).
$$
Using (\ref{eq:bi-Lipschitz_groupoid_maps2}), (\ref{eq:bi-Lipschitz_groupoid_maps4}) and (\ref{eq:bi-Lipschitz_groupoid_maps5}), one can see that the following computations for $a=(a_1,a_2),\, b=(b_1,b_2)$ in $V=V^s(e, h^s,\eta ,N)$ are well-defined, that is,
\begin{align*}
\varphi^{-N-N'}[\varphi^{N+N'}(b_2),\varphi^{N+N'}(a_1)]&= \varphi^{-N}[\varphi^{N}(b_2),\varphi^{N}(a_1)]\\
&= \varphi^{-N}[\varphi^{N}(b_2),[\varphi^{N}(a_1),\varphi^N(e_1)]]\\
&= \varphi^{-N}[\varphi^{N}(b_2),\varphi^N(e_1)]\\
&= b_1,
\end{align*}
and hence $$b\in V^s(a,\lambda_X^{-k(a,n+N+N'+1)}\varepsilon_X'/4, N+N').$$ Consequently, $$\rho_{s,d'}(a,b)\leq 2^{-n-N-N'-1},$$ which leads to a contradiction since $\rho_{s,d'}(a,b)=2^{-n}$. This completes the proof of part (i).

Part (ii) follows easily from part (i). Indeed, since $\mathcal{B}^s(Q,d')$ is a topological base for $G^s(Q)$, part (i) implies that the source map $s$ is locally bi-Lipschitz, with respect to $D_{s,d'}$ and $\widetilde{D_{s,d'}}$. Moreover, every $V\in \mathcal{B}(G^s(Q))$ is pre-compact and the restriction of $s$ on $\cl(V)$ is again a homeomorphism. Then, the proof of part (ii) follows from Lemma \ref{lem:locally_bi_Lip}. Finally, for part (iii) observe that the range map $r$ is also locally bi-Lipschitz, because $r=s\circ i$ and the inversion map $i$ is bi-Lipschitz, see Lemma \ref{lem:bi_Lip_inversion}. Similarly, the result follows from Lemma \ref{lem:locally_bi_Lip}.
\end{proof}

\begin{prop}\label{prop:groupoiddistances}
Consider the metric $d'\in \text{sM}_d(X,\varphi)$ and the Smale space $(X,d',\varphi)$ with contraction constant $\lambda_{X,d'}>1.$ Moreover, let $c\in G^s(Q)$ and consider the base set $V_{d'}^s(c,h^s,\lambda_{X,d'}^{-N-k}\gamma,N)$ $\in \mathcal{B}^s(Q,d'),$ where $k\in \mathbb N$, and $\gamma>0$ does not depend on $k$. Then, we can find $\gamma'=\gamma'(\lambda_{X,d'},\gamma)>0$ so that, if $a,b\in V^s(c,h^s,\lambda_{X,d'}^{-N-k}\gamma,N)$, then $$D_{s,d'}(a,b)\leq 2^{-k/\lceil \log_{\lambda_{X,d'}} 3 \rceil} \gamma'.$$ 
\end{prop}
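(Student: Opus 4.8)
The plan is to control $D_{s,d'}(a,b)$ by forcing $a$ and $b$ to lie in a single element of the cover $\mathcal{U}^{s,d'}_n$ for $n$ as large as possible. I would work in the $d'$-model of $G^s(Q)$ and abbreviate $\lambda:=\lambda_{X,d'}$, $\ell:=\lceil\log_\lambda 3\rceil$, and write $\varepsilon_X'$ for the corresponding constant of $(X,d',\varphi)$. By the inequality $D_{s,d'}\le\rho_{s,d'}$ in (\ref{eq:metricandquasimetric}) and the definition (\ref{eq:groupoidquasimetric}) of $\rho_{s,d'}$, it suffices to show that $a,b\in U_n(c)\in\mathcal{U}^{s,d'}_n$ for some $n$ close to $k/\ell$, since then $D_{s,d'}(a,b)\le 2^{-n}$. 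Recall from (\ref{eq:selfsimilarbaseset}) that $U_n(c)=V_{k(c,n)}(c)=V^s(c,h^s_{c,k(c,n)},\lambda^{-N_{c,k(c,n)}}\varepsilon_X'/4,N_{c,k(c,n)})$ with $N_{c,k(c,n)}=\max\{N_c,k(c,n)\}$, and that $k(c,1)=1$ while $k(c,n)=N_{c,1}+(n-1)\ell$ for $n\ge 2$ by Lemma \ref{lem:k(a,n)}.

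First I would record two elementary facts. (i) Since $V:=V^s(c,h^s,\lambda^{-N-k}\gamma,N)$ is a genuine base set in $\mathcal{B}^s(Q,d')$, its parameter satisfies $N\ge N_c$, hence $N_{c,1}=\max\{N_c,1\}\le N+1$. (ii) For base sets based at the same point $c$, one has $V^s(c,h^s,\eta_1,N_1)\subseteq V^s(c,\tilde h^s,\eta_2,N_2)$ whenever $\eta_1\le\eta_2$ and the two holonomy maps agree on $X^u(c_2,\eta_1)$; this agreement is automatic, being a routine iteration of axiom (B4) that is legitimate because all the $\varphi$-times involved are $\ge N_c$, exactly as in the bracket computation concluding the proof of Lemma \ref{lem:groupoidNB}. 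Next I would set $C_0:=\log_\lambda(\varepsilon_X'/(4\gamma))$, a constant depending only on $\lambda$, $\gamma$, $\varepsilon_X'$, and prove the key claim: for every integer $n\ge 0$ with $n\ell\le k-1+C_0$, one has $V\subseteq U_n(c)$. The case $n=0$ is trivial since $U_0(c)=G^s(Q)$. For $n\ge 1$, combining (i), $N_c\le N$, $k(c,1)=1$, the formula for $k(c,n)$, and the hypothesis $n\ell\le k-1+C_0$, one verifies $N_{c,k(c,n)}\le N+k+C_0$; hence $\lambda^{-N_{c,k(c,n)}}\varepsilon_X'/4\ge\lambda^{-(N+k)}\lambda^{-C_0}\varepsilon_X'/4=\lambda^{-(N+k)}\gamma$, so the holonomy domain $X^u(c_2,\lambda^{-N-k}\gamma)$ of $V$ is contained in that of $U_n(c)$, and (ii) gives the claim.

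Granting the claim, I would take $n^\ast$ to be the largest non-negative integer with $n^\ast\ell\le k-1+C_0$ (so $n^\ast=0$ if $k-1+C_0<0$). Then $a,b\in V\subseteq U_{n^\ast}(c)$, so $D_{s,d'}(a,b)\le 2^{-n^\ast}$; and in every case $n^\ast\ge\frac{k-1+C_0}{\ell}-1=\frac{k}{\ell}-\bigl(1+\frac{1-C_0}{\ell}\bigr)$ (when $k-1+C_0<0$ this holds because the right-hand side is negative), so $D_{s,d'}(a,b)\le 2^{-k/\ell}\,\gamma'$ with $\gamma':=2^{\,1+(1-C_0)/\ell}$. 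This $\gamma'$ depends only on $\lambda_{X,d'}$, $\gamma$ and $\varepsilon_X'$ — hence, for the fixed Smale space $(X,d',\varphi)$, only on $\lambda_{X,d'}$ and $\gamma$ — and not on $c$, $N$, $h^s$ or $k$. The unstable statement for $G^u(P)$ is proved identically.

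The step I expect to be the main obstacle is the estimate $N_{c,k(c,n)}\le N+k+C_0$ inside the key claim: it requires tracking, via the self-similar scaling (\ref{eq:mult_of_isom}) and the explicit formula of Lemma \ref{lem:k(a,n)}, how the shrinking width $\lambda^{-N-k}\gamma$ of $V$ compares with the width $\lambda^{-N_{c,k(c,n)}}\varepsilon_X'/4$ of the cover elements as $n$ grows, and — the crucial point making $\gamma'$ independent of $c$ and $N$ — noticing that the $N$-dependence cancels because legitimacy of $V$ forces $N\ge N_c$, hence $N_{c,1}\le N+1$. Checking that the two holonomy maps agree is routine and mirrors bracket manipulations already carried out in the proofs of Lemmas \ref{lem:groupoidNB} and \ref{lem:groupoidNB2}.
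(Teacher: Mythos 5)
Your argument is correct and takes essentially the same route as the paper's proof: both bound $D_{s,d'}$ by the quasimetric $\rho_{s,d'}$, fit the given base set inside a cover element $U_n(c)$ for the largest admissible $n\approx k/\lceil\log_{\lambda_{X,d'}}3\rceil$ minus a constant (via Lemma~\ref{lem:k(a,n)}, the bound $N\ge N_c$, and the self-similar comparison of the width $\lambda_{X,d'}^{-N-k}\gamma$ with $\lambda_{X,d'}^{-N_{c,k(c,n)}}\varepsilon_X'/4$), and absorb the resulting constant into $\gamma'$. The only difference is organizational — the paper first enlarges and re-indexes $V$ as the neighbourhood-base set $V_{N+k-k'}(c)$ and then uses monotonicity of $(V_m(c))_m$, whereas you compare widths and holonomy maps with $U_n(c)$ directly — which does not change the substance.
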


\begin{proof}
Let $d'\in \text{sM}_d(X,\varphi)$ and again, for the sake of simplicity, we keep the notation of the metric $d'$ only for $\mathcal{U}^{s,d'}_n, \rho_{s,d'}$ and $D_{s,d'}$. Let $k'>0$ be such that $\lambda_X^{-k'}\gamma\leq \varepsilon_X'/4.$ If $k< k'+\lceil \log_{\lambda_X} 3 \rceil$, let $\gamma''>0$ be big enough so that $$2^{-k/\lceil \log_{\lambda_X} 3 \rceil} \gamma'' \geq 1\geq D_{s,d'}(a,b).$$ If $k\geq k'+\lceil \log_{\lambda_X} 3 \rceil$, then $$\lambda_X^{-N-k}\gamma \leq \lambda_X^{-N-k+k'}\varepsilon_X'/4< \lambda_X^{-N}\varepsilon_X'/4.$$ By dropping the notation of the holonomy maps, for brevity, we have that 
\begin{align*}
V^s(c,\lambda_X^{-N-k}\gamma,N)&\subset V^s(c,\lambda_X^{-N-k+k'}\varepsilon_X'/4,N)\\
&=V^s(c,\lambda_X^{-N-k+k'}\varepsilon_X'/4,N+k-k')\\
&= V_{N+k-k'}(c).
\end{align*}
Ideally, the goal is to find the largest $n\geq 2$ such that $k(c,n)\leq N+k-k'$, and then $V_{N+k-k'}(c)\subset V_{k(c,n)}(c)=U_n(c)\in \mathcal{U}^{s,d'}_n$, leading to $$D_{s,d'}(a,b)\leq \rho_{s,d'}(a,b)\leq 2^{-n}.$$
\enlargethispage{\baselineskip}
For $n\geq 2$, we have 
\begin{align*}
k(c,n)&=N_{c,1}+(n-1)\lceil \log_{\lambda_X} 3 \rceil \\
&\leq N+(n-1)\lceil \log_{\lambda_X} 3 \rceil.
\end{align*}
Hence we can find the largest $n$ so that $N+(n-1)\lceil \log_{\lambda_X} 3 \rceil\leq N+k-k'$, which is $$n=\lfloor (k-k')/\lceil \log_{\lambda_X} 3 \rceil \rfloor+1 \geq 2.$$
It holds that $n>(k-k')/\lceil \log_{\lambda_X} 3 \rceil$ and hence $$2^{-n}< 2^{-k/\lceil \log_{\lambda_X} 3 \rceil} 2^{k'/\lceil \log_{\lambda_X} 3 \rceil}.$$ By choosing $\gamma'=\max \{ \gamma'', 2^{k'/\lceil \log_{\lambda_X} 3 \rceil}\}$ we complete the proof.
\end{proof}

\subsection{Optimisation of uniform convergence rates}\label{sec:Optimisation} In Section \ref{sec:SmoothRuelle} we intend to build a family of Lipschitz algebras of compactly supported functions, namely
$$\{\Lip_{c}(G^s(Q),D_{s,d'}): d'\in \text{sM}_d(X,\varphi)\},$$
which are dense $*$-subalgebras of $\mathcal{S}(Q)$. All these Lipschitz algebras are $\alpha_s$-invariant and therefore, for every $d'\in \text{sM}(X,\varphi)$, one can form the dense $*$-subalgebra 
\begin{equation}\label{eq:smoothest_Lip_alg}
\Lambda_{s,d'}=\Lip_{c}(G^s(Q),D_{s,d'})\rtimes_{\alpha_s,\text{alg}} \mathbb Z
\end{equation}
of the Ruelle algebra $\mathcal{R}^s(Q)$. One of our results will be that, for any $d'\in \text{sM}(X,\varphi)$, the algebra $\mathcal{R}^s(Q)$ has uniformly $\mathcal{L}^p$-summable $\Kt$-homology on $\Lambda_{s,d'}$, for every $p>p(d')$, where the constant $p(d')$ depends on the topological entropy $\ent(\varphi)$ and the metric $d'$. 

It turns out that, if $d_1,d_2\in \text{sM}_d(X,\varphi)$ and the corresponding contraction constants satisfy $\lambda_{X,d_1}\geq \lambda_{X,d_2}$, we have  
\begin{equation}
p(d_1)\leq p(d_2),
\end{equation}
meaning that $\Lambda_{s,d_1}$ is more smooth than $\Lambda_{s,d_2}$. This fact is related to Proposition \ref{prop:groupoiddistances} which offers a variety of exponential uniform convergence rates, each one depending on a choice of metric in $\text{sM}_d(X,\varphi)$, that can be used to estimate the convergence rates of the limits in Lemma \ref{lem:plusinftyKPW}, over the aforementioned Lipschitz subalgebras. Therefore, the question is, \textit{to what extent can we find such \enquote{smooth} Lipschitz subalgebras?}

\begin{definition}\label{def:l_number}
For every $d'\in \text{sM}_d(X,\varphi)$ consider the Smale space $(X,d',\varphi)$ with contraction constant $\lambda_{X,d'}>1.$ The quantity $$\lambda(X,\varphi)=\sup \{\lambda_{X,d'}: d'\in \text{sM}_d(X,\varphi)\}$$ is called the $\lambda$\textit{-number} of the Smale space $(X,\varphi)$.
\end{definition}

It turns out that the $\lambda$-numbers (also considered in \cite{Artigue}) are topological invariants of Smale spaces. Before proving this, we present the following lemma whose proof involves a straightforward manipulation of the Smale space axioms, and therefore is omitted.
\enlargethispage{\baselineskip}

\begin{lemma}\label{lem:pullback_Smale_space}
Let $(Y,\psi)$ be a topological dynamical system and $(Z,d_Z,\zeta, \varepsilon_Z,\lambda_Z)$ be a Smale space with locally defined bracket map $[\cdot, \cdot ]_Z$. Suppose that these systems are topologically conjugate via the conjugating homeomorphism $f:(Y,\psi)\to (Z,\zeta)$. Then $(Y,\psi)$ admits a Smale space structure with
\begin{enumerate}[(1)]
\item $d_Y(y_1,y_2)=d_Z(f(y_1),f(y_2))$, for any $y_1,y_2\in Y$;
\item $\varepsilon_Y=\varepsilon_Z$ and $$[y_1,y_2]_Z=f^{-1}[f(y_1),f(y_2)]_Z,$$
whenever $d_Y(y_1,y_2)\leq \varepsilon_Y$;
\item $\lambda_Y=\lambda_Z$.
\end{enumerate}
Moreover, if $(Z,d_Z,\zeta, \varepsilon_Z,\lambda_Z)$ is self-similar, the Smale space $(Y,d_Y,\psi,\varepsilon_Y,\lambda_Y)$ is too.
\end{lemma}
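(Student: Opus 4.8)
The final statement to prove is Proposition~\ref{prop:lambda_number_top.invar.}, that the $\lambda$-number $\lambda(X,\varphi)$ is a topological invariant of the Smale space $(X,\varphi)$.

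The plan is to transport the entire Smale space structure of $Z$ to $Y$ along the conjugating homeomorphism $f$, and then to read off the axioms one at a time. First I would observe that $d_Y(y_1,y_2)=d_Z(f(y_1),f(y_2))$ is a metric on $Y$, being the pullback of $d_Z$ along the bijection $f$; it induces the topology of $Y$ because $f$ is a homeomorphism, and by construction $f:(Y,d_Y)\to(Z,d_Z)$ is a surjective isometry. Hence $(Y,d_Y)$ is a compact (infinite) metric space, and $f\times f$ restricts to a homeomorphism from $\{(y_1,y_2)\in Y\times Y: d_Y(y_1,y_2)\leq\varepsilon_Z\}$ onto the domain $\{(z_1,z_2)\in Z\times Z: d_Z(z_1,z_2)\leq\varepsilon_Z\}$ of the bracket $[\cdot,\cdot]_Z$. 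Since $f$ is an isometry it preserves exactly which bracket expressions are defined, and this is the one bookkeeping point to track throughout.

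Next I would set $\varepsilon_Y=\varepsilon_Z$, $\lambda_Y=\lambda_Z$, and define $[\cdot,\cdot]_Y=f^{-1}\circ[\cdot,\cdot]_Z\circ(f\times f)$ on the domain above; this is automatically bi-continuous as a composition of bi-continuous maps, and it agrees with the formula in item~(2). Axioms (B1)--(B3) for $[\cdot,\cdot]_Y$ then follow by direct substitution from (B1)--(B3) for $[\cdot,\cdot]_Z$ after cancelling $f^{-1}\circ f$, the "whenever both sides are defined" clauses being preserved by the remark above. For (B4) I would use the intertwining identity $\psi=f^{-1}\circ\zeta\circ f$: then $\psi([y_1,y_2]_Y)=f^{-1}\zeta[f(y_1),f(y_2)]_Z$, which by (B4) for $Z$ equals $f^{-1}[\zeta f(y_1),\zeta f(y_2)]_Z$, and since $\zeta f=f\psi$ this is $f^{-1}[f(\psi(y_1)),f(\psi(y_2))]_Z=[\psi(y_1),\psi(y_2)]_Y$.

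It then remains to handle the contraction axioms and self-similarity. Since $f$ is an isometry, the formula for $[\cdot,\cdot]_Y$ gives $f(Y^s(y,\varepsilon))=Z^s(f(y),\varepsilon)$ and $f(Y^u(y,\varepsilon))=Z^u(f(y),\varepsilon)$ for $0<\varepsilon\leq\varepsilon_Z$, directly from the definitions of the local stable and unstable sets. Hence for $y',y''\in Y^s(y,\varepsilon)$ one computes $d_Y(\psi(y'),\psi(y''))=d_Z(\zeta f(y'),\zeta f(y''))\leq\lambda_Z^{-1}d_Z(f(y'),f(y''))=\lambda_Z^{-1}d_Y(y',y'')$ by (C1) for $Z$, which is exactly (C1) for $Y$ with $\lambda_Y=\lambda_Z$; (C2) is identical using $\psi^{-1}$, $\zeta^{-1}$ and local unstable sets. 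This establishes that $(Y,d_Y,\psi,\varepsilon_Y,\lambda_Y)$ is a Smale space. Finally, if $(Z,\zeta)$ is self-similar, so that $\zeta,\zeta^{-1}$ are $\lambda_Z$-Lipschitz for $d_Z$, then $d_Y(\psi(y'),\psi(y''))=d_Z(\zeta f(y'),\zeta f(y''))\leq\lambda_Z\, d_Y(y',y'')$ and likewise for $\psi^{-1}$, so $\psi,\psi^{-1}$ are $\lambda_Y$-Lipschitz and $(Y,d_Y,\psi)$ is self-similar. No step presents a genuine obstacle; the only care needed is the domain bookkeeping for the bracket identities, which is dispatched once and for all by the fact that $f$ preserves distances.
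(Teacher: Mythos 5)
Your proof is correct and follows exactly the route the paper has in mind: it omits the argument as a ``straightforward manipulation of the Smale space axioms,'' and your transport of the metric, bracket, constants, and contraction axioms along the isometric conjugacy $f$, together with the observation $f(Y^s(y,\varepsilon))=Z^s(f(y),\varepsilon)$ (and its unstable analogue), is precisely that verification, including the self-similar case. The only blemish is the opening sentence, which mislabels the target as Proposition~\ref{prop:lambda_number_top.invar.} (the application) rather than the lemma itself, but the body proves the lemma as stated.
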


\begin{prop}\label{prop:lambda_number_top.invar.}
If the Smale space $(Y,\psi)$ is topologically conjugate to the Smale space $(Z,\zeta)$, then $\lambda(Y,\psi)=\lambda(Z,\zeta)$.
\end{prop}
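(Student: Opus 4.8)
The plan is to exploit the fact that topological conjugacy sets up a bijection between compatible metrics on the two spaces that preserves self-similarity and the associated contraction constants, so that the two defining sets of contraction constants coincide and therefore their suprema do.

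First I would unpack the definitions. Let $f:(Y,\psi)\to (Z,\zeta)$ be a conjugating homeomorphism, so $f$ is a homeomorphism with $f\circ \psi=\zeta\circ f$. Given any $d'\in \text{sM}_{d_Z}(Z,\zeta)$, Lemma \ref{lem:pullback_Smale_space} shows that $d'_Y(y_1,y_2):=d'(f(y_1),f(y_2))$ is a compatible metric on $Y$ for which $(Y,d'_Y,\psi)$ is again a self-similar Smale space, with the \emph{same} contraction constant $\lambda_{X,d'_Y}=\lambda_{Z,d'}$. Hence $d'_Y\in \text{sM}_{d_Y}(Y,\psi)$ (here $d_Y$ is any fixed metric on $Y$ defining its topology — note $d'_Y$ is topologically equivalent to it since $f$ is a homeomorphism), and this already gives
\[
\lambda(Y,\psi)=\sup\{\lambda_{X,d''}:d''\in \text{sM}_{d_Y}(Y,\psi)\}\ \geq\ \lambda_{X,d'_Y}=\lambda_{Z,d'}.
\]
Taking the supremum over all $d'\in \text{sM}_{d_Z}(Z,\zeta)$ yields $\lambda(Y,\psi)\geq \lambda(Z,\zeta)$.

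For the reverse inequality I would run the identical argument with the conjugacy $f^{-1}:(Z,\zeta)\to(Y,\psi)$ in place of $f$: every self-similar compatible metric on $Y$ pulls back along $f^{-1}$ to a self-similar compatible metric on $Z$ with the same contraction constant, giving $\lambda(Z,\zeta)\geq\lambda(Y,\psi)$. Combining the two inequalities gives $\lambda(Y,\psi)=\lambda(Z,\zeta)$, as claimed. Strictly speaking one should remark that the pull-back construction is a bijection $\text{sM}_{d_Z}(Z,\zeta)\to \text{sM}_{d_Y}(Y,\psi)$ (with inverse given by pulling back along $f^{-1}$) which respects contraction constants; then the equality of the $\lambda$-numbers is immediate since the two sets $\{\lambda_{X,d''}:d''\in \text{sM}(Z,\zeta)\}$ and $\{\lambda_{X,d''}:d''\in \text{sM}(Y,\psi)\}$ are literally equal.

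The proof is essentially a bookkeeping argument, so there is no serious obstacle; the one point requiring a little care is checking that $d'_Y$ as defined is genuinely compatible with the \emph{given} topology on $Y$ (rather than some ad hoc topology) — but this is automatic because $f$ is a homeomorphism, so $f$ being an isometry $(Y,d'_Y)\to(Z,d')$ transports the topology of $(Z,d')$, which is the topology of $Z$, back to the topology of $Y$. A second minor point is that Lemma \ref{lem:pullback_Smale_space} is stated for $(Y,\psi)$ a mere topological dynamical system and $(Z,\zeta)$ a Smale space; applying it in both directions is legitimate since a Smale space is in particular a topological dynamical system. With these observations the statement follows directly.
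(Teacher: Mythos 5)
Your proposal is correct and follows essentially the same route as the paper: the paper's proof also invokes Lemma \ref{lem:pullback_Smale_space} to transport self-similar metrics along the conjugacy (in both directions), concluding that the two sets of contraction constants coincide and hence so do their suprema. Your extra remarks on compatibility of the pulled-back metric and on applying the lemma symmetrically are exactly the bookkeeping the paper leaves implicit.
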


\begin{proof}
Let us denote the metrics on $Y$ and $Z$ by $d_Y$ and $d_Z$. The result follows immediately from Lemma \ref{lem:pullback_Smale_space}, which shows that $$\{\lambda_{X,d'}: d'\in \text{sM}_{d_Y}(X,\varphi)\}=\{\lambda_{X,d'}: d'\in \text{sM}_{d_Z}(X,\varphi)\}.\qedhere$$
\end{proof}

Interestingly enough, the $\lambda$-numbers seem to capture some topological information about the Smale space. The next result is a consequence of \cite[Theorem 7.6]{Gero}. Also, recall that the topological dimension of $X$ is always finite.

\begin{prop}\label{prop:lambda_number_dimension}
It holds that $\lambda(X,\varphi)<\infty$ if and only if $\dim X >0$. Specifically, if $\dim X>0$ then, $$\lambda(X,\varphi)\leq e^{2\ent(\varphi)/\dim X}.$$
\end{prop}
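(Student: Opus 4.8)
The plan is to establish the equivalence via the Ahlfors regularity result from \cite[Theorem 7.6]{Gero} combined with the fact that Hausdorff dimension is a metric-independent lower bound for a space that supports a doubling-type measure. First I would recall that for a self-similar metric $d'\in \text{sM}_d(X,\varphi)$, applying \cite[Theorem 7.6]{Gero} (after invoking Smale's Decomposition Theorem \ref{thm: Smale decomposition} to reduce the irreducible case to the mixing case) gives that Bowen's measure of maximal entropy is Ahlfors $s_{d'}$-regular with
\begin{equation*}
s_{d'}=\frac{2\ent(\varphi)}{\log \lambda_{X,d'}}.
\end{equation*}
Hence the Hausdorff dimension of $(X,d')$ equals $s_{d'}$. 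Since the topological (covering) dimension $\dim X$ is a lower bound for the Hausdorff dimension of $X$ in \emph{any} compatible metric (a classical fact, e.g.\ via the Szpilrajn/Nöbeling inequality $\dim X\le \dim_{\mathrm H}(X,d')$), we obtain $\dim X\le s_{d'}$ for every $d'\in \text{sM}_d(X,\varphi)$, i.e.
\begin{equation*}
\dim X\le \frac{2\ent(\varphi)}{\log\lambda_{X,d'}}, \qquad\text{so}\qquad \log\lambda_{X,d'}\le \frac{2\ent(\varphi)}{\dim X}
\end{equation*}
whenever $\dim X>0$. Taking the supremum over $d'$ yields $\lambda(X,\varphi)\le e^{2\ent(\varphi)/\dim X}<\infty$, which proves the quantitative bound and one direction of the equivalence.

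For the converse — that $\dim X=0$ forces $\lambda(X,\varphi)=\infty$ — I would use the fact that a zero-dimensional Smale space is an SFT, hence (up to topological conjugacy, which preserves $\lambda(X,\varphi)$ by Proposition \ref{prop:lambda_number_top.invar.}) a topological Markov chain $(\Sigma_M,\sigma_M)$. The key observation is that the SFT metric \eqref{eq:SFTmetric} can be rescaled: for any integer $N\ge 1$, the ultrametric $d_N(x,y)=\mathrm{inf}\{2^{-Nn}:n\ge 0,\ x_i=y_i\text{ for }|i|<n\}$ is compatible with the same topology and makes $(\Sigma_M,d_N,\sigma_M)$ a self-similar Smale space with contraction constant $\lambda_{X,d_N}=2^N$. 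Since $N$ is arbitrary, $\lambda(\Sigma_M,\sigma_M)=\infty$, and pulling back along the conjugacy gives $\lambda(X,\varphi)=\infty$. This, together with the bound above (valid exactly when $\dim X>0$), closes the equivalence.

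The main obstacle I anticipate is the careful justification of the reduction steps and the dimension inequality in the required generality: one must confirm that \cite[Theorem 7.6]{Gero} applies to the \emph{irreducible} (not just mixing) case — handled by Theorem \ref{thm: Smale decomposition}, since both $\ent(\varphi)$ and $\dim X$ behave correctly under the finite clopen decomposition, the entropy being the maximum over the mixing components and the topological dimension being the maximum over the pieces — and that a compatible self-similar metric on $X$ restricts/assembles appropriately across the decomposition so that the regularity exponent is genuinely controlled by $\ent(\varphi)$ as in \eqref{eq:fractal_dimension}. One must also be slightly careful that the inequality $\dim X \le \dim_{\mathrm H}(X,d')$ holds for all compact metrizable $X$ (it does), and that $\lambda_{X,d'}$ refers to the optimal, i.e.\ supremal, contraction constant for the fixed metric $d'$, so that passing to $\sup_{d'}$ is legitimate. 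None of these is deep, but the bookkeeping around Smale's decomposition is where the argument needs the most care.
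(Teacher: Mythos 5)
Your proposal is correct and follows essentially the same route as the paper: the forward direction uses \cite[Theorem 7.6]{Gero} together with the inequality $\dim X\leq \dim_{H}(X,d')$ and a supremum over $d'\in \text{sM}_d(X,\varphi)$, and the converse reduces a zero-dimensional Smale space to a TMC via Proposition \ref{prop:lambda_number_top.invar.} and rescales the ultrametric (\ref{eq:SFTmetric}) to get arbitrarily large contraction constants. Your extra care about passing from the mixing to the irreducible case via Theorem \ref{thm: Smale decomposition} is exactly the point the paper disposes of earlier (in Subsection \ref{sec:Primer_Smale_spaces}) and then uses tacitly, so there is no substantive difference.
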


\begin{proof}
First assume that $\dim X>0$. From \cite[Theorem 7.6]{Gero} we obtain that, for every $d'\in \text{sM}_d(X,\varphi)$, the Hausdorff dimension satisfies $$\dim_{H}(X,d')=\frac{2\ent(\varphi)}{\log\lambda_{X,d'}}.$$ Since $\dim X\leq \dim_{H}(X,d')$ (see \cite{Falconer}), by taking the supremum over $d'\in \text{sM}_d(X,\varphi)$ we obtain that $\lambda(X,\varphi)\leq e^{2\ent(\varphi)/\dim X}.$

We now prove the converse by showing that, if $\dim X=0$, then $\lambda(X,\varphi)=\infty$. First, we have that $(X,\varphi)$ is topologically conjugate to a TMC $(Y,\psi)$, as we see in \cite[Theorem 2.2.8]{Putnam_Book}. As a result, from Proposition \ref{prop:lambda_number_top.invar.} we get that $\lambda(X,\varphi)=\lambda(Y,\psi)$. Now, it is straightforward to see that the ultrametric (\ref{eq:SFTmetric}), which induces the product topology on $Y$, can have any factor $\lambda >1$ instead of just $\lambda=2$. All of these ultrametrics are self-similar for $\psi$. As a result, $\lambda(Y,\psi)=\infty$.
\end{proof}

Consequently, our quest for finding the smoothest Lipschitz subalgebra (\ref{eq:smoothest_Lip_alg}) of $\mathcal{R}^s(Q)$ by varying the metric $d'\in \text{sM}_d(X,\varphi)$ has a topological limitation, see also Remark \ref{rem:totalinf}. Another way to optimise even further our approach of using the Alexandroff-Urysohn-Frink Metrisation Theorem, would be to find sharper estimates in Proposition \ref{prop:groupoiddistances}. More precisely, the diameter of the sets $V_{d'}^s(c,h^s,\lambda_{X,d'}^{-N-k}\gamma,N)$ goes exponentially fast to zero, as $k$ goes to infinity. However, the base of the exponent, namely $$2^{-1/\lceil \log_{\lambda_{X,d'}} 3 \rceil},$$ is by no means random. In fact, it is the best possible (in this generality of stable and unstable groupoids) achieved by this metrisation method. Specifically, the number $3$ is related to the following fact: in a metric space $(Z,d)$, given any two sufficiently small intersecting balls $B_d(z_1,r_1)$ and $B_d(z_2,r_2)$, it holds $B_d(z_1,r_1)\subset B_d(z_2,3r)$, where $r=\max\{r_1,r_2\}$. The number $2$ is related to the fact that $\rho_{s,d'}$ is a $2$-quasimetric, and somehow seems to be important. In \cite{Schroeder}, the author constructs a $K$-quasimetric space, with $K>2$, for which Frink's chain-metric approach does not work.

\subsection{Groupoid ultrametrics for topological Markov chains}\label{sec:Groupoid_metrics_SFT} If the irreducible Smale space $(X,d,\varphi)$ is zero-dimensional, it is possible to build tractable ultrametrics on $G^s(Q)$ without using the Alexandroff-Urysohn-Frink Metrisation Theorem \ref{thm:Metrisation_thm}. With these ultrametrics one also obtains arbitrarily fast uniform convergence rates, in the sense of Proposition \ref{prop:groupoiddistances}, reflecting the fact that $\lambda(X,\varphi)=\infty$.

Indeed, assume that $(X,d,\varphi)$ is an irreducible TMC and the metric $d$, denoted henceforth by $d_{\lambda}$, is the usual self-similar ultrametric with expanding factor $\lambda >1$. Observe that the algebraic and topological structures of $G^s(Q)$ do not depend on $\lambda$, since for every $\lambda_1>1$ and $\lambda_2>1$ the metrics $d_{\lambda_1}$ and $d_{\lambda_2}$ generate the same topology. Also, for simplicity the elements of $G^s(Q)$ will be denoted by $a=(a_1,a_2),\, b=(b_1,b_2)$ and $c=(c_1,c_2)$.

We now mention a few basic facts about $(X,d_{\lambda},\varphi)$. First, the expansivity constant $\varepsilon_X=\lambda^{-1}$. Also, in our calculations there is no need to consider $\varepsilon_X/2$ since $d_{\lambda}$ is an ultrametric. Similarly, the constant $\varepsilon_X'\leq \varepsilon_X/2$ (see (\ref{eq:uniquebracket})) is also not needed because, whenever $d_{\lambda}(x,y)\leq \lambda^{-1}$, then 
\begin{equation}\label{eq:SFT_groupoid_ineq_1}
d_{\lambda}(x,[x,y]),\, d_{\lambda}(y,[x,y])\leq d_{\lambda}(x,y).
\end{equation} 

As a result, for every $c\in G^s(Q)$ it is enough to consider only the holonomy maps $h^s:X^u(c_2,\eta)\to X^u(c_1,\lambda^{-N-1})$ given by $$h^s(z)=\varphi^{-N}[\varphi^N(z),\varphi^N(c_1)],$$ where $N\geq 0$ is such that 
\begin{equation}\label{eq:SFT_groupoid_eq_3}
\varphi^N(c_2)\in X^s(\varphi^N(c_1),\lambda^{-1}) 
\end{equation}
and $\eta \leq \lambda^{-N-1}$. With that said, it can be assumed that the holonomy maps of all base sets $V^s(c,h^s,\eta,N)\subset G^s(Q)$ are as above. Note that these base sets will depend on the metric $d_{\lambda}$. 

It is important to mention that, for every $b\in V^s(c,h^s,\eta,N)$, we also have that
$\varphi^N(b_2)\in X^s(\varphi^N(b_1),\lambda^{-1}).$ To see this, note that $\varphi^N(b_1)=[\varphi^N(b_2),\varphi^N(c_1)]$, and also that 
\begin{align*}
d_{\lambda}(\varphi^N(b_2), [\varphi^N(b_2),\varphi^N(c_1)])&\leq d_{\lambda}(\varphi^N(b_2), \varphi^N(c_1))\\
&\leq \max \{ d_{\lambda}(\varphi^N(b_2), \varphi^N(c_2)), d_{\lambda}(\varphi^N(c_2), \varphi^N(c_1))\}\\
&\leq \lambda^{-1}.
\end{align*}

Moreover, every such $h^s$ is actually an isometry, and hence $$h^s:X^u(c_2,\eta)\to X^u(c_1,\eta).$$ To see this, first observe that, for every $x,y\in X$ with $d_{\lambda}(x,y)\leq \lambda^{-1}$, we have \begin{equation}\label{eq:SFT_groupoid_ineq_2}
d_{\lambda}(x,[y,x])=d_{\lambda}(y,[x,y]).
\end{equation} 
Let now $a,b\in V^s(c,h^s,\eta,N)$, then $a_2,b_2\in X^u(c_2,\eta)$ and $a_1,b_1\in X^u(c_1,\lambda^{-N-1})$. From the previous discussion we obtain that $\varphi^N(b_2)\in X^s(\varphi^N(b_1),\lambda^{-1})$, and hence 
\begin{align*}
a_1&=h^s(a_2)\\
&= \varphi^{-N}[\varphi^N(a_2),\varphi^N(c_1)]\\
&= \varphi^{-N}[\varphi^N(a_2),[\varphi^N(c_1),\varphi^N(b_1)]]\\
&= \varphi^{-N}[\varphi^N(a_2),\varphi^N(b_1)].
\end{align*}
Then, since $d_{\lambda}(\varphi^N(b_2),\varphi^N(a_1))\leq \lambda^{-1}$, from (\ref{eq:SFT_groupoid_ineq_2}) it holds that $$d_{\lambda}(\varphi^N(b_2),\varphi^N(a_2))=d_{\lambda}(\varphi^N(b_1),\varphi^N(a_1)).$$ Finally, since $\varphi^{-1}$ is the $\lambda^{-1}$-multiple of an isometry on local unstable sets, it follows that $$d_{\lambda}(b_2,a_2)=d_{\lambda}(b_1,a_1)=d_{\lambda}(h^s(b_2),h^s(a_2)).$$

In an attempt to metrise $G^s(Q)$, define $d_{s,\lambda}:G^s(Q)\times G^s(Q)\to [0,1]$ by 
\begin{equation}
d_{s,\lambda}(a,b)=
\begin{cases}
\max \{d_{\lambda}(a_1,b_1),d_{\lambda}(a_2,b_2)\}, & \text{if} \enspace b_2\in X^u(a_2,\lambda^{-1}),\, b_1\in X^u(a_1,\lambda^{-1})\\
1, & \text{if else.}
\end{cases}
\end{equation}
Using the fact that $d_{\lambda}$ is an ultrametric, it is not hard to show that $d_{s,\lambda}$ is also an ultrametric. However, it generates a strictly weaker topology than that of $G^s(Q)$ because there is no open ball, with respect to $d_{s,\lambda}$, that fits inside a base set of the form $V^s(a,h^s,\eta,N)$. The reason is that every global stable set wraps densely around every local unstable set of the same mixing component. 

To remedy this, consider the map $c_s:G^s(Q)\to \mathbb N$ that takes each $a\in G^s(Q)$ to the first time $N_{a}\geq 0$; that is, the minimum positive integer for which (\ref{eq:SFT_groupoid_eq_3}) holds.  One should also compare this definition with the one in (\ref{eq:first_time}). The map $c_s$ depends on $d_{\lambda}$, and the next lemma shows that it is continuous. Therefore, $G^s(Q)$ decomposes into the clopen subsets 
\begin{equation}\label{eq:SFT_groupoid_decomposition}
G^s(Q)=\bigsqcup_{N\geq 0}c_s^{-1}(N).
\end{equation}

In what follows, for simplifying the notation, a base set of the form $V^s(c,h^s,\eta,N)$ will often be written as $V^s(c,\eta,N)$.

\begin{lemma}\label{lem:Groupoid_metrics_SFT_1}
If $a\in c_s^{-1}(N)$ and $n\geq N$, then $V^s(a,\lambda^{-n-1},n)\subset c_s^{-1}(N).$
\end{lemma}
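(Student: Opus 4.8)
The plan is to fix $b=(b_1,b_2)\in V^s(a,\lambda^{-n-1},n)$ and show that the first time of $b$ equals $N$, i.e.\ $b\in c_s^{-1}(N)$. Recall that, for $(v,w)\in G^s(Q)$, the first time $N_{(v,w)}$ is the least $m\ge 0$ with $\varphi^m(w)\in X^s(\varphi^m(v),\lambda^{-1})$; by the bracket formula (\ref{eq:SFTbracket}) and the form of $d_\lambda$ this condition holds exactly when $v$ and $w$ agree in all coordinates of index $\ge m$ (up to the index bookkeeping built into (\ref{eq:SFTmetric})), and in particular it is monotone in $m$, so $N_{(v,w)}=1+\max\{i:v_i\ne w_i\}$ (and $=0$ if $v=w$).

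First I would record, for $b\in V^s(a,h^s,\lambda^{-n-1},n)$, the three coordinate relations that come from facts already proven in this subsection. Since $b_2\in X^u(a_2,\lambda^{-n-1})$ and $h^s$ is an isometry of $X^u(a_2,\lambda^{-n-1})$ onto $X^u(a_1,\lambda^{-n-1})$, we have $b_1=h^s(b_2)\in X^u(a_1,\lambda^{-n-1})$; hence $b_1$ agrees with $a_1$, and $b_2$ with $a_2$, in all coordinates of index $\le n$. Moreover, as established above, every element of $V^s(a,h^s,\lambda^{-n-1},n)$ satisfies $\varphi^n(b_2)\in X^s(\varphi^n(b_1),\lambda^{-1})$, so $b_1$ and $b_2$ agree in all coordinates of index $\ge n$. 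Finally, the hypothesis $a\in c_s^{-1}(N)$ translates, via the description of the first time above, to: $a_1$ and $a_2$ agree in all coordinates of index $\ge N$, and (when $N\ge 1$) disagree at coordinate $N-1$.

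Then the conclusion is a short combinatorial check. Since $n\ge N$, for every index $i\ge N$ one has $(b_1)_i=(b_2)_i$: for $i\ge n$ this is the third relation, and for $N\le i<n$ it follows from $(b_1)_i=(a_1)_i=(a_2)_i=(b_2)_i$ using the first two relations together with $i\ge N$. Hence $\varphi^N(b_2)\in X^s(\varphi^N(b_1),\lambda^{-1})$, giving $N_b\le N$. Conversely, if $N\ge 1$ then $N-1\le n$, so $(b_1)_{N-1}=(a_1)_{N-1}\ne (a_2)_{N-1}=(b_2)_{N-1}$, which forbids $\varphi^m(b_2)\in X^s(\varphi^m(b_1),\lambda^{-1})$ for all $m\le N-1$ and hence $N_b\ge N$; for $N=0$ this bound is automatic. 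Therefore $N_b=N$, so $b\in c_s^{-1}(N)$, and consequently $V^s(a,\lambda^{-n-1},n)\subset c_s^{-1}(N)$.

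The only genuinely delicate point is the bookkeeping: matching the radius $\lambda^{-n-1}$ and the constant $\varepsilon_X=\lambda^{-1}$ to the precise ranges of coordinates on which the relevant sequences agree, and tracking the index shift produced by the powers of $\varphi$. Once the dictionary between ``membership in a local stable or unstable set of a prescribed radius'' and ``agreement of a block of coordinates'' is set up carefully (using the ultrametric estimates (\ref{eq:SFT_groupoid_ineq_1}) and (\ref{eq:SFT_groupoid_ineq_2})), the argument above goes through verbatim; one could instead phrase it purely through the bracket axioms (B1)--(B4) applied to the powers $\varphi^N$ of the defining relations, but the coordinate version is the most transparent here.
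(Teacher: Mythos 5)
Your proof is correct, but it takes a genuinely different route from the paper's, at least for the key lower bound. The paper argues coordinate-freely: by self-similarity $V^s(a,\lambda^{-n-1},n)=V^s(a,\lambda^{-n-1},N)$, so any $b$ in this set satisfies $\varphi^N(b_2)\in X^s(\varphi^N(b_1),\lambda^{-1})$ (the fact, established just before the lemma, that you also invoke), giving $c_s(b)\le N$; then, exploiting the symmetry of base sets, $a\in V^s(b,\lambda^{-n-1},N)=V^s(b,\lambda^{-n-1},c_s(b))$, and the same reasoning with the roles of $a$ and $b$ reversed yields $N=c_s(a)\le c_s(b)$. You instead unwind everything into the symbolic coordinates of the TMC: the upper bound is the same fact read in coordinates, while the lower bound comes from propagating the first disagreement coordinate of $a_1,a_2$ to $b_1,b_2$ via the unstable-set agreements $b_i\sim a_i$. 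Your dictionary between first times and agreement ranges is off by a fixed shift (with $\varepsilon_X=\lambda^{-1}$ and the strict inequality in the definition of local stable sets, $\varphi^m(w)\in X^s(\varphi^m(v),\lambda^{-1})$ corresponds to agreement in coordinates $\ge m-1$, and the radius $\lambda^{-n-1}$ gives agreement up to index $n+1$), but since you use one consistent convention and the hypothesis $n\ge N$ leaves slack, the overlap argument is insensitive to these shifts, so the deferred bookkeeping is harmless. The trade-off: the paper's argument is shorter and does not touch the symbolic presentation, while yours is more concrete and makes the mechanism visible, at the cost of being tied to the coordinate model and to careful index arithmetic.
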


\begin{proof}
Due to self-similarity it holds that $$V^s(a,\lambda^{-n-1},n)=V^s(a,\lambda^{-n-1},N).$$ Let $b\in V^s(a,\lambda^{-n-1},N)$ and we claim $c_s(b)=N$. We have $\varphi^N(b_2)\in X^s(\varphi^N(b_1),\lambda^{-1})$ and so $c_s(b)\leq N$. Therefore, the base set $V^s(b,\lambda^{-n-1},N)$ is well-defined and one can easily show that it contains $a$. Now, due to self-similarity it holds that $$V^s(b,\lambda^{-n-1},N)=V^s(b,\lambda^{-n-1},c_s(b)),$$ and similarly one can show that $N=c_s(a)\leq c_s(b).$ This completes the proof.
\end{proof}

Let now $D_{s,\lambda}:G^s(Q)\times G^s(Q)\to [0,1]$ be defined by
\begin{equation}\label{eq:SFTgroupoidmetric}
D_{s,\lambda}(a,b)=
\begin{cases}
d_{s,\lambda}(a,b), & \text{if} \enspace  c_s(a)=c_s(b)\\
1, & \text{otherwise}.
\end{cases}
\end{equation}
Similarly, it is straightforward to show that $D_{s,\lambda}$ is an ultrametric.

\begin{lemma}\label{lem:SFT_groupoid_topology}
The ultrametric $D_{s,\lambda}$ generates the topology of $G^s(Q)$.
\end{lemma}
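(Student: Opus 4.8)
The plan is to verify the two conditions of the Alexandroff–Urysohn–Frink Metrisation Theorem \ref{thm:Metrisation_thm}, or rather to argue directly that the $D_{s,\lambda}$-balls form a neighbourhood base at each point of $G^s(Q)$; since we already know $D_{s,\lambda}$ is an ultrametric, this suffices. Fix $a=(a_1,a_2)\in G^s(Q)$ with $c_s(a)=N$. First I would show that every $D_{s,\lambda}$-ball around $a$ contains a base set containing $a$, i.e.\ the $D_{s,\lambda}$-topology is coarser than the {\'e}tale one. Given $r>0$, choose $n\geq N$ with $\lambda^{-n-1}<r$; by Lemma \ref{lem:Groupoid_metrics_SFT_1} the base set $V^s(a,\lambda^{-n-1},n)$ lies in $c_s^{-1}(N)$, so for any $b$ in it we have $c_s(a)=c_s(b)$, and moreover (using self-similarity and that $h^s$ is an isometry, as established above) $d_\lambda(a_1,b_1)=d_\lambda(a_2,b_2)\leq \lambda^{-n-1}<r$, hence $D_{s,\lambda}(a,b)=d_{s,\lambda}(a,b)\leq\lambda^{-n-1}<r$. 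Thus $V^s(a,\lambda^{-n-1},n)\subset B_{D_{s,\lambda}}(a,r)$.

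Conversely I would show that every base set $V^s(a,\lambda^{-n-1},n)$ (with $n\geq N$) contains a $D_{s,\lambda}$-ball around $a$, which gives the reverse inclusion of topologies. Take the ball $B_{D_{s,\lambda}}(a,\lambda^{-n-1})$ and let $b$ be in it with $b\neq a$; then $D_{s,\lambda}(a,b)<1$, so $c_s(b)=c_s(a)=N$ and $d_\lambda(a_1,b_1),d_\lambda(a_2,b_2)\leq\lambda^{-n-1}$. In particular $b_2\in X^u(a_2,\lambda^{-n-1})$, so $b_2$ lies in the domain of the holonomy $h^s$ of $V^s(a,\lambda^{-n-1},n)$; it remains to check $h^s(b_2)=b_1$. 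Since $c_s(b)=N$ we have $\varphi^N(b_2)\in X^s(\varphi^N(b_1),\lambda^{-1})$, and since $d_\lambda(a_1,b_1)\leq\lambda^{-n-1}$ the point $\varphi^N(b_1)$ is $\lambda^{-1}$-close to $\varphi^N(a_1)$; a short bracket computation, using the axioms (B1)--(B4) and the ultrametric bound \eqref{eq:SFT_groupoid_ineq_1} exactly as in the verification that $h^s$ is an isometry above, then gives
\[
h^s(b_2)=\varphi^{-N}[\varphi^N(b_2),\varphi^N(a_1)]=\varphi^{-N}[\varphi^N(b_2),[\varphi^N(a_1),\varphi^N(b_1)]]=\varphi^{-N}[\varphi^N(b_2),\varphi^N(b_1)]=b_1.
\]
Hence $b\in V^s(a,\lambda^{-n-1},n)$, so $B_{D_{s,\lambda}}(a,\lambda^{-n-1})\subset V^s(a,\lambda^{-n-1},n)$.

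Since the base sets $\{V^s(a,\lambda^{-n-1},n):n\geq N\}$ form a neighbourhood base at $a$ for the {\'e}tale topology (by the analogue of Lemma \ref{lem:groupoidNB}, or directly from Theorem \ref{thm: stable bisections} together with self-similarity), the two chains of inclusions show that the $D_{s,\lambda}$-balls also form a neighbourhood base at $a$. As $a$ was arbitrary, $D_{s,\lambda}$ generates the {\'e}tale topology of $G^s(Q)$, and similarly for the unstable groupoid.

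The main obstacle I anticipate is bookkeeping rather than conceptual: one must be careful that the first-time function $c_s$ behaves well under the self-similar rescaling $V^s(a,\lambda^{-n-1},n)=V^s(a,\lambda^{-n-1},N)$ (this is where Lemma \ref{lem:Groupoid_metrics_SFT_1} is essential, since without the constancy of $c_s$ on small base sets the ``otherwise'' clause in \eqref{eq:SFTgroupoidmetric} would break the coarser-topology direction), and that all the bracket expressions appearing in the identification $h^s(b_2)=b_1$ are actually defined — which is guaranteed because $\varepsilon_X=\lambda^{-1}$ and the relevant points are all within $\lambda^{-1}$ of each other in the ultrametric, so no $\varepsilon_X/2$ or $\varepsilon_X'$ refinements are needed here.
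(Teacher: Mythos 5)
Your proposal is correct and follows essentially the same route as the paper: the paper proves the same two inclusions, namely $B_{D_{s,\lambda}}(a,\lambda^{-n-1})\subset V^s(a,\lambda^{-n-1},c_s(a))$ via exactly the bracket computation you give, and the reverse containment of a small base set inside a given ball via the local constancy (continuity) of $c_s$ from Lemma \ref{lem:Groupoid_metrics_SFT_1} together with the isometric holonomies. Your version merely makes the ``choose $\delta$ small enough'' step and the cofinality of the sets $V^s(a,\lambda^{-n-1},n)$ explicit, which is fine.
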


\begin{proof}
Let $a\in G^s(Q)$ and consider the base set $V^s(a,\lambda^{-n-1},N),$ where $n\geq N$. Due to self-similarity the base set is equal to $V^s(a,\lambda^{-n-1},c_s(a))$. We claim that $$B_{D_{s,\lambda}}(a,\lambda^{-n-1})\subset V^s(a,\lambda^{-n-1},c_s(a)).$$ Indeed, pick $b$ in the open ball. We have $c_s(b)=c_s(a)=M$ and $b_2\in X^u(a_2,\lambda^{-n-1})$, $b_1\in X^u(a_1,\lambda^{-n-1}).$ Moreover, $b\in V^s(a,\lambda^{-n-1},c_s(a))$ because 
\begin{align*}
\varphi^{-M}[\varphi^M(b_2),\varphi^M(a_1)]&= \varphi^{-M}[\varphi^M(b_2),[\varphi^M(a_1),\varphi^M(b_1)]]\\
&=\varphi^{-M}[\varphi^M(b_2),\varphi^M(b_1)]\\
&= b_1.
\end{align*}

For the reverse inclusion, consider the open ball $B_{D_{s,\lambda}}(a,\eta)$, where $\eta \leq \lambda^{-1}$. Then, $$B_{D_{s,\lambda}}(a,\eta)=\{b\in c_s^{-1}(c_s(a)):b_2\in X^u(a_2,\eta),\, b_1\in X^u(a_1,\eta)\}.$$ Since $c_s$ is continuous, there is $\delta >0, N\geq 0$ such that $V^s(a,\delta,N)\subset c_s^{-1}(c_s(a)).$ Choosing $\delta$ small enough gives that $V^s(a,\delta,N)\subset B_{D_{s,\lambda}}(a,\eta).$ As a result, the topology generated by $D_{s,\lambda}$ agrees with the topology of $G^s(Q)$.
\end{proof}

The following statement is the zero-dimensional analogue of Proposition \ref{prop:groupoiddistances}.

\begin{prop}\label{prop:SFT_groupoid_distances}
Let $c\in G^s(Q)$ and $n\geq c_s(c)$. For every $a,b$ in $V^s(c,\lambda^{-n-1},n)$ it holds that $$D_{s,\lambda}(a,b)\leq \lambda^{-n-1}.$$
\end{prop}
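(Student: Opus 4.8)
The plan is to reduce everything to the explicit description of the ultrametric $D_{s,\lambda}$ in \eqref{eq:SFTgroupoidmetric} and the self-similarity of the TMC. First I would fix $a,b \in V^s(c,\lambda^{-n-1},n)$. Recall that, by self-similarity, $V^s(c,\lambda^{-n-1},n) = V^s(c,\lambda^{-n-1},c_s(c))$, so without loss of generality we may take the base set to be built at the first time $N := c_s(c)$. From the discussion preceding Lemma \ref{lem:Groupoid_metrics_SFT_1}, every element of $V^s(c,\lambda^{-n-1},N)$ lies in $c_s^{-1}(N)$; in particular $c_s(a) = c_s(b) = N$. Hence $D_{s,\lambda}(a,b) = d_{s,\lambda}(a,b)$, and it remains to bound $d_{s,\lambda}(a,b)$.

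Next I would unpack what membership in the base set gives for the second coordinates: $a_2, b_2 \in X^u(c_2, \lambda^{-n-1})$, so $d_\lambda(a_2,b_2) \le \lambda^{-n-1}$ by the ultrametric inequality. For the first coordinates, the holonomy map $h^s$ attached to this base set is, as established in the text, an isometry $X^u(c_2,\lambda^{-n-1}) \to X^u(c_1,\lambda^{-n-1})$ with $a_1 = h^s(a_2)$, $b_1 = h^s(b_2)$. Therefore $d_\lambda(a_1,b_1) = d_\lambda(a_2,b_2) \le \lambda^{-n-1}$. Combining, $\max\{d_\lambda(a_1,b_1), d_\lambda(a_2,b_2)\} \le \lambda^{-n-1}$, and since in particular $b_2 \in X^u(a_2,\lambda^{-1})$ and $b_1 \in X^u(a_1,\lambda^{-1})$ (as $\lambda^{-n-1} \le \lambda^{-1}$), the first case of the definition of $d_{s,\lambda}$ applies, giving $d_{s,\lambda}(a,b) = \max\{d_\lambda(a_1,b_1), d_\lambda(a_2,b_2)\} \le \lambda^{-n-1}$. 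Hence $D_{s,\lambda}(a,b) \le \lambda^{-n-1}$, as claimed.

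There is no real obstacle here — the statement is essentially a bookkeeping consequence of two facts already proved in this subsection: that the first-time function $c_s$ is constant on such a base set (so the $D_{s,\lambda}$ formula collapses to $d_{s,\lambda}$), and that the relevant stable holonomy maps are isometries for the self-similar ultrametric. The only mild subtlety to watch is making sure we invoke the \emph{right} version of the base set (the one built at time $c_s(c)$, obtained via self-similarity $V^s(c,\lambda^{-n-1},n) = V^s(c,\lambda^{-n-1},c_s(c))$), so that the isometry property of $h^s$ and the inclusion into $c_s^{-1}(c_s(c))$ are both directly available.
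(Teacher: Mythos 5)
Your proof is correct and follows essentially the same route as the paper: Lemma \ref{lem:Groupoid_metrics_SFT_1} (which is itself the statement you attribute to the "discussion preceding" it) gives $c_s(a)=c_s(b)$, and the isometry of the holonomy map plus the ultrametric inequality bounds both coordinates by $\lambda^{-n-1}$, so the first case of the definition of $D_{s,\lambda}$ applies. Your version merely spells out the self-similarity reduction and the case check in the definition of $d_{s,\lambda}$ that the paper leaves implicit.
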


\begin{proof}

From Lemma \ref{lem:Groupoid_metrics_SFT_1} we have that $c_s(a)=c_s(b)$. Moreover, $b_2\in X^u(a_2,\lambda^{-n-1})$ and since the holonomy map is isometric we also have that $b_1\in X^u(a_1,\lambda^{-n-1})$. The result follows.
\end{proof}

The ultrametric $D_{s,\lambda}$ is very dynamic in nature. Before proving this, we should note that the automorphism $\Phi=\varphi \times \varphi :G^s(Q)\to G^s(Q)$ shifts the decomposition (\ref{eq:SFT_groupoid_decomposition}). More precisely, for every $N\in \mathbb N$ it holds that 
\begin{equation}\label{eq:SFT_groupoid_decomposition_shift}
\Phi^{-1}(c_s^{-1}(N))=c_s^{-1}(N+1),
\end{equation}
while the case $N=0$ is different since $c_s^{-1}(1)$ is strictly contained in $\Phi^{-1}(c_s^{-1}(0))$. 

Moreover, let $\widetilde{D_{s,\lambda}}$ denote the induced ultrametric on the space of units $X^u(Q)$, just like in (\ref{eq:restricted_metric}). In this case, however, since $X^u(Q)$ embeds in $c_s^{-1}(0)$, it is straightforward to see that
\begin{equation}\label{eq:SFTgroupoidmetric_units_space}
\widetilde{D_{s,\lambda}}(x,y)=
\begin{cases}
d_{\lambda}(x,y), & \text{if} \enspace  y\in X^u(x,\lambda^{-1})\\
1, & \text{if else}.
\end{cases}
\end{equation}

The next proposition is the zero-dimensional counterpart of Theorem \ref{thm:GroupoidMetrisation}. We prefer to work with $\Phi^{-1}$. 

\begin{prop}\label{prop:SFT_groupoids_Lipschitz}
Let $(X,d_{\lambda},\varphi)$ be an irreducible TMC with stable groupoid $G^s(Q)$. Also, for each $\kappa >1$ consider the equivalent ultrametric $d_{\kappa }$. Then, there is a family of compatible ultrametrics $\{D_{s,\kappa }:\kappa >1\}$ on $G^s(Q)$ so that, with respect to each of these ultrametrics,
\begin{enumerate}[(1)]
\item the groupoid automorphism $\Phi^{-1}:G^s(Q)\to G^s(Q)$ is bi-Lipschitz with $$\kappa^{-1}D_{s,\kappa}(a,b)\leq D_{s,\kappa}(\Phi^{-1}(a),\Phi^{-1}(b))\leq D_{s,\kappa}(a,b),$$ for every $a,b\in G^s(Q)$;
\item the map $\Phi^{-1}$ is locally contracting so that, if $D_{s,\kappa}(a,b)\leq \kappa^{-1}$, then $$D_{s,\kappa}(\Phi^{-1}(a),\Phi^{-1}(b))=\kappa^{-1}D_{s,\kappa}(a,b);$$
\item the range and source maps onto $(X^u(Q), \widetilde{D_{s,\kappa}})$ are locally isometric. Moreover, the inversion map is isometric.
\end{enumerate}
\end{prop}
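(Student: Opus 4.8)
The plan is to verify each of the three claims directly from the explicit formula for $D_{s,\kappa}$ in \eqref{eq:SFTgroupoidmetric}, using the self-similarity of the underlying TMC and the interaction of $\Phi^{-1}$ with the clopen decomposition \eqref{eq:SFT_groupoid_decomposition}. Throughout, fix $\kappa>1$, consider the $d_\kappa$-model of $G^s(Q)$, and recall that since $d_\kappa$ is a self-similar ultrametric with factor $\kappa$, the homeomorphism $\varphi^{-1}$ acts as the $\kappa^{-1}$-multiple of an isometry on local unstable sets, while $\varphi$ acts as the $\kappa$-multiple of an isometry there on sets of radius at most $\kappa^{-1}\varepsilon_X$. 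The key structural input is \eqref{eq:SFT_groupoid_decomposition_shift}: for $N\geq 1$ one has $\Phi^{-1}(c_s^{-1}(N))=c_s^{-1}(N+1)$, and $c_s^{-1}(1)\subsetneq \Phi^{-1}(c_s^{-1}(0))$. In particular $\Phi^{-1}$ never maps outside an increase of the ``first time'', and $c_s(a)=c_s(b)$ implies $c_s(\Phi^{-1}(a))=c_s(\Phi^{-1}(b))$ — this is what forces $D_{s,\kappa}(\Phi^{-1}(a),\Phi^{-1}(b))\neq 1$ precisely when $D_{s,\kappa}(a,b)\neq 1$ is ``close'', which is the crux of (1) and (2).

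For (1), split into cases on $D_{s,\kappa}(a,b)$. If $D_{s,\kappa}(a,b)=1$ the inequality $\kappa^{-1}\cdot 1 \le D_{s,\kappa}(\Phi^{-1}(a),\Phi^{-1}(b)) \le 1$ holds trivially since the metric is bounded by $1$ and, when $c_s(a)\ne c_s(b)$ or the points are not in a common local unstable product neighbourhood, $D_{s,\kappa}(\Phi^{-1}(a),\Phi^{-1}(b))$ can be $1$, but even if it is not, it is at least $\kappa^{-1}$ times the smallest nontrivial scale — here I would just record that the claim only asserts a two-sided bound and the upper bound $\le 1 = D_{s,\kappa}(a,b)$ and lower bound $\ge \kappa^{-1}D_{s,\kappa}(a,b)$ are automatic once we know $D_{s,\kappa}(\Phi^{-1}(a),\Phi^{-1}(b))$ does not jump to $1$ from a small value, which needs the $c_s$-compatibility just noted. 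If instead $D_{s,\kappa}(a,b)=\max\{d_\kappa(a_1,b_1),d_\kappa(a_2,b_2)\}\leq \kappa^{-1}$ with $c_s(a)=c_s(b)$, then $b_i\in X^u(a_i,\kappa^{-1})$ so $\varphi^{-1}(b_i)\in X^u(\varphi^{-1}(a_i),\kappa^{-2})$ and $d_\kappa(\varphi^{-1}(a_i),\varphi^{-1}(b_i))=\kappa^{-1}d_\kappa(a_i,b_i)$ by the multiplicative action; combined with $c_s(\Phi^{-1}(a))=c_s(\Phi^{-1}(b))$ this gives $D_{s,\kappa}(\Phi^{-1}(a),\Phi^{-1}(b))=\kappa^{-1}D_{s,\kappa}(a,b)$ exactly. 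That last identity is precisely the content of (2), so (1) and (2) are proved simultaneously. The remaining subcase of (1), where $D_{s,\kappa}(a,b)$ equals $1$ but the points lie in a common local unstable neighbourhood with $c_s(a)\ne c_s(b)$, only needs the trivial bounds.

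For (3), the inversion map $i(x,y)=(y,x)$ swaps the two coordinate distances $d_\kappa(a_1,b_1)\leftrightarrow d_\kappa(a_2,b_2)$ inside the $\max$, and preserves $c_s$ — here I would invoke that $\varphi^N(a_2)\in X^s(\varphi^N(a_1),\kappa^{-1})$ iff the symmetric condition holds (by axiom (B) and the ultrametric inequality \eqref{eq:SFT_groupoid_ineq_1}), so $c_s\circ i=c_s$; hence $D_{s,\kappa}(i(a),i(b))=D_{s,\kappa}(a,b)$, i.e. $i$ is a (global) isometry. For the range and source maps onto $(X^u(Q),\widetilde{D_{s,\kappa}})$, recall $\widetilde{D_{s,\kappa}}$ from \eqref{eq:SFTgroupoidmetric_units_space}, and use the crucial fact already established before \eqref{eq:SFTgroupoidmetric} that every stable holonomy map $h^s$ in this ultrametric setting is an isometry $X^u(c_2,\eta)\to X^u(c_1,\eta)$. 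Thus, fixing a base set $V^s(c,\lambda^{-n-1},n)$ with $n\ge c_s(c)$ and $a,b\in V$, one has $d_\kappa(a_1,b_1)=d_\kappa(h^s(a_2),h^s(b_2))=d_\kappa(a_2,b_2)$, so $D_{s,\kappa}(a,b)=d_\kappa(a_2,b_2)=\widetilde{D_{s,\kappa}}(s(a),s(b))$, giving that $s_V$ is a local isometry; then $r=s\circ i$ with $i$ isometric makes $r$ locally isometric as well. The main obstacle, as usual with these piecewise-defined metrics, is the bookkeeping around the value $1$: one must be careful that $\Phi^{-1}$ does not turn a pair at distance $\kappa^{-1}$ into a pair at distance $1$, which is exactly why \eqref{eq:SFT_groupoid_decomposition_shift} and the continuity of $c_s$ (Lemma~\ref{lem:Groupoid_metrics_SFT_1}, Lemma~\ref{lem:SFT_groupoid_topology}) are doing the real work; everything else is the multiplicative action of $\varphi^{\pm 1}$ on local unstable sets together with the isometry property of holonomies.
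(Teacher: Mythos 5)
Your route is the same as the paper's (case analysis on the value of $D_{s,\kappa}$, exact scaling of $d_\kappa$ under $\varphi^{-1}$ on local unstable sets, isometric holonomies plus $r=s\circ i$ for part (3)), and parts (2) and (3) are essentially in order. However, there is a genuine gap in part (1), precisely in the case $D_{s,\kappa}(a,b)=1$. The nontrivial claim there is the lower bound $D_{s,\kappa}(\Phi^{-1}(a),\Phi^{-1}(b))\geq \kappa^{-1}$, i.e.\ that $\Phi^{-1}$ cannot send a pair at distance $1$ to a pair at distance $\leq \kappa^{-2}$. Your justification (``at least $\kappa^{-1}$ times the smallest nontrivial scale'', ``automatic once we know the image distance does not jump to $1$ from a small value'') addresses the opposite direction: preventing a close pair from being sent to distance $1$ is what part (2) needs, not this bound, and no argument for the bound is actually given. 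The paper proves it by contradiction: if $D_{s,\kappa}(\Phi^{-1}(a),\Phi^{-1}(b))\leq\kappa^{-2}$, then $c_s(\Phi^{-1}(a))=c_s(\Phi^{-1}(b))$ and $\varphi^{-1}(b_i)\in X^u(\varphi^{-1}(a_i),\kappa^{-2})$; self-similarity gives $b_i\in X^u(a_i,\kappa^{-1})$, and one must then deduce $c_s(a)=c_s(b)$ --- immediate from \eqref{eq:SFT_groupoid_decomposition_shift} when the common level is $\geq 1$, but in the level-$0$ case it requires $\Phi^{-1}(b)\in V^s(\Phi^{-1}(a),\kappa^{-2},0)$, hence $b\in V^s(a,\kappa^{-1},0)$, and Lemma~\ref{lem:Groupoid_metrics_SFT_1} --- so that $D_{s,\kappa}(a,b)\leq\kappa^{-1}$, a contradiction.

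A related inaccuracy: the implication you extract from \eqref{eq:SFT_groupoid_decomposition_shift} alone, ``$c_s(a)=c_s(b)$ implies $c_s(\Phi^{-1}(a))=c_s(\Phi^{-1}(b))$'', is false as a bare statement when $c_s(a)=c_s(b)=0$, since $\Phi^{-1}(c_s^{-1}(0))$ meets both $c_s^{-1}(0)$ and $c_s^{-1}(1)$. In the situation of part (2) it does hold, but only because $D_{s,\kappa}(a,b)\leq\kappa^{-1}$ yields $b\in V^s(a,\kappa^{-1},0)$ (a short bracket computation as in the proof of Lemma~\ref{lem:SFT_groupoid_topology}), whence $\Phi^{-1}(b)\in V^s(\Phi^{-1}(a),\kappa^{-2},1)$ and Lemma~\ref{lem:Groupoid_metrics_SFT_1} forces the levels to agree; you assert the equality rather than prove it. Once the contradiction argument for the distance-$1$ case and this level-$0$ bookkeeping are written out, your proof coincides with the paper's.
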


\begin{proof}

Let $\kappa >1$. Lemma \ref{lem:SFT_groupoid_topology} states that $D_{s,\kappa}$ generates the topology of $G^s(Q)$. Moreover, we can assume that $G^s(Q)$ is built from the Smale space $(X,d_{\kappa},\varphi)$ as the algebraic and topological structure of $G^s(Q)$ is given also by $d_{\kappa}$. This means that all the following notation is relative to $d_{\kappa}$. 

To prove parts (1) and (2), let $a,b\in G^s(Q)$ and first assume that $D_{s,\kappa}(a,b)=1$. Then, it clearly holds that $$D_{s,\kappa}(\Phi^{-1}(a),\Phi^{-1}(b))\leq D_{s,\kappa}(a,b).$$ We claim that $D_{s,\kappa}(\Phi^{-1}(a),\Phi^{-1}(b))\geq \kappa^{-1}.$ To prove this, assume to the contrary that 
\begin{equation}\label{eq:SFT_Groupoids_Contradiction}
D_{s,\kappa}(\Phi^{-1}(a),\Phi^{-1}(b))\leq \kappa^{-2}.
\end{equation}
Then, $c_s(\Phi^{-1}(a))=c_s(\Phi^{-1}(b))$ and $$\varphi^{-1}(b_2)\in X^u(\varphi^{-1}(a_2),\kappa^{-2}),\,\, \varphi^{-1}(b_1)\in X^u(\varphi^{-1}(a_1),\kappa^{-2}).$$ Due to self-similarity we get that $$b_2\in X^u(a_2,\kappa^{-1}),\,\, b_1\in X^u(a_1,\kappa^{-1}).$$ In addition, we have that $c_s(a)=c_s(b)$. Indeed, if $c_s(\Phi^{-1}(a))=c_s(\Phi^{-1}(b))\geq 1$, then $c_s(a)=c_s(\Phi^{-1}(a))-1$ and $c_s(b)=c_s(\Phi^{-1}(b))-1$. If $c_s(\Phi^{-1}(a))=c_s(\Phi^{-1}(b))=0$, we have that $\Phi^{-1}(b)\in V^s(\Phi^{-1}(a),\kappa^{-2},0),$ and hence $$b\in V^s(a,\kappa^{-1},0).$$ From Lemma \ref{lem:Groupoid_metrics_SFT_1} we get that $c_s(a)=c_s(b)$. To summarise, the assumption (\ref{eq:SFT_Groupoids_Contradiction}) leads to $D_{s,\kappa}(a,b)\leq \kappa^{-1}$, which is a contradiction.

Assume now that $D_{s,\kappa}(a,b)\leq \kappa^{-1}$. Then, $c_s(a)=c_s(b)$ and $$b_2\in X^u(a_2,\kappa^{-1}),\, b_1\in X^u(a_1,\kappa^{-1}).$$ As a result, $$\varphi^{-1}(b_2)\in X^u(\varphi^{-1}(a_2),\kappa^{-2}),\, \varphi^{-1}(b_1)\in X^u(\varphi^{-1}(a_1),\kappa^{-2}),$$ and $c_s(\Phi^{-1}(a))=c_s(\Phi^{-1}(b))$. For the latter equality note that, if $c_s(a)=c_s(b)\geq 1$, then $c_s(\Phi^{-1}(a))=c_s(a)+1$ and $c_s(\Phi^{-1}(b))=c_s(b)+1$. Now, if $c_s(a)=c_s(b)=0$, then $b\in V^s(a,\kappa^{-1},0)$ and hence $$\Phi^{-1}(b)\in V^s(\Phi^{-1}(a),\kappa^{-2},1),$$ meaning that $c_s(\Phi^{-1}(a))=c_s(\Phi^{-1}(b))$. Note that $c_s(\Phi^{-1}(a))\leq 1$. In general, 
\begin{align*}
D_{s,\kappa}(\Phi^{-1}(a),\Phi^{-1}(b))&=d_{s,\kappa}(\Phi^{-1}(a),\Phi^{-1}(b))\\
&= \max \{ d_{\kappa}(\varphi^{-1}(a_1),\varphi^{-1}(b_1)), d_{\kappa}(\varphi^{-1}(a_2),\varphi^{-1}(b_2))\}\\
&= \max \{ \kappa^{-1}d_{\kappa}(a_1,b_1), \kappa^{-1}d_{\kappa}(a_2,b_2)\}\\
&=\kappa^{-1}d_{s,\kappa}(a,b)\\
&=\kappa^{-1}D_{s,\kappa}(a,b).
\end{align*}

For part (3) it suffices to show that, for an arbitrary base set $V=V^s(c,\kappa^{-n-1},n)$, the restriction of the source map $s_V:V\to s(V)$ is an isometry. This is because every base set is open in the topology generated by $D_{s,\kappa}$. Also, recall that $s_V$ is bijective. Let $a,b \in V$, then $c_s(a)=c_s(b)$ and $$b_2\in X^u(a_2,\kappa^{-n-1}),\, b_1\in X^u(a_1,\kappa^{-n-1}),$$ since $d_{\kappa}(a_2,b_2)=d_{\kappa}(a_1,b_1)$ (recall that all holonomy maps are isometric). Therefore, 
\begin{align*}
D_{s,\kappa}(a,b)&=\max \{ d_{\kappa}(a_1,b_1), d_{\kappa}(a_2,b_2)\}\\
&= d_{\kappa}(a_2,b_2)\\
&= \widetilde{D_{s,\kappa}}(s(a),s(b)).
\end{align*}
Similarly, one can show that the range map is locally isometric. The fact that the inversion is an isometry is straightforward.
\end{proof}

At this stage, it is still unclear if a similar approach could work for general Smale spaces, and this is the main reason for considering Theorem \ref{thm:GroupoidMetrisation}. But it should be mentioned that, if such metrisation approach is possible, the metrics might be very different from the ones obtained in Theorem \ref{thm:GroupoidMetrisation}. In fact, for the metrics $D_{s,\kappa}$ and $D_{s,d_{\kappa}}$, where $\kappa>1$, even though they generate the same topology and have similar dynamical behaviour, one can see that they are not uniformly equivalent. 

Specifically, there exist sequences $(a_n)_{n\geq 0},(b_n)_{n\geq 0} \subset G^s(Q)$ and some $t>0$ such that $\lim_n D_{s,\kappa}(a_n,b_n)=0$ while $D_{s,d_\kappa}(a_n,b_n)>t$, for all $n\geq 0$. The only reason this is happening is because $\lceil \log_{\lambda_{X,d_{\kappa}}} 3 \rceil$ is used in the definition of the sequences $j(a,n)$ in (\ref{eq:J(a,n)}), which is related to a triangle inequality, see Lemma \ref{lem:groupoidNB2}. However, if we remove $\lceil \log_{\lambda_{X,d_{\kappa}}} 3 \rceil$ from the definition of $D_{s,d_{\kappa}}$ (in the case of TMC is no longer needed) the two metrics become equivalent. 

In any case, Proposition \ref{prop:SFT_groupoids_Lipschitz} (just like Theorem \ref{thm:GroupoidMetrisation}) is exactly what we need to construct dense $*$-subalgebras of $\mathcal{S}(Q)$ which are $\alpha_s$-invariant and hence induce dense $*$-subalgebras of $\mathcal{R}^s(Q)$, see Subsection \ref{sec:Lip_Ruelle_Smooth_Ext}.

\section{K-homological finiteness of Ruelle algebras}\label{sec:SmoothRuelle}
We prove that the groupoid metrics of Section \ref{sec:Metrise_Groupoids} yield dense Lipschitz $*$-subalgebras of the stable and unstable Ruelle algebras. Then, we study commutation relations between stable and unstable Lipschitz algebras. Eventually, we obtain the smoothness of the KPW-extension and the $\Kt$-homological finiteness of Ruelle algebras.

Let $(X,d,\varphi)$ be an irreducible Smale space with periodic orbits $P,Q$ such that $P\cap Q=\varnothing$. Theorem \ref{thm:GroupoidMetrisation} associates to every self-similar metric $d'\in \text{sM}_d(X,\varphi)$ the compatible groupoid metrics $D_{s,d'}$ and $D_{u,d'}$ on $G^s(Q)$ and $G^u(P)$. In the case where $(X,d,\varphi)$ is a TMC, a different construction (Proposition \ref{prop:SFT_groupoids_Lipschitz}) produces ultrametrics $D_{s,\kappa}$ and $D_{u,\kappa}$ on $G^s(Q)$ and $G^u(P)$, for every expanding factor $\kappa>1$.

\subsection{Lipschitz subalgebras of Ruelle algebras}\label{sec:Lip_Ruelle_Smooth_Ext}

All these aforementioned facts combined with the results of Section \ref{sec:Lip_etale_alg} allow us to deduce the following.

\begin{prop}\label{prop:Lip_alg_Smale_groupoids}
Let $d'\in \text{sM}_d(X,\varphi)$. The complex vector space of compactly supported Lipschitz functions $\Lip_c(G^s(Q),D_{s,d'})$ forms a dense $*$-subalgebra of $\mathcal{S}(Q)$. Moreover, it is $\alpha_s$-invariant, and therefore, the algebraic crossed product $$\Lambda_{s,d'}(Q,\alpha_s)=  \Lip_c(G^s(Q),D_{s,d'})\rtimes_{\alpha_s,\text{alg}} \mathbb Z$$ 
is a well-defined dense $*$-subalgebra of $\mathcal{R}^s(Q)$. Similarly, in the unstable case we obtain the dense $*$-subalgebra $$\Lambda_{u,d'}(P,\alpha_u)=  \Lip_c(G^u(P),D_{u,d'})\rtimes_{\alpha_u,\text{alg}} \mathbb Z$$ of the Ruelle algebra $\mathcal{R}^u(P)$.
\end{prop}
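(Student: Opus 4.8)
The plan is to deduce Proposition~\ref{prop:Lip_alg_Smale_groupoids} from the general {\'e}tale-groupoid statement, Proposition~\ref{prop:Lip_dense_alg_general}, together with the metric structure produced in Section~\ref{sec:Metrise_Groupoids}. First I would observe that Theorem~\ref{thm:GroupoidMetrisation} (or Proposition~\ref{prop:SFT_groupoids_Lipschitz} in the zero-dimensional case) equips $G^s(Q)$ with a compatible metric $D_{s,d'}$ for which the restricted range and source maps $r_V,s_V$ on every base set $V\in\mathcal{B}(G^s(Q))$ are bi-Lipschitz. Since $\mathcal{B}^s(Q,d')$ is a topological base for $G^s(Q)$ consisting of bisections, this says precisely that $r,s:G^s(Q)\to X^u(Q)$ are locally bi-Lipschitz with respect to $D_{s,d'}$ and $\widetilde{D_{s,d'}}$. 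Recalling that $\mathcal{S}(Q)=C^*_{\rho_s}(G^s(Q))$ is the completion of the fundamental representation of $C_c(G^s(Q))$, which is faithful, Proposition~\ref{prop:Lip_dense_alg_general} applies verbatim and yields that $\Lip_c(G^s(Q),D_{s,d'})$ is a dense $*$-subalgebra of $\mathcal{S}(Q)$.

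Next I would establish $\alpha_s$-invariance. Recall $\alpha_s(f)=f\circ\Phi^{-1}$ with $\Phi=\varphi\times\varphi$. Since $\Phi$ is a homeomorphism of $G^s(Q)$, it preserves compact supports, so $\alpha_s$ maps $C_c(G^s(Q))$ onto itself; the content is that it preserves the Lipschitz condition. This is immediate from Theorem~\ref{thm:GroupoidMetrisation}(1), which gives $4^{-1}D_{s,d'}(a,b)\le D_{s,d'}(\Phi(a),\Phi(b))\le 8\,D_{s,d'}(a,b)$ for all $a,b$: thus $\Phi$ and $\Phi^{-1}$ are bi-Lipschitz on all of $G^s(Q)$, so $f\circ\Phi^{-1}$ is Lipschitz whenever $f$ is, with Lipschitz constant multiplied by at most $4$. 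Hence $\alpha_s\bigl(\Lip_c(G^s(Q),D_{s,d'})\bigr)=\Lip_c(G^s(Q),D_{s,d'})$. In the TMC case one invokes instead Proposition~\ref{prop:SFT_groupoids_Lipschitz}(1), which gives $\kappa^{-1}D_{s,\kappa}(a,b)\le D_{s,\kappa}(\Phi^{-1}(a),\Phi^{-1}(b))\le D_{s,\kappa}(a,b)$, yielding the same conclusion.

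Once the $*$-subalgebra $\Lip_c(G^s(Q),D_{s,d'})\subset\mathcal{S}(Q)$ is seen to be dense and $\alpha_s$-invariant, the algebraic crossed product $\Lambda_{s,d'}(Q,\alpha_s)=\Lip_c(G^s(Q),D_{s,d'})\rtimes_{\alpha_s,\mathrm{alg}}\mathbb{Z}$ is a well-defined $*$-subalgebra of $\mathcal{R}^s(Q)=\mathcal{S}(Q)\rtimes_{\alpha_s}\mathbb{Z}$: it is the linear span of elements $au^j$ with $a\in\Lip_c(G^s(Q),D_{s,d'})$ and $j\in\mathbb{Z}$, closed under multiplication by the covariance relation $u^j a=\alpha_s^j(a)u^j$ (using invariance) and under the involution $(au^j)^*=u^{-j}a^*=\alpha_s^{-j}(a^*)u^{-j}$ (again using invariance and that the Lipschitz algebra is $*$-closed). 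Density in $\mathcal{R}^s(Q)$ follows since finite sums $\sum_j a_j u^j$ with $a_j\in C_c(G^s(Q))$ are dense in the crossed product and each $C_c$-function is approximated in $\mathcal{S}(Q)$-norm by Lipschitz ones. The unstable case is identical, replacing $G^s(Q)$ by $G^u(P)$, $\alpha_s$ by $\alpha_u$, and using the corresponding clauses of Theorem~\ref{thm:GroupoidMetrisation} and Proposition~\ref{prop:SFT_groupoids_Lipschitz} for $\Phi^{-1}$.

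The only point requiring genuine care---hence the main obstacle---is verifying the hypothesis of Proposition~\ref{prop:Lip_dense_alg_general}, namely that local bi-Lipschitzness of $r,s$ on the base sets $\mathcal{B}^s(Q,d')$ genuinely transfers to local bi-Lipschitzness as maps $G^s(Q)\to X^u(Q)$ in the sense of \eqref{eq:locally_bi_Lip}; this is where one must be attentive that the metric $\widetilde{D_{s,d'}}$ on the unit space is exactly the pull-back \eqref{eq:restricted_metric} used throughout Section~\ref{sec:Metrise_Groupoids}, and that every $\gamma\in G^s(Q)$ lies in some base set on which both $r$ and $s$ are bi-Lipschitz with uniform constants in a neighbourhood---which is precisely the content of Lemma~\ref{lem:bi-Lipschitz_groupoid_maps} combined with Lemma~\ref{lem:locally_bi_Lip}. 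Everything else is bookkeeping.
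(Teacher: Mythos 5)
Your proposal is correct and follows essentially the same route as the paper: invoke Theorem~\ref{thm:GroupoidMetrisation} to see that $r,s$ are locally bi-Lipschitz and $\Phi$ is bi-Lipschitz, apply Proposition~\ref{prop:Lip_dense_alg_general} for density of the Lipschitz convolution $*$-algebra, deduce $\alpha_s$-invariance from the bi-Lipschitz estimate on $\Phi$, and conclude density of the algebraic crossed product in $\mathcal{R}^s(Q)$ (the paper cites \cite[Remark 2.30]{DWilliams} for this last step). The only cosmetic difference is that you fold in the TMC ultrametrics of Proposition~\ref{prop:SFT_groupoids_Lipschitz}, which in the paper is handled by the separate Proposition~\ref{prop:Lip_alg_Smale_groupoids_SFT}; this does not affect the argument for the statement at hand.
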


\begin{proof}
We only prove it for the stable case. From Theorem \ref{thm:GroupoidMetrisation} we have that the range and source maps of $G^s(Q)$ are locally bi-Lipschitz with respect to $D_{s,d'}$. Therefore, from Proposition \ref{prop:Lip_dense_alg_general} we obtain that $\Lip_c(G^s(Q),D_{s,d'})$ is a convolution $*$-algebra whose image in $\mathcal{S}(Q)$ is dense.

Moreover, the groupoid automorphism $\Phi=\varphi \times \varphi :G^s(Q)\to G^s(Q)$ is bi-Lipschitz with respect to $D_{s,d'}$ (Theorem \ref{thm:GroupoidMetrisation}), and since for $a\in C_c(G^s(Q))$ we have $\alpha_s(a)= a\circ \Phi^{-1},$ then $$\alpha_s(\Lip_c(G^s(Q),D_{s,d'}))=\Lip_c(G^s(Q),D_{s,d'}).$$ As a result, we can form the algebraic crossed product $\Lambda_{s,d'}(Q,\alpha_s)$ which is a $*$-subalgebra of $\mathcal{R}^s(Q)$. Finally, $\Lambda_{s,d'}(Q,\alpha_s)$ is dense in $\mathcal{R}^s(Q)$ since $\Lip_c(G^s(Q),D_{s,d'})$ is dense in $\mathcal{S}(Q)$, see \cite[Remark 2.30]{DWilliams}.
\end{proof}

In exactly the same way, the ultrametrics of Proposition \ref{prop:SFT_groupoids_Lipschitz} allow us to build (roughly speaking) dense Lipschitz $*$-subalgebras of stabilised Cuntz-Krieger algebras.

\begin{prop}\label{prop:Lip_alg_Smale_groupoids_SFT}
Suppose that $(X,d,\varphi)$ is an irreducible TMC. For all $\kappa >1$, the vector space $\Lip_c(G^s(Q),D_{s,\kappa})$ forms a dense $*$-subalgebra of $\mathcal{S}(Q)$. Moreover, it is $\alpha_s$-invariant, and therefore, the algebraic crossed product $$\Lambda_{s,\kappa}(Q,\alpha_s)=  \Lip_c(G^s(Q),D_{s,\kappa})\rtimes_{\alpha_s,\text{alg}} \mathbb Z$$ 
is a well-defined dense $*$-subalgebra of $\mathcal{R}^s(Q)$. Similarly, in the unstable case we obtain the dense $*$-subalgebra $$\Lambda_{u,\kappa}(P,\alpha_u)=  \Lip_c(G^u(P),D_{u,\kappa})\rtimes_{\alpha_u,\text{alg}} \mathbb Z$$ of the Ruelle algebra $\mathcal{R}^u(P)$.
\end{prop}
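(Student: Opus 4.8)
The plan is to follow exactly the template of the proof of Proposition \ref{prop:Lip_alg_Smale_groupoids}, replacing the appeal to Theorem \ref{thm:GroupoidMetrisation} with the appeal to Proposition \ref{prop:SFT_groupoids_Lipschitz}. As before, it suffices to treat the stable case, the unstable one being identical with $P$, $\alpha_u$, $G^u(P)$ in place of $Q$, $\alpha_s$, $G^s(Q)$.

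First I would record what Proposition \ref{prop:SFT_groupoids_Lipschitz} gives about $(G^s(Q),D_{s,\kappa})$: part (3) says the range and source maps are \emph{locally isometric} onto $(X^u(Q),\widetilde{D_{s,\kappa}})$, and in particular locally bi-Lipschitz; part (2) (and its straightforward inverse, since $\Phi$ is locally expanding by a factor $\kappa$) says $\Phi=\varphi\times\varphi$ is bi-Lipschitz for $D_{s,\kappa}$. Then Proposition \ref{prop:Lip_dense_alg_general} applies verbatim to the \'etale groupoid $G^s(Q)$ equipped with the compatible metric $D_{s,\kappa}$ (compatibility is Lemma \ref{lem:SFT_groupoid_topology}, second countability and local compactness hold for the stable groupoid of an irreducible Smale space), yielding that $\Lip_c(G^s(Q),D_{s,\kappa})$ is a convolution $*$-subalgebra of $C_c(G^s(Q))$ whose image in the reduced $C^*$-algebra $\mathcal{S}(Q)$ is dense.

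Next I would check $\alpha_s$-invariance: since $\Phi$ is bi-Lipschitz for $D_{s,\kappa}$ and $\alpha_s(a)=a\circ\Phi^{-1}$, composition with the bi-Lipschitz homeomorphism $\Phi^{-1}$ preserves the property of being a compactly supported Lipschitz function, so $\alpha_s(\Lip_c(G^s(Q),D_{s,\kappa}))=\Lip_c(G^s(Q),D_{s,\kappa})$. Hence the algebraic crossed product $\Lambda_{s,\kappa}(Q,\alpha_s)=\Lip_c(G^s(Q),D_{s,\kappa})\rtimes_{\alpha_s,\mathrm{alg}}\mathbb Z$ makes sense and is a $*$-subalgebra of $\mathcal{R}^s(Q)=\mathcal{S}(Q)\rtimes_{\alpha_s}\mathbb Z$; its density follows from the density of $\Lip_c(G^s(Q),D_{s,\kappa})$ in $\mathcal{S}(Q)$ together with the standard fact that an algebraic crossed product of a dense subalgebra is dense in the full crossed product (as cited, \cite[Remark 2.30]{DWilliams}; cf. the end of the proof of Proposition \ref{prop:Lip_alg_Smale_groupoids}). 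The unstable case is obtained by the symmetric argument using the ultrametric $D_{u,\kappa}$ on $G^u(P)$.

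There is essentially no obstacle here: the entire content is packaged in Proposition \ref{prop:SFT_groupoids_Lipschitz} (which provides the compatible ultrametric with locally bi-Lipschitz range/source maps and bi-Lipschitz $\Phi$) and in the general Proposition \ref{prop:Lip_dense_alg_general}. The only minor point worth a sentence is that $G^s(Q)$ for an irreducible Smale space is a second countable, locally compact, Hausdorff \'etale groupoid, so that the hypotheses of Proposition \ref{prop:Lip_dense_alg_general} are met; this is Theorem \ref{thm: stable bisections}. Thus the proof is a two-line reference: invoke Proposition \ref{prop:SFT_groupoids_Lipschitz} for the metric, then Proposition \ref{prop:Lip_dense_alg_general} for the density of the Lipschitz convolution algebra, then note $\alpha_s$-invariance and pass to the crossed product exactly as in Proposition \ref{prop:Lip_alg_Smale_groupoids}.
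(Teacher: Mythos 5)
Your proposal is correct and is exactly the argument the paper intends: the paper gives no separate proof here, stating only that the TMC case follows ``in exactly the same way'' as Proposition \ref{prop:Lip_alg_Smale_groupoids}, with Proposition \ref{prop:SFT_groupoids_Lipschitz} supplying the compatible ultrametric, locally (bi-)Lipschitz range/source maps and bi-Lipschitz $\Phi$ in place of Theorem \ref{thm:GroupoidMetrisation}, and then Proposition \ref{prop:Lip_dense_alg_general}, $\alpha_s$-invariance and \cite[Remark 2.30]{DWilliams} finishing the argument. (Only cosmetic point: the bi-Lipschitz property of $\Phi$ is part (1), not part (2), of Proposition \ref{prop:SFT_groupoids_Lipschitz}, since $\Phi^{-1}$ bi-Lipschitz immediately gives $\Phi$ bi-Lipschitz.)
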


Recall now the KPW-extension $\tau_{\Delta}:\mathcal{R}^s(Q)\otimes \mathcal{R}^u(P)\to \mathcal{Q}(\mathscr{H}\otimes \ell^2(\mathbb Z))$ formed by the faithful representations $\overline{\rho_{s}}:\mathcal{R}^s(Q)\to \mathcal{B}(\mathscr{H}\otimes \ell^2(\mathbb Z))$ and $\overline{\rho_{u}}:\mathcal{R}^u(P)\to \mathcal{B}(\mathscr{H}\otimes \ell^2(\mathbb Z))$ which commute modulo $\mathcal{K}(\mathscr{H}\otimes \ell^2(\mathbb Z))$, where the Hilbert space $\mathscr{H}=\ell^2(X^h(P,Q)).$ For details see Subsection \ref{sec:K-duality_Ruelle}.

\begin{remark}\label{rem:reduction_algebraic_computations}
Let $d'\in \text{sM}_d(X,\varphi)$. A main goal is to study $\tau_{\Delta}$ on the dense $*$-subalgebra $\Lambda_{s,d'}(Q,\alpha_s)\otimes_{\text{alg}} \Lambda_{u,d'}(P,\alpha_u)$. Due to linearity of $\overline{\rho_s}\cdot \overline{\rho_u}$, our arguments can be simplified. First, since the tensor product is algebraic, we can simply work on elementary tensors $x\otimes y$, for $x\in \Lambda_{s,d'}(Q,\alpha_s),\, y\in \Lambda_{u,d'}(P,\alpha_u).$ Further, since both factors of the tensor algebra are algebraic crossed products, it suffices to work on generators of the form $au^j\otimes bu^{j'}$, where $a\in \Lip_c(G^s(Q),D_{s,d'}),\, b\in \Lip_c(G^u(P),D_{u,d'})$ and $j,j'\in \mathbb Z$. Finally, since the groupoids are {\'e}tale, we can also assume that $a$ and $b$ are supported on bisections. Of course, the same reductions hold for all $d'\in \text{sM}_d(X,\varphi)$, and the algebras $\Lambda_{s,\kappa}(Q,\alpha_s)\otimes_{\text{alg}} \Lambda_{u,\kappa}(P,\alpha_u)$, where $\kappa>1$. 
\end{remark}

At this point our intentions should be clear. All these Lipschitz algebras will be shown to extend to smooth subalgebras of Ruelle algebras on which the $\Kt$-homology of Ruelle algebras is uniformly finitely summable. This is also related to the study of smoothness of the KPW-extension. To this end, we aim to investigate how much the algebras $\overline{\rho_{s}}(\Lambda_{s,d'}(Q,\alpha_s))$ and $\overline{\rho_{u}}(\Lambda_{u,d'}(P,\alpha_u))$ commute, for $d'\in \text{sM}_d(X,\varphi)$. This will yield results for general Smale spaces. Now, if $(X,d,\varphi)$ is a TMC, we will be able to obtain sharp estimates by studying how much the algebras $\overline{\rho_{s}}(\Lambda_{s,\kappa}(Q,\alpha_s))$ and $\overline{\rho_{u}}(\Lambda_{u,\kappa}(P,\alpha_u))$ commute, for $\kappa >1$.

\begin{lemma}\label{lem:singular_values}
Consider a compact operator $T=\bigoplus_{n\in \mathbb N} T_n\in \mathcal{B}(\bigoplus_{n\in \mathbb N} H_n)$. Assume that there are constants $C_1,C_2>0$ so that, for every $\varepsilon >0$ there are $n_0\in \mathbb N$ and $\alpha_{\varepsilon}, \beta_{\varepsilon}>1$ such that, for all $n\geq n_0$, it holds
\begin{enumerate}[(1)]
\item $\rank(T_n)\leq C_1 \alpha_{\varepsilon}^n$;
\item $\|T_n\|\leq C_2\beta_{\varepsilon}^{-n}.$
\end{enumerate}
Then, for every $\varepsilon>0$ it holds $s_n(T)=O(n^{-\log_{\alpha_{\varepsilon}} \beta_{\varepsilon}})$. Consequently, if we also assume that $\alpha_{\varepsilon},\beta_{\varepsilon}$ converge to $\alpha,\beta >1$ as $\varepsilon$ approaches zero, then $(s_n(T))_{n\in \mathbb N}\in \ell^p(\mathbb N)$, for every $p>\log_{\beta}\alpha.$ Moreover, if (2) is replaced by $\|T_n\|\leq C_2n^{-\gamma_{\varepsilon}}$ for some $\gamma_{\varepsilon}>0$ then, for every $\varepsilon>0$ we have $s_n(T)=O((\log n)^{-\gamma_{\varepsilon}})$. 
\end{lemma}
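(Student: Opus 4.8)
The statement is a purely operator-theoretic estimate on singular values of a block-diagonal compact operator, so the plan is to argue directly from the min-max (or rather the rank-plus-norm truncation) characterisation of singular values. First I would set up notation: write $H=\bigoplus_{n\in\mathbb N}H_n$, fix $\varepsilon>0$, and let $n_0,\alpha_\varepsilon,\beta_\varepsilon$ be as in the hypothesis. For a given index $m$ (a cutoff to be chosen), consider the finite-rank operator $T^{(m)}=\bigoplus_{n\le m}T_n$, which has rank at most $R(m):=\sum_{n\le n_0}\rank(T_n)+C_1\sum_{n_0<n\le m}\alpha_\varepsilon^n\le C_1'\,\alpha_\varepsilon^{m}$ for a constant $C_1'=C_1'(\varepsilon)$ absorbing the finitely many initial blocks (using $\alpha_\varepsilon>1$ so the geometric sum is dominated by its last term). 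The operator $T-T^{(m)}=\bigoplus_{n>m}T_n$ has norm at most $\sup_{n>m}\|T_n\|\le C_2\beta_\varepsilon^{-m}$ once $m\ge n_0$. Hence by the standard fact that $s_{k+1}(T)\le\|T-F\|$ for any operator $F$ of rank $\le k$, we get $s_{R(m)+1}(T)\le C_2\beta_\varepsilon^{-m}$.

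The second step is to convert this into a bound in terms of the index. Given $k$ large, choose $m=m(k)$ to be the largest integer with $C_1'\alpha_\varepsilon^{m}\le k$, i.e. $m\asymp \log_{\alpha_\varepsilon}(k/C_1')$. Then $R(m)\le k$, so $s_k(T)\le s_{R(m)+1}(T)\le C_2\beta_\varepsilon^{-m}$, and substituting the value of $m$ gives $\beta_\varepsilon^{-m}\asymp (k/C_1')^{-\log_{\alpha_\varepsilon}\beta_\varepsilon}$, which is exactly $s_k(T)=O(k^{-\log_{\alpha_\varepsilon}\beta_\varepsilon})$ with the implied constant depending on $\varepsilon$ (through $C_1',C_2,\alpha_\varepsilon,\beta_\varepsilon$). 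This establishes the first assertion. For the summability claim, if $\alpha_\varepsilon\to\alpha$ and $\beta_\varepsilon\to\beta$ as $\varepsilon\to0$, then $\log_{\alpha_\varepsilon}\beta_\varepsilon\to\log_\alpha\beta$; given $p>\log_\alpha\beta$ one picks $\varepsilon$ small enough that $\log_{\alpha_\varepsilon}\beta_\varepsilon>\tfrac1p(\log_\alpha\beta+p)/\!$—more simply, small enough that $p\cdot\log_{\alpha_\varepsilon}\beta_\varepsilon>1$—and then $\sum_k s_k(T)^p\le \mathrm{const}\cdot\sum_k k^{-p\log_{\alpha_\varepsilon}\beta_\varepsilon}<\infty$, so $(s_n(T))\in\ell^p(\mathbb N)$.

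For the final variant, where hypothesis (2) is replaced by $\|T_n\|\le C_2 n^{-\gamma_\varepsilon}$, I would repeat the same truncation argument: $s_{R(m)+1}(T)\le C_2 m^{-\gamma_\varepsilon}$, and with the same choice $m(k)\asymp\log_{\alpha_\varepsilon}k$ one obtains $s_k(T)\le C_2 m(k)^{-\gamma_\varepsilon}=O((\log k)^{-\gamma_\varepsilon})$, using that $m(k)\asymp\log k$ up to a positive multiplicative constant (which is absorbed by raising to the power $-\gamma_\varepsilon$ into the $O$-constant).

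\textbf{Main obstacle.} There is no deep obstacle here; the only points requiring care are bookkeeping ones: (i) correctly absorbing the finitely many blocks $n\le n_0$ into the constants $C_1',C_2'$ without spoiling the asymptotics, (ii) making sure the geometric sum $\sum_{n\le m}\alpha_\varepsilon^n$ is genuinely comparable to $\alpha_\varepsilon^m$ (which needs $\alpha_\varepsilon>1$, guaranteed by hypothesis), and (iii) justifying the passage $\log_{\alpha_\varepsilon}\beta_\varepsilon\to\log_\alpha\beta$ and the resulting choice of $\varepsilon$ for the $\ell^p$-claim — this is just continuity of $(\log x)/(\log y)$ in $(x,y)$ on $(1,\infty)^2$. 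The inequality $s_{k+1}(T)\le\|T-F\|$ for $\rank F\le k$ is the one genuinely external input, and it is classical (e.g. \cite{GK}).
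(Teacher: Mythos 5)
Your proposal is correct and follows essentially the same route as the paper: your inequality $s_{k+1}(T)\leq\|T-F\|$ for $\rank F\leq k$, applied to the head/tail splitting $T^{(m)}\oplus(T-T^{(m)})$, is exactly the special case of the Gohberg--Krein/Ky Fan inequality $s_{n+m-1}(W_1+W_2)\leq s_n(W_1)+s_m(W_2)$ that the paper uses with the same splitting, and your inversion of the cumulative rank bound $R(m)\lesssim\alpha_{\varepsilon}^{m}$ to choose $m(k)\asymp\log_{\alpha_{\varepsilon}}k$ mirrors the paper's passage from $R'_n$ to the estimate $s_m(T)=O\bigl((m-Q_{n_0})^{-\log_{\alpha_{\varepsilon}}\beta_{\varepsilon}}\bigr)$. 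In the final write-up only two cosmetic points need fixing: the phrase ``given $p>\log_{\alpha}\beta$'' should read $p>\log_{\beta}\alpha$ (the operative condition $p\,\log_{\alpha_{\varepsilon}}\beta_{\varepsilon}>1$ you then impose is the correct one), and the choice of $m(k)$ should ensure $R(m)+1\leq k$ rather than just $R(m)\leq k$, a harmless off-by-one.
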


\begin{proof}
Let $\varepsilon>0$ and consider $n_0\in \mathbb N$ and $\alpha_{\varepsilon}, \beta_{\varepsilon}>1$ as in the statement. For every $n\in \mathbb N$, define $R_n=\sum_{i=1}^n \rank(T_i)+1$. The claim is that, for every $n\geq n_0$, we have $$s_{R_n}(T)\leq C_2\beta_{\varepsilon}^{-n-1}.$$ Indeed, from \cite[Chapter II]{GK} we know that, for any two compact operators $W_1,W_2$ acting on the same Hilbert space, the singular values satisfy $$s_{n+m-1}(W_1+W_2)\leq s_n(W_1)+s_m(W_2),$$ for all $n,m\in \mathbb N$. So, if $W_1=\bigoplus_{i=1}^{n} T_i$ and $W_2=\bigoplus_{i=n+1}^{\infty} T_i$ are seen as operators in $\mathcal{B}(\bigoplus_{i\in \mathbb N} H_i)$, then $W_1+W_2=T$ and $$s_{R_n}(W_1)=0,\, s_1(W_2)\leq C_2\beta_{\varepsilon}^{-n-1}.$$ 

Also, for $n\geq n_0$, we have that $$R_n\leq R_{n_0-1}+C_1\sum_{j=n_0}^{n}\alpha_{\varepsilon}^j.$$ For simplicity, the right hand side can be written as $R'_n=Q_{n_0}+C_1\alpha_{\varepsilon}^{n+1}/(\alpha_{\varepsilon}-1)$, where $Q_{n_0}=R_{n_0-1}-C_1\alpha_{\varepsilon}^{n_0}/(\alpha_{\varepsilon}-1)$. Therefore, for every $n\geq n_0$ we have that 
\begin{equation}\label{eq:singular_values}
s_{R'_n}(T)\leq s_{R_n}(T)\leq C_2\beta_{\varepsilon}^{-n-1}.
\end{equation}
Since $(R_n')_{n\geq n_0}$ is increasing to infinity, for every $m\geq R_{n_0}'$ we can find $n\geq n_0$ such that $R'_n\leq m < R'_{n+1}$. Note that $m>Q_{n_0}$, and since $m < R'_{n+1}$ we get that $$n> \log_{\alpha_{\varepsilon}}(m-Q_{n_0})+\log_{\alpha_{\varepsilon}}((\alpha_{\varepsilon}-1)/C_1)-2.$$ Finally, from $R'_n\leq m$ and (\ref{eq:singular_values}) we obtain that
\begin{align*}
s_m(T) &\leq C_2\beta_{\varepsilon}^{-n-1}\\
&\leq C_2C_{\alpha_{\varepsilon},\beta_{\varepsilon}}\beta_{\varepsilon}^{-\log_{\alpha_{\varepsilon}}(m-Q_{n_0})}\\
&= C_2C_{\alpha_{\varepsilon},\beta_{\varepsilon}} (m-Q_{n_0})^{-\log_{\alpha_{\varepsilon}}\beta_{\varepsilon}},
\end{align*}
where $C_{\alpha_{\varepsilon},\beta_{\varepsilon}}=\beta_{\varepsilon}^{-\log_{\alpha_{\varepsilon}}((\alpha_{\varepsilon}-1)/C_1)+1}.$ 

To summarise, for every $\varepsilon>0$ there are $n_{\varepsilon}\in \mathbb N$ and $C_{\varepsilon}>0$ such that, for all $n\geq n_{\varepsilon}$, we have $$s_n(T)\leq C_{\varepsilon}n^{-\log_{\alpha_{\varepsilon}}\beta_{\varepsilon}}.$$ This means, for all $\varepsilon>0$ and $p>\log_{\beta_{\varepsilon}}\alpha_{\varepsilon}$ one has $(s_n(T))_{n\in \mathbb N}\in \ell^p(\mathbb N)$. As a result, if $\alpha_{\varepsilon},\beta_{\varepsilon}$ converge to $\alpha,\beta >1$ as $\varepsilon$ approaches zero, then for every $p>\log_{\beta}\alpha$ we have $(s_n(T))_{n\in \mathbb N}\in \ell^p(\mathbb N)$. Finally, the case where $(\|T_n\|)_{n\in \mathbb N}$ goes polynomially fast to zero is dealt in exactly the same way. Just note that $(\log_{a_{\varepsilon}} n)^{-\gamma_{\varepsilon}}=O((\log n)^{-\gamma_{\varepsilon}}).$
\end{proof}

We are now in position to study commutation relations between the aforementioned stable and unstable Lipschitz algebras. Lemma \ref{lem:singular_values} suggests that the Hilbert space $\mathscr{H}\otimes \ell^2(\mathbb Z)$ should also be seen as $\bigoplus_{n\in \mathbb Z} \mathscr{H}$, and that it suffices to estimate the ranks and norms of the off-diagonal entries of the operators $\overline{\rho_s}(a u^j)\overline{\rho_u}(b u^{j'})-\overline{\rho_u}(b u^{j'})\overline{\rho_s}(a u^j)$ in $\mathcal{K}(\mathscr{H}\otimes \ell^2(\mathbb Z))$ that appear in (\ref{eq: tau_Delta}). We begin with the following lemma that is derived from \cite[Theorem 2.3]{KilPut}.

\begin{lemma}\label{lem:stable_unstable_intersection_KP}
Let $(Y,\psi)$ be a mixing Smale space and consider $x,y\in Y$. Let $B\subset X^u(x)$ and $C\subset X^s(y)$ be open with compact closure. Then, for every $\varepsilon >0$ there is $k_0\in \mathbb N$ such that, for all $k\geq k_0$, we have $$\# \psi^k(B)\cap C < e^{(\ent(\psi)+\varepsilon)k}.$$
\end{lemma}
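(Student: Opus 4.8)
The plan is to reduce the statement to a purely dynamical counting fact and then invoke the topological entropy version recorded in \cite[Theorem 2.3]{KilPut}, which controls the growth of the number of points at which a forward-iterated piece of an unstable set meets a stable set.

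First I would fix $x,y\in Y$ and the open sets $B\subset X^u(x)$, $C\subset X^s(y)$ with compact closures $\cl(B),\cl(C)$. Since $(Y,\psi)$ is mixing and the local product structure is a homeomorphism onto its image, I would first reduce to the case where $B$ is contained in a \emph{local} unstable set $X^u(x,\varepsilon_Y)$ and $C$ in a \emph{local} stable set $X^s(y,\varepsilon_Y)$: indeed, by \eqref{eq:increasingnetleaves}, $X^u(x)$ is the increasing union $\bigcup_{m\geq 0}\psi^m(X^u(\psi^{-m}(x),\varepsilon_Y))$, and since $\cl(B)$ is compact there is $m$ with $\cl(B)\subset \psi^m(X^u(\psi^{-m}(x),\varepsilon_Y))$; replacing $B$ by $\psi^{-m}(B)$ only shifts the index $k$ by the fixed amount $m$, which is harmless for the asymptotic bound. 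The same argument handles $C$. After this reduction, the intersection $\psi^k(B)\cap C$ is governed by transversality: by \eqref{eq:uniquebracket}, for points close enough, $X^s(\cdot,\varepsilon_Y/2)\cap X^u(\cdot,\varepsilon_Y/2)$ is a single point given by the bracket map, so each point of $\psi^k(B)\cap C$ corresponds to a genuine transverse intersection of a stable and an unstable piece.

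Next I would bring in the entropy count. The set $\psi^k(B)\cap C$ is (a subset of) the set of points $z$ whose backward orbit for time $k$ stays near the unstable piece $B$ and whose forward orbit stays near the stable piece $C$; this is exactly the kind of $(k,\delta)$-separated/spanning set whose cardinality is controlled by $\ent(\psi)$. More precisely, two distinct points of $\psi^k(B)\cap C$ cannot have $\psi^j$-images within a fixed expansivity constant $\delta$ for all $j\in\{0,\dots,k\}$: if they did, expansiveness of the Smale space (\cite[Prop.~2.1.9]{Putnam_Book}) together with the contraction axioms would force them to coincide, since agreement along $C\subset X^s(y,\varepsilon_Y)$ pins down the stable coordinate and agreement of the first $k$ backward iterates along $B\subset X^u(x,\varepsilon_Y)$ pins down the unstable coordinate to within a gap that shrinks like $\lambda_Y^{-k}$. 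Hence $\psi^k(B)\cap C$ is a $(k,\delta)$-separated set, and by the definition of topological entropy (e.g.\ \cite[Theorem 3.2]{Walters}, together with the Smale space having finite entropy $\ent(\psi)$), for every $\varepsilon>0$ there is $k_0$ such that any such separated set has cardinality at most $e^{(\ent(\psi)+\varepsilon)k}$ for all $k\geq k_0$. Absorbing the fixed shift $m$ from the first reduction into $k_0$ (and into $\varepsilon$, using $e^{(\ent(\psi)+\varepsilon/2)(k+m)}\leq e^{(\ent(\psi)+\varepsilon)k}$ for $k$ large) yields the claimed bound.

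The main obstacle I anticipate is making the separation argument rigorous with honest constants: one has to choose $\delta$ smaller than the expansivity constant \emph{and} small enough that the local product structure / bracket map is defined and unique on the relevant neighbourhoods, and then verify that two distinct intersection points really do separate within time $k$. This is where \cite[Theorem 2.3]{KilPut} does the heavy lifting — it packages precisely the statement that the growth rate of $\#(\psi^k(B)\cap C)$ is bounded above by $\ent(\psi)$ — so in the write-up I would state the reduction to local stable/unstable pieces carefully and then cite \cite{KilPut} for the exponential bound rather than re-deriving the entropy estimate from scratch. A secondary (minor) point is handling the case where $B$ or $C$ is not connected or not contained in a single local set even after one application of \eqref{eq:increasingnetleaves}; a compactness/finite-subcover argument splits $\cl(B)$ into finitely many pieces each inside a local unstable set, and finitely many summands of exponential bounds is still an exponential bound with the same exponent.
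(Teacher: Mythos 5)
Your proposal is correct in substance, and its bottom line coincides with what the paper actually does: the paper gives no argument for this lemma at all, stating only that it "is derived from \cite[Theorem 2.3]{KilPut}", so your plan to reduce to local stable/unstable pieces and then cite Killough--Putnam is exactly the paper's route. The extra content you supply --- the reduction via \eqref{eq:increasingnetleaves} with the harmless shift by a fixed $m$, and the direct counting of the intersection as a separated set bounded by the definition of topological entropy --- is a genuinely more elementary derivation of the upper bound: it does not need mixing, nor the full strength of \cite{KilPut} (whose Theorem 2.3 gives the precise Bowen-measure asymptotics, not just an exponential upper bound), so it buys self-containedness at the cost of a page of bookkeeping. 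One point to fix in a write-up: as literally stated, the separation claim has the wrong orientation. After the reduction, two distinct points $z_1\neq z_2$ of $\psi^k(B)\cap C$ lie on a common local stable set (they are in $C$), so their \emph{forward} iterates $\psi^j(z_i)$, $0\le j\le k$, contract and never $\delta$-separate; the correct statement is that the preimages $w_i=\psi^{-k}(z_i)\in B$ form a $(k,\delta)$-separated set for $\psi$ (equivalently, the $z_i$ are $(k,\delta)$-separated for $\psi^{-1}$, and $\ent(\psi^{-1})=\ent(\psi)$). Indeed, if $d(\psi^j(w_1),\psi^j(w_2))\le\delta$ for all $0\le j\le k$ with $\delta$ below the expansivity constant, then contraction on the common local unstable piece controls all iterates $\psi^j$ with $j<0$, contraction on the common local stable piece (since $\psi^k(w_i)=z_i\in C$) controls all $j>k$, and expansiveness \cite[Prop.~2.1.9]{Putnam_Book} forces $w_1=w_2$; then the fixed-$\delta$ bound $\limsup_k\frac1k\log s(k,\delta)\le\ent(\psi)$ from \cite{Walters} gives the claimed estimate. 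Your own explanation already invokes the backward iterates, so this is a notational slip rather than a gap, but the indices should be stated consistently.
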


Recall Smale's Decomposition Theorem \ref{thm: Smale decomposition}. Here we will use a different notation. Let $X_1,\ldots , X_M$ be the decomposition of the irreducible $(X,\varphi)$ into disjoint clopen sets which are cyclically permuted by $\varphi$, and where $\varphi^M|_{X_i}$ is mixing, for all $1\leq i\leq M$. The next result does not dependent on any choice of hyperbolic metric in $\text{hM}_d(X,\varphi)$, and this is why we do not highlight any metric on $(X,\varphi)$. 

\begin{lemma}\label{lem:Rank_Ruelle_alg}
Let $a\in C_c(G^s(Q))$ and $b\in C_c(G^u(P))$. Then, for every $n\in \mathbb Z$ the operator $\alpha_s^n(a)b$ has finite rank. Moreover, 
\begin{enumerate}[(i)]
\item there exists $n_0\in \mathbb N$ such that $\rank(\alpha_s^n(a)b)=0$, for every $n\leq - n_0$;
\item there exists a constant $C_1>0$ so that, for every $\varepsilon >0$ there is $n_1\in \mathbb N$ such that, for all $n\geq n_1$, we have $$\rank(\alpha_s^n(a)b)<C_1e^{(\ent(\varphi)+\varepsilon)n}.$$
\end{enumerate}
Similarly for the operators  $b\alpha_s^n(a)$, where $n\in \mathbb Z$.
\end{lemma}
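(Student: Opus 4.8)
The plan is to reduce everything to the case where $a$ and $b$ are supported on single bisections (by Remark \ref{rem:reduction_algebraic_computations} and linearity of $\alpha_s^n$ and of the convolution product), and then to analyse the support geometry of $\alpha_s^n(a)b$ acting on $\mathscr{H}=\ell^2(X^h(P,Q))$. So fix $a\in C_c(G^s(Q))$ with $\supp(a)\subset V^s(v,w,h^s,\eta,N)$ and $b\in C_c(G^u(P))$ with $\supp(b)\subset V^u(v',w',h^u,\eta',N')$. Recall from (\ref{eq:groupoid_autom}) that $\alpha_s^n(a)\delta_x$ is supported on a single point $\varphi^n h^s \varphi^{-n}(x)$ whenever $h^s\circ\varphi^{-n}(x)$ is defined (and is zero otherwise), and from the lemma following the fundamental representation that $b\delta_x=b(h^u(x),x)\delta_{h^u(x)}$ if $x\in X^s(w',\eta')$, and $0$ otherwise. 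Thus $\alpha_s^n(a)b\,\delta_x$ is nonzero only if $x$ lies in $X^s(w',\eta')$ \emph{and} $h^u(x)$ lies in the $\varphi^n$-preimage of the domain of $h^s$; in that case the image is a scalar multiple of a single basis vector. So the rank of $\alpha_s^n(a)b$ is at most the number of such $x\in X^h(P,Q)$. This already shows each $\alpha_s^n(a)b$ has finite rank, being supported on a relatively compact set (products of pre-compact bisections are pre-compact).

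\textbf{Part (i).} For the vanishing at $n\le -n_0$, this is essentially a restatement of Lemma \ref{lem:minusinftylimit}: there it is proved that for $a,b$ supported on bisections as above there is $M\in\mathbb N$ with $\alpha_s^{-n}(a)b=b\alpha_s^{-n}(a)=0$ for $n\ge M$. Take $n_0=M$. (The general-element case reduces to this by writing $a,b$ as finite sums over bisections.)

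\textbf{Part (ii): the counting estimate.} For $n\to+\infty$ the key is to bound the number of $x\in X^h(P,Q)$ contributing to the rank. Writing $y=h^u(x)\in X^s(v')$ (image of the unstable holonomy), the condition is that $y$ ranges over a relatively compact open subset $C$ of a local stable set at some point of $Q$'s orbit, and $\varphi^{-n}(y)$ must land in the (relatively compact, open) domain $B$ of $h^s\circ\varphi^{-n}$ restricted appropriately, i.e.\ $y\in\varphi^n(B)$ where $B$ is a relatively compact open subset of a local unstable set. So the rank is at most $\#(\varphi^n(B)\cap C)$ for fixed relatively compact $B\subset X^u(\cdot)$, $C\subset X^s(\cdot)$. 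Now $(X,\varphi)$ is irreducible, so decompose via Theorem \ref{thm: Smale decomposition} as $X=X_1\sqcup\cdots\sqcup X_M$ with $\varphi^M|_{X_i}$ mixing. The sets $B$ and $C$ each meet only finitely many of the $X_i$ (they are relatively compact and the $X_i$ are clopen), so after splitting into finitely many pieces we may assume $B\subset X_i$, $C\subset X_j$. For $\varphi^n(B)\cap C$ to be nonempty we need $\varphi^n(X_i)=X_j$, which pins down $n$ modulo $M$; write $n=Mk+r$. Then $\varphi^n(B)\cap C=(\varphi^M)^k(\varphi^r(B))\cap C$ with $\varphi^r(B)$ relatively compact open in a local unstable set of the mixing system $(X_i,\varphi^M|_{X_i})$ (for the appropriate index), and $C$ relatively compact open in a local stable set. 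Apply Lemma \ref{lem:stable_unstable_intersection_KP} to the mixing Smale space $(X_i,\varphi^M|_{X_i})$: for every $\varepsilon'>0$ there is $k_0$ so that $\#\big((\varphi^M)^k(\varphi^r(B))\cap C\big)<e^{(\ent(\varphi^M|_{X_i})+\varepsilon')k}$ for $k\ge k_0$. Since $\ent(\varphi^M|_{X_i})=M\ent(\varphi)$ (entropy scales under iteration, and is independent of the component for an irreducible system), this gives, with $n=Mk+r$ and $k=(n-r)/M$, a bound of the form $\rank(\alpha_s^n(a)b)<C_1\,e^{(\ent(\varphi)+\varepsilon)n}$ for $n$ large, upon choosing $\varepsilon'$ appropriately in terms of $\varepsilon$ and absorbing the finitely many residues $r$ and the finitely many $(i,j)$-pieces into the constant $C_1$ (the number of holonomy-domain components is finite since $a,b$ have compact support). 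The estimate for $b\alpha_s^n(a)$ follows by the symmetric argument, or by taking adjoints since $\rank(T)=\rank(T^*)$ and $(\alpha_s^n(a)b)^* = b^*\alpha_s^n(a^*)$, with $a^*\in C_c(G^s(Q))$, $b^*\in C_c(G^u(P))$.

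\textbf{Main obstacle.} The only genuinely delicate point is the bookkeeping in Part (ii): correctly identifying that the contributing index $x$ is parametrised by a transverse intersection $\varphi^n(B)\cap C$ of a relatively compact piece of a local unstable set with one of a local stable set, and then threading the irreducible-to-mixing reduction so that Lemma \ref{lem:stable_unstable_intersection_KP} applies with the entropy correctly scaled. Once the geometry is set up, the estimate is immediate; the constant $C_1$ must be chosen uniformly over the (finitely many) components and residue classes but independently of $\varepsilon$, which is where some care is needed.
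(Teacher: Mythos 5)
Your proposal is correct and follows essentially the same route as the paper: reduce to functions supported on single bisections, get part (i) from Lemma \ref{lem:minusinftylimit}, bound the rank by the cardinality of $\varphi^n(B)\cap C$ for compact pieces $B,C$ of local unstable and stable sets, and then run the irreducible-to-mixing reduction of Theorem \ref{thm: Smale decomposition} so that Lemma \ref{lem:stable_unstable_intersection_KP} applies with $\ent(\varphi^M|_{X_j})=M\ent(\varphi)$, exactly as in the paper. The one small imprecision is your justification of finite rank for each fixed $n$: pre-compactness of $\supp(a)\supp(b)$ is not by itself the reason (a compact set can contain infinitely many points of the dense set $X^h(P,Q)$); the paper instead invokes the transversality statement Lemma \ref{lem:stableunstablerankone} after decomposing $\alpha_s^n(a)$ over bisections, though your part (ii) parametrisation of the contributing $x$ by the finite intersection $\varphi^n(B)\cap C$ supplies the same fact.
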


\begin{proof}
Using a partition of unity argument (see Lemma \ref{lem:Lip_span}) we can decompose $a,b$ into finite sums of compactly supported functions $$a=\sum_{m=1}^{m'} a_m,\, b= \sum_{\ell=1}^{\ell'} b_{\ell},$$ with $\supp(a_m)\subset V^s(v_m,w_m,h^s_m,\eta_m,N_m)$ and $\supp(b_{\ell})\subset V^u(v'_{\ell},w'_{\ell},h^u_{\ell},\eta_{\ell}',N'_{\ell})$. Also, for every $n\in \mathbb Z$ we have $\alpha_s^n(a)\in C_c(G^s(Q))$. Therefore, we can use the same partition of unity argument on $\alpha_s^n(a)$, and from Lemma \ref{lem:stableunstablerankone} we obtain that $\alpha_s^n(a)b$ has finite rank. Moreover, using the above decomposition of $a,b$ and Lemma \ref{lem:minusinftylimit} we obtain part (i).

Let $\varepsilon>0$ and to prove part (ii) it suffices to show that, for every $m,\ell$ there is some $n(a_m,b_\ell,\varepsilon)$ so that, for all $n\geq n(a_m,b_\ell,\varepsilon)$, we have $$\rank(\alpha_s^n(a_m)b_{\ell})<e^{(\ent(\varphi)+\varepsilon)n}.$$ The result follows with $C_1=m'\ell'$ and $n_1$ being the maximum among all $n(a_m,b_\ell,\varepsilon)$. Indeed, fix some $n\geq0$ and $m,\ell$, and if $x\in X^s(w'_{\ell},\eta'_{\ell}),\, \varphi^{-n}\circ h^u_{\ell} (x)\in X^u(w_m, \eta_m)$ we have that $$\alpha_s^n(a_m) b_{\ell} \delta_x= a_m(h^s_m\circ \varphi^{-n}\circ h^u_{\ell} (x), \varphi^{-n}\circ h^u_{\ell} (x))b_{\ell}(h^u_{\ell}(x),x)\delta_{\varphi^n \circ h^s_m\circ \varphi^{-n}\circ h^u_{\ell} (x)},$$ otherwise $\alpha_s^n(a_m) b_{\ell} \delta_x=0$. For brevity let $B_m=X^u(w_m,\eta_m), C_{\ell}=X^s(v'_{\ell},\varepsilon_X/2)$ and so, if $\alpha_s^n(a_m) b_{\ell} \delta_x\neq 0$, then $$h^u_{\ell}(x)\in \varphi^n(B_m)\cap C_{\ell}.$$ Since $\varphi^n(B_m)\cap C_{\ell}$ is finite and $h^u_{\ell}$ is injective it holds that $$\rank(\alpha_s^n(a_m)b_{\ell})\leq \#\varphi^n(B_m)\cap C_{\ell}.$$

Let $j_m\geq 0$ be the first time that $\varphi^{j_m}(w_m)$ lies in the same mixing component, say $X_j$, with $v'_{\ell}$. Then, for every $k\geq 0$ the points $$\varphi^{j_m+kM}(w_m)\in X_j.$$ If $n\not \equiv j_m \mod M$, then $\varphi^n(B_m)\cap C_{\ell}=\varnothing$ and hence $\alpha_s^n(a_m) b_{\ell}=0$. Now, if $n=j_m+kM$, for some $k\geq 0$, we can write $$\varphi^n(B_m)\cap C_{\ell}= (\varphi^M|_{X_j})^k(\varphi^{j_m}(B_m))\cap C_{\ell}.$$ Using Lemma \ref{lem:stable_unstable_intersection_KP} for $\psi= \varphi^M|_{X_j}, B= \varphi^{j_m}(B_m)$ and $C=C_{\ell}$ we obtain some $k_0\in \mathbb N$ so that, if $k\geq k_0$, then  
\begin{align*}
\#\varphi^n(B_m)\cap C_{\ell}&= \# (\varphi^M|_{X_j})^k(\varphi^{j_m}(B_m))\cap C_{\ell}\\
&< e^{(\ent(\varphi^M|_{X_j})+\varepsilon)k}\\
&\leq e^{(\ent(\varphi^M|_{X_j})+\varepsilon)(n/M)}.
\end{align*}
Since $ \ent(\varphi^M|_{X_j})=M\ent(\varphi)$, we can define $n(a_m,b_\ell,\varepsilon)=j_m+k_0M$ and the proof of part (ii) is complete.
\end{proof}

\begin{lemma}\label{lem:KPW_norms}
Consider a metric $d'\in \text{sM}_d(X,\varphi)$ and let $a\in \Lip_c(G^s(Q),D_{s,d'})$ and $b\in \Lip_c(G^u(P),D_{u,d'})$. Then, there exist $C_2>0$ and $n_2\in \mathbb N$ such that, for all $n\geq n_2$, it holds $$\|\alpha_s^n(a)\alpha_u^{-n}(b)- \alpha_u^{-n}(b)\alpha_s^n(a)\|\leq C_22^{-n/\lceil \log_{\lambda_{X,d'}}3 \rceil}.$$ As a result, the constants $C_2$ and $n_2$ can be chosen bigger so that for $n\geq n_2$ one has $$\|\alpha_s^n(a)b- b\alpha_s^n(a)\|\leq C_22^{-(n/2)/\lceil \log_{\lambda_{X,d'}}3 \rceil}.$$ 
\end{lemma}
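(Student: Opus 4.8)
The idea is to reduce to generators supported on bisections, as in Remark~\ref{rem:reduction_algebraic_computations}, compute the relevant commutator explicitly using the holonomy description of multiplication (cf.\ Lemma~\ref{lem:stableunstablerankone} and Lemma~\ref{lem:Rank_Ruelle_alg}), and then estimate its operator norm by controlling how close the two ways of composing holonomies are, measured in the groupoid metrics $D_{s,d'}$ and $D_{u,d'}$. The key point is that the commutator $\alpha_s^n(a)\alpha_u^{-n}(b)-\alpha_u^{-n}(b)\alpha_s^n(a)$, restricted to a basis vector $\delta_x$, is (at most) a rank-one operator; its norm is controlled by $|a(\cdot)-a(\cdot')|$ and $|b(\cdot)-b(\cdot')|$ evaluated at two pairs of points which, after flowing forward or backward by $\varphi^n$, land inside local stable/unstable sets of radius going exponentially to zero. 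By the Lipschitz property of $a,b$ and by Proposition~\ref{prop:groupoiddistances} applied to the base sets that shrink at rate $\lambda_{X,d'}^{-N-k}$, the $D_{s,d'}$- and $D_{u,d'}$-distance between these points is bounded by $2^{-k/\lceil \log_{\lambda_{X,d'}}3\rceil}$, and after identifying $k$ with $n$ (up to a constant) this yields the claimed exponential decay rate $2^{-n/\lceil \log_{\lambda_{X,d'}}3\rceil}$.

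First I would fix a metric $d'\in \text{sM}_d(X,\varphi)$ and, by the reduction in Remark~\ref{rem:reduction_algebraic_computations}, reduce to the case where $a\in\Lip_c(G^s(Q),D_{s,d'})$ is supported on a single stable bisection $V^s(v,w,h^s,\eta,N)$ and $b\in\Lip_c(G^u(P),D_{u,d'})$ on a single unstable bisection $V^u(v',w',h^u,\eta',N')$. By Lemma~\ref{lem:minusinftylimit} the products vanish once $n$ is large enough that the supports are dynamically separated in the wrong direction, and for $n$ large one uses the explicit formula for how $\alpha_s^n(a)$ and $\alpha_u^{-n}(b)$ act on $\delta_x$ (as in the proof of Lemma~\ref{lem:Rank_Ruelle_alg}): both $\alpha_s^n(a)\alpha_u^{-n}(b)\delta_x$ and $\alpha_u^{-n}(b)\alpha_s^n(a)\delta_x$ are scalar multiples of a single basis vector $\delta_{z}$ where $z$ is obtained by composing the two holonomy maps in the two possible orders. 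The crucial transversality fact is that these two orders of composition land at the \emph{same} point $z$ (this is precisely the content of the $\KKt$-duality commutation, Lemma~\ref{lem:commutationRuelles}, at the level of holonomies), so the commutator $\delta_z$-component is $\big(a(\cdot)b(\cdot)-a(\cdot')b(\cdot')\big)\delta_z$ where the arguments differ by a small amount. I would then write $a(\cdot)b(\cdot)-a(\cdot')b(\cdot')=\big(a(\cdot)-a(\cdot')\big)b(\cdot)+a(\cdot')\big(b(\cdot)-b(\cdot')\big)$ and bound each term using the Lipschitz constants $\Lip(a),\Lip(b)$ together with the metric estimates.

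Next I would quantify the displacement. After applying $\varphi^n$ (resp.\ $\varphi^{-n}$) the relevant points lie in local unstable (resp.\ stable) sets of radius $\lambda_{X,d'}^{-n}\cdot\text{const}$; pulling back and feeding into Proposition~\ref{prop:groupoiddistances} with $k\asymp n$ gives $D_{s,d'}$-distances $\le 2^{-n/\lceil\log_{\lambda_{X,d'}}3\rceil}\gamma'$ for the stable arguments and analogously $D_{u,d'}$-distances for the unstable ones. Since only finitely many basis vectors $\delta_x$ give a nonzero contribution and the operator is a sum of rank-one pieces with pairwise orthogonal ranges (or at worst boundedly overlapping, which only affects the constant), the operator norm of the commutator is bounded by a constant multiple of the largest of these scalars, i.e.\ by $C_2\,2^{-n/\lceil\log_{\lambda_{X,d'}}3\rceil}$, for $n\ge n_2$. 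Finally, the second inequality of the statement, $\|\alpha_s^n(a)b-b\alpha_s^n(a)\|\le C_2\,2^{-(n/2)/\lceil\log_{\lambda_{X,d'}}3\rceil}$, follows from the first by the standard interpolation/telescoping trick used in Lemma~6.3 of \cite{KPW}: one inserts $\alpha_u^{-m}(b)$ for an intermediate time $m\approx n/2$, writes $\alpha_s^n(a)b-b\alpha_s^n(a)$ as a sum of a commutator of the form already estimated (with $m$ in place of $n$) plus terms of the form $\alpha_s^n(a)\big(b-\alpha_u^{-m}(b)\big)$ and $\big(b-\alpha_u^{-m}(b)\big)\alpha_s^n(a)$, and bounds the latter using $\|b-\alpha_u^{-m}(b)\|\le \Lip(b)\cdot\text{diam}$-type estimates that also decay exponentially in $m$; optimizing $m$ against $n-m$ gives the halved exponent.

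\textbf{Main obstacle.} The genuinely delicate step is verifying that the two orders of holonomy composition land at exactly the same point $z$ and keeping careful track of \emph{which} local stable/unstable set of \emph{which} radius the displaced points inhabit, so that Proposition~\ref{prop:groupoiddistances} can be applied with the correct exponent $k$; this is a bookkeeping-heavy manipulation of the bracket axioms (B1)--(B4) and the contraction axioms (C1)--(C2), and one must also confirm that the finitely many nonzero rank-one contributions do not accumulate (their number is controlled by Lemma~\ref{lem:Rank_Ruelle_alg}, growing only like $e^{(\ent(\varphi)+\varepsilon)n}$, which is harmless for the norm estimate but matters for the later Schatten-class statement). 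The second, comparatively minor, subtlety is choosing the intermediate time $m$ in the telescoping argument so that both halves decay at the same rate, forcing the factor $1/2$ in the exponent.
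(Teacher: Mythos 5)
Your treatment of the first estimate is essentially the paper's argument: reduce to bisection-supported $a,b$, observe that for large $n$ both $\alpha_s^n(a)\alpha_u^{-n}(b)\delta_x$ and $\alpha_u^{-n}(b)\alpha_s^n(a)\delta_x$ are scalars times one common basis vector, split the difference of scalars as $(a(e)-a(f))b(e')+a(f)(b(e')-b(f'))$, and feed the exponentially shrinking base sets containing $e,f$ (resp.\ $e',f'$) into Proposition~\ref{prop:groupoiddistances} with $k\asymp n$. Two caveats on that part. The fact that the two orders of holonomy composition land at the same point is not "the content of Lemma~\ref{lem:commutationRuelles}" — that lemma only asserts commutation modulo compacts and is logically downstream of the present estimate; the paper first runs an $\varepsilon$-neighbourhood/compactness argument to ensure all four displaced points lie in the correct local stable/unstable sets and then invokes \cite[Lemma 2.2]{Putnam_algebras} for the exact equality at large $n$. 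Also, the passage from the pointwise scalar bound to the operator norm uses that the map sending $\delta_x$ to the target basis vector is injective (a composition of injective holonomies and powers of $\varphi$), so the norm is exactly the supremum of the scalars; "boundedly overlapping" would not be harmless if the multiplicity grew with $n$.

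The genuine gap is your derivation of the second inequality. The proposed telescoping requires $\|b-\alpha_u^{-m}(b)\|$ to decay exponentially in $m$, and this is false: $\alpha_u^{-m}(b)=b\circ\Phi^{m}$ is a "translate" of $b$ of the same norm, and the Lipschitz bound only gives $|b(\gamma)-b(\Phi^{m}(\gamma))|\le \Lip(b)\,D_{u,d'}(\gamma,\Phi^{m}(\gamma))$, where $D_{u,d'}(\gamma,\Phi^{m}(\gamma))$ is in no way small (for large $m$ the supports of $b$ and $b\circ\Phi^{m}$ typically separate and $\|b-\alpha_u^{-m}(b)\|$ stays comparable to $\|b\|_\infty$). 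Hence the remainder terms $\alpha_s^n(a)\bigl(b-\alpha_u^{-m}(b)\bigr)$ do not tend to zero, and no choice of intermediate time $m$ closes the argument; moreover the "already estimated" commutator you produce is $[\alpha_s^n(a),\alpha_u^{-m}(b)]$ with $n\neq m$, which is not of the estimated diagonal form. The paper obtains the halved exponent from an exact identity instead: since $\alpha_s$ and $\alpha_u$ are both implemented by the same unitary $u$ on $\mathscr{H}$, one has $\alpha_s^{2n}(a)b-b\alpha_s^{2n}(a)=u^{n}\bigl(\alpha_s^{n}(a)\alpha_u^{-n}(b)-\alpha_u^{-n}(b)\alpha_s^{n}(a)\bigr)u^{-n}$, so the two norms coincide, and odd exponents are handled by applying the even case to $\alpha_s(a)$ (which is again Lipschitz because $\Phi$ is bi-Lipschitz), at the cost of enlarging $C_2$ and $n_2$. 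You should replace the telescoping step by this conjugation identity.
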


\begin{proof}
All notation is relative to $d'$ and for brevity we omit to indicate it, except for the case of the contraction constant $\lambda_{X,d'}>1$. First assume that $\supp(a)\subset V^s(v,w,h^s,\eta,N)$ and $\supp(b)\subset V^u(v',w',h^u,\eta',N')$. As in Lemma \ref{lem:Rank_Ruelle_alg} we see that, if $\alpha_s^n(a)\alpha_u^{-n}(b)\neq 0$, then $X^s(\varphi^{-n}(v'))\cap X^u(\varphi^n(w))\neq \varnothing.$ In particular, $\varphi^{-n}(v')$ and $\varphi^n(w)$ are in the same mixing component, otherwise $\alpha_s^n(a)\alpha_u^{-n}(b)=0$. Therefore, the possibility that $\alpha_s^n(a)\alpha_u^{-n}(b)\neq 0$ occurs periodically. 

Similarly, if $\alpha_u^{-n}(b)\alpha_s^n(a)\neq 0$, then $X^s(\varphi^{-n}(w'))\cap X^u (\varphi^n(v))\neq \varnothing$, hence $\varphi^{-n}(w'),$ $\varphi^n(v)$ lie in the same mixing component. But this can also happen if $\alpha_s^n(a)\alpha_u^{-n}(b)\neq 0$, since the points $\varphi^{-n}(w'),\,\varphi^n(v)$ are always in the same mixing component with $\varphi^{-n}(v'),\,\varphi^n(w)$, respectively. This last argument works symmetrically, and to summarise, the only chance for the operator $\alpha_s^n(a)\alpha_u^{-n}(b)- \alpha_u^{-n}(b)\alpha_s^n(a)$ to be non-zero is when $n$ lies in a certain arithmetic-like strictly increasing sequence. Therefore, for the sake of brevity, we can simply assume that $(X,\varphi)$ is mixing.

We have that, if 
\begin{equation}\label{eq:KPW_norms_1}
\varphi^n(x)\in X^s(w',\eta'), \, \varphi^{-2n}\circ h^u \circ \varphi^n (x)\in X^u(w,\eta),
\end{equation}
then $\alpha_s^n(a)\alpha_u^{-n}(b)\delta_x$ equals
$$
a(h^s\circ \varphi^{-2n}\circ h^u \circ \varphi^n (x), \varphi^{-2n}\circ h^u \circ \varphi^n (x)) b(h^u \circ \varphi^n (x), \varphi^n(x))\delta_{\varphi^n\circ h^s\circ \varphi^{-2n}\circ h^u \circ \varphi^n (x)},
$$
otherwise $\alpha_s^n(a)\alpha_u^{-n}(b)\delta_x=0$. Similarly, if 
\begin{equation}\label{eq:KPW_norms_2}
\varphi^{-n}(x)\in X^u(w,\eta), \, \varphi^{2n}\circ h^s \circ \varphi^{-n}(x)\in X^s(w',\eta'),
\end{equation}
then $\alpha_u^{-n}(b)\alpha_s^n(a)\delta_x$ equals
$$b(h^u\circ \varphi^{2n}\circ h^s \circ \varphi^{-n}(x), \varphi^{2n}\circ h^s \circ \varphi^{-n}(x)) a( h^s \circ \varphi^{-n}(x), \varphi^{-n}(x)) \delta_{\varphi^{-n}\circ h^u\circ \varphi^{2n}\circ h^s \circ \varphi^{-n}(x)},$$
and $\alpha_u^{-n}(b)\alpha_s^n(a)\delta_x=0$ otherwise.

Let now $\text{source}(a)$ and $\text{source}(b)$ denote the images of $\supp(a),\, \supp(b)$ via the groupoid source map. Then, $\text{source}(a)$ is a compact subset of $X^u(w,\eta)$ and $\text{source}(b)$ is a compact subset of $X^s(w',\eta')$. From compactness, there is an $\varepsilon>0$ such that the $\varepsilon$-neighbourhoods $N_{\varepsilon}(\text{source}(a))\subset X^u(w,\eta)$ and $N_{\varepsilon}(\text{source}(b))\subset X^s(w',\eta')$. We claim that there is $n_2\in \mathbb N$ such that, for every $n\geq n_2$, if $\alpha_s^n(a)\alpha_u^{-n}(b)\delta_x\neq 0$, then (\ref{eq:KPW_norms_2}) holds and hence the above formula for $\alpha_u^{-n}(b)\alpha_s^n(a)\delta_x$ is valid. However, this does not necessarily mean that $\alpha_u^{-n}(b)\alpha_s^n(a)\delta_x\neq 0.$ The same can be said for $\alpha_s^n(a)\alpha_u^{-n}(b)\delta_x$, if $\alpha_u^{-n}(b)\alpha_s^n(a)\delta_x\neq 0$. Indeed, let $n_2\geq N,N'$ and such that $\lambda_{X,d'}^{-2n_2+N}\varepsilon_X/2,\, \lambda_{X,d'}^{-2n_2+N'}\varepsilon_X/2 <\varepsilon$, and for $n\geq n_2$ suppose that $\alpha_s^n(a)\alpha_u^{-n}(b)\delta_x\neq 0$. Then, $$\varphi^n(x)\in \text{source}(b),\, \varphi^{-2n}\circ h^u \circ \varphi^n (x)\in \text{source}(a),$$ and since 
\begin{equation}\label{eq:KPW_norms_3}
\varphi^{-2n}\circ h^u \circ \varphi^n (x)\in X^u(\varphi^{-n}(x),\lambda_{X,d'}^{-2n+N'}\varepsilon_X/2),
\end{equation}
we obtain $\varphi^{-n}(x)\in N_{\varepsilon}(\text{source}(a))\subset  X^u(w,\eta)$. Hence, the expression $\varphi^{2n}\circ h^s \circ \varphi^{-n}(x)$ is well-defined and also 
\begin{equation}\label{eq:KPW_norms_4}
\varphi^{2n}\circ h^s \circ \varphi^{-n}(x)\in X^s(\varphi^n(x), \lambda_{X,d'}^{-2n+N}\varepsilon_X/2),
\end{equation}
meaning that $\varphi^{2n}\circ h^s \circ \varphi^{-n}(x)\in N_{\varepsilon}(\text{source}(b))\subset X^s(w',\eta')$. This proves the claim. Moreover, from \cite[Lemma 2.2]{Putnam_algebras}, we can actually choose $n_2$ big enough so that, for every $n\geq n_2$, if $\alpha_s^n(a)\alpha_u^{-n}(b)\delta_x\neq 0$ or $\alpha_u^{-n}(b)\alpha_s^n(a)\delta_x\neq 0$ then, $$\varphi^n\circ h^s\circ \varphi^{-2n}\circ h^u \circ \varphi^n (x)= \varphi^{-n}\circ h^u\circ \varphi^{2n}\circ h^s \circ \varphi^{-n}(x).$$

For $n\geq n_2$ and $x\in X^h(P,Q)$ such that $\alpha_s^n(a)\alpha_u^{-n}(b)\delta_x\neq 0$ or $\alpha_u^{-n}(b)\alpha_s^n(a)\delta_x\neq 0$, define 
\begin{align*}
x_1&=x,\\
x_2&= \varphi^n\circ h^s \circ \varphi^{-n}(x),\\
x_3&= \varphi^n\circ h^s\circ \varphi^{-2n}\circ h^u \circ \varphi^n (x)= \varphi^{-n}\circ h^u\circ \varphi^{2n}\circ h^s \circ \varphi^{-n}(x),\\
x_4&= \varphi^{-n}\circ h^u\circ \varphi^n(x).
\end{align*}
For an even bigger $n_2$ we can guarantee that 
\begin{align*}
x_1&\in X^s(x_2, \varepsilon_X'/2),\\
x_3&\in X^s(x_4, \varepsilon_X'/2),\\
x_1&\in X^u(x_4,\varepsilon_X'/2),\\
x_3&\in X^u(x_2,\varepsilon_X'/2).
\end{align*}
From the discussion so far, and since $\alpha_s^n(a)\alpha_u^{-n}(b)- \alpha_u^{-n}(b)\alpha_s^n(a)$ takes base vectors to base vectors, it holds that, for $n\geq n_2$, $$\|\alpha_s^n(a)\alpha_u^{-n}(b)- \alpha_u^{-n}(b)\alpha_s^n(a)\|$$ $$=\sup_x|a(\varphi^{-n}(x_3),\varphi^{-n}(x_4))b(\varphi^{n}(x_4),\varphi^n(x_1))-b(\varphi^n(x_3),\varphi^n(x_2))a(\varphi^{-n}(x_2),\varphi^{-n}(x_1))|,$$ where the supremum is taken over all $x\in X^h(P,Q)$ such that $\alpha_s^n(a)\alpha_u^{-n}(b)\delta_x\neq 0$ or $\alpha_u^{-n}(b)\alpha_s^n(a)\delta_x\neq 0$. To estimate the norm, let us denote $e=(\varphi^{-n}(x_3),\varphi^{-n}(x_4)),$ $f=(\varphi^{-n}(x_2),\varphi^{-n}(x_1)),\, e'= (\varphi^{n}(x_4),\varphi^n(x_1))$ and $f'=(\varphi^n(x_3),\varphi^n(x_2))$. Then, since $a,b$ are Lipschitz, we have that 
\begin{align*}
|a(e)-a(f)|&\leq \Lip(a) D_{s,d'}(e,f),\\
|b(e')-b(f')|&\leq \Lip(b) D_{u,d'}(e',f').
\end{align*}
Again, if we consider a bigger $n_2$ so that $\lambda_{X,d'}^{-n_2+N}\varepsilon_X/2,\, \lambda_{X,d'}^{-n_2+N'}\varepsilon_X/2\leq \varepsilon_X'/2$ then, for every $n\geq n_2$, the base sets $V^u(f',\lambda_{X,d'}^{-2n+N}\varepsilon_X/2,n),\, V^s(f, \lambda_{X,d'}^{-2n+N'}\varepsilon_X/2,n)$ are well-defined, and from (\ref{eq:KPW_norms_3}), (\ref{eq:KPW_norms_4}) they contain $e'$ and $e$, respectively. Now, from Proposition \ref{prop:groupoiddistances} we can find $\gamma, \gamma'>0$ (independent of $e,f$ and $e',f'$) such that 
\begin{align*}
D_{s,d'}(e,f)&\leq  2^{-n/\lceil \log_{\lambda_{X,d'}}3 \rceil}\gamma,\\
D_{u,d'}(e',f')&\leq 2^{-n/\lceil \log_{\lambda_{X,d'}}3 \rceil}\gamma'.
\end{align*}
Using the fact that $a,b$ are compactly supported, we obtain $C_1>0$ such that, for every $n\geq n_2$, it holds 
\begin{equation}\label{eq:KPW_norms_5}
\|\alpha_s^n(a)\alpha_u^{-n}(b)- \alpha_u^{-n}(b)\alpha_s^n(a)\|\leq C_12^{-n/\lceil \log_{\lambda_{X,d'}}3 \rceil}.
\end{equation}
More generally, if $a,b$ are not necessarily supported on bisections, then we can use Lemma \ref{lem:Lip_span} and obtain a constant $C_2>0$ such that (\ref{eq:KPW_norms_5}) holds with $C_2$ instead of $C_1$.

Finally, it is not hard to see that $$\|\alpha_s^{2n}(a)b- b\alpha_s^{2n}(a)\|=\|\alpha_s^n(a)\alpha_u^{-n}(b)- \alpha_u^{-n}(b)\alpha_s^n(a)\|,$$ and hence 
\begin{equation}\label{eq:KPW_norms_6}
\|\alpha_s^{2n}(a)b- b\alpha_s^{2n}(a)\|\leq C_22^{-n/\lceil \log_{\lambda_{X,d'}}3 \rceil},
\end{equation}
for all $n\geq n_2$. To complete the proof, observe that (\ref{eq:KPW_norms_6}) can be also applied to $\alpha_s(a)$ in place of $a$.
\end{proof}

In exactly the same way, using Proposition \ref{prop:SFT_groupoid_distances}, one can prove the following.

\begin{lemma}\label{lem:KPW_norms_SFT}
Suppose that $(X,d,\varphi)$ is an irreducible TMC and let $\kappa >1$. Then, for every $a\in \Lip_c(G^s(Q),D_{s,\kappa})$ and $b\in \Lip_c(G^u(P),D_{u,\kappa})$, there exist $C_2>0$ and $n_2\in \mathbb N$ such that, for all $n\geq n_2$, it holds $$\|\alpha_s^n(a)\alpha_u^{-n}(b)- \alpha_u^{-n}(b)\alpha_s^n(a)\|\leq C_2\kappa^{-2n}.$$ As a result, the constants $C_2$ and $n_2$ can be chosen bigger so that $$\|\alpha_s^n(a)b- b\alpha_s^n(a)\|\leq C_2\kappa^{-n},$$ for all $n\geq n_2$.
\end{lemma}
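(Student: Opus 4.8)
The statement is the zero-dimensional counterpart of Lemma \ref{lem:KPW_norms}, so the natural approach is to run exactly the same argument but replacing Proposition \ref{prop:groupoiddistances} with its ultrametric analogue Proposition \ref{prop:SFT_groupoid_distances}, and keeping track of the sharper exponential rate $\kappa^{-1}$ in place of $2^{-1/\lceil \log_{\lambda_{X,d'}}3\rceil}$. Concretely, I would first reduce to the mixing case by the same periodicity observation: if $\alpha_s^n(a)\alpha_u^{-n}(b)$ or $\alpha_u^{-n}(b)\alpha_s^n(a)$ is nonzero, the relevant source and range points must land in a common mixing component of the Smale space after applying $\varphi^{\pm n}$, so the commutator vanishes unless $n$ lies in an arithmetic-like sequence; hence it is harmless to assume $(X,\varphi)$ is mixing. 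Then I reduce to $a,b$ supported on bisections using Lemma \ref{lem:Lip_span}, and write out the explicit action of $\alpha_s^n(a)\alpha_u^{-n}(b)\delta_x$ and $\alpha_u^{-n}(b)\alpha_s^n(a)\delta_x$ on basis vectors exactly as in the proof of Lemma \ref{lem:KPW_norms}.

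The middle portion is the "four corner points" bookkeeping: for $n$ large enough (using that $d_\kappa$ is self-similar, so $\varphi^{-1}$ acts as the $\kappa^{-1}$-multiple of an isometry on local unstable sets, and $\varphi$ similarly on local stable sets) one shows that whenever $\alpha_s^n(a)\alpha_u^{-n}(b)\delta_x \neq 0$ the image point automatically lies in the relevant source set for the reverse composition, and vice versa, by pulling the relevant neighbourhoods of $\operatorname{source}(a)$ and $\operatorname{source}(b)$ back far enough — this is where $\lambda_{X,d'}$ is simply replaced by $\kappa$ and the $\varepsilon_X'/2$ safety buffers of $(X,d_\kappa,\varphi)$ are used (indeed in the TMC picture of Section \ref{sec:Groupoid_metrics_SFT} one can even dispense with halving the expansivity constant). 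One then defines the four points $x_1,\dots,x_4$ as in Lemma \ref{lem:KPW_norms}, and estimates the sup-norm of the commutator by $\Lip(a)\,\widetilde{D}_{s,\kappa}$-type terms plus $\Lip(b)\,\widetilde{D}_{u,\kappa}$-type terms, where the two pairs of points $e,f$ and $e',f'$ sit in base sets of the form $V^s(\cdot,\kappa^{-2n+N}\gamma, n)$ and $V^u(\cdot,\kappa^{-2n+N'}\gamma',n)$. Applying Proposition \ref{prop:SFT_groupoid_distances} to these base sets gives $D_{s,\kappa}(e,f)\le \kappa^{-n-1}$ and $D_{u,\kappa}(e',e')\le \kappa^{-n-1}$ (up to the constant absorbing $\gamma,\gamma'$ and the fixed support sizes), which is exactly the $\kappa^{-2n}$ bound claimed for the diagonal-shifted commutator, after noting that $\|\alpha_s^n(a)\alpha_u^{-n}(b)-\alpha_u^{-n}(b)\alpha_s^n(a)\|$ is what we are bounding and that the scale parameter is roughly $\kappa^{-2n}$. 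The final identity $\|\alpha_s^{2n}(a)b-b\alpha_s^{2n}(a)\| = \|\alpha_s^n(a)\alpha_u^{-n}(b)-\alpha_u^{-n}(b)\alpha_s^n(a)\|$ then yields the $\kappa^{-n}$ bound for $\|\alpha_s^n(a)b-b\alpha_s^n(a)\|$ after applying it to $\alpha_s(a)$ as well, to cover odd indices.

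\textbf{Main obstacle.} The routine work is entirely parallel to Lemma \ref{lem:KPW_norms}, so there is no genuinely new difficulty; the only point requiring a little care is checking that Proposition \ref{prop:SFT_groupoid_distances}, whose hypothesis is $n \ge c_s(c)$ and whose conclusion has radius $\kappa^{-n-1}$ (rather than the radius-dependent $2^{-k/\lceil\log 3\rceil}$ of Proposition \ref{prop:groupoiddistances}), applies to the base sets $V^s(f,\kappa^{-2n+N'}\varepsilon_X/2, n)$ that arise — i.e.\ that the "width" of these base sets is genuinely $\kappa^{-(n + \text{const})}$ relative to the "time" parameter $n$, so that the ultrametric bound kicks in with the right exponent. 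Since in the TMC picture the holonomy maps are genuine isometries and $\varphi^{-1}$ is exactly a $\kappa^{-1}$-contraction on unstable sets, this matching is clean, and the constant $\lceil \log_{\lambda_{X,d'}}3\rceil$ disappears entirely (reflecting $\lambda(X,\varphi)=\infty$, as anticipated in Subsection \ref{sec:Optimisation}). Thus the proof reduces, as the text says, to repeating the argument of Lemma \ref{lem:KPW_norms} \emph{mutatis mutandis}, and one may legitimately state it in the compressed form "In exactly the same way, using Proposition \ref{prop:SFT_groupoid_distances}, one proves the following."
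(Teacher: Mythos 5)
Your proposal is exactly the paper's intended argument: the paper offers no separate proof of Lemma \ref{lem:KPW_norms_SFT}, saying only that it follows ``in exactly the same way'' by repeating the proof of Lemma \ref{lem:KPW_norms} with Proposition \ref{prop:SFT_groupoid_distances} in place of Proposition \ref{prop:groupoiddistances}, which is precisely what you carry out. One bookkeeping slip to fix: the base sets containing the pairs $e,f$ and $e',f'$ have radius of order $\kappa^{-2n}$ (not $\kappa^{-(n+\mathrm{const})}$), so one first rewrites them via self-similarity with time parameter roughly $2n$, after which Proposition \ref{prop:SFT_groupoid_distances} returns a distance bound of order $\kappa^{-2n+\mathrm{const}}$ --- not the $\kappa^{-n-1}$ you state at one point --- and it is this that yields the claimed $C_2\kappa^{-2n}$ estimate.
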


We now state the main results of this subsection.

\begin{prop}\label{prop:com_Lip_alg_general}
Let $d'\in \text{sM}_d(X,\varphi)$. The algebras $\overline{\rho_{s}}(\Lambda_{s,d'}(Q,\alpha_s)),\, \overline{\rho_{u}}(\Lambda_{u,d'}(P,\alpha_u))$ commute modulo the Schatten ideal $\mathcal{L}^p(\mathscr{H}\otimes \ell^2(\mathbb Z))$, for every $$p> \frac{2\ent(\varphi)\lceil \log_{\lambda_{X,d'}}3 \rceil}{\log 2}.$$
\end{prop}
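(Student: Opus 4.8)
The plan is to reduce the commutation estimate to the combination of Lemma \ref{lem:singular_values}, the rank bound of Lemma \ref{lem:Rank_Ruelle_alg}, and the norm decay of Lemma \ref{lem:KPW_norms}, using the explicit matrix-unit formula (\ref{eq: tau_Delta}) for $\tau_{\Delta}$. By Remark \ref{rem:reduction_algebraic_computations} it suffices, by linearity of $\overline{\rho_s}\cdot\overline{\rho_u}$ and since the crossed products are algebraic, to show that the commutator $[\overline{\rho_s}(au^j),\overline{\rho_u}(bu^{j'})]$ lies in $\mathcal{L}^p(\mathscr H\otimes\ell^2(\mathbb Z))$ for all generators with $a\in\Lip_c(G^s(Q),D_{s,d'})$, $b\in\Lip_c(G^u(P),D_{u,d'})$ supported on bisections and $j,j'\in\mathbb Z$. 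Since $\overline{\rho_u}(u)$ commutes exactly with $\overline{\rho_s}(u)$ and with $\overline{\rho_s}(a)$ only modulo $\mathcal{K}$, one first moves the unitaries $1\otimes B^j$ and $u^{j'}\otimes B^{-j'}$ past each other exactly (they commute), so that modulo a fixed power of the bilateral shift the commutator is unitarily conjugate to $[\overline{\rho_s}(\alpha_s^{\ell}(a)),\overline{\rho_u}(b)]$ for a suitable $\ell$; conjugation by the unitary $u\otimes B$ preserves every Schatten ideal, so it is enough to treat $j=j'=0$.

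\textbf{Main computation.} For $j=j'=0$ one reads off from (\ref{eq: tau_Delta}) that, identifying $\mathscr H\otimes\ell^2(\mathbb Z)$ with $\bigoplus_{n\in\mathbb Z}\mathscr H$, the operator $\overline{\rho_s}(a)\overline{\rho_u}(b)-\overline{\rho_u}(b)\overline{\rho_s}(a)$ is the block-diagonal operator whose $n$-th block is $\alpha_s^n(a)b-b\alpha_s^n(a)$ acting on the $n$-th copy of $\mathscr H$. So it remains to estimate the ranks and norms of $T_n:=\alpha_s^n(a)b-b\alpha_s^n(a)$. By Lemma \ref{lem:Rank_Ruelle_alg}(i) applied to $a$ and to $b^*$ (or symmetrically, to $b$ and $a^*$), $T_n=0$ for $n\le -n_0$, so only $n\ge 0$ matters; by Lemma \ref{lem:Rank_Ruelle_alg}(ii), $\rank(T_n)\le \rank(\alpha_s^n(a)b)+\rank(b\alpha_s^n(a))\le 2C_1 e^{(\ent(\varphi)+\varepsilon)n}$ for $n$ large. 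By Lemma \ref{lem:KPW_norms}, $\|T_n\|\le C_2\,2^{-(n/2)/\lceil\log_{\lambda_{X,d'}}3\rceil}=C_2\,(2^{1/(2\lceil\log_{\lambda_{X,d'}}3\rceil)})^{-n}$ for $n\ge n_2$. Hence $T=\bigoplus_{n}T_n$ (restricted to $n\ge 0$; the finitely many blocks with $-n_0<n<0$ are irrelevant for summability) satisfies the hypotheses of Lemma \ref{lem:singular_values} with $\alpha_\varepsilon=e^{\ent(\varphi)+\varepsilon}$ and $\beta_\varepsilon=\beta=2^{1/(2\lceil\log_{\lambda_{X,d'}}3\rceil)}$ (here $\beta$ does not even depend on $\varepsilon$). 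Therefore $(s_n(T))_n\in\ell^p(\mathbb N)$ for every $p>\log_\beta\alpha=\log_\beta e^{\ent(\varphi)}=\dfrac{\ent(\varphi)\log e}{\log\beta}=\dfrac{2\ent(\varphi)\lceil\log_{\lambda_{X,d'}}3\rceil}{\log 2}$, which is exactly the claimed bound; letting $\varepsilon\to 0$ absorbs the $\varepsilon$ in $\alpha_\varepsilon$.

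\textbf{Assembling and the general generator.} Finally I would return to arbitrary $j,j'$: the unitary conjugation described above turns $[\overline{\rho_s}(au^j),\overline{\rho_u}(bu^{j'})]$ into (a shift of) $\bigoplus_n [\alpha_s^{n+j}(a)b-b\alpha_s^{n+j}(a)]$ tensored against a partial isometry between the relevant summands of $\ell^2(\mathbb Z)$; reindexing $n\mapsto n-j$ and discarding the finitely many exceptional blocks, the same rank and norm estimates apply verbatim, so the commutator lies in $\mathcal{L}^p$ for the same range of $p$. Then a standard density/ideal argument (the set of pairs $(x,y)$ with $[\overline{\rho_s}(x),\overline{\rho_u}(y)]\in\mathcal{L}^p$ is closed under sums and $*$, and contains all generators) gives the statement for all of $\Lambda_{s,d'}(Q,\alpha_s)\times\Lambda_{u,d'}(P,\alpha_u)$.

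\textbf{Expected main obstacle.} The routine part is the rank/norm bookkeeping; the genuinely delicate point is making sure the block-diagonal picture is exactly right for general $j,j'$ — i.e., that after pulling the shifts through, the commutator really is conjugate to a \emph{block-diagonal} operator with blocks $\alpha_s^{n}(a)b-b\alpha_s^{n}(a)$ and not merely block-\emph{tridiagonal}, since $\overline{\rho_u}(bu^{j'})$ moves the $\ell^2(\mathbb Z)$-index by $-j'$ while $\overline{\rho_s}(au^j)$ moves it by $+j$. One must check (using that $B^j$ and $B^{-j'}$ commute and that $\overline{\rho_s}(a)$ is itself block-diagonal) that the net index-shift of each term in the commutator is the same, so that conjugating by a single power of $1\otimes B$ restores block-diagonality; this is where the precise form (\ref{eq:inflatedstableRuelle})–(\ref{eq:inflatedusntableRuelle}) of the inflated representations is used, together with the identity $\|\alpha_s^{2n}(a)b-b\alpha_s^{2n}(a)\|=\|\alpha_s^n(a)\alpha_u^{-n}(b)-\alpha_u^{-n}(b)\alpha_s^n(a)\|$ from Lemma \ref{lem:KPW_norms} to keep the exponents matched.
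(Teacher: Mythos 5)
Your main computation is exactly the paper's proof: after reducing to generators via Remark \ref{rem:reduction_algebraic_computations}, you write $[\overline{\rho_s}(a),\overline{\rho_u}(b)]=\bigoplus_{n\in\mathbb Z}\bigl(\alpha_s^n(a)b-b\alpha_s^n(a)\bigr)$, discard the blocks with $n\le -n_0$ using Lemma \ref{lem:Rank_Ruelle_alg}(i), bound the ranks by Lemma \ref{lem:Rank_Ruelle_alg}(ii) and the norms by Lemma \ref{lem:KPW_norms}, and feed these into Lemma \ref{lem:singular_values} with $\alpha_\varepsilon=e^{\ent(\varphi)+\varepsilon}$ and $\beta=2^{(1/2)/\lceil \log_{\lambda_{X,d'}}3\rceil}$, getting $p>\log_\beta\alpha=2\ent(\varphi)\lceil\log_{\lambda_{X,d'}}3\rceil/\log 2$. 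That part is correct and is the argument in the paper.

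Where your writeup goes astray is the reduction to $j=j'=0$. You assert that $\overline{\rho_u}(u)$ commutes with $\overline{\rho_s}(a)$ \emph{only modulo} $\mathcal K$, and you then flag the block structure for general $j,j'$ as a remaining delicate point. In fact the commutators $[\overline{\rho_s}(a),\overline{\rho_u}(u)]$, $[\overline{\rho_u}(b),\overline{\rho_s}(u)]$ and $[\overline{\rho_s}(u),\overline{\rho_u}(u)]$ vanish \emph{exactly} (this is recorded in Subsection \ref{sec:K-duality_Ruelle} just before Lemma \ref{lem:commutationRuelles}; blockwise it is the identity $\alpha_s^n(a)u=u\alpha_s^{n-1}(a)$), so one has the exact factorisation
$$[\overline{\rho_s}(au^{j}),\overline{\rho_u}(bu^{j'})]=[\overline{\rho_s}(a),\overline{\rho_u}(b)]\,\overline{\rho_s}(u^{j})\overline{\rho_u}(u^{j'}),$$
and since the right-hand factor is unitary, the general case follows from $j=j'=0$ with no conjugation, reindexing, or tridiagonality analysis at all --- this is how the paper disposes of it in one line. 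Note that your stated justification would not suffice as written: if the commutation really held only modulo $\mathcal K$, the error terms would be merely compact, hence not necessarily in $\mathcal L^p$, and the reduction would break; so the ``modulo $\mathcal K$'' claim must be replaced by the exact identity. Finally, your closing ``density/ideal argument'' should be nothing more than bilinearity of the commutator over finite sums: elements of $\Lambda_{s,d'}(Q,\alpha_s)$ and $\Lambda_{u,d'}(P,\alpha_u)$ are finite sums of the generators you treated (Lemma \ref{lem:Lip_span}), and no norm-density argument is available here since $\mathcal L^p$ is not closed in operator norm.
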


\begin{proof}
To simplify the notation, let $p(d')= 2\ent(\varphi)\lceil \log_{\lambda_{X,d'}}3\rceil/\log 2.$ It suffices to show that, for every $a\in \Lip_c(G^s(Q),D_{s,d'}),\, b\in \Lip_c(G^u(P),D_{u,d'})$ and $j,j'\in \mathbb Z$, it holds $$[\overline{\rho_s}(a u^j),\overline{\rho_u}(b u^{j'})]\in \mathcal{L}^p(\mathscr{H}\otimes \ell^2(\mathbb Z)),$$ for every $p>p(d')$. By the construction of $\overline{\rho_s}$ and $\overline{\rho_u}$ we have that $$[\overline{\rho_s}(a u^j),\overline{\rho_u}(b u^{j'})]=[\overline{\rho_s}(a),\overline{\rho_u}(b)]\overline{\rho_s}( u^j)\overline{\rho_u}(u^{j'}),$$ and therefore it suffices to show that the singular values of the compact operator $$R=[\overline{\rho_s}(a),\overline{\rho_u}(b)]$$ satisfy $(s_n(R))_{n\in \mathbb N}\in \ell^p(\mathbb N)$, for every $p>p(d')$.

We have that $R=\bigoplus_{n\in \mathbb Z}R_n$, where $R_n=\alpha_s^n(a)b-b\alpha_s^n(a).$ From Lemma \ref{lem:Rank_Ruelle_alg} we obtain that each $R_n$ has finite rank and that there is some $n_0\in \mathbb N$ such that, for all $n\leq -n_0$, it holds $R_n=0$. For this reason, consider the compact operator $T=\bigoplus_{n\in \mathbb N}T_n\in \mathcal{B}(\bigoplus_{n\in \mathbb N} \mathscr{H})$, where each $T_n=R_{n-n_0}$, and using Lemma \ref{lem:singular_values} it suffices to show that $(s_n(T))_{n\in \mathbb N}\in \ell^p(\mathbb N)$, for every $p>p(d')$. 

From Lemma \ref{lem:Rank_Ruelle_alg} we can find a constant $C_1>0$ so that, for every $\varepsilon>0$ there is $n_1\in \mathbb N$ such that, for all $n\geq n_1$, it holds 
\begin{equation}\label{eq:com_Lip_alg_general_1}
\rank(T_n)\leq C_1e^{(\ent(\varphi)+\varepsilon)n}.
\end{equation}
Also, from Lemma \ref{lem:KPW_norms} we can find $C_2>0$ and $n_2\in \mathbb N$ such that, for all $n\geq n_2$, one has 
\begin{equation}\label{eq:com_Lip_alg_general_2}
\|T_n\|\leq C_22^{-(n/2)/\lceil \log_{\lambda_{X,d'}}3 \rceil}.
\end{equation}
Both (\ref{eq:com_Lip_alg_general_1}) and (\ref{eq:com_Lip_alg_general_2}) have been simplified to not include the integer $n_0$. Following Lemma \ref{lem:singular_values}, we have that $\alpha_{\varepsilon}=e^{\ent(\varphi)+\varepsilon},\,\alpha=e^{\ent(\varphi)}>1,\,\beta_{\varepsilon}=\beta=2^{(1/2)/\lceil \log_{\lambda_{X,d'}}3 \rceil}$, and also that, for every $p>\log_{\beta}\alpha$, the sequence $(s_n(T))_{n\in \mathbb N}\in \ell^p(\mathbb N)$. In fact, we have that $$\log_{\beta}\alpha= \frac{2\ent(\varphi)\lceil \log_{\lambda_{X,d'}}3 \rceil}{\log 2},$$ and the proof is complete.
\end{proof}

Similarly, one can show the following.

\begin{prop}\label{prop:com_Lip_alg_SFT}
Suppose that $(X,d,\varphi)$ is an irreducible TMC and let $\kappa >1$. The algebras $\overline{\rho_{s}}(\Lambda_{s,\kappa}(Q,\alpha_s))$ and $\overline{\rho_{u}}(\Lambda_{u,\kappa}(P,\alpha_u))$ commute modulo the Schatten ideal $\mathcal{L}^p(\mathscr{H}\otimes \ell^2(\mathbb Z))$, for every $$p> \frac{\ent(\varphi)}{\log \kappa}.$$
\end{prop}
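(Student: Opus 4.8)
The statement is the zero-dimensional analogue of Proposition \ref{prop:com_Lip_alg_general}, so the plan is to follow the same architecture but feed in the sharper ultrametric estimates available for TMC. Exactly as in the proof of Proposition \ref{prop:com_Lip_alg_general}, it suffices to prove that for every $a\in \Lip_c(G^s(Q),D_{s,\kappa})$, $b\in \Lip_c(G^u(P),D_{u,\kappa})$ and $j,j'\in \mathbb Z$ one has $[\overline{\rho_s}(au^j),\overline{\rho_u}(bu^{j'})]\in \mathcal{L}^p(\mathscr{H}\otimes \ell^2(\mathbb Z))$ whenever $p>\ent(\varphi)/\log\kappa$. Using the factorisation $[\overline{\rho_s}(au^j),\overline{\rho_u}(bu^{j'})]=[\overline{\rho_s}(a),\overline{\rho_u}(b)]\,\overline{\rho_s}(u^j)\overline{\rho_u}(u^{j'})$ and the fact that $\overline{\rho_s}(u^j)\overline{\rho_u}(u^{j'})$ is unitary, this reduces to controlling the singular values of $R=[\overline{\rho_s}(a),\overline{\rho_u}(b)]=\bigoplus_{n\in\mathbb Z}R_n$ with $R_n=\alpha_s^n(a)b-b\alpha_s^n(a)$.

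First I would invoke Lemma \ref{lem:Rank_Ruelle_alg}: it gives $n_0\in\mathbb N$ with $R_n=0$ for $n\le -n_0$, so that after reindexing $T_n:=R_{n-n_0}$ we obtain $T=\bigoplus_{n\in\mathbb N}T_n$ on $\bigoplus_{n\in\mathbb N}\mathscr H$ with $s_n(R)=s_n(T)$; and it supplies a constant $C_1>0$ so that for every $\varepsilon>0$ there is $n_1$ with $\rank(T_n)\le C_1 e^{(\ent(\varphi)+\varepsilon)n}$ for $n\ge n_1$. (Note this rank lemma carries no dependence on the chosen hyperbolic metric, so it applies verbatim here.) Second, I would apply Lemma \ref{lem:KPW_norms_SFT} in place of Lemma \ref{lem:KPW_norms}: it yields $C_2>0$ and $n_2\in\mathbb N$ with $\|\alpha_s^n(a)b-b\alpha_s^n(a)\|\le C_2\kappa^{-n/2}\cdot\kappa^{-n/2}$ — more precisely $\|T_n\|\le C_2\kappa^{-n}$ after the usual simplification absorbing $n_0$ — so the norm decay is now a genuine exponential $\kappa^{-n}$ rather than $2^{-(n/2)/\lceil\log_{\lambda_{X,\kappa}}3\rceil}$. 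Third, I would plug these two estimates into Lemma \ref{lem:singular_values} with $\alpha_\varepsilon=e^{\ent(\varphi)+\varepsilon}$, $\alpha=e^{\ent(\varphi)}>1$ and $\beta_\varepsilon=\beta=\kappa>1$, which gives $(s_n(T))_{n\in\mathbb N}\in\ell^p(\mathbb N)$ for every $p>\log_\beta\alpha=\log_\kappa e^{\ent(\varphi)}=\ent(\varphi)/\log\kappa$, completing the proof.

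Strictly speaking there is a subtlety to address before applying Lemma \ref{lem:KPW_norms_SFT}: that lemma is stated for $\alpha_s^n(a)\alpha_u^{-n}(b)-\alpha_u^{-n}(b)\alpha_s^n(a)$ and then for $\alpha_s^n(a)b-b\alpha_s^n(a)$, so one must check that applying it with $a$ replaced by $\alpha_s(a)$ (which stays in $\Lip_c(G^s(Q),D_{s,\kappa})$ by $\alpha_s$-invariance, Proposition \ref{prop:Lip_alg_Smale_groupoids_SFT}) covers the odd indices as well, exactly as at the end of the proof of Lemma \ref{lem:KPW_norms}. This is routine. The only real point to be careful about — and the place where the zero-dimensional case genuinely differs — is ensuring that Lemma \ref{lem:KPW_norms_SFT} is the correct input, i.e.\ that the ultrametrics $D_{s,\kappa},D_{u,\kappa}$ from Proposition \ref{prop:SFT_groupoids_Lipschitz} together with Proposition \ref{prop:SFT_groupoid_distances} really do replace the Alexandroff--Urysohn--Frink estimate of Proposition \ref{prop:groupoiddistances} with the exponential base $\kappa^{-1}$ in place of $2^{-1/\lceil\log_{\lambda_{X,\kappa}}3\rceil}$; but this is precisely what Lemma \ref{lem:KPW_norms_SFT} records, so no obstacle remains and the argument is a direct transcription of the proof of Proposition \ref{prop:com_Lip_alg_general}.
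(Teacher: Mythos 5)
Your proposal is correct and follows exactly the route the paper intends: the paper proves this proposition by the words ``similarly, one can show'', meaning the proof of Proposition~\ref{prop:com_Lip_alg_general} with Lemma~\ref{lem:KPW_norms_SFT} substituted for Lemma~\ref{lem:KPW_norms}, which is precisely your argument (rank bound from Lemma~\ref{lem:Rank_Ruelle_alg}, norm decay $\|T_n\|\leq C_2\kappa^{-n}$, then Lemma~\ref{lem:singular_values} with $\beta=\kappa$ giving $p>\ent(\varphi)/\log\kappa$). Your handling of the odd-index reduction via $\alpha_s(a)$ and the metric-independence of the rank lemma also matches the paper's treatment.
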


\subsection{Main results}\label{sec:KdualitysmoothRuelle} In what follows, we use the $\lambda$-number $\lambda(X,\varphi)$ of $(X,d,\varphi)$, which is a topological invariant related to the family of metrics $\text{sM}_d(X,\varphi)$, see Subsection \ref{sec:Optimisation}. Also, for every $d'\in \text{sM}_d(X,\varphi)$ and $\kappa>1$, recall (from Subsection \ref{sec:Lip_Ruelle_Smooth_Ext}) the dense (Lipschitz) $*$-subalgebras $\Lambda_{s,d'}(Q,\alpha_s),\, \Lambda_{u,d'}(P,\alpha_u)$ and $\Lambda_{s,\kappa}(Q,\alpha_s),\, \Lambda_{u,\kappa}(P,\alpha_u)$ (the zero-dimensional case) of the Ruelle algebras $\mathcal{R}^s(Q)$, $\mathcal{R}^u(P)$, respectively. The proof of the next result is omitted since it is a straightforward application of Propositions \ref{prop:Lip_alg_Smale_groupoids}, \ref{prop:com_Lip_alg_general} and \ref{prop:Ext_smoothness_prop}.

\begin{thm}\label{thm:KPW_smoothness}
For every $d'\in \text{sM}_d(X,\varphi)$, there exist holomorphically stable dense $*\text{-subalgebras}\,$ $\Eta_{s,u,d'}(Q,\alpha_s)\subset \mathcal{R}^s(Q)$ and $\Eta_{u,s,d'}(P,\alpha_u)\subset \mathcal{R}^u(P)$ that
\begin{enumerate}[(1)]
\item contain $\Lambda_{s,d'}(Q,\alpha_s)$ and  $\Lambda_{u,d'}(P,\alpha_u)$, respectively;
\item the KPW-extension $\tau_{\Delta}:\mathcal{R}^s(Q)\otimes \mathcal{R}^u(P)\to \mathcal{Q}(\mathscr{H}\otimes \ell^2(\mathbb Z))$ is $p$-smooth on the algebra $\Eta_{s,u,d'}(Q,\alpha_s)\otimes_{\text{alg}} \Eta_{u,s,d'}(P,\alpha_u)$, for every 
\begin{equation}\label{eq:KPW_smoothness1}
p> \frac{2\ent(\varphi)\lceil \log_{\lambda_{X,d'}}3 \rceil}{\log 2}.
\end{equation}
\end{enumerate}
As a result, from a topological perspective, for every 
\begin{equation}\label{eq:KPW_smoothness2}
p> \frac{2\ent(\varphi)}{\log 2}+\frac{2\ent(\varphi)\log_{\lambda (X,\varphi)}3}{\log 2},
\end{equation}
there is $d'\in \text{sM}_d(X,\varphi)$ such that $\tau_{\Delta}$ is $p$-smooth on $\Eta_{s,u,d'}(Q,\alpha_s)\otimes_{\text{alg}} \Eta_{u,s,d'}(P,\alpha_u)$.
\end{thm}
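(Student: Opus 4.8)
The plan is to assemble Theorem \ref{thm:KPW_smoothness} from three ingredients already established: the existence of the dense Lipschitz subalgebras $\Lambda_{s,d'}(Q,\alpha_s)\subset\mathcal{S}(Q)$ and $\Lambda_{u,d'}(P,\alpha_u)\subset\mathcal{U}(P)$ from Proposition \ref{prop:Lip_alg_Smale_groupoids}; the commutation estimate of Proposition \ref{prop:com_Lip_alg_general}, which says $\overline{\rho_s}(\Lambda_{s,d'}(Q,\alpha_s))$ and $\overline{\rho_u}(\Lambda_{u,d'}(P,\alpha_u))$ commute modulo $\mathcal{L}^p(\mathscr{H}\otimes\ell^2(\mathbb Z))$ for every $p>2\ent(\varphi)\lceil\log_{\lambda_{X,d'}}3\rceil/\log 2$; and the holomorphic-functional-calculus machinery of Proposition \ref{prop:Ext_smoothness_prop}. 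First I would fix $d'\in\text{sM}_d(X,\varphi)$ and apply Proposition \ref{prop:Ext_smoothness_prop}(i) with $A=\mathcal{R}^s(Q)$, $B=\mathcal{R}^u(P)$, $\rho_A=\overline{\rho_s}$, $\rho_B=\overline{\rho_u}$, $\mathcal{A}=\Lambda_{s,d'}(Q,\alpha_s)$, $\mathcal{B}=\Lambda_{u,d'}(P,\alpha_u)$, $H=\mathscr{H}\otimes\ell^2(\mathbb Z)$, and $\mathcal{I}=\mathcal{L}^p(\mathscr{H}\otimes\ell^2(\mathbb Z))$ for a fixed $p>p(d')$. This hands over holomorphically stable dense $*$-subalgebras $\Eta_{s,u,d'}(Q,\alpha_s)\supset\Lambda_{s,d'}(Q,\alpha_s)$ and $\Eta_{u,s,d'}(P,\alpha_u)\supset\Lambda_{u,d'}(P,\alpha_u)$ on which $\tau_{\Delta}$ is $p$-smooth. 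One should note that $\mathcal{R}^s(Q)\otimes\mathcal{R}^u(P)=\mathcal{R}^s(Q)\otimes_{\max}\mathcal{R}^u(P)$ by nuclearity, so the hypotheses of Propositions \ref{prop:slantprodKtheory}, \ref{prop:Ext_smoothness_prop} are met, and that $\mathcal{R}^s(Q)$ is separable, simple and purely infinite by the remark after Definition \ref{def:SmaleCalg}.

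A small subtlety: Proposition \ref{prop:Ext_smoothness_prop}(i) produces a single $\mathcal{I}$-smoothness conclusion for the fixed $p$, whereas the theorem asserts $p$-smoothness simultaneously for all $p>p(d')$ on one pair of subalgebras. To handle this uniformly I would observe that the subalgebras built in Lemma \ref{lem:extend_almost_commuting_algebras} / Proposition \ref{prop:Ext_smoothness_prop} are the maximal commutants $\mathcal{A}_{\mathcal{B}}=\{a\in A:[a,b]\in\mathcal{I}\ \text{for all}\ b\in\mathcal{B}\}$, so a cleaner route is to define $\Eta_{s,u,d'}(Q,\alpha_s)$ and $\Eta_{u,s,d'}(P,\alpha_u)$ directly as the commutants relative to the ideal $\mathcal{L}^{p_0}(\mathscr{H}\otimes\ell^2(\mathbb Z))$ for some fixed $p_0>p(d')$; since $\mathcal{L}^{p_0}\subset\mathcal{L}^{p}$ for $p\geq p_0$, smoothness with respect to the smaller ideal forces smoothness with respect to every larger Schatten ideal, and monotonicity plus Proposition \ref{prop:com_Lip_alg_general} closes the gap for the remaining $p\in(p(d'),p_0)$. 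The holomorphic stability of these commutants, and of the matrix amplifications, is exactly Lemma \ref{lem:extend_almost_commuting_algebras}(2) together with Lemmas \ref{lem:Banach_alg_hol_cal}, \ref{lem:Banach_alg_hol_cal_zero_one}, which cover both the symmetrically normed case $p\geq 1$ and the quasi-normed case $p\in(0,1)$.

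For the final ``topological'' assertion I would pass to the infimum over $d'$. By Definition \ref{def:l_number}, $\lambda(X,\varphi)=\sup\{\lambda_{X,d'}:d'\in\text{sM}_d(X,\varphi)\}$, so $\log_{\lambda_{X,d'}}3\to\log_{\lambda(X,\varphi)}3$ as $\lambda_{X,d'}\uparrow\lambda(X,\varphi)$, and since $\lceil t\rceil<t+1$ we get, for $d'$ chosen with $\lambda_{X,d'}$ close enough to $\lambda(X,\varphi)$,
\begin{equation*}
\frac{2\ent(\varphi)\lceil\log_{\lambda_{X,d'}}3\rceil}{\log 2}<\frac{2\ent(\varphi)}{\log 2}+\frac{2\ent(\varphi)\log_{\lambda(X,\varphi)}3}{\log 2}+\epsilon
\end{equation*}
for any prescribed $\epsilon>0$. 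Hence given $p$ exceeding the right-hand side of \eqref{eq:KPW_smoothness2}, one can pick $\epsilon$ small and then $d'\in\text{sM}_d(X,\varphi)$ so that $p>2\ent(\varphi)\lceil\log_{\lambda_{X,d'}}3\rceil/\log 2=$ the bound in \eqref{eq:KPW_smoothness1}, and the first part of the theorem applied to that $d'$ gives $p$-smoothness of $\tau_{\Delta}$ on $\Eta_{s,u,d'}(Q,\alpha_s)\otimes_{\text{alg}}\Eta_{u,s,d'}(P,\alpha_u)$. The only genuine obstacle is bookkeeping — making sure the ideal with respect to which the commutant subalgebras are defined is chosen once and for all (not depending on the target $p$) so that the ``for every $p$'' quantifier in (2) is honestly satisfied by a fixed pair of subalgebras — and confirming that the quasi-Banach subtleties in the range $p\in(0,1)$ (when $\ent(\varphi)$ is small and $\lambda(X,\varphi)$ large) are genuinely covered by Lemma \ref{lem:Banach_alg_hol_cal_zero_one}; everything else is a direct invocation of the cited propositions.
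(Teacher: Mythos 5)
Your overall route is the paper's: Theorem \ref{thm:KPW_smoothness} is indeed obtained by combining Proposition \ref{prop:Lip_alg_Smale_groupoids} (the dense Lipschitz subalgebras), Proposition \ref{prop:com_Lip_alg_general} (the Schatten commutation estimate) and Proposition \ref{prop:Ext_smoothness_prop}(i) (equivalently Lemma \ref{lem:extend_almost_commuting_algebras}), and your checks of the hypotheses (nuclearity, so $\mathcal{R}^s(Q)\otimes\mathcal{R}^u(P)=\mathcal{R}^s(Q)\otimes_{\max}\mathcal{R}^u(P)$; simplicity and pure infiniteness; faithfulness of $\overline{\rho_s},\overline{\rho_u}$) as well as your derivation of the bound (\ref{eq:KPW_smoothness2}) from $\lceil x\rceil<x+1$ by choosing $\lambda_{X,d'}$ close to $\lambda(X,\varphi)$ match Remark \ref{rem:KPW_smooth_ext2}. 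You also correctly identified the only real subtlety, namely that a single, $p$-independent pair $\Eta_{s,u,d'}(Q,\alpha_s)$, $\Eta_{u,s,d'}(P,\alpha_u)$ must work for every $p$ in (\ref{eq:KPW_smoothness1}). However, your proposed patch fails: if the enlargements are defined as commutants modulo one fixed $\mathcal{L}^{p_0}(\mathscr{H}\otimes\ell^2(\mathbb Z))$ with $p_0>p(d')$, then for $p(d')<p<p_0$ the inclusion goes the wrong way ($\mathcal{L}^{p}\subset\mathcal{L}^{p_0}$, not conversely), and Proposition \ref{prop:com_Lip_alg_general} only controls commutators of elements of $\Lambda_{s,d'}(Q,\alpha_s)$ and $\Lambda_{u,d'}(P,\alpha_u)$, not of the enlarged algebras; an element of the $\mathcal{L}^{p_0}$-commutant may have commutators lying in no smaller Schatten ideal, so ``monotonicity plus Proposition \ref{prop:com_Lip_alg_general}'' does not give $p$-smoothness on the fixed pair for $p<p_0$.

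The correct way to make the ``for every $p$'' quantifier honest is to impose all the Schatten conditions simultaneously: set $\Eta_{s,u,d'}(Q,\alpha_s)=\{a\in\mathcal{R}^s(Q): [\overline{\rho_s}(a),\overline{\rho_u}(b)]\in\mathcal{L}^{q}(\mathscr{H}\otimes\ell^2(\mathbb Z))\ \text{for all}\ q>p(d')\ \text{and all}\ b\in\Lambda_{u,d'}(P,\alpha_u)\}$, and then define $\Eta_{u,s,d'}(P,\alpha_u)$ as the analogous commutant of $\Eta_{s,u,d'}(Q,\alpha_s)$, exactly as in Lemma \ref{lem:extend_almost_commuting_algebras} but with the intersection of the ideals. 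For each fixed $q$ the condition is preserved by holomorphic functional calculus (Lemmas \ref{lem:Banach_alg_hol_cal} and \ref{lem:Banach_alg_hol_cal_zero_one}, the quasi-normed range $q\in(0,1)$ included), hence the intersection over $q$ is holomorphically closed; it contains $\Lambda_{s,d'}(Q,\alpha_s)$ by Proposition \ref{prop:com_Lip_alg_general}, so it is dense, and Theorem \ref{thm:Density_theorem} upgrades holomorphic closedness to holomorphic stability. With this $p$-independent pair, the rest of your argument — smoothness of $\tau_{\Delta}$ on $\Eta_{s,u,d'}(Q,\alpha_s)\otimes_{\text{alg}}\Eta_{u,s,d'}(P,\alpha_u)$ for each $p>p(d')$, and the passage to (\ref{eq:KPW_smoothness2}) — goes through as you wrote it.
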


\begin{remark}\label{rem:KPW_smooth_ext}
One of the indices of $\Eta_{s,u,d'}(Q,\alpha_s)\subset \mathcal{R}^s(Q)$ is the letter \enquote{u} which stands for \enquote{unstable}. Even though this subalgebra lies only in the stable Ruelle algebra $\mathcal{R}^s(Q)$, its construction depends on the Lipschitz algebra $\overline{\rho_{u}}(\Lambda_{u,d'}(P,\alpha_u))$ in $\mathcal{B}(\mathscr{H}\otimes \ell^2(\mathbb Z))$. For more details see the proof of Lemma \ref{lem:extend_almost_commuting_algebras}. Similarly for the subalgebra $\Eta_{u,s,d'}(P,\alpha_u)\subset \mathcal{R}^u(P)$. 
\end{remark}

\begin{remark}\label{rem:KPW_smooth_ext2}
Following Subsection \ref{sec:Optimisation}, in this generality of Smale spaces the degree of summability (\ref{eq:KPW_smoothness1}), for given $d'\in \text{sM}_d(X,\varphi)$, is the best possible. For a discussion on the infimum degree of summability see Remark \ref{rem:totalinf}. However, there is a subtlety. The ceiling function is not continuous at the integers, and hence we cannot derive, in general, that $\tau_{\Delta}$ is $p$-smooth for every 
\begin{equation}\label{eq:KPW_smooth_ext2}
p>\frac{2\ent(\varphi)\lceil \log_{\lambda(X,\varphi)}3 \rceil}{\log 2},
\end{equation}
where $\lambda(X,\varphi)=\sup \{\lambda_{X,d'}: d'\in \text{sM}_d(X,\varphi)\}$. This can be easily seen if $(X,d,\varphi)$ is zero-dimensional. In this case $\lambda(X,\varphi)=\infty$ and the smallest degree of summability (\ref{eq:KPW_smoothness1}) is $2\ent(\varphi)/\log 2$, while the one of (\ref{eq:KPW_smooth_ext2}) is zero. Equation (\ref{eq:KPW_smoothness2}) comes from the fact that $\lceil x \rceil < x+1$, for all $x\in \mathbb R$.
\end{remark}

From the $\Kt$-duality of Ruelle algebras (see Theorem \ref{thm:Ruelleduality}) and Propositions \ref{prop:Lip_alg_Smale_groupoids}, \ref{prop:com_Lip_alg_general} and \ref{prop:Ext_smoothness_prop}, we obtain the following important result.

\begin{thm}\label{cor:KPW_smoothness}
For all $d'\in \text{sM}_d(X,\varphi)$, the $\Kt$-homology of the Ruelle algebras $\mathcal{R}^s(Q)$ and $\mathcal{R}^u(P)$ is uniformly $\mathcal{L}^p$-summable on the holomorphically stable dense $*$-subalgebras $\Eta_{s,u,d'}(Q,\alpha_s)$ and $\Eta_{u,s,d'}(P,\alpha_u),$ respectively, for every $$p> \frac{2\ent(\varphi)\lceil \log_{\lambda_{X,d'}}3 \rceil}{\log 2}.$$
In particular, for every $$p> \frac{2\ent(\varphi)}{\log 2}+\frac{2\ent(\varphi)\log_{\lambda (X,\varphi)}3}{\log 2},$$ there is a metric $d'\in \text{sM}_d(X,\varphi)$ such that the $\Kt$-homology of $\mathcal{R}^s(Q)$ and $\mathcal{R}^u(P)$ is uniformly $\mathcal{L}^p$-summable on $\Eta_{s,u,d'}(Q,\alpha_s)$ and $\Eta_{u,s,d'}(P,\alpha_u),$ respectively.
\end{thm}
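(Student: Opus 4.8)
\textbf{Proof proposal for Theorem \ref{cor:KPW_smoothness}.}

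The plan is to assemble the statement from the three ingredients already in place: the Lipschitz algebras of Proposition \ref{prop:Lip_alg_Smale_groupoids}, the Schatten commutation estimate of Proposition \ref{prop:com_Lip_alg_general}, and the abstract uniform-summability machine of Proposition \ref{prop:Ext_smoothness_prop}, fed by the $\Kt$-duality of Theorem \ref{thm:Ruelleduality}. First I would fix $d'\in \text{sM}_d(X,\varphi)$ and set $p(d')=2\ent(\varphi)\lceil \log_{\lambda_{X,d'}}3\rceil/\log 2$. Recall the faithful representations $\overline{\rho_s}:\mathcal{R}^s(Q)\to \mathcal{B}(\mathscr{H}\otimes \ell^2(\mathbb Z))$ and $\overline{\rho_u}:\mathcal{R}^u(P)\to \mathcal{B}(\mathscr{H}\otimes \ell^2(\mathbb Z))$ that commute modulo $\mathcal{K}(\mathscr{H}\otimes \ell^2(\mathbb Z))$ and whose Calkin-algebra product is the KPW-extension $\tau_{\Delta}$ representing $\Delta\in \KKt_1(\mathcal{R}^s(Q)\otimes \mathcal{R}^u(P),\mathbb C)$. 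By Proposition \ref{prop:com_Lip_alg_general}, the images $\overline{\rho_s}(\Lambda_{s,d'}(Q,\alpha_s))$ and $\overline{\rho_u}(\Lambda_{u,d'}(P,\alpha_u))$ commute modulo $\mathcal{L}^p(\mathscr{H}\otimes \ell^2(\mathbb Z))$ for every $p>p(d')$. Since Ruelle algebras are nuclear, $\mathcal{R}^s(Q)\otimes \mathcal{R}^u(P)=\mathcal{R}^s(Q)\otimes_{\max}\mathcal{R}^u(P)$, so the hypotheses of Proposition \ref{prop:Ext_smoothness_prop} are met with $A=\mathcal{R}^u(P)$ (simple, purely infinite), $B=\mathcal{R}^s(Q)$, $\mathcal{I}=\mathcal{L}^p$, $\mathcal{A}=\Lambda_{u,d'}(P,\alpha_u)$, $\mathcal{B}=\Lambda_{s,d'}(Q,\alpha_s)$, and $\tau=\tau_{\Delta}$ (after flipping the tensor factors via $\sigma_{12}$, as in the remark following Definition \ref{def:SW-Kduality}).

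Applying Proposition \ref{prop:Ext_smoothness_prop}(i) produces holomorphically stable dense $*$-subalgebras, which we name $\Eta_{u,s,d'}(P,\alpha_u)\supset \Lambda_{u,d'}(P,\alpha_u)$ inside $\mathcal{R}^u(P)$ and $\Eta_{s,u,d'}(Q,\alpha_s)\supset \Lambda_{s,d'}(Q,\alpha_s)$ inside $\mathcal{R}^s(Q)$, with $\overline{\rho_u}(\Eta_{u,s,d'}(P,\alpha_u))$ and $\overline{\rho_s}(\Eta_{s,u,d'}(Q,\alpha_s))$ still commuting modulo $\mathcal{L}^p$ for every $p>p(d')$. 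By part (ii) of that proposition, the $\Kt$-homology classes in the range of the slant product $\otimes_{\mathcal{R}^u(P)}[\tau_{\Delta}]:\Kt_*(\mathcal{R}^u(P))\to \Kt^{*+1}(\mathcal{R}^s(Q))$ are represented by Fredholm modules that are $p$-summable on $\Eta_{s,u,d'}(Q,\alpha_s)$. The point now is that this slant product is exactly (up to the flip) the duality isomorphism $-\otimes_{\mathcal{R}^u}\Delta:\KKt_*(\mathbb C,\mathcal{R}^u(P))\to \KKt_{*+1}(\mathcal{R}^s(Q),\mathbb C)$ of (\ref{eq:slantprod2}), which is an isomorphism by Theorem \ref{thm:Ruelleduality} — in particular surjective. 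Hence every $\Kt$-homology class of $\mathcal{R}^s(Q)$ is represented by such a $p$-summable Fredholm module, i.e. $\mathcal{R}^s(Q)$ has uniformly $\mathcal{L}^p$-summable $\Kt$-homology on $\Eta_{s,u,d'}(Q,\alpha_s)$ for every $p>p(d')$. Running the same argument with the roles of $\mathcal{R}^s(Q)$ and $\mathcal{R}^u(P)$ interchanged (using the other slant product $-\otimes_{\mathcal{R}^s}\Delta$, which is likewise an isomorphism by Theorem \ref{thm:Ruelleduality}) gives the statement for $\mathcal{R}^u(P)$ on $\Eta_{u,s,d'}(P,\alpha_u)$.

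For the ``in particular'' clause I would optimise over $d'$: by Proposition \ref{prop:lambda_number_dimension} and the definition of $\lambda(X,\varphi)=\sup\{\lambda_{X,d'}:d'\in \text{sM}_d(X,\varphi)\}$, given any target exponent $p$ with
\begin{equation*}
p> \frac{2\ent(\varphi)}{\log 2}+\frac{2\ent(\varphi)\log_{\lambda(X,\varphi)}3}{\log 2},
\end{equation*}
one uses $\lceil x\rceil < x+1$ together with continuity of $\lambda_{X,d'}\mapsto \log_{\lambda_{X,d'}}3$ and the supremum property to pick $d'\in \text{sM}_d(X,\varphi)$ with $\lambda_{X,d'}$ close enough to $\lambda(X,\varphi)$ that $p(d')<p$; the first display then applies with that $d'$. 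The main obstacle is really bookkeeping rather than mathematics: making sure the tensor-factor flip is handled consistently so that the surjectivity of the slant product $\otimes_A[\tau_{\Delta}]$ in Proposition \ref{prop:Ext_smoothness_prop}(ii) genuinely matches the duality isomorphism (\ref{eq:slantprod2}) of Theorem \ref{thm:Ruelleduality}, and — if one wanted full rigour — checking that Proposition \ref{prop:Ext_smoothness_prop} covers both parities $*=0,1$ simultaneously, which it does since its statement is phrased for $\Kt_*$ and $\Kt^{*+1}$. Given how directly everything lines up, the proof is short enough that one can simply cite the three propositions and Theorem \ref{thm:Ruelleduality}, as the paper indeed does.
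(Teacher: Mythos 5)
Your proposal is correct and follows exactly the route the paper takes: the paper's "proof" is precisely the citation of Proposition \ref{prop:Lip_alg_Smale_groupoids}, Proposition \ref{prop:com_Lip_alg_general} and Proposition \ref{prop:Ext_smoothness_prop} together with the duality isomorphisms of Theorem \ref{thm:Ruelleduality}, applied with $A=\mathcal{R}^u(P)$, $B=\mathcal{R}^s(Q)$ (and roles swapped for the unstable case), with the flip handled as in the remark after Definition \ref{def:SW-Kduality} and the ``in particular'' clause obtained from $\lceil x\rceil<x+1$ as in Remark \ref{rem:KPW_smooth_ext2}. Your assembly, including the surjectivity of the slant product via the duality and the choice of $d'$ with $\lambda_{X,d'}$ close to $\lambda(X,\varphi)$, matches the intended argument.
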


We now move on to sharpen the above results in the case $(X,d,\varphi)$ is a TMC. This is done using the ultrametrics $D_{s,\kappa},\, D_{u,\kappa}$ of Proposition \ref{prop:SFT_groupoids_Lipschitz}, which are indexed over every expanding factor $\kappa>1$. The next two results are strongly related to the fact that, in this case, the $\lambda$-number $\lambda(X,\varphi)$ is infinite. The proof of Theorem \ref{thm:KPW_smoothness_SFT} follows from Propositions \ref{prop:Lip_alg_Smale_groupoids_SFT}, \ref{prop:com_Lip_alg_SFT}, \ref{prop:Ext_smoothness_prop}, which by $\Kt$-duality also yield Theorem \ref{cor:KPW_smoothness_SFT}.

\begin{thm}\label{thm:KPW_smoothness_SFT}
Suppose that $(X,d,\varphi)$ is an irreducible TMC and let $\kappa >1$. There exist holomorphically stable dense $*$-subalgebras $\Eta_{s,u,\kappa}(Q,\alpha_s)\subset \mathcal{R}^s(Q)$, $\Eta_{u,s,\kappa}(P,\alpha_u)$ $\subset \mathcal{R}^u(P)$ that
\begin{enumerate}[(1)]
\item contain $\Lambda_{s,\kappa}(Q,\alpha_s)$ and  $\Lambda_{u,\kappa}(P,\alpha_u)$, respectively;
\item the KPW-extension $\tau_{\Delta}:\mathcal{R}^s(Q)\otimes \mathcal{R}^u(P)\to \mathcal{Q}(\mathscr{H}\otimes \ell^2(\mathbb Z))$ is $p$-smooth on the algebra $\Eta_{s,u,\kappa}(Q,\alpha_s)\otimes_{\text{alg}} \Eta_{u,s,\kappa}(P,\alpha_u)$, for every 
\begin{equation}\label{eq:KPW_smoothness_SFT_1}
p> \frac{\ent(\varphi)}{\log \kappa}.
\end{equation}
\end{enumerate}
As a result, the extension $\tau_{\Delta}$ is optimally smooth. More precisely, for every $p>0$ there is $\kappa>1$ such that $\tau_{\Delta}$ is $p$-smooth on $\Eta_{s,u,\kappa}(Q,\alpha_s)\otimes_{\text{alg}} \Eta_{u,s,\kappa}(P,\alpha_u)$.
\end{thm}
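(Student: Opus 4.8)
The plan is to assemble the statement from the three ingredients cited just before it, exactly in the pattern already used for Theorems \ref{thm:KPW_smoothness} and \ref{cor:KPW_smoothness}, but now feeding in the zero-dimensional data. First I would fix an irreducible TMC $(X,d,\varphi)$ and $\kappa>1$. By Proposition \ref{prop:Lip_alg_Smale_groupoids_SFT}, the Lipschitz functions $\Lip_c(G^s(Q),D_{s,\kappa})$ and $\Lip_c(G^u(P),D_{u,\kappa})$ are dense $*$-subalgebras of $\mathcal{S}(Q),\mathcal{U}(P)$, they are $\alpha_s$- respectively $\alpha_u$-invariant, and hence $\Lambda_{s,\kappa}(Q,\alpha_s)\subset\mathcal{R}^s(Q)$, $\Lambda_{u,\kappa}(P,\alpha_u)\subset\mathcal{R}^u(P)$ are dense $*$-subalgebras. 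By Proposition \ref{prop:com_Lip_alg_SFT}, their images $\overline{\rho_s}(\Lambda_{s,\kappa}(Q,\alpha_s))$ and $\overline{\rho_u}(\Lambda_{u,\kappa}(P,\alpha_u))$ on $\mathscr{H}\otimes\ell^2(\mathbb Z)$ commute modulo $\mathcal{L}^p(\mathscr{H}\otimes\ell^2(\mathbb Z))$ for every $p>\ent(\varphi)/\log\kappa$.

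Next I would invoke Proposition \ref{prop:Ext_smoothness_prop}(i), whose hypotheses are now all met: $\mathcal{R}^s(Q)$ is separable, simple and purely infinite; $\mathcal{R}^u(P)$ is separable and nuclear so that $\mathcal{R}^s(Q)\otimes\mathcal{R}^u(P)=\mathcal{R}^s(Q)\otimes_{\max}\mathcal{R}^u(P)$; $\tau_{\Delta}$ is an invertible extension of the required tensor-product form, built from the faithful, essentially commuting representations $\overline{\rho_s},\overline{\rho_u}$ (Subsection \ref{sec:K-duality_Ruelle}); and, by the previous paragraph, with $\mathcal{I}=\mathcal{L}^p(\mathscr{H}\otimes\ell^2(\mathbb Z))$ for $p>\ent(\varphi)/\log\kappa$, the dense $*$-subalgebras $\Lambda_{s,\kappa}(Q,\alpha_s),\Lambda_{u,\kappa}(P,\alpha_u)$ have images that commute modulo $\mathcal{I}$. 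Part (i) of that proposition then enlarges them to holomorphically stable dense $*$-subalgebras, which I would denote $\Eta_{s,u,\kappa}(Q,\alpha_s)\subset\mathcal{R}^s(Q)$ and $\Eta_{u,s,\kappa}(P,\alpha_u)\subset\mathcal{R}^u(P)$, still containing the Lipschitz algebras and still with images commuting modulo $\mathcal{I}$; consequently $\tau_{\Delta}$ is $p$-smooth on $\Eta_{s,u,\kappa}(Q,\alpha_s)\otimes_{\text{alg}}\Eta_{u,s,\kappa}(P,\alpha_u)$ for every $p>\ent(\varphi)/\log\kappa$, which is (1) and (2). Finally, the concluding optimality claim is purely numerical: given $p>0$, choose $\kappa>1$ large enough that $\ent(\varphi)/\log\kappa<p$ (possible since $\log\kappa\to\infty$ as $\kappa\to\infty$, or trivially if $\ent(\varphi)=0$), and apply what was just proved.

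I do not expect a genuine obstacle here, since all the substantive work has been done in the cited propositions; the one point demanding a little care is that, unlike the general Smale-space case of Theorem \ref{thm:KPW_smoothness}, here there is no ceiling function in the summability exponent — the bound $\ent(\varphi)/\log\kappa$ from Proposition \ref{prop:com_Lip_alg_SFT} is continuous and unbounded in $\kappa$, which is precisely why the infimum over $\kappa$ is $0$ and the extension is \emph{optimally} smooth. I would therefore keep the proof short, as the authors do for the analogous Theorems \ref{thm:KPW_smoothness} and \ref{cor:KPW_smoothness}, essentially citing Propositions \ref{prop:Lip_alg_Smale_groupoids_SFT}, \ref{prop:com_Lip_alg_SFT} and \ref{prop:Ext_smoothness_prop}(i) and then noting the elementary limit $\inf_{\kappa>1}\ent(\varphi)/\log\kappa=0$.
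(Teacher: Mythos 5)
Your proposal is correct and matches the paper's own (very brief) argument exactly: the paper derives Theorem \ref{thm:KPW_smoothness_SFT} precisely by combining Propositions \ref{prop:Lip_alg_Smale_groupoids_SFT}, \ref{prop:com_Lip_alg_SFT} and \ref{prop:Ext_smoothness_prop}(i), with the optimality claim being the elementary observation that $\ent(\varphi)/\log\kappa\to 0$ as $\kappa\to\infty$. Your added remark on why no ceiling function appears (in contrast to Theorem \ref{thm:KPW_smoothness}) is consistent with the paper's Remark \ref{rem:KPW_smooth_ext2}.
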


\begin{thm}\label{cor:KPW_smoothness_SFT}
Suppose that $(X,d,\varphi)$ is an irreducible TMC. Then, for every $\kappa >1$, the $\Kt$-homology of the Ruelle algebras $\mathcal{R}^s(Q)$ and $\mathcal{R}^u(P)$ is uniformly $\mathcal{L}^p$-summable on the holomorphically stable dense $*$-subalgebras $\Eta_{s,u,\kappa}(Q,\alpha_s)$ and $\Eta_{u,s,\kappa}(P,\alpha_u),$ for every $$p> \frac{\ent(\varphi)}{\log \kappa}.$$ In particular, for every $p>0$ there is $\kappa>1$ such that the $\Kt$-homology of $\mathcal{R}^s(Q)$ and $\mathcal{R}^u(P)$ is uniformly $\mathcal{L}^p$-summable on $\Eta_{s,u,\kappa}(Q,\alpha_s)$ and $\Eta_{u,s,\kappa}(P,\alpha_u),$ respectively.
\end{thm}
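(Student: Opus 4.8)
\textbf{Proof plan for Theorem \ref{cor:KPW_smoothness_SFT}.}
The statement is, as the excerpt indicates, a direct consequence of assembling three already-established ingredients: the $\Kt$-duality of Ruelle algebras (Theorem \ref{thm:Ruelleduality}), the commutation estimate for the Lipschitz crossed products (Proposition \ref{prop:com_Lip_alg_SFT}), and the abstract uniform-summability machinery (Proposition \ref{prop:Ext_smoothness_prop}). So the plan is to verify that the hypotheses of Proposition \ref{prop:Ext_smoothness_prop} are met for the KPW-extension and then quote it. First I would recall that $\mathcal{R}^u(P)$ is separable, simple, purely infinite, nuclear and in the UCT class, and likewise $\mathcal{R}^s(Q)$; in particular $A=\mathcal{R}^s(Q)$ is separable, simple and purely infinite, $B=\mathcal{R}^u(P)$ is separable, and since both are nuclear we have $A\otimes B=A\otimes_{\max}B$, so the running hypothesis of Subsection \ref{sec:Hol_Fun_Cal} preceding Proposition \ref{prop:Ext_smoothness_prop} holds. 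Next I would recall, from Subsection \ref{sec:K-duality_Ruelle}, that the KPW-extension $\tau_{\Delta}:\mathcal{R}^s(Q)\otimes \mathcal{R}^u(P)\to \mathcal{Q}(\mathscr{H}\otimes \ell^2(\mathbb Z))$ is exactly of the form $\tau_{\Delta}(x\otimes y)=\overline{\rho_s}(x)\overline{\rho_u}(y)+\mathcal{K}(\mathscr{H}\otimes \ell^2(\mathbb Z))$ with $\overline{\rho_s}$ and $\overline{\rho_u}$ faithful representations commuting modulo compacts (Lemma \ref{lem:commutationRuelles}), so it is precisely an extension of the type handled by Proposition \ref{prop:Ext_smoothness_prop}, and it is invertible since $\mathcal{R}^s(Q)\otimes \mathcal{R}^u(P)$ is nuclear.

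For the dense $*$-subalgebras, fix $\kappa>1$ and take $\mathcal{A}=\Lambda_{s,\kappa}(Q,\alpha_s)\subset \mathcal{R}^s(Q)$, $\mathcal{B}=\Lambda_{u,\kappa}(P,\alpha_u)\subset \mathcal{R}^u(P)$, which are dense by Proposition \ref{prop:Lip_alg_Smale_groupoids_SFT}. Let $\mathcal{I}=\mathcal{L}^p(\mathscr{H}\otimes \ell^2(\mathbb Z))$ for a fixed $p>\ent(\varphi)/\log\kappa$; by Proposition \ref{prop:com_Lip_alg_SFT} the images $\overline{\rho_s}(\mathcal{A})$ and $\overline{\rho_u}(\mathcal{B})$ commute modulo $\mathcal{I}$. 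This is all that is required to invoke Proposition \ref{prop:Ext_smoothness_prop}: part (i) enlarges $\mathcal{A},\mathcal{B}$ to holomorphically stable dense $*$-subalgebras $\Eta_{s,u,\kappa}(Q,\alpha_s):=\mathcal{A}_{\mathcal{B}}$ and $\Eta_{u,s,\kappa}(P,\alpha_u):=\mathcal{B}_{\mathcal{A}}$ still commuting modulo $\mathcal{I}$ (note $\mathcal{L}^p$ for $p\in(0,1)$ is only quasi-normed, which is why Proposition \ref{prop:Ext_smoothness_prop} and the functional-calculus lemmas were set up in the $r$-Banach framework), and part (ii) says the $\Kt$-homology classes in the range of $\otimes_A[\tau_{\Delta}]:\Kt_*(\mathcal{R}^s(Q))\to \Kt^{*+1}(\mathcal{R}^u(P))$ are represented by Fredholm modules that are $\mathcal{L}^p$-summable on $\Eta_{u,s,\kappa}(P,\alpha_u)$. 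The only point still to check is surjectivity of that map: but by Theorem \ref{thm:Ruelleduality} the pair $(\widehat{\Delta},\Delta)$ is a Spanier-Whitehead $\Kt$-duality between $\mathcal{R}^s(Q)$ and $\mathcal{R}^u(P)$, so the slant product $-\otimes_{\mathcal{R}^s(Q)}\Delta$ is one of the duality isomorphisms (\ref{eq:slantprod2}), in particular surjective. Hence $\mathcal{R}^u(P)$ has uniformly $\mathcal{L}^p$-summable $\Kt$-homology on $\Eta_{u,s,\kappa}(P,\alpha_u)$; applying the symmetric argument with the roles of stable and unstable reversed (using the other slant isomorphism and the ``$-1_{\mathcal{R}^u}$'' normalisation) gives the statement for $\mathcal{R}^s(Q)$ on $\Eta_{s,u,\kappa}(Q,\alpha_s)$.

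Finally, for the ``in particular'' clause: given any $p>0$, choose $\kappa>1$ large enough that $\ent(\varphi)/\log\kappa<p$ — possible since $\ent(\varphi)$ is a fixed finite number (Smale spaces have finite topological entropy) and $\log\kappa\to\infty$ as $\kappa\to\infty$ — and apply the first part with this $\kappa$. This reflects exactly the fact that, for a zero-dimensional Smale space, the $\lambda$-number $\lambda(X,\varphi)=\infty$ (Proposition \ref{prop:lambda_number_dimension}), so there is no topological lower bound on the achievable degree of summability of the form appearing in Theorem \ref{cor:KPW_smoothness}.

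\textbf{Expected main obstacle.} Since every substantive inequality has been isolated into Propositions \ref{prop:com_Lip_alg_SFT} and \ref{prop:Ext_smoothness_prop}, there is no genuine analytic difficulty left in this particular theorem; the only things to be careful about are bookkeeping points — confirming that the KPW-extension literally fits the template of Proposition \ref{prop:Ext_smoothness_prop} (faithfulness, the $A\otimes B=A\otimes_{\max}B$ condition, invertibility of $\tau_{\Delta}$), and correctly invoking $\Kt$-duality to get surjectivity of \emph{both} slant maps, including matching the degree shift $*\mapsto *+1$ with the odd nature of $\Delta$. The genuinely hard work — producing the groupoid ultrametrics $D_{s,\kappa},D_{u,\kappa}$ with arbitrarily fast uniform contraction rates (Proposition \ref{prop:SFT_groupoids_Lipschitz}) and turning Lemma \ref{lem:KPW_norms_SFT} plus the rank bound of Lemma \ref{lem:Rank_Ruelle_alg} into the Schatten estimate via Lemma \ref{lem:singular_values} — has already been carried out upstream, so this theorem is essentially a corollary.
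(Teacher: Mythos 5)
Your proposal is correct and follows essentially the same route as the paper, which derives this theorem exactly by combining Propositions \ref{prop:Lip_alg_Smale_groupoids_SFT}, \ref{prop:com_Lip_alg_SFT} and \ref{prop:Ext_smoothness_prop} with the $\Kt$-duality of Theorem \ref{thm:Ruelleduality}; your verification of the hypotheses of Proposition \ref{prop:Ext_smoothness_prop} (nuclearity giving $A\otimes B=A\otimes_{\max}B$, faithfulness and essential commutation of $\overline{\rho_s},\overline{\rho_u}$, surjectivity of the slant maps from the duality) and the choice of large $\kappa$ for the final clause are precisely the bookkeeping the paper leaves implicit.
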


\begin{Acknowledgements}
My sincere appreciation goes to my advisors Mike Whittaker and Joachim Zacharias for their constant support during my doctoral studies. Also, I am grateful to Magnus Goffeng for many useful discussions on finite summability and index theory. Further, I would like to thank Siegfried Echterhoff, Heath Emerson, Bram Mesland, Ian Putnam and Christian Voigt for a number of useful conversations. I am also indebted to the anonymous referee for carefully reading the manuscript and for a number of helpful comments that improved it. Finally, I would like to thank EPSRC (grants NS09668/1, M5086056/1) as well as the London Mathematical Society and Heilbronn Institute for Mathematical Research (Early Career Fellowship) for supporting this research.
\end{Acknowledgements}

\end{document}